\tikzset{
    >=stealth,
    every picture/.style={thick},
    graphs/every graph/.style={empty nodes},
}
\tikzstyle{vertex}=[
\tikzstyle{printersafe}=[decoration={snake,amplitude=0pt}]
\newtheorem{theorem}{Theorem}[section]
\newtheorem{theorem-definition}{Theorem-Definition}[section]
\newtheorem{proposition}[theorem]{Proposition}
\newtheorem{corollary}[theorem]{Corollary}
\newtheorem{lemma}[theorem]{Lemma}
\newtheorem{conjecture}[theorem]{Conjecture}
\theoremstyle{definition}
\newtheorem{definition}[theorem]{Definition}
\newtheorem{remark}[theorem]{Remark}
\newtheorem{notation}[theorem]{Notation}
\newcommand{\on}[1]{\operatorname{#1}}
\newcommand{\Aut}{{\on{Aut}}}
\newcommand{\Gal}{{\on{Gal}}}
\newcommand{\Alb}{{\on{Alb}}}
\newcommand{\id}{{\on{id}}}
\newcommand{\Spec}{{\on{Spec}}}
\newcommand{\Proj}{{\on{Proj }}}
\newcommand{\GL}{{\on{GL}}}
\newcommand{\Supp}{\on{Supp}}
\newcommand{\codim}{{\on{codim }}}
\newcommand{\Pic}{{\on{Pic}}}
\newcommand{\NE}{\overline{NE}}
\newcommand{\hor}{\on{hor}}
\newcommand{\dist}{\on{dist}}
\newcommand{\interior}{\on{int}}
\newcommand{\WDiv}{\operatorname{WDiv}}
\newcommand{\coker}{\operatorname{coker}}
\newcommand{\Bir}{\operatorname{Bir}}
\newcommand{\PsAut}{\operatorname{PsAut}}
\newcommand{\Nef}{\overline{\mathcal{A}}}
\newcommand{\Mov}{\overline{\mathcal{M}}}
\newcommand{\Exc}{\operatorname{Exc}}
\newcommand{\pp}{\mathbb{P}}
\newcommand{\qq}{\mathbb{Q}}
\newcommand{\zz}{\mathbb{Z}}
\newcommand{\rr}{\mathbb{R}}
\newcommand{\xdasharrow}[2][->]{
\tikz[baseline=-\the\dimexpr\fontdimen22\textfont2\relax]{
\node[anchor=south,font=\scriptsize, inner ysep=1.5pt,outer xsep=2.2pt](x){#2};
\draw[shorten <=3.4pt,shorten >=3.4pt,dashed,#1](x.south west)--(x.south east);
}
}
\def\O#1.{\mathcal {O}_{#1}}			
\def\pr #1.{\mathbb P^{#1}}				
\def\af #1.{\mathbb A^{#1}}			
\def\ses#1.#2.#3.{0\to #1\to #2\to #3 \to 0}	
\def\xrar#1.{\xrightarrow{#1}}			
\def\K#1.{K_{#1}}						
\def\bA#1.{\mathbf{A}_{#1}}			
\def\bM#1.{\mathbf{M}_{#1}}				
\def\bL#1.{\mathbf{L}_{#1}}				
\def\bB#1.{\mathbf{B}_{#1}}				
\def\bK#1.{\mathbf{K}_{#1}}			
\def\subs#1.{_{#1}}					
\def\sups#1.{^{#1}}
\numberwithin{equation}{section}
\begin{document}

\title[The geometric Cone conjecture in relative dimension two]{The geometric cone Conjecture in relative dimension two}

\author[J.~Moraga]{Joaqu\'in Moraga}
\address{Department of Mathematics, UCLA, Los Angeles, CA 90095-1555, USA}
\email{jmoraga@math.ucla.edu}

\author[T.~Stark]{Talon Stark}
\address{Department of Mathematics, UCLA, Los Angeles, CA 90095-1555, USA}
\email{talonstark@math.ucla.edu}

\subjclass[2020]{Primary: 14E30, 14E07; 
Secondary: 14C20, 14D06.}
\keywords{Cone conjecture, Calabai--Yau pairs.}

\begin{abstract}
Let $X\rightarrow S$ be a fibration
of relative dimension at most two and
let $(X,\Delta)$ be a klt pair for which
$K_X+\Delta \equiv_S 0$.
We show that there are only finitely many
Mori chambers and Mori faces
in the movable effective cone $\mathcal{M}^e(X/S)$
up to the action of relative pseudo-automorphisms of $X/S$ preserving $\Delta$.
\end{abstract}

\maketitle

\setcounter{tocdepth}{1} 
\tableofcontents

\section{Introduction}

A fundamental perspective in algebraic geometry is that to understand an algebraic variety $X$, we must understand the morphisms from $X$. For projective varieties, by Stein factorization, every morphism factors as a morphism with connected fibers, a \emph{contraction}, followed by a finite morphism. Therefore, it is important to understand the contractions that a variety possesses. Contractions of a variety $X$ are classified by the faces of the nef effective cone $\mathcal{A}^e(X)$.

Smooth projective varieties come naturally in three flavors: Fano, Calabi--Yau, and canonically polarized. The Minimal Model Program (MMP) predicts that all varieties are built out of varieties of these three types. Fano varieties are Mori dream spaces, and in particular, their nef cones are rational polyhedral. For canonically polarized varieties, the behavior of the nef cone is completely intractable. On the other hand, the situation for Calabi--Yau varieties is intricate and has been the topic of a large body of research. Examples show that the nef cone of a Calabi--Yau variety ranges from being polyhedral to completely round.

The development of the minimal model program has shown that it is also important to understand the minimal models that a variety $X$ may possess. For smooth Fano and Calabi--Yau varieties, the minimal models are classified by the chambers of a wall--and--chamber decomposition of the movable effective cone $\mathcal{M}^e(X)$. The chamber of a minimal model is its nef effective cone under a canonical identification. Therefore, the wall--and--chamber decomposition of the movable effective cone provides a landscape for all minimal models and contractions thereof. 

In~\cite{Mor93,Mor96}, Morrison proposed a conjecture regarding the cone of nef divisors of a Calabi--Yau variety. He conjectured that the cone of nef effective divisors $\mathcal{A}^e(X)$ of a Calabi--Yau variety $X$ admits a rational polyhedral fundamental domain (RPFD) for the action of its automorphism group $\Aut(X)$. That is, a rational polyhedral cone $\Pi\subseteq \mathcal{A}^e(X)$ such that $\bigcup_{g\in \Aut(X)} g_*\Pi = \mathcal{A}^e(X)$ and $\interior{\Pi} \cap \interior{g_* \Pi} = \emptyset$ unless $g_* = \id$. In a similar vein, he conjectured that the cone of movable effective divisors $\mathcal{M}^e(X)$ of a Calabi--Yau variety $X$ admits a rational polyhedral fundamental domain for the action of the birational automorphism group $\Bir(X)$. The latter conjecture implies the first, as $\Aut(X)$ is the stabilizer of $\mathcal{A}^e(X)$ for the action $\Bir(X)\curvearrowright\mathcal{M}^e(X)$. 

The surprising aspect of the conjecture is that it predicts that a Calabi--Yau variety with non-polyhedral nef cone must have infinitely many automorphisms. In dimension two, the movable and nef cones coincide and minimal models are unique. In dimension two, Morrison's conjecture case was worked out by Sterk, Looijenga, and Namikawa with automorphisms constructed via the Torelli theorem \cite{Ste85, Nam85, Kaw97}. In dimension at least $3$, it remains unclear how to construct such automorphisms. Even though the conjecture has been settled for abelian varieties \cite{Pre12} and hyperk\"ahler  varieties \cite{AV17, Mar11}, and many other special cases, the conjecture is still wide open for Calabi--Yau manifolds in dimension at least $3$. In~\cite{Kaw97}, Kawamata proposed that both conjectures should be valid for Calabi--Yau fiber spaces $X\rightarrow S$, i.e., fibrations for which $K_X\equiv_S 0$. He showed that the geometric properties of the conjecture hold for Calabi--Yau fiber spaces $X\to S$ for $X$ a threefold and $S$ a curve or a surface. That is, there are finitely many contractions $X\to T \to S$ of $X$ over $S$ up to automorphisms commuting with the fibration $\Aut(X/S)$, and finitely many minimal models $X\dashrightarrow X' \to S$ over $S$ up to $\Bir(X/S)$.

In~\cite{Tot10}, Totaro showed that the cone conjecture is reasonable in the context of klt Calabi--Yau pairs, i.e., $(X,\Delta)$ is klt and $K_X+\Delta \equiv 0$. Further, he proposes that we have enough automorphisms when we restrict ourselves to those automorphisms $\Aut(X,\Delta)$ preserving the boundary $\Delta$. He gave the most modern formulation of the conjecture, proving it in dimension two. In this setting, $X$ may be non-terminal and so birational automorphisms may be not defined in codimension $1$. Hence, for a birational automorphism action on $N^1(X)$ to be defined, we must pass to the subgroup of birational automorphisms that are isomorphisms in codimension $1$, the \emph{pseudo-automorphisms} $\PsAut(X,\Delta)$. In addition, for $X$ terminal, minimal models are connected by flops \cite{Kaw08}, but for $X$ klt, minimal models may also be connected by crepant blowing ups. In this setting, the chambers of the movable cone classify all varieties we may obtain by sequences of flops, or \emph{marked small $\mathbb{Q}$-factorial modifications (SQMs)}. We refer to this conjecture as the Kawamata--Morrison--Totaro cone conjecture, or just the cone conjecture.

\begin{conjecture}[The Kawamata--Morrison--Totaro cone conjecture]
\label{conj:KMT-cone}
Let $f:\:(X,\Delta)\to S$ be a klt Calabi--Yau pair. That is, $(X,\Delta)$ is klt and $K_X+\Delta \equiv_S 0$. Then, the following statements hold:
\begin{enumerate}
    \item 
    
    The group $\Aut(X/S,\Delta)$ acts on the nef effective cone $\mathcal{A}^e(X/S)$ with finitely many orbits of Mori faces. The group $\PsAut(X/S,\Delta)$ acts on the movable effective cone $\mathcal{M}^e(X/S)$ with finitely many orbits of Mori chambers and faces. Equivalently, there are finitely many contractions of $X$ over $S$ up to $\Aut(X/S,\Delta)$ and finitely many marked SQMs of $X$ over $S$ up to $\PsAut(X/S,\Delta)$. 
    \item 
    
    The above finiteness statements are due to the existence of rational polyhedral fundamental domains $\Pi\subseteq \mathcal{A}^e(X/S)$ and $\Pi'\subseteq \mathcal{M}^e(X/S)$ for the actions of $\Aut(X/S,\Delta)\curvearrowright \mathcal{A}^e(X)$ and $\PsAut(X/S,\Delta)\curvearrowright \mathcal{M}^e(X)$, respectively.
\end{enumerate}
\end{conjecture}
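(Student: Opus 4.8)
The plan is to establish, under the hypothesis that $f$ has relative dimension at most two, the finiteness part of \Cref{conj:KMT-cone}(1): that $\PsAut(X/S,\Delta)$ acts on $\mathcal{M}^e(X/S)$ with finitely many orbits of Mori chambers and Mori faces (the corresponding statement for $\Aut(X/S,\Delta)$ on $\mathcal{A}^e(X/S)$ then follows by a standard argument, since $\Aut(X/S,\Delta)$ is the stabilizer of the nef chamber). The idea is to split the Mori chamber count into a ``horizontal'' part governed by the generic fibre and a ``vertical'' part governed by the behaviour of $(X,\Delta)$ over codimension-one points of $S$. After Stein factorization one may assume $f$ has connected fibres, and after a small $\mathbb{Q}$-factorialization that $X$ is $\mathbb{Q}$-factorial. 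Writing $K=K(S)$, the generic fibre $(X_\eta,\Delta_\eta)$ is a $\mathbb{Q}$-factorial klt pair with $K_{X_\eta}+\Delta_\eta\equiv 0$ and $\dim X_\eta\le 2$; for curves there is nothing to prove, while for surfaces movable and nef cones coincide and pseudo-automorphisms are automorphisms, so Totaro's theorem over $\bar K$ provides a rational polyhedral fundamental domain for $\Aut(X_{\bar\eta},\Delta_{\bar\eta})\curvearrowright\mathcal{A}^e(X_{\bar\eta})$. Since the Galois action on the finitely generated group $N^1(X_{\bar\eta})$ factors through a finite quotient, preserves $\mathcal{A}^e$, and normalizes the automorphism action, finite Galois descent gives finitely many $\Aut(X_\eta,\Delta_\eta)$-orbits of faces over $K$; in particular $X_\eta$ has, up to $K$-isomorphism preserving $\Delta_\eta$, a \emph{unique} marked SQM.

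The next step is to show that everything genuinely new in $\mathcal{M}^e(X/S)$ is vertical. By the canonical bundle formula, $(S,B_S+M_S)$ is a generalized klt pair with $K_S+B_S+M_S\equiv 0$, discriminant $B_S$ supported on finitely many prime divisors and $M_S$ nef; away from $\Supp B_S$ and a further closed subset, $X\to S$ is an \'etale-locally trivial family of its generic fibre. Combined with the previous step this shows that any marked SQM $X\dashrightarrow X'$ over $S$ induces an isomorphism of geometric generic fibres and hence agrees with $X$ over a dense open $U\subseteq S$; its flopping locus is vertical, lying over $S\setminus U$ and, up to the further closed subset, over the prime divisors in $\Supp B_S$. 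Invoking boundedness of log Calabi--Yau surfaces and of their one-parameter degenerations, the local structure of $X\to S$ over each codimension-one point $s\in S$ --- a degeneration of $(X_\eta,\Delta_\eta)$ over the DVR $\mathcal{O}_{S,s}$ --- is of one of boundedly many types, so the vertical chamber structure it contributes to $\mathcal{M}^e(X/S)$ has bounded complexity, while a relative MMP with scaling over $S$, together with finiteness of models for $(X,\Delta+\varepsilon H)/S$ as $\varepsilon\to 0$ with $H$ ample, makes the full Mori chamber decomposition locally finite on rational polyhedral slices.

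To conclude I would use a Looijenga-type criterion: it suffices to produce a rational polyhedral cone $\Pi\subseteq\mathcal{M}^e(X/S)$ meeting only finitely many Mori chambers and faces and such that every Mori chamber and face of $\mathcal{M}^e(X/S)$ is $\PsAut(X/S,\Delta)$-equivalent to one meeting $\Pi$. One assembles $\Pi$ from the rational polyhedral fundamental domain of the generic fibre (spread out over $S$) together with the bounded collection of vertical prime divisors over $\Supp B_S$, and realizes the needed identifications by pseudo-automorphisms built from automorphisms of the surface $X_\eta$ --- spread out to birational self-maps of $X/S$ and promoted to pseudo-automorphisms by a relative MMP --- together with the vertical flopping pseudo-automorphisms over the divisorial points of $S$; these generate a subgroup of $\PsAut(X/S,\Delta)$ carrying every Mori chamber and face into $\Pi$.

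The main obstacle is the vertical part. A priori $X/S$ can admit infinitely many vertical flops, and one must show they are $\PsAut(X/S,\Delta)$-equivalent to finitely many: over a divisorial point $s\in S$ the vertical MMP over $\mathcal{O}_{S,s}$ is governed by a Coxeter-type group acting on the sublattice of $N^1(X/S)$ spanned by the components of the fibres --- in the spirit of the Shioda--Tate and Mordell--Weil picture --- and one must show that this group lies in $\PsAut(X/S,\Delta)$ up to finite index and is compatible with the generic-fibre automorphism group, so that the two combine into a single group admitting a genuinely rational polyhedral set of orbit representatives. It is precisely here that the relative-dimension-two hypothesis is essential: it bounds the dimension and type of the vertical lattices and makes the required boundedness of Calabi--Yau surface degenerations available, and this is where the bulk of the work lies.
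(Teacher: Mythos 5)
There is a genuine gap, and it sits exactly where you locate the ``main obstacle'': the chambers of $\mathcal{M}^e(X/S)$ lying over a fixed chamber of the generic fibre, i.e.\ the fibres of the restriction $r\colon N^1(X/S)\to N^1(X_\eta)$. Your two proposed mechanisms for taming this vertical part do not work as stated. First, boundedness of log Calabi--Yau surfaces and of their one-parameter degenerations over the divisorial points of $S$ does not control the Mori chamber structure of $\mathcal{M}^e(X/S)$: the possible infinitude of chambers up to $\PsAut(X/S,\Delta)$ is a global phenomenon in $\ker(r)$, which is strictly larger than $V(X/S)$ precisely when $H^1(X_\eta,\mathcal{O}_{X_\eta})\neq 0$ (cf.\ \cref{lem:H1Otriv}), and no local ``bounded complexity'' over codimension-one points of $S$ yields finiteness of orbits there. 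Second, the ``Coxeter-type'' fibral elements you invoke are realized by flops, i.e.\ by passing to other marked SQMs; they are not elements of $\PsAut(X/S,\Delta)$ and so cannot be used to identify chambers. The group that actually performs the identification in this situation is of a different nature: it is the subgroup $G(X/S,\Delta)\leqslant\sigma(\PsAut(X/S,\Delta))$ built from $\mathbb{C}(S)$-rational points of the Albanese/Mordell--Weil group of the generic fibre, acting trivially on $N^1(X_\eta)$ and by translations of full rank on $\ker(r_V)$ (\cref{prop:k=0}); constructing it, and verifying that it consists of pseudo-automorphisms preserving $\Delta$, is one of the substantive points of the argument and is not supplied by ``spreading out'' automorphisms of $X_\eta$.

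Even granting such a group, finiteness of orbits does not follow from a Looijenga-type criterion applied to a cone assembled as you describe: one still needs (i) the properness and closedness statements for $J(X/S,\Delta)\to W(X/S,\Delta)$ (\cref{lem:closed}, \cref{lem:proper}), which require a delicate relative MMP analysis and are exactly what makes the quotient compact away from the fibre-space-structure faces; and (ii) a separate treatment of the Mori faces corresponding to SQM fibre space structures and of the chambers containing them, which in the paper uses the relative dimension one cone theorem \cref{thm:reldim1}, the finiteness of SQM fibre space structures \cref{prop:FFSS} (itself resting on the generic-fibre result \cref{thm:kltpairs} and on lifting pseudo-automorphisms across good fibrations, \cref{cor:lifting}), and an induction on $v(X/S)$. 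Your proposal acknowledges that establishing the compatibility of the vertical group with $\PsAut(X/S,\Delta)$ is ``where the bulk of the work lies'' but does not carry it out, so the argument as written does not prove the statement. As a minor additional point, the generic-fibre step is also lighter than what is needed: the paper does not deduce the cone conjecture over $K=\mathbb{C}(S)$ by bare Galois descent from Totaro's theorem over $\bar K$, but reworks his proof over non-closed fields (using the equivariant statement of Oguiso--Sakurai for a group mapping finitely to $\Aut(M_{\bar k})$), since invariant fundamental domains are not automatically fundamental domains for the $K$-form.
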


We refer to (1) as the geometric cone conjecture and (2) as the existence of a RPFD.

In~\cite{FHS21}, the authors proved the geometric cone conjecture for all terminal Calabi--Yau fiber spaces of relative dimension $1$, allowing for the total space to have arbitrary dimension. They used this as the primary tool to prove the landmark result that elliptic Calabi--Yau $3$-folds form a bounded family, implying that elliptic Calabi--Yau $3$-folds have only finitely many topological types. The geometric cone conjecture will be also one of the primary tools to prove the boundedness of Calabi--Yau manifolds admitting special fibrations in higher dimensions.

In this article, we prove the geometric cone conjecture for all klt Calabi--Yau pairs of relative dimension at most two, simultaneously extending the results of~\cite{Kaw97},~\cite{Tot10},~\cite{FHS21},~\cite{LZ22}, and~\cite{Li23}. 

\begin{theorem}
\label{thm:mainresult}
The geometric cone conjecture holds in relative dimension at most two.
\end{theorem}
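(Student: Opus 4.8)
The plan is to induct on the relative dimension $d=\dim X-\dim S\le 2$, reduce the statement to the generic fibre of $f$, invoke the cone conjecture for klt Calabi--Yau surface pairs over a field, and then spread out over $S$ and descend. First some standard reductions: passing to a small $\mathbb{Q}$-factorial modification of $X$ changes neither $\mathcal{M}^e(X/S)$, nor $\PsAut(X/S,\Delta)$, nor the assertion; a generically finite base change of $S$ alters the relevant groups only up to commensurability; and the statement for $\mathcal{A}^e(X/S)$ and $\Aut(X/S,\Delta)$ follows from that for $\mathcal{M}^e(X/S)$ and $\PsAut(X/S,\Delta)$ by the wall-and-chamber formalism, the nef cone of a marked small $\mathbb{Q}$-factorial modification being one of the Mori chambers and $\Aut(X/S,\Delta)$ its stabiliser. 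So I may assume $X$ is $\mathbb{Q}$-factorial, $f$ a contraction with $S$ normal, and I must show that $\PsAut(X/S,\Delta)$ acts on the set of marked small $\mathbb{Q}$-factorial modifications of $X$ over $S$ with finitely many orbits. The cases $d\le 1$ are known --- $d=0$ is immediate and $d=1$ is \cite{FHS21} together with its refinements to klt pairs --- so the content lies in $d=2$.

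Assume $d=2$ and let $(X_\eta,\Delta_\eta)$ be the generic fibre, a klt Calabi--Yau surface pair over $k=K(S)$. The classification of such pairs yields three cases.
(i) If $X_\eta$ is of Fano type over $k$, then, spreading out a suitable auxiliary boundary, $X$ is of Fano type over $U$ for some dense open $U\subseteq S$, so $\mathcal{M}^e(X_U/U)$ is rational polyhedral, with finitely many marked SQMs over $U$; over $S\setminus U$ --- which, after removing from $U$ a further general hypersurface through the part of $S\setminus U$ of codimension $\ge 2$, we may take to be a divisor --- restriction of the family to the components of $S\setminus U$ lowers $\dim X$ while keeping the relative dimension at most two, so the inductive hypothesis applies.
(ii) If $X_\eta$ is uniruled but not of Fano type, it carries a canonical fibration to a curve with Calabi--Yau-curve general fibre --- the maximal rationally connected fibration when $X_\eta$ is ruled over a curve of positive genus, and an anticanonical genus-one fibration otherwise --- which globalises, after a birational modification of $X$ over $S$, to a tower $X\dashrightarrow Y\to T\to S$ with $Y\to T$ and $T\to S$ of relative dimension one; I would then deduce the cone conjecture for $X/S$ from the case $d=1$ applied to $Y/T$ and its generalised-pair analogue applied to the Calabi--Yau generalised pair $(T,\Delta_T+M_T)/S$ produced by the canonical bundle formula, via a fibration-reduction argument in the spirit of \cite{Kaw97,FHS21}.
(iii) If $X_\eta$ is not uniruled, then $\Delta_\eta=0$ and $X_\eta$ is a minimal Calabi--Yau surface with klt singularities --- abelian, K3, Enriques or bielliptic, or a klt quotient thereof --- so the absolute cone conjecture for $X_\eta$ holds by \cite{Ste85,Nam85,Kaw97,Tot10}, and by \cite{Pre12} in the abelian case; to descend from $\overline{k}$ to $k$ I would average a rational polyhedral fundamental domain over the Galois group, which acts on $N^1(X_{\overline{\eta}})$ through a finite quotient.

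It remains, in cases (ii) and (iii), to pass from the generic fibre to $X/S$ itself. Since the $\mathcal{M}^e$-form of the cone conjecture is a birational invariant of klt Calabi--Yau pairs over $S$, I may replace $X$ by the model produced above; the finitely many marked SQMs and the finitely many fundamental-domain representatives of pseudo-automorphisms over $\eta$ then spread out over a dense open $U\subseteq S$. Over the discriminant $S\setminus U$, normalised to be a divisor as in case (i), there may be infinitely many further marked SQMs and pseudo-automorphisms, but these are governed, via the inductive hypothesis, by the family restricted to the components of $S\setminus U$, which has smaller $\dim X$ and relative dimension still at most two. The final step is to assemble the generic-fibre and discriminantal data into a statement over $S$: I would show that the pseudo-automorphisms furnished over $\eta$ lift, after a small modification of $X$ over $S$, to elements of $\PsAut(X/S,\Delta)$, and then, using equivariance of the restriction-to-generic-fibre map on marked SQMs together with a decomposition of its fibres according to the discriminantal models, conclude that the orbits over $S$ are finite. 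The principal obstacle, and where essentially all of the real work lies, is this lifting and assembly --- that is, producing enough pseudo-automorphisms of the total space, the relative incarnation of the classical difficulty of constructing automorphisms of Calabi--Yau varieties --- together with the fibration reduction of case (ii) and the cone conjecture for Calabi--Yau generalised pairs in relative dimension one; the remainder is bookkeeping with inputs already available in the literature.
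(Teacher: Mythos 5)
There is a genuine gap: your write-up is a plan that defers exactly the content the theorem requires. You acknowledge that ``essentially all of the real work lies'' in lifting generic-fibre data to $\PsAut(X/S,\Delta)$ and assembling it over $S$, but you do not supply that work, and finiteness of chambers of $\mathcal{M}^e(X/S)$ up to $\PsAut(X/S,\Delta)$ does \emph{not} follow formally from the cone conjecture for $(X_\eta,\Delta_\eta)$ plus ``spreading out''. The paper closes this gap by two mechanisms absent from your sketch. First, when $H^1(X_\eta,\mathcal{O}_{X_\eta})\neq 0$ (the abelian-torsor and isotrivial elliptic cases) one must \emph{construct} pseudo-automorphisms of the total space: a free abelian subgroup $G(X/S,\Delta)\leqslant \sigma(\PsAut(X/S,\Delta))$ of full rank $\rho(X/S)-\rho(X_\eta)-v(X/S)$, built from Mordell--Weil/Albanese translations of the generic fibre, acting by translations on the fibres of $N^1(X/S)/V(X/S)\to N^1(X_\eta)$ (\cref{prop:k=0}); when $H^1(X_\eta,\mathcal{O}_{X_\eta})=0$ one instead shows $\ker r = V(X/S)$ (\cref{lem:H1Otriv}). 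Second, one needs a genuine compactness statement: the union $J(X/S,\Delta)$ of chambers and faces not containing fibre-space faces is closed in the big cone and maps \emph{properly} to $W(X/S,\Delta)$ (\cref{lem:closed}, \cref{lem:proper}), so that $J/G$ is compact and meets only finitely many chambers; the remaining chambers, those in $I(X/S)$, are handled by the finiteness of SQM fibre space structures up to $\PsAut$ (\cref{prop:FFSS}, which itself needs the generic-fibre cone conjecture over the non-closed field $K(S)$, \cref{thm:kltpairs}, and lifting of pseudo-automorphisms across good fibrations, \cref{cor:lifting}) together with \cref{thm:reldim1}. None of this is ``bookkeeping with inputs already available in the literature''.

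Moreover, the one concrete assembly mechanism you do propose --- passing to a dense open $U\subseteq S$, normalising $S\setminus U$ to a divisor, and invoking the inductive hypothesis for the family restricted to the components of $S\setminus U$ --- is not a valid reduction. $N^1(X/S)$, $\mathcal{M}^e(X/S)$ and $\PsAut(X/S,\Delta)$ are global objects, and there is no map relating Mori chambers of $\mathcal{M}^e(X/S)$ ``concentrated over the discriminant'' to chambers of the restricted families: the restriction of $X$ to a component of $S\setminus U$ need not be a klt Calabi--Yau pair over that component, and marked SQMs of $X/S$ neither restrict to nor are determined by SQMs of such restrictions. The induction that actually works is on $v(X/S)$, the dimension of the span of vertical divisors, and it applies only to chambers and faces containing a face corresponding to an SQM fibre space structure (first claim in the proof of \cref{thm:mainresult2}). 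Two smaller points: in your case (i), $X_\eta$ of Fano type forces $\Delta$ big over $S$, so $X$ is of Fano type over $S$ globally by \cref{lem:fanotype} and $\mathcal{M}^e(X/S)$ is rational polyhedral outright --- no discriminant analysis is needed; and in your case (ii) no ``generalised-pair analogue'' in relative dimension one is required, since Ambro's canonical bundle formula produces an honest klt Calabi--Yau pair $(T/S,\Delta_T)$ to which \cref{thm:reldim1} applies directly.
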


All minimal models of a klt pair with $K_X+\Delta$ pseudo-effective are klt Calabi--Yau over the same ample model $\Proj(R(X,K_X + \Delta))$, and this result gives the immediate application:

\begin{corollary}
\label{cor:minmodels}
    Let $(X,\Delta)$ be a klt pair with $\kappa(X,\Delta)\geq \dim(X)-2$, then $(X,\Delta)$ possesses only finitely many isomorphism classes of minimal models.
\end{corollary}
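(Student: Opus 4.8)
The plan is to deduce this from \Cref{thm:mainresult} by passing to the canonical fibration of $(X,\Delta)$ and reinterpreting the count of minimal models as a count of Mori chambers up to pseudo-automorphisms. First I would note that the hypothesis $\kappa(X,\Delta)\ge \dim X-2$ forces $\kappa(X,\Delta)\ge 0$, so $K_X+\Delta$ is pseudo-effective; by finite generation of the canonical ring \cite{BCHM} the section ring $R(X,K_X+\Delta)$ is finitely generated, $(X,\Delta)$ admits a minimal model, and the ample model $Y:=\Proj R(X,K_X+\Delta)$ exists. By the remark preceding the statement, every minimal model $(X_m,\Delta_m)$ of $(X,\Delta)$ is a klt Calabi--Yau pair over $Y$: it carries a contraction $\pi_m\colon X_m\to Y$ with $K_{X_m}+\Delta_m\equiv_Y 0$, and $K_{X_m}+\Delta_m$ is the pullback of an ample class $A$ on $Y$. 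The relative dimension of $\pi_m$ equals $\dim X_m-\dim Y=\dim X-\kappa(X,\Delta)\le 2$. I would then fix one such $\pi_m\colon(X_m,\Delta_m)\to Y$.

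Next I would install the dictionary between minimal models and Mori chambers. Any two minimal models of the klt pair $(X,\Delta)$ are isomorphic in codimension one --- a standard consequence of the negativity lemma --- so every minimal model of $(X,\Delta)$ is the target of a marked small $\mathbb{Q}$-factorial modification $\psi\colon X_m\dashrightarrow X'$, and, the two induced Iitaka fibrations agreeing, this modification is over $Y$. Conversely, for any marked SQM $\psi\colon X_m\dashrightarrow X'$ over $Y$ one has $K_{X'}+\Delta'=\psi_*\pi_m^{*}A=\pi'^{*}A$ nef, while $(X',\Delta')$ has the same discrepancies as $(X_m,\Delta_m)$, so $X'$ is again a minimal model of $(X,\Delta)$. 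Hence the isomorphism classes of minimal models of $(X,\Delta)$ are exactly the isomorphism classes of targets of marked SQMs of $X_m$ over $Y$, and these marked SQMs are precisely the data parametrized by the Mori chambers of $\mathcal{M}^e(X_m/Y)$.

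Finally I would observe that this ``target'' map is constant on $\PsAut(X_m/Y,\Delta_m)$-orbits of Mori chambers: if $g\in\PsAut(X_m/Y,\Delta_m)$ and $\psi\colon X_m\dashrightarrow X'$ is a marked SQM, then $\psi\circ g\colon X_m\dashrightarrow X'$ is a marked SQM with the same target, whose Mori chamber is the image under $g_{*}$ (up to inversion) of the Mori chamber of $\psi$; thus all chambers lying in one orbit have mutually isomorphic targets. Since \Cref{thm:mainresult} provides only finitely many $\PsAut(X_m/Y,\Delta_m)$-orbits of Mori chambers in $\mathcal{M}^e(X_m/Y)$ --- the relative dimension being at most two --- the target map has finite image, and therefore $(X,\Delta)$ has only finitely many isomorphism classes of minimal models.

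I expect the only genuinely non-formal ingredient to be the one imported from the discussion preceding the Corollary, namely that every minimal model of $(X,\Delta)$ really does admit a contraction to $Y$, i.e.\ that $K_{X_m}+\Delta_m$ is semiample. In the range $\kappa(X,\Delta)\ge\dim X-2$ this is the abundance of a nef divisor whose restriction to the general fiber of its Iitaka fibration lives in dimension at most two, which is available, but it is the one step where one appeals to geometry rather than to bookkeeping around \Cref{thm:mainresult}; everything else in the argument is formal.
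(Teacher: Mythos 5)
Your argument is correct and is essentially the proof the paper intends: the printed proof is just the citation ``verbatim from \cite[Corollary 1.7]{Li23}'' with Theorem~\ref{thm:mainresult} substituted, and Li's argument is exactly your route --- pass to the ample model $Y=\Proj(R(X,K_X+\Delta))$, note that every minimal model is a good klt Calabi--Yau pair over $Y$ of relative dimension $\dim X-\kappa(X,\Delta)\le 2$ (the semiampleness you flag is the only geometric input, available here since abundance holds on the general fiber of the Iitaka fibration), identify minimal models with targets of marked SQMs over $Y$, i.e.\ with Mori chambers of $\mathcal{M}^e(X_m/Y)$, and invoke finiteness of chambers up to $\PsAut(X_m/Y,\Delta_m)$. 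The one caveat is that your step ``any two minimal models are isomorphic in codimension one'' uses the standard (strict-discrepancy, $\mathbb{Q}$-factorial) notion of minimal model; under the paper's weaker Definition of marked minimal model (condition (5) with $\le$, which admits crepant contractions) or for non-$\mathbb{Q}$-factorial models this fails, but such models are targets of SQM contractions and hence correspond to Mori faces, whose $\PsAut(X_m/Y,\Delta_m)$-orbits Theorem~\ref{thm:mainresult2} also shows to be finite, so the conclusion is unaffected.
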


It is worth mentioning that Li and Zhao have obtained the previous results for terminal pairs in relative dimension two (see~\cite{LZ22,Li23}).
Our approach is different -- however, in both works we need to understand 
the cone conjecture for the generic fiber of the relative log Calabi--Yau pair.
To prove~\cref{thm:mainresult}, we must prove that there are finitely many Mori faces and chambers of $\mathcal{M}^e(X/S)$ up to the action of $\PsAut(X/S,\Delta)$. The strategy to do this consists of the following five steps:
\begin{enumerate}
    \item extend the result of \cite[Theorem 3.4]{FHS21} from terminal CY fibrations of relative dimension $1$ to klt CY pairs of relative dimension $1$;
    \item prove the conjecture for the generic fiber i.e., \cite[Theorem 4.1]{Tot10} but for non-closed fields of characteristic $0$;
    \item use the first two steps to show that, for $(X/S,\Delta)$ klt CY of relative dimension two, there are finitely many $\PsAut(X/S,\Delta)$-orbits of faces and chambers in $\mathcal{M}^e(X/S)$ corresponding to fiber space structures;
    \item use induction on dimension of the space of vertical divisors $V(X/S)$ and the relative dimension 1 statement to show that there are finitely many orbits of chambers and faces containing faces that correspond to fiber space structures; and
    \item show that the remainder of the orbits of faces and chambers is finite up to $\PsAut(X/S,\Delta)$ using structural results about the generic fiber and making a careful topological analysis of certain subsets of $\mathcal{M}^e(X/S)$.
\end{enumerate}

Step 1 is achieved in Section~\ref{subsec:rel-dim-1}.
In Section~\ref{section4}, we work over non-closed fields of characteristic zero to settle Step 2.
In Section~\ref{sec:finiteness-fiber-space}, we show the finiteness of faces and chambers of $\mathcal{M}^e(X/S)$ corresponding to fiber space structures, finishing Step 3.
Step 4 is proved in the first few lines of 
the proof of Theorem~\ref{thm:mainresult2}.
Finally, Step 5 is achieved in Theorem~\ref{thm:mainresult2}.

\section{Preliminaries}

In this section, we state some preliminary definitions and results. Unless otherwise stated, we work with normal varieties over an algebraically closed field $k$ of characteristic zero. 

\begin{definition}
    A projective morphism of varieties $f:\:X \to S$ is called a \emph{contraction} if $f_*\mathcal{O}_X = \mathcal{O}_S$ (in particular, it follows that $f$ is surjective). For normal projective varieties over $\mathbb{C}$, this is equivalent to being a surjective morphism with connected fibers.
\end{definition}

There is also a notion of rational map being a contraction (see \cite{HK00}):

\begin{definition}
 Let $f:\: X \dashrightarrow Y$ be a rational map between normal projective varieties. Let $(p, q):\: W \to X \times Y$ be a resolution of $f$ with $W$ projective, and $p$ birational. We say that the map $f$ \emph{has connected fibres} if $q$ does. If $f$ is birational, we call it a \emph{birational contraction} if every $p$-exceptional divisor is $q$-exceptional. For a $\mathbb{Q}$-Cartier divisor in $Y$, $f^*(D)$ is defined to be $p^*(q^*(D))$, and all of these are independent of the resolution.
 An effective divisor $E$ on $W$ is called \emph{$q$-fixed} if no effective Cartier divisor whose support is contained in the support of $E$ is $q$-moving, i.e., for every such divisor $D$ the natural map 
 $$\mathcal{O}_Y \to q_*(\mathcal{O}(D))$$
 is an isomorphism. A map $f$ is called a \emph{contraction} if every $p$-exceptional divisor is $q$-fixed. An effective divisor $E \subset X$ is called $f$-fixed if any effective divisor of $W$ supported on the union of the strict transform of $E$ with the exceptional divisor of $p$ is $q$-fixed.
\end{definition}

\subsection{Relative properties of divisors}
In this subsection, we recall some relative properties of divisors and the concept of relative log pairs. 

For a Cartier divisor $D$ on a variety $X$, the section ring $R(X,D)$ is the graded ring 
$$R(X,D) = \bigoplus_{m\in \mathbb{N}} H^0(X,mD),$$
where the multiplication happens in the function field of $X$.
If $R(X,D)$ is finitely generated and $D$ is effective, then there is an induced rational contraction $f_D:\: X \dashrightarrow \Proj(R(X,D))$ that is regular outside the stable base locus $\mathbb{B}(D):=\bigcap_{m>0} \text{Bs}(mD)$. We call this map the \emph{Iitaka fibration} of $D$~\cite[Theorem 2.1.33]{Laz04a}.

\begin{definition}
    For two effective $\mathbb{Q}$-Cartier divisors $D_1$ and $D_2$, we say that they are \emph{Mori equivalent} if the maps $f_{D_1}$ and $f_{D_2}$ are the same. The \emph{N\'eron-Severi space} $N^1(X)$ denotes the $\mathbb{R}$-vector space of $\mathbb{R}$-Cartier divisor classes modulo numerical equivalence. By a \emph{Mori chamber} (resp. \emph{Mori face}), of $N^1(X)$, we mean the closure of a Mori equivalence class whose interior is non-empty (resp. empty). We frequently refer to these as just \emph{chambers} and \emph{faces}. In \cref{rem:faceschambers}, we will see that this is shall cause no ambiguity. 
\end{definition}

We now define some relative properties of divisors that will be of primary interest:
\begin{definition}
    For $f:\: X\to S$ a projective contraction of normal varieties, a Cartier divisor $D$ on $X$ is said to be:
    \begin{itemize}
        \item \emph{$f$-nef} if $(D\cdot C)\geq 0$ for all curves $C$ contained in a fiber of $f$,
        \item \emph{$f$-movable} if $\codim \Supp \coker(f^* f_* \mathcal{O}_X(D) \to \mathcal{O}_X(D))\geq 2$,
        \item \emph{$f$-effective} if $f_*\mathcal{O}_X(D)\neq 0$,
        \item \emph{$f$-big} if $\kappa(X_\eta, D_\eta) = \dim X - \dim S$ for the generic point $\eta\in S$,
        \item \emph{$f$-vertical} if $f(D)\neq S$,
        \item \emph{$f$-basepoint free} if $f^* f_* \mathcal{O}_X(D) \to \mathcal{O}_X(D)$ is surjective,
        \item \emph{$f$-semiample} if $mD$ is $f$-basepoint free for $m$ sufficiently divisible and large.
    \end{itemize}
\end{definition}

If the map $f:\: X\to S$ is implicitly understood, we may say ``nef over $S$", ``movable over $S$", etc. Taking $f:\: X\to \Spec(k)$, we recover the usual notions of nefness, movability, etc. 

We also have a relative version of the section ring
\[
\mathcal{R}(X/S,D):= \bigoplus_{m\in \mathbb{N}} f_*\mathcal{O}_X(mD).
\]
If $\mathcal{R}(X/S,D)$ is finitely generated as an $\mathcal{O}_S$-algebra and $D$ is an $f$-effective Cartier divisor, we get a relative Iitaka fibration $X\dashrightarrow {\rm Proj}_S\mathcal{R}(X/S,D)$ that is regular outside the relative stable base locus 
\[
\mathbb{B}_S(D):=\bigcap_{m>0}\Supp \coker(f^* f_* \mathcal{O}_X(mD) \to \mathcal{O}_X(mD)).
\]

Hence, if $D$ is $f$-semiample, we get an associated morphism $X\to T:={\rm Proj}_S\mathcal{R}(X/S,D)$ over $S$, and some multiple of $D$ is the pullback of an \emph{$f$-ample} divisor on $T$. A Cartier divisor $D$ is an $f$-ample divisor if its relative Iitaka fibration is an embedding. Conversely, for any contraction $\phi:\: X\to T$ over $S$, $\phi$ is the Iitaka fibration of the $f$-semiample divisor $\phi^*A$ for any choice of $f$-ample divisor $A$ on $T$. 

By the \emph{relative N\'eron-Severi space}, we mean the real vector space $N^1(X/S)$ spanned by Cartier divisors on $X$ modulo numerical equivalence on curves contained in the fibers of $f$ (i.e., $D_1\equiv_S D_2$ means that $D_1\cdot C = D_2 \cdot C$ for any curve $C$ mapped to a point under $f$). We define relative notions of Mori equivalence, Mori chambers, and Mori faces inside of $N^1(X/S)$ just as in $N^1(X)$. Inside the relative N\'eron-Severi space $N^1(X/S)$, we have the following convex cones of divisors. 

The \emph{$f$-nef} cone $\Nef(X/S)$, (resp. \emph{$f$-movable cone} $\Mov(X/S)$, resp. \emph{$f$-pseudoeffective cone} $\overline{\mathcal{B}}(X/S)$) is the closed convex cone in $N^1(X/S)$ generated by the numerical classes of $f$-nef (resp. $f$-movable, resp. $f$-effective) divisors. These cones satisfy the containments 
\[
\Nef(X/S)\subseteq \Mov(X/S) \subseteq \overline{\mathcal{B}}(X/S).
\] 
The interior of the $f$-nef cone $\mathcal{A}(X/S)$ is the \emph{$f$-ample cone} generated by numerical classes of $f$-ample divisors and the interior of the $f$-pseudoeffective cone $\mathcal{B}(X/S)$ is the \emph{$f$-big} cone generated by numerical classes of $f$-big divisors. The $f$-effective cone $\mathcal{B}^e(X/S)$ generated by numerical classes of $f$-effective is not necessarily open nor closed, and it lies in between the $f$-big and $f$-pseudoeffective cones. We write 
\[
\mathcal{A}^e(X/S):= \Nef(X/S) \cap \mathcal{B}^e(X/S) \qquad \text{ and } 
\qquad 
\mathcal{M}^e(X/S):= \Mov(X/S) \cap \mathcal{B}^e(X/S),
\]
for the $f$-effective $f$-nef cone 
and the $f$-movable $f$-effective cone,
respectively. We denote by $V(X/S)$ the subspace of $N^1(X/S)$ generated by $f$-vertical divisors and the by $v(X/S)$ the dimension of $V(X/S)$. 

Now, we turn to recall the concept of relative pairs.

\begin{definition}
    A \emph{(relative) log pair}, which we denote $f:\:(X,\Delta)\to S$ or $(X/S,\Delta)$, consists of following data:
    \begin{itemize}
        \item a normal variety $X$, 
        \item a projective contraction $f:\: X\rightarrow S$, 
        \item and an effective $\mathbb{R}$-divisor $\Delta$ such that $K_X+\Delta$ is $\rr$-Cartier.
    \end{itemize} 
    
    The \emph{relative dimension} of $(X/S,\Delta)$ is simply $\dim X - \dim S$. We may refer to $X\rightarrow S$ as the \emph{structure morphism} of the relative pair. We say that $(X/S,\Delta)$ is a \emph{klt pair} (resp. terminal, canonical) if $(X,\Delta)$ is klt (resp. terminal, canonical).
\end{definition}

Denote by $N_1(X/S)$ the finite-dimensional $\mathbb{R}$-vector space generated by classes of curves mapping to a point under $f$, modulo numerical equivalence. Define $\NE(X/S)$, the \emph{Mori cone} of $X/S$, to be the convex cone generated by classes of curves. 
A \emph{convex cone} is a convex subset $C$ for which $x\in C$ implies that $\mathbb{R}^{\geq0}\cdot x\subset C$. There is a perfect pairing $N_1(X/S) \times N^1(X/S)\to \mathbb{R}$ induced by intersection. Under this pairing, it is clear from the definitions that the cones $\Nef(X/S)$ and $\NE(X/S)$ are dual to one another. 

\subsection{Birational geometry, surfaces, and klt Calabi--Yau pairs}

In this subsection, we discuss some ubiquitous results in birational geometry that we will need as well as some fundamental results about klt Calabi--Yau pairs.
We will frequently make use of the following lemma \cite[Lemma 2.31]{KM98}.

\begin{lemma}
    Let $h: X \to Y$ be a proper birational morphism between normal varieties. Let $-D$ be an $h$-nef $\mathbb{R}$-Cartier $\mathbb{R}$-divisor on $X$. Then we have the following statements:
    \begin{enumerate}
        \item $D$ is effective if and only if $h^*D$ is.
        \item Assume that $D$ is effective. Then for every $y \in Y$, either $h^{-1}(y) \subset \Supp D$ or $h^{-1}(y) \cap \Supp B = \emptyset$.
    \end{enumerate}
\end{lemma}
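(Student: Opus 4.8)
The plan is to treat this as a version of the \emph{Negativity Lemma} and to follow the classical argument of \cite{KM98}; throughout, $h_*D$ denotes the cycle-theoretic pushforward. The \emph{only if} direction of (1) is immediate and needs no hypothesis on $-D$: a non-$h$-exceptional prime divisor $P\subset X$ maps birationally onto a prime divisor $\overline{h(P)}$ of $Y$, distinct $P$ giving distinct images, while $h$-exceptional primes push forward to $0$; hence $D$ and $h_*D$ carry the same coefficients along corresponding primes, and $D\ge 0$ forces $h_*D\ge 0$. For the converse I would write $D=D'+E$, with $D'$ the sum of the non-exceptional components of $D$ and $E$ supported on $h$-exceptional primes. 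Then $h_*D=h_*D'$, and matching of coefficients forces $D'\ge 0$ once $h_*D\ge 0$; so everything reduces to proving $E\ge 0$, using that $-D$ is $h$-nef.

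To prove $E\ge 0$ I would reduce to $\dim Y=2$. When $\dim Y\le 1$ the morphism $h$ is an isomorphism (proper and birational onto a curve or a point forces finite and birational, hence an isomorphism of normal varieties), so there is nothing to do. Since the statement is local on $Y$ and is unaffected by replacing $X$ by a quasi-projective normal modification (pull $D$ back, apply the result there, push forward), I may assume $X$ is quasi-projective; put $n=\dim X$ and take a general complete intersection $S=H_1\cap\cdots\cap H_{n-2}$ with the $H_i$ very ample. By Bertini and Seidenberg, $S$ is a normal surface, and for general $S$ the restriction $g:=h|_S\colon S\to T$, where $T$ is the normalization of $h(S)$, is again a proper birational morphism of normal surfaces; moreover $-D|_S$ stays $g$-nef, each $E_i\cap S$ is $g$-exceptional (because $h(E_i)$ has codimension $\ge 2$ in $Y$) while each component of $D'$ meets $S$ in a non-$g$-exceptional curve, so $D=D'+E$ restricts to the analogous decomposition of $D|_S$, and $g_*(D|_S)$ is the restriction of $h_*D$ and hence effective. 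As $\coeff_{E_0}(D)=\coeff_{C_0}(D|_S)$ for any component $C_0$ of $E_0\cap S$, it is enough to settle the surface case. So let $g\colon S\to T$ be proper birational between normal surfaces, $-D$ $g$-nef, $g_*D\ge 0$, and let $E$ be the $g$-exceptional part of $D$. Fix a point $t\in T$ with $g$-exceptional curves $C_1,\dots,C_r$ over it; by Mumford's theorem the intersection matrix $(C_i\cdot C_j)$ is negative definite. Write the part of $E$ supported on the $C_i$ as $P-N$ with $P,N\ge 0$ having no common component, and suppose $N\ne 0$. For a component $C$ of $N$, only curves over $t$ meet $C$, so $(D\cdot C)=(D'\cdot C)+(P\cdot C)-(N\cdot C)$; here $(D\cdot C)\le 0$ by $g$-nefness of $-D$, while $(D'\cdot C)\ge 0$ and $(P\cdot C)\ge 0$ since $C$ is a component of neither, whence $(N\cdot C)\ge 0$. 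Then $N^2=\sum_C(\coeff_C N)\,(N\cdot C)\ge 0$, contradicting negative definiteness; so $N=0$, and letting $t$ vary gives $E\ge 0$. This proves (1).

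For (2) I would argue directly. Assume $D\ge 0$ with $-D$ $h$-nef, fix $y\in Y$, and suppose $h^{-1}(y)$ meets $\Supp D$ without being contained in it. As $h$ is a contraction (being proper and birational onto a normal variety), $h^{-1}(y)$ is connected, and I would use this to extract an irreducible curve $C\subseteq h^{-1}(y)$ with $C\cap\Supp D\ne\emptyset$ and $C\not\subseteq\Supp D$: if some irreducible component $W$ of $h^{-1}(y)$ meets $\Supp D$ without lying in it, a general curve in $W$ through a point of $W\cap\Supp D$ works; otherwise every component of $h^{-1}(y)$ is either inside $\Supp D$ or disjoint from it, and connectedness forces two components of opposite type to intersect, a contradiction. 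Now $C$ is contracted by $h$, so $(D\cdot C)\le 0$; but $D\ge 0$ and $C\not\subseteq\Supp D$ make $D|_C$ an effective divisor on the complete curve $C$ supported on the nonempty set $C\cap\Supp D$, so $(D\cdot C)=\deg(D|_C)>0$ (clear for $\mathbb{Q}$-Cartier $D$ after passing to a Cartier multiple, hence in general). This contradiction proves (2).

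The step I expect to be the main obstacle is the reduction to surfaces together with Mumford's negative-definiteness input: the coefficient and combinatorial bookkeeping is routine, but the negativity itself — that an $h$-anti-nef divisor with effective pushforward cannot pick up negative coefficients along exceptional divisors — rests essentially on the negative definiteness of contracted curve configurations on a normal surface, and the delicate part is to run the general-hyperplane-section reduction while keeping $X$ normal, $-D$ relatively nef, and the precise coefficient of $D$ along the exceptional divisor one is trying to control.
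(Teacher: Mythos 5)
The paper does not prove this lemma: it is the Negativity Lemma, quoted from \cite[Lemma 3.39]{KM98} (cited in the paper as Lemma 2.31), with the surface input quoted separately from Bourbaki, so the benchmark is the standard Koll\'ar--Mori argument that you are reconstructing. Your ``only if'' direction, your surface-case argument via negative definiteness, and your proof of (2) are fine (modulo routine facts you gloss, e.g.\ producing a contracted curve meeting $\Supp D$ properly, and positivity of degrees of restrictions of effective $\mathbb{R}$-Cartier divisors). The genuine gap is in the reduction to surfaces. Cutting $X$ by general very ample divisors \emph{on $X$} does not turn $h$-exceptional divisors into $g$-exceptional curves: take $\dim X=3$ and $h$ contracting a prime divisor $E$ to a curve $B\subset Y$; a general very ample surface $S\subset X$ meets every fibre of $E\to B$, so $E\cap S$ is a curve surjecting onto $B$, and $B\subset h(S)$; hence $E\cap S$ maps onto a curve of $T$ and is \emph{not} contracted by $g$. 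Your parenthetical ``because $h(E_i)$ has codimension $\ge 2$ in $Y$'' is only valid when $h(E_i)$ is a point, and with it both key claims fail: $D=D'+E$ does not restrict to ``the analogous decomposition'' on $S$, and $g_*(D|_S)$ is not the restriction of $h_*D$ --- it contains the extra term $\coeff_{E}(D)\cdot g_*(E\cap S)$, so asserting its effectivity is essentially assuming what you want to prove (note also that $h_*D$ is merely a Weil divisor and $h(S)$ is not a general complete intersection in $Y$, so ``the restriction of $h_*D$'' is not even well defined as used).

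The standard repair, and the way the cited proof runs, is to make the relevant exceptional centres zero-dimensional \emph{before} cutting inside $X$: write $D=D^{+}-D^{-}$ with no common components (every component of $D^{-}$ is $h$-exceptional once $h_*D\ge 0$), and either localize $Y$ at the generic point of $h(E_0)$ for a component $E_0$ of $D^{-}$, or first cut $Y$ by $k$ general hyperplane sections pulled back to $X$, where $k$ is the maximal dimension of a component of $h(\Supp D^{-})$. After this preliminary cut every surviving component of $D^{-}$ has finite image, so when one then cuts down to a normal surface by general very ample divisors on the total space, those components do become contracted curves, every non-contracted component of $D|_S$ through the relevant fibre comes from $D^{+}$ and has non-negative coefficient, and your quadratic-form argument applies verbatim over the chosen point. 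Without this step the surface argument gives no control on $\coeff_{E_i}(D)$ for any $E_i$ with $\dim h(E_i)\ge 1$, which is the typical case in dimension $\ge 3$.
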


In the case of surfaces, this is essentially a basic result about quadratic forms (see \cite[V.2.3.5]{Bou71}):
\begin{lemma}
    Let $N$ be a set of curves on a smooth projective surface on which the intersection pairing is negative definite. Let $-D$ be a linear combination of the curves $N_i$ which has non-negative intersection with each $N_i$. Then $D$ is effective. Moreover, the support of $D$ is a union of some connected components of $N$.
\end{lemma}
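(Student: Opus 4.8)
The statement to prove is the surface-theoretic lemma: on a smooth projective surface, if $N = \{N_i\}$ is a set of curves on which the intersection form is negative definite, and $-D$ is a linear combination of the $N_i$ with $(-D)\cdot N_i = -D\cdot N_i \le 0$ for all $i$ — wait, the hypothesis is that $-D$ has *non-negative* intersection with each $N_i$, i.e. $D\cdot N_i \le 0$ — then $D$ is effective and its support is a union of connected components of $N$. The plan is to reduce everything to the corresponding purely linear-algebraic statement about negative definite lattices, which is the content of the Bourbaki reference.

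First I would reduce to the case where $D$ is supported on $N$, which is given, and write $D = \sum a_i N_i$ with $a_i \in \mathbb{R}$; the goal is to show all $a_i \ge 0$. Decompose $D = D_+ - D_-$ where $D_+ = \sum_{a_i > 0} a_i N_i$ and $D_- = \sum_{a_i < 0}(-a_i) N_i$ are the effective "positive" and "negative" parts, supported on disjoint subsets of the $N_i$. The key computation: $0 \ge D\cdot D_- = D_+\cdot D_- - D_-\cdot D_-$. Here $D_+\cdot D_- \ge 0$ because $D_+$ and $D_-$ are effective with no common components, so every pairwise intersection $N_i\cdot N_j \ge 0$. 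On the other hand $-D_-\cdot D_- \ge 0$, with equality iff $D_- = 0$, precisely because the intersection form restricted to the span of the $N_i$ is negative definite. Hence $0 \ge D\cdot D_- = D_+\cdot D_- - D_-\cdot D_- \ge 0$, forcing both terms to vanish; in particular $D_-\cdot D_- = 0$, so $D_- = 0$ and $D = D_+$ is effective.

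For the statement about the support, I would argue that after discarding the $N_i$ not appearing in $D$, the divisor $D$ still satisfies $D\cdot N_i \le 0$ for the remaining curves, and one checks that if $D$ met a connected component of $N$ only partially, restricting to that component and running the same negative-definiteness argument (or using the fact that an effective divisor $E$ supported on a connected negative-definite configuration with $E\cdot N_i \le 0$ for all $N_i$ in an adjacent part of the component must actually contain those $N_i$, else $E$ would have a nonnegative self-intersection component) yields a contradiction. Concretely, suppose a connected component $C$ of $N$ meets $\Supp D$ but is not entirely contained in it; pick $N_j \subseteq C \setminus \Supp D$ adjacent to $\Supp D \cap C$. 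Then $D\cdot N_j \ge$ (intersection with the adjacent components of $D$) $> 0$, contradicting $D\cdot N_j \le 0$. So every connected component of $N$ that $D$ meets is contained in $\Supp D$, and $\Supp D$ is a union of connected components of $N$.

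The main obstacle is essentially bookkeeping rather than depth: one must be careful that "non-negative intersection with each $N_i$" is correctly translated (the sign conventions around $-D$ versus $D$), and that the negative-definiteness is invoked only on genuine $\mathbb{R}$-linear combinations of the $N_i$, which it is since $D$, $D_+$, $D_-$ all lie in the span of $N$. No resolution or deeper birational input is needed; the whole lemma is the geometric shadow of the elementary fact that in a negative definite quadratic space, a vector $v = v_+ - v_-$ with $\langle v, w\rangle \le 0$ on a spanning set of "coordinate" vectors and $\langle v_+, v_-\rangle \ge 0$ must have $v_- = 0$.
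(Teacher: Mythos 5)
Your proof is correct: the decomposition $D=D_+-D_-$, the computation $0\ge D\cdot D_-=D_+\cdot D_- - D_-^2$ with both terms forced to vanish by negative definiteness, and the adjacency argument for the support statement together give the standard complete proof of this classical fact. The paper itself offers no proof, quoting the lemma as a known result on quadratic forms with a reference to Bourbaki (and it is the surface shadow of the negativity lemma stated just before it), so your argument is exactly the expected one and fills in nothing controversial.
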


The following is a result of Ambro and Kawamata \cite[Theorem 4.1]{Amb05}:

\begin{theorem}
    If $f:\: (X,\Delta)\to S$ is a klt CY pair and $X\to T$ a contraction over $S$, there exists an effective $\mathbb{R}$-divisor $\Delta_T$ on $T$ such that $(T/S,\Delta_T)$ is klt CY. 
\end{theorem}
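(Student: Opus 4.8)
The plan is to deduce this from the canonical bundle formula of Kawamata and Ambro. Write the given contraction over $S$ as $g\colon X\to T$ and let $h\colon T\to S$, so that $f=h\circ g$. Since every curve contracted by $g$ is contracted by $f$, the hypothesis $K_X+\Delta\equiv_S 0$ gives $K_X+\Delta\equiv_T 0$; together with the klt hypothesis --- and, should one need $\mathbb{R}$-linear rather than merely numerical triviality over $T$ to run the formula, an appeal to relative log abundance for the klt Calabi--Yau pair $(X,\Delta)$ over $T$, available in the relative dimensions occurring in this paper --- the map $g$ is a klt-trivial fibration in the sense of Ambro.

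First I would apply the canonical bundle formula to $g$, obtaining on $T$ a discriminant $\mathbb{R}$-divisor $B_T$ and a moduli $\mathbb{R}$-divisor $M_T$ with
\[
K_X+\Delta \;\equiv_S\; g^{*}\!\left(K_T+B_T+M_T\right),
\]
where $(T,B_T)$ is klt with $B_T\ge 0$: the coefficient along a prime divisor $P\subset T$ equals $1-\lct$ of $g^{*}P$ with respect to $(X,\Delta)$ over the generic point of $P$, which lies in $[0,1)$ since $(X,\Delta)$ is klt (so this threshold is positive) and $g^{*}P$ is a nonzero effective divisor near the generic point of $P$ (so the threshold is at most $1$), while the lower bound on the discrepancies of $(T,B_T)$ is inherited from that of $(X,\Delta)$. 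Moreover $M_T$ is the trace of a $b$-divisor that is nef over $S$ on a sufficiently high birational model of $T$; this positivity of the moduli part is the deep input of Kawamata and Ambro.

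Next I would descend the triviality from $X$ to $T$. Since $g$ is a contraction and $K_X+\Delta\equiv_S 0$, pulling back $\mathbb{R}$-Cartier classes under $g$ reflects numerical equivalence over $S$: for any curve $C\subset T$ with $h(C)$ a point, choose an irreducible curve $C'\subset X$ dominating $C$ and use the projection formula, $(K_T+B_T+M_T)\cdot C = \tfrac{1}{\deg(C'/C)}(K_X+\Delta)\cdot C' = 0$. Hence $K_T+B_T+M_T\equiv_S 0$, i.e. $-(K_T+B_T)\equiv_S M_T$.

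The last step --- and the main obstacle --- is to represent $M_T$ by an effective divisor, so that it can be absorbed into the boundary. Here I would invoke the semiampleness of the moduli part over $S$, after possibly replacing $T$ by a birational model and pushing forward: this is known in the cases relevant here, where the general fibre of $g$ has dimension at most two, although in full generality it is the Prokhorov--Shokurov $b$-semiampleness conjecture. Granting it, $M_T$ is $\mathbb{R}$-linearly equivalent over $S$ to an effective $\mathbb{R}$-divisor; passing to a general member of a sufficiently divisible multiple and rescaling, I may choose such a representative $M_T'\equiv_S M_T$ with arbitrarily small coefficients, so that $(T,B_T+M_T')$ is still klt by the openness of the klt condition. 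Then $\Delta_T:=B_T+M_T'\ge 0$ satisfies $K_T+\Delta_T\equiv_S K_T+B_T+M_T\equiv_S 0$ with $(T,\Delta_T)$ klt, and $(T/S,\Delta_T)$ is the desired klt Calabi--Yau pair.
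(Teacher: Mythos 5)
The paper does not actually prove this statement: it is quoted as Ambro's theorem and justified by the citation \cite[Theorem 4.1]{Amb05} (with Fujino--Gongyo providing the lc/$\mathbb{R}$-coefficient refinements). Your proposal reconstructs that theorem via the canonical bundle formula, and most of the skeleton is right: passing from numerical to $\mathbb{R}$-linear triviality over $T$ is legitimate and, incidentally, needs no restriction on the relative dimension --- numerically trivial klt log canonical divisors are ($\mathbb{R}$-linearly) trivial in every dimension (this is the content of \cref{thm:index}), so your hedge about relative abundance is unnecessary; the description of the discriminant $B_T$ and the descent of triviality to $T$ are fine.

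The genuine gap is in the last step, where you represent the moduli part by an effective divisor by invoking semiampleness of $M_T$ over $S$, asserted to be ``known in the cases relevant here, where the general fibre of $g$ has dimension at most two.'' That assertion is not correct: the Prokhorov--Shokurov b-semiampleness conjecture is known for klt-trivial fibrations of relative dimension one and in special cases in higher relative dimension (e.g.\ K3 or abelian fibres), but it is open for general klt-trivial fibrations whose fibres are surfaces --- and this case genuinely occurs here, namely when $T\to S$ is birational. So, as written, your proof is conditional precisely where the theorem is not. The point of \cite[Theorem 4.1]{Amb05}, and the reason the paper can simply cite it, is that semiampleness is not needed: Ambro proves unconditionally that the moduli b-divisor is b-nef \emph{and abundant}, and nef-and-abundant suffices for the perturbation you want. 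On a birational model of $T$ where the moduli part descends, a nef and abundant class is, up to $\mathbb{R}$-linear equivalence, the pullback of a nef and big class, which decomposes as ample plus arbitrarily small effective; choosing a general such representative, pushing forward to $T$, and checking klt-ness of $(T,B_T+M_T')$ by computing discrepancies on that model (the verification you gloss over, where the real work lies --- see \cite{Amb05} or Fujino--Gongyo) gives the unconditional statement. With that substitution your argument becomes the standard proof; with b-semiampleness as your input it does not cover all cases the theorem asserts.
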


\begin{remark}
    The target of a contraction $X\to T$ from a klt CY pair $(X/S,\Delta)$ is frequently not $\mathbb{Q}$-factorial. When we invoke the canonical bundle formula to find an effective $\mathbb{R}$-divisor $\Delta_T$ such that $(T/S,\Delta_T)$ is ``klt CY", it will be understood that we are dropping the $\mathbb{Q}$-factorial assumption in this case. 
\end{remark}

The following is a consequence of the cone theorem together with the fact that we may decompose $f$-big divisors into the sum of an $f$-effective and an $f$-ample divisor (a detailed proof can be found in \cite[Theorem 5.7]{Kaw88}):

\begin{proposition}
\label{prop:locallypolyhedral}
    Let $(X/S,\Delta)$ be a klt CY pair over $S$. The cone $$\overline{\mathcal{A}}(X/S)\cap \mathcal{B}(X/S) = \mathcal{A}^e(X/S)\cap \mathcal{B}(X/S)$$ is locally rational polyhedral inside of the open cone $\mathcal{B}(X/S)$. This means that the intersection of $\mathcal{A}^e(X/S)\cap \mathcal{B}(X/S)$ with any any rational polyhedral cone $\Sigma\subset \mathcal{B}(X/S)\cup \{0\}$ is rational polyhedral. Any face $F$ of this cone corresponds to a birational contraction $\phi:\: X\to Y$ over $S$ by the equality $F=\phi^*(\overline{\mathcal{A}}(Y/S)\cap \mathcal{B}(Y/S))$.
\end{proposition}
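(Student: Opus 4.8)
The plan is to combine the relative Cone Theorem with the decomposition of big divisors. First I would fix a rational polyhedral cone $\Sigma \subset \mathcal{B}(X/S)\cup\{0\}$ and aim to show $\mathcal{A}^e(X/S)\cap\Sigma$ is rational polyhedral. The key point is that any class $D$ in the interior $\mathcal{B}(X/S)$ is $f$-big, hence can be written as $D \equiv_S A + E$ with $A$ an $f$-ample $\mathbb{Q}$-divisor and $E$ an effective $\mathbb{R}$-divisor; by perturbing we may take $A$ to be an arbitrarily small ample $\mathbb{Q}$-divisor, so it suffices to run the $(K_X+\Delta+\text{(small ample)})$-MMP, equivalently to understand the nef cone near big classes via the rays of $\NE(X/S)$. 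Since $K_X+\Delta\equiv_S 0$, for any $\epsilon>0$ the divisor $-(K_X+\Delta)+\epsilon H$ is $f$-ample for $H$ ample, and the Cone Theorem applied to the klt pair $(X,\Delta)$ gives that $\NE(X/S)$ has only finitely many $(K_X+\Delta+\epsilon H)$-negative extremal rays in any region bounded away from the hyperplane $(K_X+\Delta)^\perp$ — but since $K_X+\Delta\equiv_S 0$ this hyperplane is everything, so the more honest statement is that $\NE(X/S)\cap\{C : H\cdot C \le c\}$ is rational polyhedral for every $c$, because every extremal ray is $(K_X+\Delta+\epsilon H)$-negative for suitable $\epsilon$. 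Dualizing this local polyhedrality of the Mori cone against a compact slice gives local polyhedrality of $\Nef(X/S)$ inside $\mathcal{B}(X/S)$; intersecting with $\mathcal{B}^e(X/S)$ (which contains $\mathcal{B}(X/S)$) changes nothing in the big locus, yielding the claimed equality $\overline{\mathcal{A}}(X/S)\cap\mathcal{B}(X/S) = \mathcal{A}^e(X/S)\cap\mathcal{B}(X/S)$ and its local rational polyhedrality.

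For the statement about faces, I would take a face $F$ of $\mathcal{A}^e(X/S)\cap\mathcal{B}(X/S)$ and pick a divisor class $D$ in the relative interior of $F$. Since $D$ is $f$-nef and $f$-big, the Relative Base Point Free Theorem for the klt pair $(X,\Delta)$ (using $K_X+\Delta\equiv_S 0$, so $D \equiv_S D - (K_X+\Delta)$ is $f$-nef and $D$ is $f$-big) shows $D$ is $f$-semiample; let $\phi\colon X\to Y$ over $S$ be the associated contraction, so $D \equiv_S \phi^* A$ for an $f$-ample class $A$ on $Y$. By Ambro–Kawamata's theorem, $(Y/S,\Delta_Y)$ is again klt CY, so the same local polyhedrality applies on $Y$, and pulling back gives $F = \phi^*\big(\overline{\mathcal{A}}(Y/S)\cap\mathcal{B}(Y/S)\big)$: the inclusion $\supseteq$ is clear since $\phi^*$ of a nef big class is nef and big (big because $\phi$ is birational here, as $D$ is $f$-big), and $\subseteq$ follows because every class in $F$ contracts the same curves as $D$, hence factors through $\phi$.

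The step I expect to require the most care is the independence of the chosen $\Sigma$ and the passage from "finitely many extremal rays of $\NE(X/S)$ in a slice" to "$\Nef(X/S)$ is locally polyhedral inside $\mathcal{B}(X/S)$" — one must check that the walls of $\Nef(X/S)$ only accumulate towards the boundary of $\mathcal{B}(X/S)$ (equivalently, towards the pseudoeffective boundary or the non-big locus), which is exactly where the bigness hypothesis is used to decompose $D = A + E$ and reduce to a genuinely ample perturbation. The identification $F = \phi^*(\overline{\mathcal{A}}(Y/S)\cap\mathcal{B}(Y/S))$ also needs the observation that $\phi$ is birational (not just a contraction) precisely because we are working inside the big cone; this is what makes $\phi^*$ an isomorphism onto a face rather than merely a linear map with kernel. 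I would cite \cite[Theorem 5.7]{Kaw88} for the bulk of the argument and only spell out the face statement in the relative, klt generality needed here.
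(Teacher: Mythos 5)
Your overall route is the same as the paper's: the paper does not write out a proof of this proposition either, but records it as a consequence of the cone theorem together with the decomposition of an $f$-big class as an $f$-ample plus an $f$-effective divisor, and refers to \cite[Theorem 5.7]{Kaw88} for the details --- precisely the two ingredients you invoke and the citation you end with; your treatment of the equality $\overline{\mathcal{A}}(X/S)\cap\mathcal{B}(X/S)=\mathcal{A}^e(X/S)\cap\mathcal{B}(X/S)$ (trivial since $f$-big implies $f$-effective) and of the face statement via relative base point freeness, birationality of $\phi$ from bigness, and Ambro--Kawamata is consistent with how the paper uses the result later.

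One intermediate claim in your sketch is, however, false as stated and should be replaced. It is not true that $\NE(X/S)\cap\{C:\ H\cdot C\le c\}$ is rational polyhedral for a klt CY pair: for an abelian surface the Mori cone is the round positive cone, so every bounded slice of it is round, and more generally the cone theorem applied to $(X,\Delta)$ itself yields nothing here because $K_X+\Delta\equiv_S 0$ makes every curve class $(K_X+\Delta)$-trivial (and $(K_X+\Delta+\epsilon H)$-positive for $H$ ample, so there are no negative rays at all). The correct mechanism is the one you only gesture at in your closing paragraph: given $[D]$ $f$-nef and $f$-big, write $D\equiv_S A+E$ with $A$ an $f$-ample $\mathbb{Q}$-divisor and $E\ge 0$; any curve $C$ contracted by $f$ with $D\cdot C=0$ then satisfies $E\cdot C=-A\cdot C<0$, hence $C\subset\Supp E$ and $(K_X+\Delta+\epsilon E)\cdot C<0$ for the klt perturbation $(X,\Delta+\epsilon E)$, so the cone theorem for this perturbed pair shows that only finitely many extremal rays of $\NE(X/S)$ can support walls of $\overline{\mathcal{A}}(X/S)$ in a neighborhood of $[D]$, i.e.\ the walls can only accumulate at the non-big boundary. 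With that substitution (or simply with the citation of \cite[Theorem 5.7]{Kaw88}, as the paper itself does) your argument is complete; the face identification $F=\phi^*(\overline{\mathcal{A}}(Y/S)\cap\mathcal{B}(Y/S))$ as you set it up is fine, since every class of $F$ is trivial on the curves contracted by $\phi$ once a class in the relative interior of $F$ is.
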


The following consequence of \cite[Theorem E]{BCHM10} is valid with no assumption on $\mathbb{Q}$-factoriality and proves the geometric cone conjecture in relative dimension $0$, that is, when $f:(X,\Delta)\to S$ is birational. 
\begin{lemma}
\label{lem:crepant}
    Let $(S,\Delta_S)$ be a klt pair. Then, the set of all projective birational morphisms of normal varieties $\nu:\: T\to S$ such that if $K_{T}+\Delta_{T} = \nu^*(K_S + \Delta_S)$, then $\Delta_{T}\geq 0$, is finite.
    
\end{lemma}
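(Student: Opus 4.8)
The plan is to view this as the ``relative dimension zero'' instance of the geometric cone conjecture and reduce it, by means of one fixed auxiliary model, to the finiteness of models of \cite{BCHM10}. First I would replace $(S,\Delta_S)$ by a $\mathbb{Q}$-factorial terminalization $g\colon (S',\Delta_{S'})\to (S,\Delta_S)$, which exists by \cite{BCHM10}: thus $S'$ is $\mathbb{Q}$-factorial, $(S',\Delta_{S'})$ is terminal, $K_{S'}+\Delta_{S'}=g^{*}(K_S+\Delta_S)$ (so $K_{S'}+\Delta_{S'}\equiv_S 0$ since $g$ is birational), and the $g$-exceptional prime divisors $E_1,\dots,E_r$ are precisely the divisorial valuations $E$ over $S$ with $a(E;S,\Delta_S)\le 0$. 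The point of this step is that there are only finitely many such valuations, as they all occur as exceptional divisors of the single morphism $g$.

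Next, given a projective birational morphism $\nu\colon T\to S$ with $\Delta_T\ge 0$ (where $\Delta_T$ is determined by $K_T+\Delta_T=\nu^{*}(K_S+\Delta_S)$), I would observe that every $\nu$-exceptional prime divisor $E$ has $\coeff_E(\Delta_T)=-a(E;S,\Delta_S)\ge 0$, hence $E\in\{E_1,\dots,E_r\}$, and then show that the induced birational map $h\colon S'\dashrightarrow T$ over $S$ is a birational contraction: resolving $h$ by $p\colon W\to S'$ and $q\colon W\to T$, every $p$-exceptional prime divisor $F$ satisfies $a(F;S,\Delta_S)=a(F;S',\Delta_{S'})>0$ by terminality of $(S',\Delta_{S'})$, while every $\nu$-exceptional divisor (being some $E_i$) and every strict transform of a divisor on $S$ is already divisorial on $S'$, hence non-$p$-exceptional on $W$; it follows that every $p$-exceptional $F$ is $q$-exceptional. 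Since $K_{S'}+\Delta_{S'}$ and $K_T+\Delta_T$ both pull back from $K_S+\Delta_S$, the birational contraction $h$ is crepant, so $T$ is obtained from $S'$ by a $(K_{S'}+\Delta_{S'})$-trivial birational contraction over $S$.

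Finally I would recover $T$ from a relative MMP on $S'$ and invoke finiteness of models. Choosing an ample $\mathbb{R}$-divisor $A_T$ on $T$ and setting $B:=\varepsilon\, h^{*}A_T$ for small $\varepsilon>0$, one has $K_{S'}+\Delta_{S'}+B\equiv_S B\ge 0$, and a log terminal model of $(S',\Delta_{S'}+B)$ over $S$ --- equivalently its ample model --- recovers $T$ up to isomorphism over $S$. Since $N^1(S'/S)$ is finite-dimensional and $B$ is movable over $S$, one can arrange that all boundaries $\Delta_{S'}+B$ arising this way lie in a single fixed rational polytope of klt boundaries, and then finiteness of models for boundaries varying in a rational polytope, \cite[Theorem~E]{BCHM10}, shows that only finitely many $T$ occur, which is the assertion. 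The step I expect to be the main obstacle is this last one: because $T$ need not be $\mathbb{Q}$-factorial, the finite combinatorial datum of \emph{which} of the $E_i$ are extracted does \emph{not} determine $T$ --- distinct crepant models extracting the same divisors can be linked by flops over $S$ --- so a naive count does not suffice and the MMP finiteness theorem is genuinely needed; the subsidiary technical points are the verification that $h$ is a birational contraction (where terminality of $S'$ enters) and that the boundaries $\Delta_{S'}+B$ can all be placed inside one rational polytope.
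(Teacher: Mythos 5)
Your proposal is correct and follows essentially the same route as the paper: pass to a $\mathbb{Q}$-factorial terminalization $(S',\Delta_{S'})$, observe that every such $T$ is a (crepant) birational contraction of $S'$ over $S$, and conclude by finiteness of weak log canonical/ample models from \cite[Theorem E]{BCHM10}. The paper's proof is just a terser version of this, so you have simply supplied the details (the contraction verification and the polytope of perturbed boundaries) that the paper leaves implicit.
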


\begin{proof}
    If $(S',\Delta')$ is a $\mathbb{Q}$-factorial terminalization of $(S,\Delta_S)$, then any such model $\nu:\: T\to S$ is obtained as a rational contraction of $(S',\Delta')$. Moreover, $(T,\Delta_{T})$ is a weak log canonical model of $(S',\Delta')$, so the theorem follows from \cite[Theorem E]{BCHM10}.
\end{proof}

In what follows, we will need a relative version of the abundance conjecture.
The abundance conjecture is known to hold in relative dimension at most three~\cite[Corollary 4.7.9]{Fuj17}:

\begin{theorem}
\label{thm:abundance}
    Let $f:\: X \to S$ be a projective morphism of relative dimension at most three and $\Delta$ an effective $\mathbb{R}$-divisor such that $(X,\Delta)$ is klt. If $K_X+\Delta$ is $f$-nef, then $K_X + \Delta$ is $f$-semiample.
\end{theorem}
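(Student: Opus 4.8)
The plan is to reduce this relative statement to the \emph{absolute} log abundance theorem for klt pairs of dimension at most three — known by the work of Keel--Matsuki--McKernan and its extension to log pairs and $\mathbb{R}$-coefficients, and valid over an arbitrary field of characteristic zero by Galois descent — together with the relative Kawamata--Shokurov basepoint-free theorem (valid in all dimensions for klt pairs) and the canonical bundle formula generalizing the Ambro--Kawamata theorem quoted above. I would begin with the standard reductions: since $f$-semiampleness may be checked over an affine open cover of $S$, we may assume $S$ is affine; passing to a $\mathbb{Q}$-factorial dlt modification $\mu\colon(X',\Delta')\to(X,\Delta)$ with $K_{X'}+\Delta'=\mu^*(K_X+\Delta)$ changes nothing, since the relative Iitaka fibration of $K_{X'}+\Delta'$ contracts the $\mu$-fibres and hence descends to a morphism from $X$; and expressing $\Delta$ as a convex combination of $\mathbb{Q}$-boundaries $\Delta_i$ with each $K_X+\Delta_i$ still $f$-nef reduces us to the case where $\Delta$ is a $\mathbb{Q}$-divisor. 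So assume $(X,\Delta)$ is $\mathbb{Q}$-factorial dlt with $\Delta$ a $\mathbb{Q}$-divisor.

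Next I would split into cases according to the relative numerical dimension $\nu:=\nu\bigl((K_X+\Delta)|_{X_\eta}\bigr)$, where $X_\eta$ is the generic fibre, a klt pair of dimension $d:=\dim X-\dim S\le 3$ over the (possibly non-closed) field $k(S)$. If $\nu=d$, then $K_X+\Delta$ is $f$-nef and $f$-big and the relative basepoint-free theorem immediately yields $f$-semiampleness. If $\nu=0$, then $K_X+\Delta\equiv_S 0$; the relative version of the abundance theorem for numerically trivial klt pairs (Gongyo, Fujino--Gongyo) shows that some positive multiple $m(K_X+\Delta)$ is $\mathcal{O}_S$-linearly trivial, hence $f$-semiample. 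The substantial case is $0<\nu<d$.

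In that case I would first apply absolute log abundance in dimension $d\le 3$ over $k(S)$ to see that $(K_{X_\eta}+\Delta_\eta)$ is semiample, with Iitaka fibration $X_\eta\to Z_\eta$ of dimension $\nu$. Using finite generation of the relative adjoint ring $\mathcal{R}(X/S,K_X+\Delta)$ (a consequence of BCHM together with the canonical bundle formula), set $T:=\operatorname{Proj}_S\mathcal{R}(X/S,K_X+\Delta)$, of relative dimension $\nu$ over $S$, with a rational contraction $g\colon X\dashrightarrow T$ over $S$. Choosing a common resolution $p\colon W\to X$, $q\colon W\to T$, the nef divisor $p^*(K_X+\Delta)$ is numerically trivial on the general $q$-fibre and so, by a standard cone-theorem argument, equals $q^*A$ for an $\mathbb{R}$-Cartier divisor $A$ on $T$; the projection formula shows $A$ is nef over $S$, while $A|_{T_\eta}$ is ample so $A$ is big over $S$ relative to $T$. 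The canonical bundle formula applied to $q$ writes $A\sim_{\mathbb{R}}K_T+\Delta_T$ for a klt pair $(T/S,\Delta_T)$, so $K_T+\Delta_T$ is $f$-nef and $f$-big, and the relative basepoint-free theorem makes it $f$-semiample on $T$. Since $q$ contracts the $p$-fibres, the rational map $g$ is already a morphism and $K_X+\Delta=g^*(K_T+\Delta_T)$, whence $K_X+\Delta$ is $f$-semiample.

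The hard part will be the intermediate range $0<\nu<d$: it genuinely uses absolute log abundance for threefold klt pairs over a possibly non-closed field — a deep theorem — along with finite generation of the relative adjoint ring and the bookkeeping needed to upgrade semiampleness on the generic fibre to global $f$-semiampleness, namely controlling the relative Iitaka fibration over all (not just general) fibres of $S$ and checking that $p^*(K_X+\Delta)$ descends to a nef-and-big divisor on $T$ over $S$. Each ingredient is available in characteristic zero, but together they constitute the real content of the theorem.
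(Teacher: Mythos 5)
The paper does not prove this statement at all: it is quoted as a known theorem, with the proof deferred to Fujino's book (cited there as [Fuj17, Corollary 4.7.9]), so there is no internal argument to compare yours against. Judged on its own terms, your sketch assembles reasonable ingredients (reduction to $\mathbb{Q}$-coefficients, absolute abundance in dimension at most three applied to the generic fibre over the non-closed field $k(S)$ with Galois descent, relative basepoint-freeness when $K_X+\Delta$ is $f$-nef and $f$-big, and the numerically trivial case via Nakayama--Gongyo), but it has a genuine gap exactly where the content of the theorem lies.

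The gap is the step in the intermediate case $0<\nu<d$ where you assert that, on a common resolution $p\colon W\to X$, $q\colon W\to T$ of the relative Iitaka map, one has $p^*(K_X+\Delta)=q^*A$ ``by a standard cone-theorem argument'' because $p^*(K_X+\Delta)$ is numerically trivial on the general $q$-fibre. The cone/contraction theorem only lets you descend a divisor that is trivial on a $(K+\Delta)$-negative extremal face; here $K_X+\Delta$ is $f$-nef, so the curves contracted by $q$ are not $(K+\Delta)$-negative and no such descent mechanism applies. Worse, numerical triviality on general $q$-fibres --- which you only know over the generic point of $S$, via generic-fibre abundance --- does not give triviality on all curves contracted by $q$: by construction of the relative Iitaka fibration one only has $p^*(m(K_X+\Delta))\sim q^*H+E_m$ with $E_m$ effective, and curves contracted by $q$ lying over special points of $S$ may meet $p^*(K_X+\Delta)$ positively. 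Showing that the fixed part can be removed, i.e.\ that the nef divisor descends to $T$, is precisely the assertion that $K_X+\Delta$ is $f$-semiample, so as written the argument is circular at this point (a similar remark applies to the asserted finite generation of $\mathcal{R}(X/S,K_X+\Delta)$, which for non-big $K_X+\Delta$ is not a formal consequence of BCHM). The step can be repaired, but only by invoking a genuinely hard input --- Kawamata's relative theorem that nef and abundant log canonical divisors are semiample, or the Hacon--Xu result that a good minimal model on the generic fibre yields a relative good minimal model, combined with the fact that goodness passes between minimal models --- which is in effect what the cited corollary of Fujino packages; you acknowledge this in your closing paragraph, but the proposal as it stands does not supply it.
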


The abundance conjecture is also known in any dimension for numerically trivial log canonical divisors (see \cite[Theorem 4.2]{Amb05}, \cite[Corollary 4.9]{Nak04}, or \cite[Theorem 1.2]{Gon13} for the result on lc pairs):

\begin{theorem}
\label{thm:index}
    Let $(X/S,\Delta)$ be a klt CY pair with rational coefficients, then $K_X + \Delta\sim_{\mathbb{Q},S} 0$. Hence, there exists an positive integer $I$, the \emph{index} of $(X,\Delta)$, such that $I(K_X+\Delta)\sim_S 0$. 
\end{theorem}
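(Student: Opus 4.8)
The plan is to reduce the statement to the abundance theorem for numerically trivial log canonical pairs, which is the one deep input and which we take from the cited work of Ambro, Nakayama, and Gongyo; the remaining steps only unwind definitions and clear denominators. To set things up, note three harmless observations: $K_X+\Delta$ is $f$-nef because it is numerically trivial over $S$; the pair $(X,\Delta)$ is log canonical because it is klt; and $K_X+\Delta$ is $\mathbb{Q}$-Cartier because the coefficients of $\Delta$ are rational.

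First I would invoke log abundance for numerically trivial lc pairs in the relative setting — equivalently, after restricting to an affine open of $S$, in the absolute setting of \cite{Amb05,Nak04,Gon13} — to conclude that $K_X+\Delta$ is $f$-semiample; when the relative dimension is at most three one may cite \cref{thm:abundance} directly instead. So there is $m>0$ with $m(K_X+\Delta)$ $f$-basepoint free, inducing a contraction $\varphi\colon X\to T:=\operatorname{Proj}_S\mathcal{R}(X/S,m(K_X+\Delta))$ over $S$ with $m(K_X+\Delta)\sim\varphi^*A$ for some $f$-ample divisor $A$ on $T$ (after possibly enlarging $m$). Since $m(K_X+\Delta)\equiv_S 0$, also $A\equiv_S 0$; but an $f$-ample divisor numerically trivial over $S$ has zero intersection with every curve in a fiber of $T\to S$, so those fibers contain no curves and $T\to S$ is finite. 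As $f\colon X\to S$ is a contraction factoring as $X\xrightarrow{\varphi}T\to S$ with $\varphi$ surjective, the finite morphism $T\to S$ must be an isomorphism. Hence $m(K_X+\Delta)\sim f^*A$ for a $\mathbb{Q}$-Cartier divisor $A$ on $S$, i.e. $K_X+\Delta\sim_{\mathbb{Q},S}0$; equivalently, the cited results give this $\mathbb{Q}$-linear triviality over $S$ directly. Finally, choosing $I>0$ divisible enough that $IA$ is an integral Cartier divisor on $S$ and $I(K_X+\Delta)$ is Cartier, one gets $I(K_X+\Delta)\sim f^*(IA)\sim_S 0$, and this single $I$ is the index.

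The only genuine obstacle is the first step: abundance for numerically trivial lc pairs is a substantial theorem, and it is precisely because it is available in arbitrary dimension — in contrast with full abundance — that this proposition is unconditional. One minor point to watch is that the index must be one integer valid over all of $S$ rather than merely a fiberwise statement; the argument delivers this automatically, since the relative Iitaka fibration collapses $X$ onto $S$ itself and produces a single global divisor $A$ on the base, after which clearing denominators is routine.
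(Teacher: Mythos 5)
Your proposal is correct and takes essentially the same route as the paper, which proves this statement simply by citing the numerically-trivial abundance results of Ambro, Nakayama, and Gongyo; those cited theorems already give $K_X+\Delta\sim_{\mathbb{Q},S}0$ directly, so your detour through the relative Iitaka fibration (semiampleness forcing $T\to S$ to be finite, hence an isomorphism) is valid but redundant, as you yourself note. The final step of clearing denominators to obtain a single global index $I$ is routine and handled correctly.
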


Frequently, we will pass to a cover that improves the singularities of our variety.

\begin{definition}
    Let be $(X,\Delta)$ A klt Calabi--Yau pair with standard coefficients (i.e., the coefficients of $\Delta$ are of the form $1-\frac{1}{a_i}$ with $a_i\in \zz_{\geq 1}$) such that $K_X+\Delta\sim_\mathbb{Q} 0$. Then there exists a positive integer $I$, the \emph{index} of $(X,\Delta)$, such that $I(K_X+\Delta)\sim 0$, and the cyclic cover 
    \[
    Z:={\rm Spec}_X\big{(}\bigoplus_{i=0}^{I-1} \mathcal{O}_X(-i(K_X+\Delta))\big{)} \to X
    \]
    has the property that $Z$ has canonical singularities and $K_Z\sim 0$. The cover $Z\to X$ is called the \emph{index one cover} of $(X,\Delta)$.
\end{definition}

By \cref{thm:index}, we see that this $\mathbb{Q}$-triviality assumption $K_X+\Delta\sim_\mathbb{Q} 0$ is always true for klt CY pairs $(X,\Delta)$ whose boundary  $\Delta$ is a $\mathbb{Q}$-divisor.

\subsection{Minimal models, Small Q-factorial modifications, and SQM contractions}
In this subsection, we recall some definitions and properties of minimal models and SQM contractions.

\begin{definition}
    Let $f:\:(X,\Delta)\to S$ be a klt pair with $K_X+\Delta$ $f$-pseudoeffective. A \emph{marked minimal model} of $(X/S,\Delta)$ is a relative log pair $(X_0/S, \Delta_0)$ and a map $\alpha:\: X \dashrightarrow X_0$ satisfying the following conditions:
    \begin{enumerate}
        \item the variety $X_0$ is $\qq$-factorial,
        \item $\alpha$ is a birational map $\alpha:\: X\dashrightarrow X_0$ over $S$ that does not extract any divisors,
        \item $\Delta_0 = \alpha_{0*}\Delta$,
        \item $K_{X_0} + \Delta_0$ is $f$-nef, and
        \item $a(E,X,\Delta)\leq a(E, X_0, \Delta_0)$ for every $\alpha$-exceptional divisor $E\subset X$.
    \end{enumerate} 
\end{definition}

\begin{definition}
    Let $X\to S$ be a projective contraction of normal varieties. A \emph{marked small $\qq$-factorial modification} of $X/S$ is a couple $(X_0, \alpha)$ satisfying the following conditions:
    \begin{enumerate}
        \item the variety $X_0$ is $\qq$-factorial, and
        \item $\alpha$ is a birational map $\alpha:\: X\dashrightarrow X_0$ over $S$ that does not contract or extract any divisors.
    \end{enumerate} 
    An \emph{SQM contraction} of $X/S$ is a projective contraction $X_0 \to T$ from a marked SQM $\alpha_0:\: X\dashrightarrow X_0$ of $X/S$. We will also loosely call the composition $X\dashrightarrow X_0 \to T$ an SQM contraction of $X/S$ when the domain is unambiguous. If $\dim X = \dim T$, we will call this a \emph{birational SQM contraction} and if $\dim X > \dim T$ we will call this a \emph{SQM fiber space structure}. 
\end{definition}

For $(X/S,\Delta)$ a terminal CY pair the notions of marked minimal models and marked SQMs coincide. Indeed, by~\cite{Kaw08}, all minimal models are connected by flops. On the other hand, if $(X,\Delta)$ is a klt CY pair, minimal models may also be connected by crepant blowups, so marked SQMs are marked minimal models, but not necessarily vice-versa.

The following lemma shows that any birational contraction between klt CY pairs is an SQM contraction. It is valid even if the pairs are not $\mathbb{Q}$-factorial.

\begin{lemma}
\label{lem:birationalcontraction}
Let $(X/S,\Delta_X)$ and $(Y/S,\Delta_Y)$ be klt CY pairs. Let $\pi\colon X\dashrightarrow Y$ be a birational contraction over $S$ for which $\pi_*\Delta_X=\Delta_Y$. The map $\pi$ is a birational SQM contraction of $X/S$.
\end{lemma}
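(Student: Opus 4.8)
The plan is to show that $\pi$ extracts no divisors and contracts no divisors; since a birational contraction by definition already extracts no divisors, the only real content is to prove that $\pi$ contracts no divisor, i.e.\ that $\pi$ is an isomorphism in codimension one. Once this is established, $\pi\colon X\dashrightarrow Y$ is a marked small modification over $S$ with $\pi_*\Delta_X=\Delta_Y$, and then the ``contraction'' part $Y\to T$ in the statement is trivially $\mathrm{id}\colon Y\to Y$; so really the lemma reduces to: \emph{a birational contraction between klt CY pairs over $S$ compatible with the boundaries is small.} (If $Y$ need not be $\mathbb{Q}$-factorial, one interprets ``SQM contraction'' in the loosened sense allowed in the paper's conventions; I would add a sentence to that effect.)

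First I would take a common resolution $(p,q)\colon W\to X\times Y$ resolving $\pi$, with $p,q$ birational and $W$ smooth (or $\mathbb{Q}$-factorial), and write
\[
K_W+\Delta_W = p^*(K_X+\Delta_X) + E_X, \qquad K_W+\Delta_W' = q^*(K_Y+\Delta_Y)+E_Y,
\]
where $\Delta_W,\Delta_W'$ are the strict transforms of $\Delta_X,\Delta_Y$ and $E_X,E_Y$ are the exceptional-discrepancy corrections. Since both pairs are klt CY over $S$, we have $K_X+\Delta_X\equiv_S 0$ and $K_Y+\Delta_Y\equiv_S 0$, so pulling back gives $p^*(K_X+\Delta_X)\equiv_S 0$ and $q^*(K_Y+\Delta_Y)\equiv_S 0$ on $W$; hence their difference $F := E_X - E_Y + (\Delta_W - \Delta_W')$ is numerically trivial over $S$ — in fact, combining the two displayed equations, $F$ equals a difference of pullbacks and so $F\equiv_S 0$ on $W$. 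The divisor $F$ is supported on $p$-exceptional and $q$-exceptional divisors together with the (common) strict transform of the boundary, but because $\pi_*\Delta_X=\Delta_Y$ the strict-transform parts cancel, leaving $F$ supported on exceptional divisors only.

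Next I would run the standard negativity-of-contraction argument on each side. Because $\pi$ is a birational \emph{contraction}, every $p$-exceptional divisor is $q$-exceptional (equivalently $q$-fixed), so every prime divisor $P$ on $W$ that is $p$-exceptional is also $q$-exceptional. For such $P$, klt-ness of $(Y,\Delta_Y)$ gives $a(P,Y,\Delta_Y) > -1$, while the minimal-model-style inequality coming from $\pi$ being a birational contraction between crepant pairs gives $a(P,X,\Delta_X)\le a(P,Y,\Delta_Y)$; one now applies the negativity lemma (the version in \cite[Lemma 2.31]{KM98} recalled above, via a relative MMP or directly) to the $q$-nef, $q$-numerically-trivial divisor $F$ to conclude that $F\ge 0$ along all $q$-exceptional divisors, and symmetrically $-F\ge 0$ there, forcing $F=0$, i.e.\ the discrepancies agree: $a(P,X,\Delta_X)=a(P,Y,\Delta_Y)$ for all $p$-exceptional $P$. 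Finally I would rule out any $\pi$-contracted divisor $D\subset X$: its strict transform is $p$-non-exceptional but $q$-exceptional, so $a(\text{its valuation},Y,\Delta_Y) > a(\cdot,X,\Delta_X)$ strictly unless the coefficient computations collapse — and the crepant relation $F=0$ just established, read at the valuation of $D$, forces exactly such a collapse, contradicting $D$ being $\pi$-exceptional with positive coefficient. Hence $\pi$ contracts no divisor and is small.

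The main obstacle I anticipate is bookkeeping the boundary coefficients carefully when the two pairs are \emph{not} $\mathbb{Q}$-factorial and $Y$ in particular may be singular in codimension one's neighborhood — one must make sure that $K_X+\Delta_X\equiv_S 0$ genuinely pulls back (which it does, being $\mathbb{R}$-Cartier by the definition of a log pair) and that the negativity lemma applies on $W$ without $\mathbb{Q}$-factoriality hypotheses on $X$ or $Y$ (it does, since $W$ is chosen smooth). A secondary subtlety is the precise sense in which ``$\pi_*\Delta_X=\Delta_Y$'' interacts with strict transforms on $W$: one should verify that the strict transforms $\Delta_W$ and $\Delta_W'$ literally coincide as divisors on $W$, which follows because $\pi$ is an isomorphism over the generic point of every non-$\pi$-exceptional prime divisor. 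Granting these points, the argument is the familiar ``two crepant klt structures on a common resolution are equal, and a contraction that is crepant is small'' routine, so I would keep the write-up short and cite \cite[Lemma 2.31]{KM98} and the klt-CY crepant-model formalism rather than re-deriving the negativity lemma.
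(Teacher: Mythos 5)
There is a genuine gap, and it starts with a misreading of the statement. In this paper a \emph{birational SQM contraction} of $X/S$ is a composition $X\dashrightarrow X_0\to T$ of a marked SQM (with $X_0$ $\mathbb{Q}$-factorial) followed by a projective birational contraction; it is \emph{not} asserted that $\pi$ itself is small, and the projective part is allowed to contract divisors. Your reduction ``the lemma reduces to: a birational contraction between klt CY pairs compatible with the boundaries is small'' is therefore a reduction to a false statement. For example, the minimal resolution $X\to Y$ of a K3 surface with du Val singularities (boundary zero on both sides, $S=\Spec k$) is a birational contraction between klt CY pairs with $\pi_*\Delta_X=\Delta_Y$ that contracts the $(-2)$-curves, hence is not an isomorphism in codimension one -- yet it is trivially a birational SQM contraction (identity SQM followed by the morphism itself). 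Correspondingly, your concluding step cannot work: the negativity argument on a common resolution correctly shows the two pairs are crepant, i.e.\ $a(E,X,\Delta_X)=a(E,Y,\Delta_Y)$ for every divisor $E$ over $X$, but this produces no contradiction with the existence of a $\pi$-contracted divisor $D$; such a $D$ simply has $a(D,Y,\Delta_Y)=-\operatorname{coeff}_D(\Delta_X)\in(-1,0]$, which is perfectly compatible with $(Y,\Delta_Y)$ being klt. There is no ``collapse'' to exploit.

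What the lemma actually requires, and what the paper's proof supplies, is a factorization. Since $\pi$ extracts no divisors, the divisorial valuations with divisorial center on $X$ but not on $Y$ are exactly the $\pi$-contracted divisors; by the crepancy you did establish, these have discrepancy in $(-1,0]$ over $(Y,\Delta_Y)$, so by \cite[Lemma 1.4.3]{BCHM10} one can extract precisely these valuations over $Y$ to obtain a $\mathbb{Q}$-factorial model $\pi'\colon Y'\to Y$ over $S$ with $\pi'_*\Delta'=\Delta_Y$, and the induced map $X\dashrightarrow Y'$ is then small. Thus $\pi$ is a marked SQM followed by the projective birational contraction $\pi'$, which is the assertion. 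So the crepancy computation in your proposal is a usable ingredient (it justifies that the extraction step is legitimate), but the stated goal and the final ``contradiction'' are wrong; moreover, even in the case where $\pi$ happens to be small, your write-up would still owe the $\mathbb{Q}$-factorialization of the intermediate model, since the SQM in the paper's definition must land on a $\mathbb{Q}$-factorial variety while $Y$ need not be $\mathbb{Q}$-factorial.
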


\begin{proof}
   Since $\pi$ doesn't extract any divisors, any divisorial valuation on $(Y/S,\Delta)$ is a divisorial valuation on $(X/S,\Delta_X)$. We may extract the divisorial valuations over $(Y/S,\Delta_Y)$ that have divisorial center on $(X/S,\Delta_X)$ but not on $(Y/S,\Delta_Y)$ to obtain a model $(Y'/S,\Delta')$ that is $\mathbb{Q}$-factorial and a projective birational contraction $\pi':\: Y' \to Y$ over $S$ such that $\pi'_* \Delta' = \Delta_Y$ (see, e.g., ~\cite[Lemma 1.4.3]{BCHM10}).
    The induced birational map $\alpha:\: X\dashrightarrow Y'$ is a small $\mathbb{Q}$-factorial modification. Hence, $\pi$ is the composition of a small birational map and a projective birational contraction. 
\end{proof}

\begin{remark}
   While \cref{lem:birationalcontraction} shows that a birational contraction $(X/S,\Delta)\dashrightarrow (T/S,\Delta_T)$ is a composition of a marked SQM $\alpha: X\dashrightarrow X_0$ and a projective contraction $X_0\to T$, the same need not be true for rational contractions $(X/S,\Delta)\dashrightarrow (T/S,\Delta_T)$ of positive relative dimension, in general.
\end{remark}

We now list some known results on the structure of the previously mentioned cones. 

\begin{proposition}
    \label{prop:movdecomp}
    Assume the termination of flips in relative dimension $n$. Let $(X/S,\Delta)$ be a klt Calabi--Yau pair of relative dimension $n$ over $S$. Let $D$ be an $\rr$-divisor with $[D]\in \mathcal{M}^e(X/S)$. Then there exists a marked SQM $\alpha\colon X\dashrightarrow X'$ over $S$ such that $\alpha_*D$ is $f$-nef over $S$. This gives us a decomposition $$\mathcal{M}^e(X/S) = \bigcup_{(X_0/S,\alpha_0)} \alpha_0^*\mathcal{A}^e(X_0/S)$$ where the union is taken over all marked SQMs $\alpha_0:\: X\dashrightarrow X_0$ over $S$.
\end{proposition}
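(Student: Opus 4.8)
The plan is to deduce the decomposition from a relative minimal model program run with respect to $D$, using the Calabi--Yau hypothesis to reinterpret an MMP for $K_X+\Delta+\epsilon D$ as an MMP for $D$.

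I would first dispose of the inclusion $\bigcup_{(X_0/S,\alpha_0)}\alpha_0^{*}\mathcal{A}^{e}(X_0/S)\subseteq\mathcal{M}^{e}(X/S)$, which is the easy one. Given a marked SQM $\alpha_0\colon X\dashrightarrow X_0$ and $[N]\in\mathcal{A}^{e}(X_0/S)=\Nef(X_0/S)\cap\mathcal{B}^{e}(X_0/S)$, smallness of $\alpha_0$ shows that $\alpha_0^{*}$ carries $f$-effective divisors to $f$-effective divisors, so $\alpha_0^{*}[N]\in\mathcal{B}^{e}(X/S)$. Writing $[N]$ as a limit of $f$-ample classes---each of which lies in $\mathcal{B}(X_0/S)\subseteq\mathcal{B}^{e}(X_0/S)$---and using that an $f$-ample divisor is $f$-semiample while $\alpha_0$ is an isomorphism outside a set of codimension at least two, each such class pulls back to an $f$-movable class; since $\Mov(X/S)$ is closed, $\alpha_0^{*}[N]\in\Mov(X/S)$, so $\alpha_0^{*}[N]\in\mathcal{M}^{e}(X/S)$.

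For the main inclusion, fix $[D]\in\mathcal{M}^{e}(X/S)$. I would assume $X$ is $\mathbb{Q}$-factorial, and since $[D]\in\mathcal{B}^{e}(X/S)$, replace $D$ by an $\mathbb{R}$-linearly equivalent $\mathbb{R}$-divisor over $S$ so that $D\ge 0$; as $[D]\in\Mov(X/S)$, the locus $\mathbb{B}_S(D)$ contains no divisor. For $0<\epsilon\ll 1$ the pair $(X,\Delta+\epsilon D)$ is klt, and $K_X+\Delta\equiv_S 0$ gives $K_X+\Delta+\epsilon D\equiv_S\epsilon D$, which is $f$-pseudoeffective because $[D]\in\overline{\mathcal{B}}(X/S)$. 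I would run a $(K_X+\Delta+\epsilon D)$-MMP over $S$; since $(K_X+\Delta)\cdot R=0$ on any extremal ray $R$, this is the same as a $D$-MMP over $S$, so at each step the strict transform $D_i\ge 0$ of $D$ satisfies $D_i\cdot R<0$ for the contracted ray $R$. No step is of fibre type, since the restriction of $D_i$ to a general fibre of such a contraction would be at once effective and anti-ample. No step is divisorial: if $\phi\colon X_i\to Z$ contracted a divisor $E$, then $E\not\subseteq\mathbb{B}_S(D_i)$ would give an effective $D_m\sim_S mD_i$ with $E\not\subseteq\Supp D_m$ for suitable $m\gg 0$, and a curve $\ell\subseteq E$ with $[\ell]\in R$ and $\ell\not\subseteq\Supp D_m$ (such curves sweep out $E$) would satisfy $0\le D_m\cdot\ell=m\,(D_i\cdot\ell)<0$, a contradiction. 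Thus the program is a sequence of $D$-flips, which preserve $\mathbb{Q}$-factoriality and the relation $K+\Delta\equiv_S 0$, and by the assumed termination in relative dimension $n$ (or by~\cite{BCHM10} when $D$ is $f$-big) it terminates at a marked SQM $\alpha\colon X\dashrightarrow X'$ with $K_{X'}+\alpha_*\Delta+\epsilon\,\alpha_*D$ $f$-nef. Subtracting $K_{X'}+\alpha_*\Delta\equiv_S 0$ shows $\alpha_*D$ is $f$-nef, and it is $f$-effective since $\alpha$ is small and $D\ge 0$; hence $[D]\in\alpha^{*}\mathcal{A}^{e}(X'/S)$, and the two inclusions give the claimed equality.

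\textbf{Main obstacle.} The step I expect to be the main obstacle is the exclusion of divisorial contractions: one must translate membership of $[D]$ in the \emph{closed} movable cone into the codimension-two statement that $\mathbb{B}_S(D)$ contains no prime divisor, and then produce curves of the contracted class inside such a divisor $E$ avoiding the support of a suitably chosen relative section of $mD$. The remaining ingredients---preservation of klt-ness, $\mathbb{Q}$-factoriality, and numerical triviality of $K+\Delta$ under flips, together with the harmless replacement of $D$ by an effective divisor in its class---are routine, while termination is granted by hypothesis.
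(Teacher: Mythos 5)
Your overall route is the paper's: run a $(K_X+\Delta+\epsilon D)$-MMP over $S$, use $K_X+\Delta\equiv_S 0$ to see it is a $D$-MMP, rule out fibre-type and divisorial steps so that it is a sequence of flops, and terminate by the assumed termination of flips. The easy inclusion and the exclusion of fibre-type contractions are fine. The genuine gap is exactly the step you flagged: you assert that $[D]\in\overline{\mathcal{M}}(X/S)$ forces the relative stable base locus $\mathbb{B}_S(D_i)$ to contain no divisor, and your exclusion of divisorial contractions rests entirely on this. Membership in the \emph{closed} movable cone is a numerical condition and does not control the stable base locus: in general a class in $\overline{\mathcal{M}}(X/S)$ can have a divisorial fixed part (already a nef divisor can be rigid, e.g.\ the section $C_0$ with $C_0^2=0$ and non-torsion normal bundle on a ruled surface over an elliptic curve has $h^0(mC_0)=1$ for all $m$, so $\mathbb{B}(C_0)=C_0$). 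In the present Calabi--Yau setting the divisor-freeness of $\mathbb{B}_S(D)$ is essentially a consequence of the proposition being proved (plus abundance), so invoking it here is unjustified, if not circular. A related slip: from $[D]\in\mathcal{B}^e(X/S)$ you only get an effective divisor in the same relative \emph{numerical} class, not an $\mathbb{R}$-linearly equivalent one; that is harmless for the numerical conclusion, but it matters for your argument precisely because $\mathbb{B}_S(\cdot)$ depends on the linear equivalence class of the representative.

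The correct tool, and the one the paper uses, is the relative diminished base locus (equivalently, the divisorial part $N_\sigma$ of Nakayama's Zariski decomposition): $[D]\in\overline{\mathcal{M}}(X/S)$ holds if and only if $\mathbb{B}_-(D/S)$ has no divisorial component, and any divisor contracted in a $(K_X+\Delta+\epsilon D)$-MMP over $S$ is a component of $N_\sigma(\epsilon D/S)\subseteq\mathbb{B}_-(\epsilon D/S)$; hence no divisorial contraction can occur and the run consists of flops. Alternatively, you can repair your curve-sweeping argument by working with $D_i+\delta A$ for a relatively ample $A$ and $0<\delta\ll 1$ still negative on the contracted ray, since the divisorial components of $\mathbb{B}_S(D_i+\delta A)$ compute $\mathbb{B}_-$. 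With either repair your proof coincides with the paper's.
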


\begin{proof}
    This result follows from the MMP. If we take a divisor class $D\in \mathcal{M}^e(X/S)$, the pair $(X,\Delta + \epsilon D)$ is still klt because the klt condition is open, and we may run a $(K_X+\Delta + \epsilon D)$-MMP over $S$. The MMP will contract the relative diminished base locus of $K_X+\Delta + \epsilon D\equiv_S \epsilon D$. Since $D$ is $f$-movable, this MMP consists of a sequence of flops. We also have that $\alpha_1^*\mathcal{A}(X_1/S)\cap \alpha_2^*\mathcal{A}(X_2/S)\neq \emptyset$ implies that the marked SQMs $(X_1/S,\alpha_1)$ and $(X_2/S,\alpha_2)$ are equal. Indeed, if $D\in \alpha_1^*\mathcal{A}(X_1/S)\cap \alpha_2^*\mathcal{A}(X_2/S)$, then
    \[
    X_1 = {\rm Proj}_S(\mathcal{R}(X_1/S, D) \cong {\rm Proj}_S(\mathcal{R}(X_2/S, D)) = X_2
    \]
    with the isomorphism being compatible with $\alpha_1$ and $\alpha_2$.
\end{proof}

\begin{remark}
\label{rem:faceschambers}
    Assuming the termination of flips, the above result, together with the cone theorem, implies that the Mori chambers of $\mathcal{M}^e(X/S)$ are given by $\alpha_0^*\mathcal{A}^e(X_0/S)$ for some marked SQM $\alpha_0:\: X \dashrightarrow X_0$ and the Mori faces are given by $\alpha_0^*\phi^*\mathcal{A}^e(Y/S)$ for some SQM contraction $\phi:\: X_0 \to T$ on some marked SQM $\alpha_0:\: X\dashrightarrow X_0$. This result also implies that marked SQMs may be decomposed into sequences of flops. 
\end{remark}

\begin{proposition}
    \label{prop:Qgenerated}
    Assume the abundance conjecture in relative dimension $n$. Let $f:\: (X,\Delta)\to S$ be a klt Calabi--Yau pair of relative dimension $n$ over $S$. The cones $\mathcal{A}^e(X/S)$ and $\mathcal{M}^e(X/S)$ are generated by the numerical classes of $\mathbb{Q}$-Cartier divisors. 
\end{proposition}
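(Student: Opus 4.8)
The plan is to reduce everything to the statement that an $f$-nef, $f$-effective class on a klt Calabi--Yau pair is $f$-semiample, which is exactly what relative abundance supplies; the remaining bookkeeping is elementary convex geometry over $\mathbb{Q}$. First I would treat $\mathcal{A}^e(X/S)$. Since $\mathcal{B}^e(X/S)$ is generated by classes of genuinely effective divisors, any $D\in\mathcal{A}^e(X/S)$ is numerically equivalent over $S$ to an effective $\mathbb{R}$-divisor $D'$. For $0<\epsilon\ll 1$ the pair $(X,\Delta+\epsilon D')$ is klt by openness of the klt condition, and $K_X+\Delta\equiv_S 0$ gives $K_X+\Delta+\epsilon D'\equiv_S\epsilon D'\equiv_S\epsilon D$, which is $f$-nef because $D\in\Nef(X/S)$. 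By abundance in relative dimension $n$ (\cref{thm:abundance}), $K_X+\Delta+\epsilon D'$ is $f$-semiample; let $g\colon X\to Y$ be the associated contraction over $S$, so $D\equiv_S\tfrac1\epsilon g^{*}A$ for an $f$-ample $\mathbb{R}$-divisor $A$ on $Y$. As $\mathcal{A}(Y/S)$ is open in $N^1(Y/S)$, the rational points are dense and we may write $[A]=\sum_j b_j[A_j]$ with $b_j\ge 0$ and $A_j$ an $f$-ample $\mathbb{Q}$-Cartier divisor on $Y$. Each $A_j$ is $f$-effective on $Y$ (for $m\gg 0$, $\mathcal{O}_Y(mA_j)$ is relatively globally generated and nonzero, so $f_{Y*}\mathcal{O}_Y(mA_j)\neq 0$), and since $g$ is a contraction the projection formula gives $f_{X*}\mathcal{O}_X(mg^{*}A_j)=f_{Y*}\mathcal{O}_Y(mA_j)\neq 0$; hence each $g^{*}A_j$ is $f$-nef, $f$-effective, and $\mathbb{Q}$-Cartier, i.e. $[g^{*}A_j]\in\mathcal{A}^e(X/S)$. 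Then $D=\sum_j\tfrac{b_j}{\epsilon}[g^{*}A_j]$, so $\mathcal{A}^e(X/S)$, being convex, is generated by the $\mathbb{Q}$-Cartier classes it contains.

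For $\mathcal{M}^e(X/S)$ I would combine this with \cref{prop:movdecomp}, which gives $\mathcal{M}^e(X/S)=\bigcup_{(X_0/S,\alpha_0)}\alpha_0^{*}\mathcal{A}^e(X_0/S)$ over marked SQMs. Each $(X_0/S,\alpha_{0*}\Delta)$ is again a klt Calabi--Yau pair of relative dimension $n$, so $\mathcal{A}^e(X_0/S)$ is generated by its $\mathbb{Q}$-Cartier classes by the previous paragraph, and the $\mathbb{Q}$-linear isomorphism $\alpha_0^{*}\colon N^1(X_0/S)\to N^1(X/S)$ carries these to rational classes of $\mathcal{M}^e(X/S)$; convexity of $\mathcal{M}^e(X/S)$ then finishes the argument. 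Equivalently, one can argue directly: for $D\in\mathcal{M}^e(X/S)$ with effective representative $D'$, the $(K_X+\Delta+\epsilon D')$-MMP over $S$ is a sequence of flops because $D'$ is $f$-movable, and it terminates on a marked SQM $\alpha_0\colon X\dashrightarrow X_0$ on which $\alpha_{0*}D$ is $f$-nef and $f$-effective, after which one applies the $\mathcal{A}^e$ argument on $X_0$ and pulls back.

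The only genuinely nontrivial input is relative abundance; once $f$-nef plus $f$-effective is upgraded to $f$-semiample, the rest is density of rational points in the open $f$-ample cone together with functoriality of $f_*$, the point to watch being that $g_*\mathcal{O}_X=\mathcal{O}_Y$ (i.e. $g$ a contraction) is precisely what makes $g^{*}$ preserve $f$-effectivity. The one real subtlety is for $\mathcal{M}^e$, where the given movable class must first be moved into an $f$-nef position on a birational model; this is exactly the content of \cref{prop:movdecomp}. If one wishes to rely on abundance alone, one would instead dispose of the $f$-big part of $\mathcal{M}^e(X/S)$ first --- where relative \cite{BCHM10} directly produces a small $\mathbb{Q}$-factorial modification making $D$ nef --- and then recover the remaining boundary classes using the local rational polyhedrality of $\mathcal{A}^e(X/S)\cap\mathcal{B}(X/S)$ from \cref{prop:locallypolyhedral} together with a limiting argument; I expect this last reduction to be the most delicate point.
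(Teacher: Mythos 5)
Your proposal is correct and takes essentially the same route as the paper: perturb by an effective representative so that relative abundance applies, realize the $f$-nef $f$-effective class as the pullback of a relatively ample $\mathbb{R}$-class and use density of rational points in the open relative ample cone, then reduce $\mathcal{M}^e(X/S)$ to $\mathcal{A}^e(X/S)$ on marked SQMs via \cref{prop:movdecomp}. The details you add (the $\epsilon$-perturbation to invoke \cref{thm:abundance} and the projection-formula check that pullbacks stay $f$-effective) are exactly what the paper's terse proof leaves implicit.
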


\begin{proof}
    By the abundance conjecture, any $f$-effective $f$-nef divisor is the pullback of a $\mathbb{R}$-Cartier ample divisor. The ample cone is generated by classes of Cartier divisors, so the statement holds for $\mathcal{A}^e(X/S)$. Since every $f$-effective $f$-movable divisor becomes $f$-effective $f$-nef after a finite sequence of flops, the same holds for $\mathcal{M}^e(X/S)$. 
\end{proof}

The following is an extension of \cite[Theorem 2.6]{Kaw97} to klt CY pairs:
\begin{proposition}
\label{prop:locallyfinite}
    Let $f:\: (X,\Delta)\to S$ be a klt CY pair of relative dimension $n$. Then, the decomposition $$\mathcal{M}^e(X/S)\cap \mathcal{B}(X/S) = \bigcup_{(X_0,\alpha_0)} 
    \alpha_0^*\mathcal{A}^e(X_0/S) \cap \mathcal{B}(X/S)$$ is locally finite inside of $\mathcal{B}(X/S)$, meaning that for any closed convex cone $\Sigma\subset \mathcal{B}(X/S)\cup \{0\}$, there exist only a finite number of chamber and faces of $\mathcal{M}^e(X/S)\cap \mathcal{B}(X/S)$ that intersect $\Sigma$.
\end{proposition}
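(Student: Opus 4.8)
The plan is to deduce this from the finiteness of log terminal models over a fixed polytope of boundaries, in the spirit of \cite[Corollary 1.1.5]{BCHM10}, rather than from a direct compactness argument. First I would reduce to counting chambers: every Mori face of $\mathcal{M}^e(X/S)\cap\mathcal{B}(X/S)$ meeting $\Sigma$ is a face of some Mori chamber meeting $\Sigma$, and once only finitely many chambers meet $\Sigma$ each of them has only finitely many faces meeting $\Sigma$ by the local polyhedrality of $\mathcal{A}^e(X_0/S)\cap\mathcal{B}(X_0/S)$ (\cref{prop:locallypolyhedral}); so it is enough to bound the number of chambers meeting $\Sigma$. Since chambers and $\Sigma$ are cones, I would replace $\Sigma$ by a bounded rational polytope $P$ with vertices in $\mathcal{B}(X/S)$ and with $\Sigma\subseteq\operatorname{cone}(P)$; such a $P$ exists because $\Sigma\cap\{\lVert\cdot\rVert=1\}$ is compact inside the open cone $\mathcal{B}(X/S)$, so it is covered by finitely many rational simplices there and one takes the convex hull of all their vertices (arranged so that $P$ contains the slice in its relative interior, so no chamber is lost). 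I also assume $X$ is $\mathbb{Q}$-factorial.

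Write $w_1,\dots,w_k$ for the vertices of $P$; these are big $\mathbb{Q}$-classes, hence classes of effective $\mathbb{Q}$-divisors $D_j$. I would fix a general effective ample $\mathbb{Q}$-divisor $A$ small enough that each $D_j-A$ is big, and---a big $\mathbb{R}$-divisor being effective---write $D_j\equiv_S A+E_j$ with $E_j\geq 0$. Let $V\subseteq\WDiv_{\mathbb{R}}(X)$ be the finite-dimensional rational linear subspace spanned by the components of $\Delta$ and of $E_1,\dots,E_k$, and fix a rational $\epsilon\in(0,1)$ with $(X,\Delta+\epsilon A+\epsilon B)$ klt for every $B\in\operatorname{conv}(E_1,\dots,E_k)$. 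For $D=\sum_j\lambda_jw_j\in P$ (with $\lambda_j\geq 0$, $\sum_j\lambda_j=1$) set $B_D:=\sum_j\lambda_jE_j$; then $A+B_D\equiv_S D$, so $K_X+\Delta+\epsilon A+\epsilon B_D\equiv_S\epsilon D$ since $K_X+\Delta\equiv_S 0$, while the boundary $\Delta+\epsilon A+\epsilon B_D$ lies in the rational polytope $\mathcal{L}_{\epsilon A}(V)$ of klt boundaries of the form $\epsilon A+(\text{element of }V_{\geq 0})$.

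Given a chamber $C$ meeting $P$, I would choose $D$ in the interior of $C$ with $D\in P$. As $D$ is $f$-movable and $f$-big, the $(K_X+\Delta+\epsilon A+\epsilon B_D)$-MMP over $S$---which numerically is the $\epsilon D$-MMP---consists of flops and preserves the relative Calabi--Yau structure (cf.\ the proof of \cref{prop:movdecomp}), and it terminates because $\Delta+\epsilon A+\epsilon B_D$ is klt with a big summand \cite{BCHM10}. Its output $\alpha\colon X\dashrightarrow X'$ is a marked SQM with $\alpha_*D$ $f$-nef, so $C=\alpha^*\mathcal{A}^e(X'/S)$ by \cref{prop:movdecomp}, and simultaneously $\alpha$ is a log terminal model of $(X,\Delta+\epsilon A+\epsilon B_D)$ over $S$. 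As the boundary ranges over $\mathcal{L}_{\epsilon A}(V)$, only finitely many such marked models occur by \cite[Corollary 1.1.5]{BCHM10}; hence finitely many chambers meet $P$, and therefore only finitely many chambers---and, by the first paragraph, finitely many faces---meet $\Sigma$.

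The step I expect to be the real obstacle is not a new deep input (\cite{BCHM10} supplies the essential finiteness) but the uniformity of the reduction: producing a single finite-dimensional $V$, a single ample $\mathbb{Q}$-divisor $A$, and a single $\epsilon$ that work for every class of $\Sigma$ at once---which forces the passage to a bounded polytope and the use of convex combinations of the fixed divisors $E_j$---together with the bookkeeping that the MMP witnessing a given chamber lands in the finite family of marked models attached to $\mathcal{L}_{\epsilon A}(V)$ and that enlarging $\Sigma$ to $\operatorname{cone}(P)$ discards no chamber.
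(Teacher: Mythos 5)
Your argument is correct in substance, but it takes a genuinely different route from the paper. The paper argues pointwise: for each big class $[D]\in\mathcal{M}^e(X/S)\cap\mathcal{B}(X/S)$ it produces a good minimal model (bigness substitutes for termination, via \cite{BCHM10} and \cite{HX15}), contracts the face through $[D]$ to a klt CY pair $(Y/S,\Delta_Y)$, invokes \cref{lem:crepant} (i.e.\ \cite{BCHM10}, Theorem E) to see that only finitely many chambers and faces contain that face, uses \cref{prop:locallypolyhedral} to find a small open cone around $[D]$ meeting only those, and concludes by compactness of $\Sigma$ modulo scaling. You instead globalize: you trap the slice of $\Sigma$ in a rational polytope $P$ of big classes, write each vertex as $A+E_j$ with $A$ ample, and feed the resulting family of klt boundaries $\Delta+\epsilon A+\epsilon B_D\in\mathcal{L}_{\epsilon A}(V)$ into finiteness of weak log canonical models (\cite{BCHM10}, Corollary 1.1.5), so that the marked SQMs witnessing the chambers meeting $P$ form a finite set. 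Both proofs ultimately rest on the same BCHM finiteness (Theorem E versus its Corollary 1.1.5), but yours counts all chambers meeting $\Sigma$ in one stroke and avoids the covering-by-neighborhoods step, whereas the paper's local argument establishes literally the local finiteness asserted in the statement and treats faces and chambers by one and the same mechanism (\cref{lem:crepant}).

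The step I would press you on is the face count. Local polyhedrality only bounds the faces of the truncated cone $C\cap\Sigma'$ for a rational polyhedral cone $\Sigma'\supseteq\Sigma$ inside $\mathcal{B}(X/S)\cup\{0\}$; distinct Mori faces of a chamber $C$ can intersect $\Sigma'$ in the same polyhedral face of $C\cap\Sigma'$, so this alone does not bound the number of Mori faces meeting $\Sigma$. To close it you need exactly the input the paper takes from \cref{lem:crepant}: for a big class $x$ in the relative interior of such a polyhedral face, the Mori faces of $C$ containing $x$ correspond to factorizations $X_0\to Y\to Z_x$ of the ample model of $x$, and these are finite because every such $Y$ is a crepant klt CY model over $(Z_x,\Delta_{Z_x})$. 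With that supplement your reduction of faces to chambers is fine. Two smaller points: \cref{prop:movdecomp}, which you cite to identify $C=\alpha^*\mathcal{A}^e(X'/S)$, formally assumes termination of flips, which the proposition does not; for big classes the identification follows from \cite{BCHM10} alone (finite generation and ample models), as in the paper's proof, and you should argue that way. Also, the statement does not assume $X$ is $\mathbb{Q}$-factorial, so your reduction to that case (say via a small $\mathbb{Q}$-factorialization) should be made explicit.
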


\begin{proof}
     Let $[D]\in \mathcal{M}^e(X/S)\cap \mathcal{B}(X/S)$. Since $D$ is $f$-big, on a general fiber $f^{-1}(y)$ the $(K_{X_{f^{-1}(y)}}+\Delta_{f^{-1}(y)} + \epsilon D_{f^{-1}(y)})$-MMP has a good minimal model by \cite{BCHM10} that extends to a good minimal model $\alpha_0:\: X\dashrightarrow X_0$ for the $(K_X+\Delta+\epsilon D)$-MMP over $S$ by \cite[Theorem 2.12]{HX15}. Let $F$ be the face of $\alpha_0^*\mathcal{A}^e(X'/S)$ that contains $[D]$ in its interior.  Let $\phi:\: X_0 \to Y$ be the contraction over $S$ associated to the face $F = \phi^*\mathcal{A}^e(Y/S)$. Since $[D]$ is $f$-big, $\dim X = \dim Y$. The relative pair $(Y/S,\phi_*\Delta)$ is a klt CY and by \cref{lem:crepant}, there exists only finitely many faces and chambers $F_i= \alpha_i^*\phi_i^*\mathcal{A}^e(X_i/S)$ containing the face $F$. Since each of these faces and chambers are locally rational polyhedral inside of $\mathcal{B}(X/S)$ by \cref{prop:locallypolyhedral}, there exists a small open cone $\Sigma$ containing $[D]$ such that $\Sigma\cap \mathcal{M}^e(X/S)= \Sigma\cap \bigcup_i F_i$. The result follows because any closed convex cone $\Sigma\subset \mathcal{B}(X/S)\cup \{0\}$ is compact modulo scaling. 
\end{proof}

We will also need the following technical lemma that is a slight generalization of \cite[2.26]{DCS21}. 

\begin{lemma}
\label{lem:existssqm} 
Assume the abundance conjecture in relative dimension $n$. Let $f:\:(X,\Delta)\to S$ be a klt Calabi--Yau pair of relative dimension $n$. Assume that $S$ is $\mathbb{Q}$-factorial and let $\pi_S:\: S\dashrightarrow S_0$ a birational contraction. Then, there exists a marked SQM $\alpha_0 \colon X\dashrightarrow X_0$ and a projective contraction $f_0\colon X_0\rightarrow S_0$ making the following diagram commutative:
    \begin{center}
    \begin{large}
    \begin{tikzcd}
        X \arrow[d, "f", swap] \arrow[r,"\alpha_0",dashed]
        & X_0 \arrow[d, "f_0"] \\ 
        S \arrow [r, "\pi_S", swap, dashed]
        & S_0
    \end{tikzcd}
    \end{large}
    \end{center}  
\end{lemma}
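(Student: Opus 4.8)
The plan is to reduce to a run of a relative MMP with scaling by pulling back a suitable ample class from $S_0$. First I would choose, on the birational contraction $\pi_S\colon S\dashrightarrow S_0$, a resolution and an ample $\mathbb{R}$-Cartier divisor $A_0$ on $S_0$; letting $A$ be the strict transform (or, more precisely, the moving part of the pullback) of $A_0$ on $S$, the divisor $A$ is the pullback of an ample class under $\pi_S$ and hence is movable and semiample with Iitaka contraction $\pi_S$. Pulling back, $D := f^*A$ is an $f$-vertical $\mathbb{R}$-divisor on $X$ whose class lies in $V(X/S)$, and the composition $S\dashrightarrow S_0$ followed by the ample model of $A_0$ shows that $D$ is movable on $X$ in the \emph{absolute} sense over a common base; but what we actually want is a model on which $D$ becomes semiample over $S_0$.

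The key step is to run the $(K_X+\Delta+\epsilon D)$-MMP over $S$ for small $\epsilon>0$, exactly as in Proposition~\ref{prop:movdecomp}. Since $(X,\Delta)$ is klt, $(X,\Delta+\epsilon D)$ is klt for $\epsilon$ small, and since $K_X+\Delta\equiv_S 0$ we have $K_X+\Delta+\epsilon D\equiv_S \epsilon D$, which is pulled back from $S$ and in particular is $f$-movable (it has numerically trivial restriction to a general fiber, and more to the point its relative diminished base locus is empty because it comes from $S$). Therefore this MMP is a sequence of flops $\alpha_0\colon X\dashrightarrow X_0$, and on $X_0$ the class $\alpha_{0*}D = f^*A$ is $f$-nef; by the abundance conjecture in relative dimension $n$ (Theorem~\ref{thm:abundance} in the cases we need) it is $f$-semiample. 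Taking the relative Iitaka fibration of $\alpha_{0*}D$ over $S$ gives a contraction $X_0\to T$ over $S$. It remains to identify $T$ with $S_0$: since $\alpha_{0*}D$ is the pullback of the $f$-semiample class $A$ on $S$, and $A$ is itself pulled back from the ample class $A_0$ on $S_0$ along the birational contraction $\pi_S$, the relative section algebra $\mathcal{R}(X_0/S,\alpha_{0*}D)$ agrees with $\mathcal{R}(S/\,\cdot\,,A)$, whose relative Proj over the ambient base is $S_0$. This produces the contraction $f_0\colon X_0\to S_0$, and commutativity of the square is built into the construction because both routes $X\dashrightarrow X_0\to S_0$ and $X\to S\dashrightarrow S_0$ compute the Iitaka map of the same divisor class.

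The main obstacle I anticipate is the bookkeeping needed to make ``$A$ is pulled back from $S_0$'' precise when $\pi_S$ is only a birational contraction (not a morphism): one must choose $A$ inside $\Mov(S)$ so that its Iitaka contraction is exactly $\pi_S$ — this is where $\mathbb{Q}$-factoriality of $S$ and the structure theory of movable cones (Proposition~\ref{prop:movdecomp} applied to $S$ itself, or simply the finite generation furnished by \cite{BCHM10}) is used — and then to check that the flop sequence over $S$ does not disturb the vertical class $D$ in a way that changes its relative Iitaka fibration. Concretely, one needs that $\alpha_{0*}(f^*A)$ is still the pullback of $A$ under the modified structure morphism $X_0\dashrightarrow S$; this is automatic because $\alpha_0$ is small and an isomorphism over the generic point of $S$, so $V(X/S)\cong V(X_0/S)$ compatibly, and semiampleness over $S$ then descends the contraction to $S_0$. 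Once these identifications are in place, the diagram commutes on the nose and the lemma follows.
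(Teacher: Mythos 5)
There is a genuine gap, and it sits at the heart of your argument: the MMP you run is vacuous. You set $D=f^*A$ with $A=\pi_S^*A_0$ a divisor on $S$ (this uses the $\mathbb{Q}$-factoriality of $S$, fine), and then run the $(K_X+\Delta+\epsilon D)$-MMP \emph{over $S$}. But any divisor pulled back from $S$ is numerically trivial over $S$, so $K_X+\Delta+\epsilon D\equiv_S 0$: there are no negative extremal rays over $S$, the MMP terminates immediately with $X_0=X$, and no flops are produced. Consequently the "relative Iitaka fibration of $\alpha_{0*}D$ over $S$" is just $f\colon X\to S$ itself: by the projection formula $\mathcal{R}(X/S,f^*A)\cong\bigoplus_m \mathcal{O}_S(mA)$ (up to reflexivization), whose $\operatorname{Proj}_S$ is (a small modification of) $S$, not $S_0$. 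Indeed the identification "$T=S_0$" cannot hold for any contraction of $X_0$ over $S$, since such a $T$ comes with a structure morphism $T\to S$, whereas $S_0$ admits no morphism to $S$ when $\pi_S$ contracts divisors. Your final paragraph blurs exactly this point: $R(S,A)=R(S_0,A_0)$ is true \emph{absolutely}, but the absolute Iitaka map of $D$ is only a rational map unless $D$ has been made semiample on a suitable model, and nothing in your construction modifies $X$ at all. (A smaller slip in the same direction: $A$ is movable and big with Iitaka \emph{map} $\pi_S$, but it is not semiample on $S$ — if it were, $\pi_S$ would already be a morphism.)

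The missing idea is that the modification of $X$ must be produced by an MMP relative to $S_0$ (or at least with respect to a divisor that is \emph{not} relatively trivial over the base one runs over), and $X$ does not map to $S_0$; so one must first pass to a model where the base modification is realized — e.g.\ resolve $\pi_S$ by $p\colon W\to S$, $q\colon W\to S_0$, lift the fibration to a suitable birational model over $W$, run a relative MMP over $S_0$, and then argue that the extracted divisors are contracted again so that the outcome is a \emph{small} $\mathbb{Q}$-factorial modification of $X$ with a morphism to $S_0$. This is what the paper does: it follows the proof of \cite[Proposition 2.26]{DCS21}, replacing the terminalization at the end by an extraction of the appropriate non-terminal exceptional valuations via \cite[Proposition 1.4.3]{BCHM10} so as to stay in the klt setting and land on a marked SQM $\alpha_0\colon X\dashrightarrow X_0$. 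Compare also the proof of \cref{lem:commutativity} in the paper, where the analogous MMP is run over a base strictly below the one from which the auxiliary ample class is pulled back — that is precisely the feature your argument is missing.
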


The proof in \cite[Proposition 2.26]{DCS21} works for klt CY pairs, if instead of passing to a terminalization at the end of the proof, one uses \cite[Prop 1.4.3]{BCHM10} to extract the correct non-terminal exceptional valuations and produce the marked SQM $\alpha_0:\:X\dashrightarrow X_0$.

We emphasize that the statemens:~\cref{prop:movdecomp},~\cref{prop:Qgenerated}, and~\cref{lem:existssqm} are valid up to relative dimension three due to the previous mentioned theorems on abundance (\cref{thm:abundance}) and termination of flips (\cite[Theorem 1]{Kaw92}, \cite[Theorem 2.12]{HX15}).

\subsection{Exceptional and degenerate divisors}
In this subsection, we recall some some special divisors for fibrations, the so-called exceptional divisors and degenerate divisors.
We also prove some statements about these divisors.

\begin{definition}
Let $f\colon X\rightarrow S$ be a proper surjective morphism of normal varieties. 
We say that a prime divisor $E$ on $X$ is \emph{$f$-exceptional} if there exists a marked SQM $\alpha:\: X\dashrightarrow X_0$ over $S$
and a divisorial contraction $X_0\rightarrow Y$ over $S$ whose exceptional divisor is the strict transform of $E$ on $X_0$.
\end{definition}

The following definition comes from~\cite[Definition 2.8]{Lai11}.

\begin{definition} 
    Let $f:\: X\to S$ be a proper surjective morphism of normal varieties and $D\in \WDiv_\mathbb{R}(X)$ an effective Weil divisor. $D$ is \emph{$f$-degenerate} if either:
    \begin{enumerate}
        \item we have $\codim(\Supp(f(D)))\geq 2$; or
        \item $D$ there exists a prime divisor $\Gamma\not\subseteq D$ such that $f(\Gamma)\subseteq f(D)$ has codimension $1$ in $Y$.
    \end{enumerate}
    If the second condition above holds, then we say that $D$ is \emph{of insufficient fiber type}.
\end{definition}

\begin{lemma}
\label{lem:degenerate}
    Let $f\colon (X,\Delta)\rightarrow S$ be a klt Calabi--Yau pair.
    Let $D$ be a prime divisor on $X$.
    The divisor $D$ is $f$-exceptional if and only if it is $f$-degenerate.
    Furthermore, there are finitely many such divisors.
\end{lemma}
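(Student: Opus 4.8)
The plan is to prove the two implications separately and then deduce finiteness; by the equivalence, finiteness of $f$-exceptional divisors is reduced to finiteness of $f$-degenerate ones.

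\emph{From $f$-degenerate to $f$-exceptional.} Given an $f$-degenerate prime divisor $D$, I would pass to the klt pair $(X,\Delta+\epsilon D)$ for $0<\epsilon\ll 1$ (replacing $X$ by a small $\mathbb{Q}$-factorialization if necessary, which affects neither notion), so that $K_X+\Delta+\epsilon D\equiv_S\epsilon D$ is effective over $S$, and run a $(K_X+\Delta+\epsilon D)$-MMP over $S$, which terminates by the termination results available in relative dimension at most three. Because $K_X+\Delta\equiv_S 0$, over the base of each flip or divisorial contraction $K_X+\Delta$ is relatively numerically trivial, so the contracted extremal ray $R$ satisfies $(K_X+\Delta)\cdot R=0$ and hence $\epsilon D\cdot R<0$. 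Thus every flip is a flop for $K_X+\Delta$ (so the composite of the flips is a marked SQM $\alpha_0\colon X\dashrightarrow X_0$ over $S$), and every divisorial contraction in the MMP contracts a prime divisor supported in $\operatorname{Supp}D$, which — $D$ being prime and the flips small — is the strict transform of $D$; after such a contraction $\epsilon D$ becomes $0$ and the MMP stops. It remains to see that this divisorial contraction occurs: otherwise the MMP stops after flops with $D_0:=\alpha_{0*}D\ne 0$ and $\epsilon D_0\equiv_S K_{X_0}+\Delta_0+\epsilon D_0$ nef over $S$; but $D_0$ is again $f_0$-degenerate, and an $f_0$-degenerate prime divisor is never nef over $S$ — restricting to the fibre over the generic point $\eta$ of $f_0(D_0)$ and using that the pullback to $X_0$ of a prime divisor of $S$ through $\eta$ is relatively numerically trivial, together with connectedness of the fibres of a contraction, exhibits a fibral curve along which $D_0$ is strictly negative (this is the fibrewise negativity underlying \cite{Lai11}). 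Hence $D$ is the exceptional divisor of a divisorial contraction $X_0\to Y$ over $S$ from a marked SQM, i.e.\ $D$ is $f$-exceptional.

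\emph{From $f$-exceptional to $f$-degenerate.} Let $D$ be $f$-exceptional, witnessed by a marked SQM $\alpha_0\colon X\dashrightarrow X_0$ over $S$ and a divisorial contraction $g\colon X_0\to Y$ over $S$ contracting $D_0:=\alpha_{0*}D$ onto $Z:=g(D_0)$ with $\operatorname{codim}_Y Z\ge 2$; write $h\colon Y\to S$. Since $\alpha_0$ is an isomorphism in codimension one compatible with the fibrations, $D$ is $f$-degenerate iff $D_0$ is $f_0$-degenerate, so we may assume $X=X_0$. From $\dim Z\le\dim Y-2=\dim X-2$ and $f(D)=h(Z)$ we get $\dim f(D)\le\dim X-2$; if $\operatorname{codim}_S f(D)\ge 2$, condition (1) of $f$-degeneracy holds and we are done (automatically so in relative dimension at most one). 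Otherwise $D$ is $f$-vertical with $f(D)$ of codimension one in $S$ — the case $\dim f(D)=\dim S$ being the one point needing extra care in relative dimension two, to be handled by restricting $g$ to a general fibre, where it is a birational contraction of a $\mathbb{Q}$-factorial klt Calabi--Yau surface. Let $\eta$ be the generic point of $f(D)$; the fibre $X_\eta$ has dimension $n$ (the relative dimension), $D_\eta$ has dimension $n$, and $g_\eta\colon X_\eta\to Y_\eta$ is birational and contracts $D_\eta$ (as $\dim Z_\eta\le\dim Z-(\dim S-1)\le n-1<n$). Since a birational morphism cannot contract every $n$-dimensional component of $X_\eta$, some such component $\Gamma_\eta$ is not contained in $D_\eta$; its closure $\Gamma\subseteq X$ is a prime divisor with $f(\Gamma)=f(D)$ and $\Gamma\not\subseteq D$, so $D$ is of insufficient fibre type, hence $f$-degenerate.

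\emph{Finiteness, and the main obstacle.} For finiteness I would combine generic flatness with the constructibility of the locus of points of $S$ whose fibre is geometrically irreducible and reduced of dimension $n$: its complement $W\subsetneq S$ is a proper closed subset with finitely many irreducible components. Every $f$-degenerate prime divisor $D$ satisfies $f(D)\subseteq W$: if $\operatorname{codim}_S f(D)\ge 2$ the fibre dimension jumps over $f(D)$ because $\dim f^{-1}(f(D))=\dim X-1$, and if $D$ is of insufficient fibre type the fibre over the generic point of $f(D)$ is reducible. Hence $D$ is one of the finitely many divisorial components of $f^{-1}(W)\subsetneq X$, and by the equivalence there are finitely many $f$-exceptional divisors. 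I expect the main obstacle to be the non-nefness input used in the first implication — that an $f$-degenerate prime divisor remains non-nef over $S$ after every marked SQM, equivalently that the $(K_X+\Delta+\epsilon D)$-MMP genuinely performs the divisorial contraction of $D$ — since this is exactly where one must invoke the fibrewise negativity for degenerate divisors (and the precise form of condition (2) in the definition of $f$-degenerate).
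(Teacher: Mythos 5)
Your two main implications follow essentially the paper's own route: for degenerate $\Rightarrow$ exceptional the paper also perturbs by $\epsilon D$ and runs a relative MMP, quoting \cite[Lemma 2.9]{Lai11} for $\Supp D\subseteq \mathbb{B}_{-}(D/S)$ so that the MMP with scaling contracts $D$; your way of excluding the minimal-model outcome (a degenerate divisor can never become nef over $S$ after a small modification) is the same input, and your appeal to termination in relative dimension at most three is an unnecessary restriction, since for a degenerate divisor the needed divisorial contraction is reached after finitely many steps of the MMP with scaling without full termination, which is how the paper keeps the lemma free of any relative-dimension hypothesis. For exceptional $\Rightarrow$ degenerate, your comparison of $\dim Y_\xi\geq n$ with $\dim Z_\xi\leq n-1$ over the generic point $\xi$ of $f(D)$ is the same dimension count the paper phrases as a contradiction with upper semicontinuity of fibre dimension after contracting $D_0$. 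Your finiteness argument is genuinely different and worth keeping: the paper disposes of it in one line via the drop in relative Picard rank, whereas you place every degenerate divisor among the finitely many divisorial components of $f^{-1}(W)$; this works, with the small repair that the bad locus is only constructible, so $W$ should be its closure, which is still proper because the generic fibre of a contraction from a normal variety is geometrically integral of dimension $n$.

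The genuine gap is your parenthetical claim that the case $\dim f(D)=\dim S$ ``can be handled by restricting $g$ to a general fibre.'' It cannot: a prime divisor dominating $S$ is never $f$-degenerate, since condition (1) fails and condition (2), read as in Lai, requires $f(D)$ itself to be of codimension one. Moreover horizontal $f$-exceptional divisors genuinely occur in relative dimension two --- the closure of any contractible curve on the generic fibre, which is a klt Calabi--Yau surface, is one, and there may be infinitely many of them (take $X=Y\times S$ with $Y$ a rational elliptic surface with infinite Mordell--Weil group and $\Delta$ half of two smooth fibres times $S$: each of the infinitely many $(-1)$-sections gives a horizontal divisor contracted by a divisorial contraction over $S$). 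So the unrestricted ``exceptional $\Rightarrow$ degenerate,'' and with it your closing deduction ``by the equivalence there are finitely many $f$-exceptional divisors,'' fail in the horizontal case and cannot be repaired by a fibrewise argument. The paper's proof quietly imposes exactly the needed restriction (``suppose that $D$ is $f$-exceptional and maps to a divisor in $S$''), and finiteness is proved, and later used, only for degenerate (hence vertical) divisors. You should do the same: prove the converse only when $f(D)\subsetneq S$ and assert finiteness for the $f$-degenerate divisors, which is precisely what your $f^{-1}(W)$ argument delivers.
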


\begin{proof}
    If $D$ is a prime degenerate divisor on $X$ over $S$, then $D$ is contained in its diminished base locus $\mathbb{B}_{-}(D/S)$ by \cite[Lemma 2.9]{Lai11}. Choosing an ample divisor $A$ on $X$, we may run a $(K_X + \Delta + \epsilon D)$-MMP with scaling of $A$ over $S$ that contracts all the prime components of $\mathbb{B}_{-}(D/S)$. Therefore $D$ is $f$-exceptional. Moreover, because contracting $D$ decreases the Picard rank, there are only finitely many degenerate divisors. Conversely, suppose that $D$ is $f$-exceptional and maps to a divisor in $S$ via $f$. There exists a marked SQM $\alpha_0 :\: X \dashrightarrow X_0$ over $S$ and a divisorial contraction $X_0 \to X_0'$ over $S$ contracting $D_0:=\alpha_{0*} D$. Suppose for contradiction that $D$ does contain the fiber of the morphism $f^{-1}f(D) \to f(D)$ over the generic point of $f(D)$. Because $\alpha_0$ is small, $D_0$ contains the fiber of $f_0^{-1}f_0(D) \to f_0(D)$. But then contracting $D_0$ over $S$ would result in a contradiction to the upper semi-continuity of the dimension of the fibers of $f_0'$. 
\end{proof}

The following lemma is a generalization of 
\cite[Lemma 3.2]{Kaw97} to higher dimensions:

\begin{lemma}
\label{lem:excgen}
    Let $f\colon (X,\Delta)\to S$ be a klt Calabi--Yau pair.
    The space $V(X/S)$ is $\mathbb{R}$-linearly spanned by the classes of $f$-exceptional divisors.
\end{lemma}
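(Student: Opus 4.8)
The plan is to show that every $f$-vertical divisor class lies in the span of the $f$-exceptional (equivalently, by \cref{lem:degenerate}, $f$-degenerate) prime divisors. Let $D$ be a prime $f$-vertical divisor on $X$; its image $f(D)$ is a prime divisor $B$ on $S$ (the case $\codim f(D) \geq 2$ is already handled, as then $D$ is automatically degenerate and hence exceptional). I would stratify the $f$-vertical prime divisors by the prime $B\subseteq S$ they dominate. For a fixed prime $B\subseteq S$, let $D_1,\dots,D_r$ be the prime divisors of $X$ with $f(D_i)=B$; the claim is that any $\mathbb{R}$-linear combination $\sum a_i D_i$ is, modulo the span of $f$-exceptional divisors, determined by a single parameter — roughly the multiplicity along the generic fiber of $f$ over the generic point of $B$. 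Concretely, $f^* B = \sum_i m_i D_i + (\text{other components, which are } f\text{-exceptional or dominate other primes})$; rephrased in $N^1(X/S)$, the class $f^*B$ is numerically trivial over $S$, so $\sum_i m_i [D_i]$ lies in the span of $f$-exceptional divisor classes together with divisors dominating primes other than $B$.

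The key structural input is the following: among $D_1,\dots,D_r$, at most one can fail to be $f$-degenerate — namely at most one $D_i$ can be ``of sufficient fiber type'' in the sense of dominating $B$ with the property that $D_i\to B$ is surjective on the generic fiber (i.e., $f^{-1}(\text{generic point of }B)_{\mathrm{red}}$ is irreducible as a divisor). Indeed, the generic fiber of $f$ is irreducible (as $f$ is a contraction, $f_*\mathcal O_X=\mathcal O_S$, so the generic fiber $X_\eta$ is geometrically connected), and the generic fiber of $f$ over the generic point of $B$ is then also irreducible; hence there is a unique prime divisor among the $D_i$ dominating $B$ that is not of insufficient fiber type, and all the others are $f$-degenerate by definition, hence $f$-exceptional by \cref{lem:degenerate}. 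So in the relation $\sum_i m_i[D_i] + (\text{exc./other-}B\text{ terms}) = 0$ coming from $f^*B \equiv_S 0$, all but (at most) one $[D_i]$ is already in the exceptional span, which forces that last $[D_i]$ into the span generated by exceptional divisors and by divisors dominating other primes of $S$. Running over all primes $B\subseteq S$ appearing as images of $f$-vertical divisors, the full span $V(X/S)$ of $f$-vertical divisor classes is thus generated by $f$-exceptional classes together with the finitely many classes $[D_i]$ — one per prime $B$ — each of which we have just shown lies in the exceptional span; hence $V(X/S)$ equals the span of $f$-exceptional divisors, as desired.

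The step I expect to be the main obstacle is making the ``at most one non-degenerate $D_i$ per prime $B$'' argument fully rigorous, i.e., correctly reducing to the generic fiber over the generic point of $B$ and invoking irreducibility of that fiber. One must be careful that $f^*B$ is well defined (use $\mathbb{R}$-Cartier-ness after possibly passing through a $\mathbb{Q}$-factorialization — but note \cref{lem:birationalcontraction} and \cref{prop:movdecomp} let us pass to a $\mathbb{Q}$-factorial marked SQM of $X/S$ without changing $N^1(X/S)$ or the set of $f$-exceptional divisors, which legitimizes pulling back $B$), and that restricting to the generic point of $B$ commutes with taking strict transforms. A clean way to package the argument: localize $S$ at the generic point of $B$ to get a DVR, base change, and observe that the special fiber of the resulting family over that DVR, being a fiber of a contraction whose total space has connected (indeed irreducible) generic fiber, has a unique ``horizontal-over-$B$'' component; every other component of $f^{-1}(B)$ is vertical over $B$ and therefore corresponds to an $f$-degenerate divisor on $X$. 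This is exactly the higher-dimensional analogue of the surface computation in \cite[Lemma 3.2]{Kaw97}, and once the reduction is set up the remaining bookkeeping is routine linear algebra in $N^1(X/S)$.
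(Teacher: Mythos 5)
Your overall strategy is the same as the paper's (and as Kawamata's Lemma 3.2): pull back a divisor from the base, observe that the leftover components are of insufficient fiber type, and invoke \cref{lem:degenerate} to convert them into $f$-exceptional classes. However, two steps in your write-up are genuinely wrong as stated. First, the well-definedness of $f^*B$: the obstruction is that the prime divisor $B\subset S$ need not be $\mathbb{R}$-Cartier, since no $\mathbb{Q}$-factoriality is assumed on $S$ in this lemma. Your proposed fix --- replacing $X$ by a $\mathbb{Q}$-factorial marked SQM over $S$ via \cref{lem:birationalcontraction} and \cref{prop:movdecomp} --- does nothing to address this, because the issue lives on the base, not on $X$; and passing to a $\mathbb{Q}$-factorialization of $S$ changes the fibration and the meaning of $N^1(X/S)$, so it is not a harmless substitution. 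The paper sidesteps this entirely: for a prime vertical $D$ one chooses an (honest) Cartier divisor $D'$ on $S$ whose support contains $f(D)$, writes $f^*D'=mD+D''$ with $m>0$ and $D''$ supported on divisors of insufficient fiber type, and concludes $[D]=-\tfrac{1}{m}[D'']$ since $f^*D'\equiv_S 0$.

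Second, your ``key structural input'' is justified by a false claim: the fiber of $f$ over the generic point of $B$ is \emph{not} irreducible in general, even though the generic fiber $X_\eta$ is. The Stein condition $f_*\mathcal{O}_X=\mathcal{O}_S$ gives connectedness of fibers, not irreducibility; reducible fibers over divisors of $S$ (e.g.\ the degenerate fibers of an elliptic fibration over its discriminant) are precisely the source of insufficient-fiber-type divisors, so an argument built on their non-existence cannot be right, and the same error recurs in your DVR reformulation. Fortunately the statement you actually need --- among the primes $D_1,\dots,D_r$ dominating $B$, all but at most one are $f$-degenerate --- is immediate from the definition of insufficient fiber type: if $r\geq 2$, then for each $i$ any $D_j$ with $j\neq i$ serves as the witness $\Gamma$, so \emph{every} $D_i$ is degenerate, while if $r=1$ the pullback relation expresses $[D_1]$ directly in terms of degenerate classes. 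So the irreducibility detour is both incorrect and unnecessary; once you remove it and repair the pullback step as above, your argument collapses to the paper's one-line proof.
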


\begin{proof}
    For $D$ an $f$-vertical prime divisor, we may find a Cartier divisor $D'$ containing $f(D)$. We have that $f^*(D')=mD + D''$ for some positive rational number $m$ and $D''$ an $f$-effective $\mathbb{Q}$-linear combination of divisors of insufficient fiber type. By \cref{lem:degenerate}, the divisor $D''$ is an $f$-effective $\qq$-linear combination of $f$-exceptional divisors. This shows that $-[D]\in B^e(X/S)$ and that $[D]=-\frac{1}{m}[D'']$ in the $\mathbb{R}$-span of $f$-exceptional divisors. 
\end{proof}

\subsection{Good fibrations} 
In this subsection, we define the notion of a \emph{good fibration} and prove some results about them.

\begin{definition}
    A fibration $f\colon X\rightarrow T$ is said 
    to be a \emph{good fibration} if every divisor on $X$ dominates a divisor on $T$ and the variety $T$ is $\mathbb{Q}$-factorial.
\end{definition}

A good fibration has the property that the pullback of a movable divisor is a movable divisor. Likewise, if the fibration $f\colon X\rightarrow T$ is defined over a variety $S$, then the pullback of a divisor on $T$ movable over $S$ is movable on $X$ over $S$.

The following lemma generalizes \cite[Lemma 3.3(2)]{Kaw97} and \cite[Lemma 2.16]{FHS21}. The proof from \cite[Lemma 2.16]{FHS21} can be taken verbatim, noting that the proof of \cite[Proposition 2.9]{Fil23} holds for $(X,\Delta)$ klt. The lemma says that any klt CY pair factors through a good fibration up to to a marked SQM. 

\begin{lemma}
\label{lem:nicefactorization}
    Assume the Abundance Conjecture in relative dimension $n$. Let $f\colon (X,\Delta)\rightarrow S$ be a klt Calabi--Yau pair of relative dimension $n$.
    There is a marked SQM $X\dashrightarrow X_0$ over $S$
    with structure morphism $f_0\colon X_0\rightarrow S$
    together with a factorization
    \begin{center}
    \begin{tikzcd}
        X_0 \arrow[r, "g_0"] \arrow[rr, bend left=30, "f_0"] 
        & S_0 \arrow[r, "h_0"] & S
    \end{tikzcd}
    \end{center} such that $h_0$ is birational and $g_0$ is a good fibration.
\end{lemma}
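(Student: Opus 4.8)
The plan is to follow the strategy of \cite[Lemma 2.16]{FHS21} almost verbatim, the only subtlety being that we work with a klt (rather than terminal) Calabi--Yau pair, so at the final step we must be careful to produce a \emph{marked SQM} rather than a terminalization. First I would choose an ample divisor $A$ on $X$ and consider the divisor $f^*H$ for $H$ an ample divisor on $S$; more precisely, I want to produce a divisor whose Iitaka fibration over $S$ separates the vertical exceptional divisors. Concretely: the space $V(X/S)$ is spanned by $f$-exceptional divisors by \cref{lem:excgen}, and there are finitely many such prime divisors $E_1,\dots,E_r$ by \cref{lem:degenerate}. I would consider the divisor $D:=f^*H - \epsilon\sum a_i E_i$ for suitable ample $H$ on $S$, small $\epsilon>0$, and positive coefficients $a_i$ chosen so that $D$ lies in the relative movable cone and is $f$-big (this is possible because the $E_i$ are contracted by some relative MMP, so subtracting a small multiple of them keeps us movable and big).

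Next I would invoke \cref{prop:movdecomp} (valid here since we assume abundance, hence termination of the relevant flips, in relative dimension $n$): since $[D]\in\mathcal{M}^e(X/S)$ there is a marked SQM $\alpha_0\colon X\dashrightarrow X_0$ over $S$ with $\alpha_{0*}D$ nef over $S$. Because $D$ is $f$-big, $\alpha_{0*}D$ is big over $S$, so by \cref{thm:abundance} it is $f$-semiample and defines a contraction $g_0\colon X_0\to S_0$ over $S$ with $S_0={\rm Proj}_S\mathcal{R}(X_0/S,\alpha_{0*}D)$, giving the factorization $f_0=h_0\circ g_0$ with $h_0\colon S_0\to S$ birational (birational because $\alpha_{0*}D$ is $f$-big, so $g_0$ is generically finite, hence birational as a contraction). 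It then remains to check that $g_0$ is a good fibration: every divisor on $X_0$ dominates a divisor on $S_0$, and $S_0$ is $\mathbb{Q}$-factorial. The first point is where the choice of $D$ matters: the $g_0$-exceptional divisors would have to be $f_0$-exceptional (hence among the strict transforms of the $E_i$), but we arranged $D$ to have negative coefficient along each $E_i$, so each $E_i$ is in the stable base locus of $\alpha_{0*}D$ over $S$ and is therefore contracted by $\alpha_0$ itself rather than surviving as a $g_0$-exceptional divisor on $X_0$; one argues that after running the MMP producing $\alpha_0$ there are no vertical exceptional divisors left. For $\mathbb{Q}$-factoriality of $S_0$, I would apply the canonical bundle formula (\cite[Theorem 4.1]{Amb05}, quoted in the excerpt) to get a klt CY structure $(S_0/S,\Delta_{S_0})$ downstairs, and then use that $X_0$ is $\mathbb{Q}$-factorial together with the fact that $g_0$ has no exceptional divisors and equidimensional-in-codimension-one behavior to descend $\mathbb{Q}$-factoriality — alternatively, replace $S_0$ by a small $\mathbb{Q}$-factorialization and lift it to a further marked SQM of $X_0$ using \cref{lem:existssqm}, absorbing that SQM into $\alpha_0$.

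The main obstacle I expect is verifying cleanly that $g_0$ has \emph{no} exceptional divisors — i.e., that the marked SQM $\alpha_0$ genuinely contracts every $f$-exceptional divisor and that none reappear. This requires knowing that the relative MMP for $(K_X+\Delta+\epsilon D)$ over $S$ (a sequence of flops, since $D$ is movable) ends with a model on which $\alpha_{0*}D$ is semiample with Iitaka fibration having irreducible general fibers covering all of $S_0$; the argument in \cite{FHS21} for the terminal case goes through because, as the excerpt notes, \cite[Proposition 2.9]{Fil23} holds for klt pairs, so I would cite that for the key step and only spell out the modification at the terminalization step. The other genuinely new point — passing from a terminalization to a marked SQM via \cref{lem:existssqm} or \cite[Prop 1.4.3]{BCHM10} — is routine given the tools already assembled in this subsection.
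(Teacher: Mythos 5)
Your overall skeleton (pick a class in $\mathcal{M}^e(X/S)$ of the form $f^*H-\epsilon\sum a_iE_i$, make it relatively nef via \cref{prop:movdecomp}, take the relative Iitaka fibration using abundance, and repair $\mathbb{Q}$-factoriality by a small $\mathbb{Q}$-factorialization) has the right flavor, and indeed the paper does not re-derive this: its proof is simply to import \cite[Lemma 2.16]{FHS21} verbatim, noting that \cite[Proposition 2.9]{Fil23} holds for klt pairs. But two central steps of your reconstruction are wrong. First, $D=f^*H-\epsilon\sum a_iE_i$ is \emph{not} $f$-big: the $E_i$ are vertical and $f^*H$ is numerically trivial over $S$, so $D$ restricts trivially to the generic fiber. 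Consequently your parenthetical ``$g_0$ is generically finite, hence birational'' is both false and self-defeating: if $g_0$ were birational, then $h_0\colon S_0\to S$ would have relative dimension $n$, the opposite of what the lemma asserts. What actually forces $h_0$ to be birational is precisely that $D$ has relative Iitaka dimension $0$ over $S$, i.e.\ the failure of the bigness you claim.

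Second, the heart of the lemma --- that the $a_i$ can be chosen so that $D$ is movable over $S$ \emph{and} so that the resulting contraction $g_0$ maps no divisor of $X_0$ into codimension $\geq 2$ of $S_0$ --- is not established, and the argument you offer is internally inconsistent. A marked SQM is small, so $\alpha_0$ cannot ``contract'' the $E_i$; and if some $E_i$ were a divisorial component of the relative stable base locus of $D$, then $D$ would not be $f$-movable, undercutting the very application of \cref{prop:movdecomp} (there the MMP is a sequence of flops and removes no divisor). On the good model the divisors with codimension-$\geq 2$ image in $S$ are not eliminated: they survive on $X_0$ and dominate the $h_0$-exceptional divisors, becoming divisors of insufficient fiber type over $S_0$ --- this is exactly how the factorization is used later in the paper (see the proof of \cref{lem:closed}, where $T$ is described as a blow-up of $S$ followed by a small $\mathbb{Q}$-factorialization). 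Choosing coefficients $a_i$ for which $-\sum a_iE_i$ lies in $\overline{\mathcal{M}}(X/S)$ and yields this behavior essentially requires already knowing $S_0$ (e.g.\ as a $\mathbb{Q}$-factorialized blow-up of $S$ flattening the images of the $E_i$, whose relatively ample class one pulls back), which is the content of the cited results; your proposal assumes it rather than proving it. Finally, ``descending'' $\mathbb{Q}$-factoriality from $X_0$ to $S_0$ is not valid in general, and note that \cref{lem:existssqm} as stated requires a $\mathbb{Q}$-factorial source, so even the lifting of the small $\mathbb{Q}$-factorialization needs a word of justification rather than a bare citation.
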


From the following lemma, it follows that we may lift pseudo-automorphisms accross good fibrations. 

\begin{lemma}
\label{lem:commutativity}
    For $(X/S,\Delta)$ a klt CY pair, suppose we have a diagram 
    \begin{center}
    \begin{large}
    \begin{tikzcd}
        X_1 \arrow[d, "f_1", swap] 
        & X_2 \arrow[d, "f_2"] \\ 
        T_1 \arrow [r, "\pi_T", swap, dashed]
        & T_2
    \end{tikzcd}
    \end{large}
    \end{center}
    where $\alpha_i \colon X \dashrightarrow X_i$ are marked SQMs of $(X/S,\Delta)$, $f_1$ is a good fibration over $S$, $f_2$ a projective contraction over $S$, and $\pi_T$ is a birational contraction over $S$. Then, we may find a sequence of flops $\pi_X \colon X_1 \dashrightarrow X_2$ making the diagram commute.
    
\end{lemma}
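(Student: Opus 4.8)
The plan is to construct the flop $\pi_X$ by exploiting that $f_1$ is a good fibration and then verifying that the resulting birational map $X_1 \dashrightarrow X_2$ is actually a composition of flops over $S$. First I would fix an $f_2$-ample divisor $A$ on $X_2$. Pulling back along $\pi_T$ gives a divisor on $T_1$, and since $f_1$ is a good fibration, the pullback $f_1^*(\pi_T^* A)$ (interpreted via a resolution of $\pi_T$, with $f_1$ good so that no exceptional divisors appear on $X_1$) is a movable divisor on $X_1$ over $S$; more precisely, its class lies in $\mathcal{M}^e(X_1/S) = \mathcal{M}^e(X/S)$ under the marking. Applying \cref{prop:movdecomp} (valid in relative dimension $\le 3$ by the remark following \cref{lem:existssqm}), there is a marked SQM $\beta \colon X_1 \dashrightarrow X_1'$ over $S$ — necessarily a sequence of flops — carrying this class into the nef cone, hence inducing a contraction $X_1' \to T'$ over $S$ which is the relative Iitaka fibration of the semiample class.

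Next I would identify $T'$ with $T_2$ and the contraction $X_1' \to T'$ with $f_2$. The key point is that $T' = \operatorname{Proj}_S \mathcal{R}(X_1/S, f_1^*\pi_T^*A)$, and since $f_1$ is a good fibration the pushforward section algebra along $f_1$ agrees with the section algebra of $\pi_T^* A$ on $T_1$, whose $\operatorname{Proj}_S$ over $S$ recovers the target of the birational contraction $\pi_T$, namely $T_2$; meanwhile $\mathcal{R}(X_2/S, A)$ computes $T_2$ directly because $A$ is $f_2$-ample. One then checks that the two descriptions of the section algebra coincide — this uses that $\pi_T$ is a birational contraction, so $H^0$ is insensitive to the modification, together with the projection formula for the good fibration $f_1$. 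Thus $X_1' \cong X_2$ compatibly with the maps to $S$, and $\beta$ is the desired sequence of flops $\pi_X \colon X_1 \dashrightarrow X_2$, making the square commute by construction since $f_2 \circ \pi_X$ and $\pi_T \circ f_1$ are both realized as the Iitaka fibration over $S$ of the same class.

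The main obstacle I expect is the bookkeeping in the previous paragraph: one must be careful that $f_1^* \pi_T^* A$ is genuinely movable (not merely pseudoeffective) on $X_1$ over $S$, which is exactly where the \emph{good fibration} hypothesis is essential — the remark after \cref{lem:nicefactorization} records precisely that pullbacks of $S$-movable divisors along good fibrations stay $S$-movable — and that the Iitaka fibration produced by \cref{prop:movdecomp} has the right target. A secondary subtlety is that $T_2$ need not be $\mathbb{Q}$-factorial, so one should phrase the identification $T' \cong T_2$ in terms of the section algebras rather than via a Picard-rank count; the canonical bundle formula (Ambro--Kawamata) guarantees $(T_2/S, \Delta_{T_2})$ is klt CY, which is all that is needed to legitimately speak of its associated cones. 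Finally, one notes that $\pi_X$ does not contract or extract divisors precisely because $\beta$ is a marked SQM and $f_1$ is good, so no divisor of $X_1$ is $f_1$-vertical in a way that would be contracted; this gives the claimed factorization as a sequence of flops.
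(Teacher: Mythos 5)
Your first step coincides with the paper's: choose a divisor $A$ ample over $S$ on the base, pull it back to the movable class $f_1^*\pi_T^*A$ on $X_1$ (movability being exactly where the good-fibration hypothesis enters), and run an MMP/apply \cref{prop:movdecomp} to reach a marked SQM $X_1'$ on which this class is nef, with relative Iitaka fibration $X_1'\to T'$ and $T'\cong T_2$. (Minor wording issue: your $A$ must live on $T_2$ and be ample over $S$; an ``$f_2$-ample divisor on $X_2$'' cannot be pulled back along $\pi_T$, and $\operatorname{Proj}_S\mathcal{R}(X_2/S,A)$ for such a divisor is not $T_2$ in general.)

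The genuine gap is the assertion that $X_1'\cong X_2$ compatibly with the maps, so that the flops $\beta:X_1\dashrightarrow X_1'$ already solve the problem. The section-algebra computation only identifies the \emph{targets} $T'\cong T_2$; it says nothing about the total spaces. The class $f_1^*\pi_T^*A$ is trivial on every curve contracted by $X_1'\to T_2$, so it lies on a boundary face of the nef cone of $X_1'$, not in its interior, and the uniqueness argument of \cref{prop:movdecomp} (which identifies two marked SQMs when a common class is ample on both) does not apply. Concretely, $X_1'$ and $X_2$ are two SQMs of $X$ each mapping to $T_2$, and they may well be distinct relative minimal models over $T_2$, differing by flops over $T_2$; in that case $\beta$ has the wrong target and the induced map $X_1'\dashrightarrow X_2$ need not even be a map over $T_2$ a priori. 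This is precisely the point the paper flags and repairs with a second stage: take a divisor $A_X$ on $X_2$ ample over $S$, transport it to $X_1'$ via the small birational map, and run a $(K_{X_1'}+\Delta'+\epsilon\,\cdot)$-MMP \emph{over $T_2$} for that class; since everything is $K+\Delta$-trivial this is a sequence of flops over $T_2$ terminating at $X_2$, and only the composite $X_1\dashrightarrow X_1'\dashrightarrow X_2$ makes the full diagram commute. Your proof needs this additional step; as written, the conclusion ``$\beta$ is the desired $\pi_X$'' does not follow.
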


\begin{proof}
    Choose a divisor $A$ of $T_2$ ample over $S$. The divisor $f_1^* \pi_T^* A$ is movable over $S$, since $f_1$ maps prime vertical divisors to prime divisors. Run a $(K_{X_1}+\Delta_1 + \epsilon f_1^* \pi_T^* A)$-MMP over $S$, terminating with a good minimal model $X'$ whose ample model is $T_2$. Because $f_1^* \pi_T^* A$ is movable over $S$, this MMP $\psi$ is just a sequence of flops over $S$. We have a diagram

    \begin{center}
    \begin{large}
    \begin{tikzcd}
        X_1 \arrow[r, dashed, "\psi"] \arrow[d, "f_1", swap] 
        & X' \arrow[r, dashed, "\psi'"] \arrow[d, "f'"] & X_2 \arrow[dl, "f_2"]\\ 
        T_1 \arrow [r, "\pi_T", swap, dashed]
        & T_2
    \end{tikzcd}
    \end{large}
    \end{center}
    where $\psi':=\psi\circ\alpha_1^{-1}\circ\alpha_2$. 
    The divisor $f_1^* \pi_T^* A$ is semiample on a nonempty open subset of $X_1$ and $\psi$ is an isomorphism over a nonempty open subset of $T_2$, so the square is commutative. However, a priori, we do not know if the triangle on the right commutes. In order to replace $\psi'$ with a sequence of flops making this triangle commute, choose a divisor $A_X$ on $X_2$ ample over $S$. Running an $(K_{X'} + \Delta' + \epsilon \psi'^*A_X)$-MMP over $T_2$ yields such a sequence of flops.
\end{proof}

We immediately get the following corollary that we may lift pseudo-automorphisms across good fibrations:

\begin{corollary}
\label{cor:lifting}
    Suppose that $(X/S, \Delta)$ is a klt CY pair and $f:\: (X,\Delta)\to T$ is a good fibration over $S$. Using the canonical bundle formula, choose a divisor $\Delta_T$ such that $K_X + \Delta = f^*(K_T + \Delta_T)$ and $(T/S,\Delta_T)$ is klt CY. Then, given $\phi \in \PsAut(T/S,\Delta_T)$ we may find a $\tilde{\phi}\in \PsAut(X/S,\Delta)$ lifting $\phi$, i.e., such that the diagram
    \begin{center}
    \begin{large}
    \begin{tikzcd}
        X \arrow[d, "f", swap] \arrow[r, "\tilde{\phi}", dashed]
        & X \arrow[d, "f"] \\ 
        T \arrow [r, "\phi", swap, dashed]
        & T
    \end{tikzcd}
    \end{large}
    \end{center}
    is commutative.
\end{corollary}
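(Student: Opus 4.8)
The plan is to obtain $\tilde\phi$ as a direct application of \cref{lem:commutativity}, taking source and target to be $X$ itself. A pseudo-automorphism of $T$ over $S$ is an isomorphism in codimension one, so in particular it neither contracts nor extracts divisors; hence $\phi$ is a birational contraction over $S$. I would therefore invoke \cref{lem:commutativity} with $X_1=X_2=X$, $\alpha_1=\alpha_2=\id$, $f_1=f_2=f$ (the good fibration $f$ is in particular a projective contraction, so the hypotheses on $f_1$ and $f_2$ hold), $T_1=T_2=T$, and $\pi_T=\phi$. This produces a sequence of flops $\tilde\phi\colon X\dashrightarrow X$ over $S$ with $f\circ\tilde\phi=\phi\circ f$, which is precisely the commutative square asserted in the statement. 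A finite sequence of flops contracts and extracts no divisor, so $\tilde\phi$ is an isomorphism in codimension one; since $\PsAut(X/S)$ is a group, this gives $\tilde\phi\in\PsAut(X/S)$.

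It then remains to verify that $\tilde\phi$ preserves the boundary, i.e.\ that $\tilde\phi_*\Delta=\Delta$. Since $\tilde\phi$ is an isomorphism in codimension one we have $\tilde\phi_*K_X=K_X$, so $\tilde\phi_*(K_X+\Delta)=K_X+\tilde\phi_*\Delta$. Applying the commutativity to the inverse maps gives $f\circ\tilde\phi^{-1}=\phi^{-1}\circ f$, and hence, by functoriality of pullback of divisor classes together with the identities $\tilde\phi_*=(\tilde\phi^{-1})^{*}$ and $\phi_*=(\phi^{-1})^{*}$ (valid for maps that are isomorphisms in codimension one), one gets $\tilde\phi_*\,f^{*}D=f^{*}\phi_*D$ for every divisor class $D$ on $T$. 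Taking $D=K_T+\Delta_T$ and using $\phi_*(K_T+\Delta_T)=K_T+\Delta_T$ (because $\phi\in\PsAut(T/S,\Delta_T)$),
\begin{align*}
K_X+\tilde\phi_*\Delta=\tilde\phi_*(K_X+\Delta)&=\tilde\phi_*\,f^{*}(K_T+\Delta_T)\\
&=f^{*}\phi_*(K_T+\Delta_T)=f^{*}(K_T+\Delta_T)=K_X+\Delta ,
\end{align*}
so $\tilde\phi_*\Delta=\Delta$ and therefore $\tilde\phi\in\PsAut(X/S,\Delta)$, as desired.

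Essentially all of the content is already contained in \cref{lem:commutativity}; the two points that genuinely require attention are (a) observing that a pseudo-automorphism of the base is a birational contraction, so that \cref{lem:commutativity} applies with the chosen data, and (b) the divisor-class bookkeeping $\tilde\phi_*f^{*}=f^{*}\phi_*$ needed to transport $\phi_*\Delta_T=\Delta_T$ up to $X$. Point (b) is the only mild subtlety, and it is handled by applying the commutativity $f\circ\tilde\phi=\phi\circ f$ to the inverse birational maps.
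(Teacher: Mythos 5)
Your reduction to \cref{lem:commutativity} is exactly how the paper obtains this corollary: it is recorded there as an immediate consequence of that lemma, applied just as you do with $X_1=X_2=X$, $\alpha_1=\alpha_2=\id$, $f_1=f_2=f$, $T_1=T_2=T$ and $\pi_T=\phi$; your point (a), that a pseudo-automorphism of $T$ over $S$ neither contracts nor extracts divisors and hence is a birational contraction over $S$, is the right justification for invoking the lemma. So the construction of $\tilde\phi$ as a sequence of flops over $S$ with $f\circ\tilde\phi=\phi\circ f$ is correct and matches the paper's (implicit) argument.

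The problem is in your verification that $\tilde\phi_*\Delta=\Delta$. The identity $\tilde\phi_*f^*D=f^*\phi_*D$ can indeed be justified at the level of actual divisors (smallness of $\phi$ and $\tilde\phi$, plus the good-fibration property that preimages of codimension-two subsets of $T$ contain no divisors). But the other two links of your chain hold only up to linear equivalence: for a fixed Weil representative one has $\tilde\phi_*K_X=K_X+\dv(h)$ and $\phi_*K_T=K_T+\dv(g)$ for some rational functions $h,g$, because $\phi\in\PsAut(T/S,\Delta_T)$ gives $\phi_*\Delta_T=\Delta_T$ but says nothing about the chosen canonical representative, and likewise $\tilde\phi$ need not fix the representative $K_X$. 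Running your computation with these corrections yields only $\tilde\phi_*\Delta-\Delta=\dv(g\circ f)-\dv(h)$, i.e.\ $\tilde\phi_*\Delta\sim_{\mathbb{R}}\Delta$, whereas membership in $\PsAut(X/S,\Delta)$ requires the equality of divisors $\tilde\phi_*\Delta=\Delta$. Linear (or numerical) equivalence does not force this: a priori a composition of flops over $S$ covering $\phi$ could permute, say, horizontal components of $\Delta$ carrying different coefficients, and nothing in the class-level bookkeeping rules that out; note also that the crepancy of the flops is of no help here, since $\Delta$ is transported by strict transform anyway. So the boundary-preservation step needs a genuinely different argument (for instance a componentwise analysis of how the constructed MMP acts on the components of $\Delta$); the paper itself elides this point by presenting the corollary as immediate from \cref{lem:commutativity}, and your attempt to fill it in is the part of the proposal that does not go through as written.
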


\subsection{The cone conjecture in relative dimension 1}\label{subsec:rel-dim-1}

In \cite[Theorem 3.4]{FHS21}, the authors prove the geometric cone conjecture for fibrations $f:\: X\to S$ with $X$ terminal, $K_X\equiv_S 0$, and $f$ of relative dimension one. In this subsection, we extend their result to klt CY pairs $(X/S,\Delta)$ of relative dimension one. The following lemma is well-known to the experts. 

\begin{lemma}
\label{lem:fanotype}
    If $(X/S, \Delta)$ is a klt CY pair with $\Delta$ big over $S$, then $X$ is of Fano type over $S$.
\end{lemma}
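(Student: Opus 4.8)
The plan is to use the canonical bundle formula and the standard characterization of Fano type via running an MMP. Since $(X/S,\Delta)$ is klt and $K_X+\Delta\equiv_S 0$, and $\Delta$ is big over $S$, I would first write $\Delta \equiv_S A + E$ where $A$ is an $\mathbb{R}$-divisor that is ample over $S$ and $E$ is an effective $\mathbb{R}$-divisor; this is possible because $\Delta$ is big over $S$ (every big divisor is a sum of an ample and an effective divisor, relatively). For a sufficiently small $\epsilon>0$, the pair $(X,\Delta-\epsilon E)$ is still klt (klt is an open condition on coefficients, and we are only subtracting), and moreover $\Delta - \epsilon E$ remains effective after possibly absorbing a little more of $\Delta$; a cleaner way is to note that $(X, (1-\epsilon)\Delta + \epsilon\Delta)$ and choose the decomposition so that what remains, call it $\Delta' := \Delta - \epsilon E$, is effective and $(X,\Delta')$ is klt.

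Now I would compute: $-(K_X+\Delta') = -(K_X+\Delta) + \epsilon E \equiv_S \epsilon E$, which is effective but not obviously ample. Instead, the right move is to keep a bit of the ample part: since $\Delta\equiv_S A+E$ with $A$ ample over $S$, set $\Delta' := \Delta - \epsilon A - \epsilon E + \epsilon A = \Delta - \epsilon E$ — this still doesn't isolate ampleness. The correct bookkeeping: write $\Delta \equiv_S A + E$, and for small $\epsilon$ consider $\Delta' := (1-\epsilon)\Delta + \epsilon E'$ for a suitable effective $E'$... Let me state it plainly. Because $\Delta$ is big over $S$, for small $\epsilon > 0$ we can write $\Delta \equiv_S \epsilon A + \Delta''$ with $A$ ample over $S$ and $\Delta'' \geq 0$, and with $(X,\Delta'')$ still klt (taking $\epsilon$ small enough that subtracting $\epsilon A$ from $\Delta$ keeps coefficients acceptable and the pair klt — this uses Kodaira's lemma / the negativity of the boundary perturbation, valid since $\Delta$ big over $S$). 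Then
\[
-(K_X + \Delta'') = -(K_X+\Delta) + \epsilon A \equiv_S \epsilon A,
\]
so $-(K_X+\Delta'')$ is ample over $S$, and $(X,\Delta'')$ is klt. By definition, this exhibits $(X,\Delta'')$ as a log Fano pair over $S$, hence $X$ is of Fano type over $S$.

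The main obstacle — really the only subtle point — is justifying that one can subtract a small ample multiple from $\Delta$ while keeping the boundary effective and the pair klt; this is exactly where bigness of $\Delta$ over $S$ is used (via the relative Kodaira lemma: a big divisor over $S$ is $\mathbb{R}$-linearly equivalent over $S$ to an ample plus effective, so $\Delta - \epsilon A \equiv_S$ an effective divisor for small $\epsilon$), and klt is preserved because we are perturbing $\Delta$ by an $\mathbb{R}$-linearly-trivial-over-$S$ small adjustment and klt is an open condition. Everything else is a one-line numerical computation using $K_X+\Delta\equiv_S 0$. I would keep the write-up to a few sentences, citing the definition of Fano type and the relative Kodaira lemma.
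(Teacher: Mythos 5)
Your final argument is correct and is essentially the paper's proof: both decompose $\Delta \sim_{S,\mathbb{Q}} A+E$ via the relative Kodaira lemma and pass to the perturbed boundary $(1-\epsilon)\Delta+\epsilon E$ (your $\Delta''$), which is effective, keeps the pair klt by openness of the klt condition, and makes $-(K_X+\Delta'')\sim_{S,\mathbb{Q}}\epsilon A$ relatively ample. The false starts in the middle are only expository noise; once cleaned up, the write-up matches the paper's argument step for step.
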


\begin{proof}
    Since $\Delta$ is big over $S$ we may find $A$ relatively ample $\qq$-Cartier and $E$ effective over $S$ such that $\Delta\sim_{S,\qq} A+E$ (see, e.g.,~\cite[2.2.7]{Laz04a}).

    Since $K_X + \Delta \sim_{S,\qq} 0$, we have that 
    \begin{center}
        $-(K_X +(1-\epsilon)\Delta +\epsilon E) \sim_{S,\qq} \epsilon A$
    \end{center}
    implying that $(X/S, (1-\epsilon)\Delta + \epsilon E)$ is a Fano pair. Note that for $\epsilon>0$ small enough, the pair
    $(X,(1-\epsilon)\Delta+\epsilon E)$ has klt singularities
    as the klt condition is open (see, e.g.,~\cite{KM98}).
\end{proof}

\begin{corollary}
    If $(X/T,\Delta)$ is a klt CY pair with relative dimension $1$ and $\Delta^{\hor}\neq 0$, then $X$ is of Fano type over $T$.
    
\end{corollary}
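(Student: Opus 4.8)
The plan is to show that the hypothesis $\Delta^{\hor}\neq 0$ forces $\Delta$ to be big over $T$, and then simply to invoke \cref{lem:fanotype}. So the whole argument is a one-step reduction, the content being an analysis of the generic fibre.

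First I would pass to the generic fibre. Write $\eta$ for the generic point of $T$ and $X_\eta$ for the generic fibre; since $f$ has relative dimension one, $X_\eta$ is a normal --- hence regular --- projective curve over the function field $k(\eta)$, and in particular every Weil divisor on it is Cartier. Decompose $\Delta = \Delta^{\hor}+\Delta'$ into horizontal and vertical parts, both effective. By definition every component of $\Delta^{\hor}$ dominates $T$, so $\Delta^{\hor}$ restricts to a \emph{nonzero} effective $\mathbb{R}$-divisor $\Delta^{\hor}_\eta$ on $X_\eta$; a fortiori $\Delta_\eta\geq \Delta^{\hor}_\eta$ is a nonzero effective $\mathbb{R}$-divisor on the curve $X_\eta$. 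On a projective curve any such divisor has positive degree, hence is ample, in particular big; equivalently $\kappa(X_\eta,\Delta_\eta)=1=\dim X-\dim T$. This is exactly the statement that $\Delta$ is $f$-big, i.e.\ $\Delta$ is big over $T$.

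With this in hand, \cref{lem:fanotype} applied to the klt Calabi--Yau pair $(X/T,\Delta)$ immediately yields that $X$ is of Fano type over $T$. I do not expect any serious obstacle: the only points that need (minor) care are that $X_\eta$ is a curve over the possibly non-closed field $k(\eta)$, so one should argue via positivity of degree rather than appeal to a picture over $\overline{k}$, and that one is working with $\mathbb{R}$-divisors, for which ``big'' on a curve again just means positive degree.
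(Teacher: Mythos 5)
Your proof is correct and takes essentially the same route as the paper: reduce to showing $\Delta$ is big over $T$ and then apply \cref{lem:fanotype}. The only (harmless) difference is that you get bigness directly from the positive degree of the nonzero effective divisor $\Delta_\eta$ on the curve $X_\eta$, whereas the paper first uses the CY classification of the generic fibre ($X_\eta\cong\mathbb{P}^1$ since $\Delta_\eta\neq 0$) and adjunction to compute $\Delta\cdot F=2$; both yield the same conclusion.
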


\begin{proof}
    Since $(X/S,\Delta)$ is klt CY, $(X_\eta,\Delta_\eta)$ is as well. Hence, $X_\eta$ is either an elliptic curve or $\mathbb{P}^1$. As $\Delta_\eta\neq 0$, $X_\eta\cong \mathbb{P}^1$. By adjunction, we have that $\Delta\cdot F=2$ for $F$ a general fiber, hence $\Delta|_F \sim_\mathbb{Q} -K_F \sim c_1 (\mathcal{O}_{\mathbb{P}^1}(-2))$ implying that $\Delta$ is big over $S$. Therefore, the result follows by the previous lemma.
\end{proof}

\begin{theorem}
\label{thm:reldim1}
    Let $f:\: (X,\Delta) \to T$ be a klt CY pair of relative dimension one.
    Then, the cone conjecture holds for $f:\: (X,\Delta) \to T$.
\end{theorem}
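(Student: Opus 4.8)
The plan is to break the argument according to the horizontal part $\Delta^{\hor}$ of the boundary. From the start we may replace $X$ by a small $\mathbb Q$-factorial modification over $T$; this changes neither $\mathcal M^e(X/T)$ nor $\PsAut(X/T,\Delta)$ nor the Mori chamber/face decomposition (cf.\ \cref{prop:movdecomp}), so we assume $X$ is $\mathbb Q$-factorial. Two cases then arise.

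If $\Delta^{\hor}\neq 0$, the corollary above shows that $X$ is of Fano type over $T$. Hence $X/T$ is a relative Mori dream space: it admits only finitely many marked SQMs over $T$, each of which has only finitely many contractions over $T$ by the relative cone theorem. Consequently $\mathcal M^e(X/T)$ contains only finitely many Mori chambers and faces, so the geometric cone conjecture holds for purely trivial reasons; for the fundamental domain one observes that $\Mov(X/T)$ is rational polyhedral and is preserved by $\PsAut(X/T,\Delta)$, so that this group has finite image in $\GL(N^1(X/T))$, and a finite group acting linearly on a rational polyhedral cone admits a rational polyhedral fundamental domain.

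Assume now $\Delta^{\hor}=0$, so that $\Delta$ is $f$-vertical; since $(X_\eta,\Delta_\eta)=(X_\eta,0)$ is a klt Calabi--Yau curve, the generic fibre $X_\eta$ has genus one. I would then follow the argument of \cite[Theorem 3.4]{FHS21}, replacing each appeal to ``terminal with $K_X\equiv_T 0$'' by the corresponding klt Calabi--Yau statement proved above. Concretely: by \cref{lem:nicefactorization} we may pass, up to a marked SQM over $T$, to a model $X\to S_0\to T$ with $S_0\to T$ birational and $X\to S_0$ a good fibration onto a $\mathbb Q$-factorial base; by \cref{lem:crepant} there are only finitely many such $S_0$, and \cref{lem:existssqm} together with \cref{cor:lifting} allow us to transport marked SQMs and relative pseudo-automorphisms between $X/T$ and $X/S_0$, reducing to the case of a good genus-one fibration $X\to T$ with $T$ $\mathbb Q$-factorial. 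In that situation $V(X/T)$ is spanned by the finitely many $f$-exceptional prime divisors, equivalently the $f$-degenerate ones (\cref{lem:excgen}, \cref{lem:degenerate}), whose intersection form along the fibral curves is negative definite; this is exactly what allows the contribution of $V(X/T)$ to $\mathcal M^e(X/T)$ to be controlled by the two-dimensional case of the conjecture, i.e.\ Totaro's theorem \cite{Tot10}, which is already stated for klt pairs, while the horizontal direction is governed by the Mordell--Weil group of $X_\eta$ and the finite automorphism group of the generic fibre. Feeding these inputs into \cref{prop:movdecomp} and the local finiteness statement \cref{prop:locallyfinite}, exactly as in \cite{FHS21}, produces finitely many $\PsAut(X/T)$-orbits of Mori chambers and faces of $\mathcal M^e(X/T)$ (and, with the same bookkeeping, a rational polyhedral fundamental domain).

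It remains to pass from $\PsAut(X/T)$ to $\PsAut(X/T,\Delta)$. Because $\Delta$ is $f$-vertical, every $\phi\in\PsAut(X/T)$ satisfies $\phi^*\Delta\equiv_T\Delta$ (from $\phi^*K_X=K_X$ and $K_X+\Delta\equiv_T 0$), fixes every vertical prime divisor that is a rational multiple of a pullback from $T$, and merely permutes the finite set of $f$-exceptional divisors; hence the kernel of the resulting homomorphism from $\PsAut(X/T)$ to that finite symmetric group fixes $\Delta$, so $\PsAut(X/T,\Delta)$ has finite index in $\PsAut(X/T)$, and both the finiteness of orbits of chambers and faces and the existence of a rational polyhedral fundamental domain pass to a finite-index subgroup. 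Combining the two cases yields the theorem. I expect the main obstacle to be the genus-one case: one must check that every step of the proof of \cite[Theorem 3.4]{FHS21} survives the weakening of hypotheses, the delicate points being that the extra vertical divisors forced by the klt (non-terminal) singularities of $X$ are still finite in number and negative definite -- so that the surface case continues to govern them -- and that the boundary $\Delta$ is correctly tracked through the good-fibration reduction and the descent to $\PsAut(X/T,\Delta)$.
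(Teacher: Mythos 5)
Your case division and the finite-index passage from $\PsAut(X/T)$ to $\PsAut(X/T,\Delta)$ for vertical $\Delta$ agree with the paper, and the case $\Delta^{\hor}\neq 0$ (Fano type over $T$, rational polyhedral cones) is handled identically. The genuine gap is in the remaining case: there you propose to re-run the whole proof of \cite[Theorem 3.4]{FHS21} with ``terminal'' weakened to ``klt'', but you never actually do it --- your last sentence concedes that ``one must check that every step \ldots survives the weakening of hypotheses'', and that check is precisely the content of the statement being proved, so as written the key case is asserted rather than established. Moreover, the two mechanisms you offer for why it should survive are off: the vertical part of $\mathcal M^e(X/T)$ is not governed by ``the two-dimensional case of the conjecture'' (\cite{Tot10} concerns surfaces over a point; in this paper it enters only through the generic fiber in relative dimension two, not here, where $T$ has arbitrary dimension), and no negative-definiteness of the degenerate divisors is claimed or needed --- what is used is only their finiteness (\cref{lem:degenerate}).

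What you are missing is the one observation that makes redoing \cite{FHS21} unnecessary and is the heart of the paper's proof: after reducing to $\Delta=0$, the generic fiber is a smooth curve, so every exceptional divisor extracted by a terminalization $\phi\colon X'\to X$ over $T$ has vertical center (in relative dimension one a center of codimension at least two cannot dominate $T$), hence is vertical over $T$. One then applies \cite[Theorem 3.4]{FHS21} as a black box to the terminal model $X'\to T$ and descends: $\PsAut(X'/T)$ permutes the finite set of $\phi$-exceptional divisors, so a finite-index subgroup consists of pseudo-automorphisms descending to $X/T$, and finiteness of orbits of Mori chambers and faces passes along this finite-index subgroup. Without this reduction (or a genuinely completed klt version of the FHS21 argument), your treatment of the genus-one case is a plan, not a proof.
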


\begin{proof}
    It suffices to prove the statement for $\Delta=0$ because if $\Delta_{\hor}\neq0$, then $X$ is of Fano type over $T$, and if $\Delta$ is purely vertical, then $\PsAut(X/T,\Delta)\subset \PsAut(X/T)$ is a finite index subgroup.  Let $\phi:\: X'\to X$ be a terminalization of $X$ over $T$ so that $X\to T$ is a terminal Calabi--Yau fiber space. Then, by \cite[Theorem 3.4]{FHS21}, the conjecture is true for $X\to T$. All exceptional divisors of $\phi$ are vertical over $T$, so there exists a finite index subgroup of $\PsAut(X'/T)$ consisting of pseudo-automorphisms of $X'/T$ that descend to $X/T$. 
\end{proof}

\begin{corollary}
\label{cor:finite}
    In the setting of the previous theorem, if $\rho(X/S)=v(X/S)+1$, then $\mathcal{A}^e(X/S)$ and $\mathcal{M}^e(X/S)$ are rational polyhedral cones.
\end{corollary}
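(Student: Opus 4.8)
The plan is to show that under the hypothesis $\rho(X/S) = v(X/S) + 1$, the movable effective cone $\mathcal{M}^e(X/S)$ is forced to be rational polyhedral by a dimension count, and then to deduce the same for $\mathcal{A}^e(X/S)$ since it is a face of the former. The key observation is that $V(X/S)$, the span of $f$-vertical divisors, has codimension one in $N^1(X/S)$ by the numerical hypothesis, so the movable cone is ``almost vertical'': any $f$-movable effective class lies in a space only one dimension larger than $V(X/S)$. Since $V(X/S)$ is spanned by $f$-exceptional (equivalently $f$-degenerate) divisors by \cref{lem:excgen}, and there are only finitely many such divisors by \cref{lem:degenerate}, the cone $\mathcal{M}^e(X/S)$ lives inside a very constrained region.

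First I would make the setup precise: let $F$ be a general fiber of $f$, so $F$ is a smooth curve (elliptic, or $\mathbb{P}^1$ after accounting for the boundary via the corollary above), and intersection with $F$ gives a linear functional $\deg_F\colon N^1(X/S)\to \R$ whose kernel is precisely $V(X/S)$. By the hypothesis $\rho(X/S) = v(X/S)+1$, this functional together with $V(X/S)$ spans everything. Any $f$-effective divisor $D$ has $\deg_F D \geq 0$, and the $f$-pseudoeffective cone $\overline{\mathcal{B}}(X/S)$ is cut out inside the half-space $\{\deg_F \geq 0\}$. The slice $\{\deg_F D = 1\}\cap \overline{\mathcal{B}}(X/S)$ is then an affine cross-section living in an affine space modeled on $V(X/S)$.

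The main step is to show this cross-section is a rational polytope. Here I would use that the $f$-fixed part behaves linearly: for a divisor $D$ with $\deg_F D = 1$, after subtracting its vertical fixed components (which are $f$-exceptional by \cref{lem:degenerate}) one lands in a bounded region, because the positive part of the Zariski-type decomposition over $S$ is controlled once the horizontal degree is fixed — this is essentially the content of \cref{prop:locallyfinite} combined with the fact that fixing $\deg_F = 1$ confines us to a compact slice of $\mathcal{B}(X/S)\cup\{0\}$ together with finitely many vertical exceptional directions. Concretely, $\mathcal{M}^e(X/S)$ is the convex hull of the finitely many rays spanned by $f$-exceptional vertical divisors together with the chamber decomposition on the ``horizontal degree one'' slice, and \cref{prop:movdecomp} together with \cref{thm:reldim1} gives that this chamber decomposition is finite up to $\PsAut(X/S,\Delta)$; but under the present numerical hypothesis there is no room for the pseudo-automorphism group to act nontrivially on a positive-dimensional family of chambers, so the decomposition is already finite, hence $\mathcal{M}^e(X/S)$ is rational polyhedral. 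Finally, $\mathcal{A}^e(X/S) = \Nef(X/S)\cap \mathcal{B}^e(X/S)$ is a face of $\mathcal{M}^e(X/S)$ (the chamber corresponding to the identity SQM), so it too is rational polyhedral.

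The hard part will be pinning down exactly why finiteness of the chamber decomposition on the horizontal-degree-one slice is automatic here rather than merely finite-up-to-$\PsAut$: one must argue that $\PsAut(X/S,\Delta)$ acts through a finite quotient on $N^1(X/S)$ in this codimension-one situation, which should follow because the group preserves both $V(X/S)$ and the horizontal degree functional, and the relative dimension one cone conjecture (\cref{thm:reldim1}) forces the relevant stabilizers to be large enough that no infinite orbit can fit inside the constrained cone. I expect this to reduce, after quotienting by the finitely many vertical exceptional divisors, to the rational polyhedrality of the nef and movable cones of the generic fiber $X_\eta$ — which is a curve, hence automatic.
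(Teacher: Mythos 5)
The decisive step---upgrading ``finitely many chambers and faces up to $\PsAut(X/T,\Delta)$'' from \cref{thm:reldim1} to absolute finiteness---is exactly the point you leave open, and the heuristic you offer for it does not work. A subgroup of $\GL(N^1(X/T),\mathbb{Z})$ can preserve the hyperplane $V(X/T)$ and the fiber-degree functional and still be infinite: such a group acts block-triangularly, and both its action on $V(X/T)$ and its ``translation'' part along $V(X/T)$ can have infinite order. This is precisely what Mordell--Weil-type translations do; compare the infinite-order parabolic transformations $\varphi_x$ in the proof of \cref{thm:kltpairs} and the translation group $G(X/S,\Delta)$ of rank $\rho-\rho_\eta-v$ in \cref{prop:k=0}, all of which preserve the vertical subspace and the degree functional. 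So ``preserves $V(X/S)$ and $\deg_F$'' cannot be the reason; the hypothesis $\rho(X/T)=v(X/T)+1$ must enter through an actual finiteness statement for the group. The paper's proof consists of exactly that input: pass to a terminalization $X'\to X$ over $T$ as in the proof of \cref{thm:reldim1}, invoke \cite[Lemma 3.3]{FHS21} to conclude that $\PsAut(X'/T)$ is finite under $\rho=v+1$, deduce finiteness of $\PsAut(X/T,\Delta)$ and $\Aut(X/T,\Delta)$, and then apply \cref{thm:reldim1} to get finitely many chambers and finitely many faces in each chamber, whence both $\mathcal{A}^e(X/T)$ and $\mathcal{M}^e(X/T)$ are rational polyhedral. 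Your proposal supplies no substitute for this finiteness, so it is incomplete at its central point.

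Two secondary inaccuracies are worth flagging. Describing $\mathcal{M}^e(X/S)$ as the convex hull of rays spanned by $f$-exceptional vertical divisors together with a horizontal-degree-one slice is incorrect: exceptional (degenerate) classes are not movable (cf.\ \cref{lem:degenerate}; for good fibrations \cref{prop:movert} even gives $V(X/S)\cap\overline{\mathcal{M}}(X/S)=\{0\}$). And \cref{prop:locallyfinite} only controls the chamber structure inside the open cone $\mathcal{B}(X/S)$, so it cannot by itself exclude accumulation of chambers along the non-big boundary, which is exactly where infinitely many extremal rays would appear; likewise the closing reduction ``to the generic fiber, which is a curve'' misses the point, since $N^1(X_\eta)$ being one-dimensional is automatic and the whole difficulty lives in the vertical directions and the group acting on them.
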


\begin{proof}
By \cite[Lemma 3.3]{FHS21}, $\PsAut(X'/T)$ is finite, and hence so are $\PsAut(X/T,\Delta)$ and $\Aut(X/T,\Delta)$. By \cref{thm:reldim1}, there exist only finitely many chambers of $\mathcal{M}^e(X/T)$ and finitely many faces in each chamber. 
\end{proof}

\section{The cone conjecture for the generic fiber}\label{section4}

In this section, we work over a field $k$ of characteristic $0$ that is not necessarily algebraically closed. We adapt the arguments of Totaro in \cite{Tot10} to extend his result on the existence of a rational polyhedral fundamental domain for klt CY pairs in dimension two to this setting. In particular, for a klt CY pair $(X/S,\Delta)$ of relative dimension two, this proves the cone conjecture for $(X_\eta,\Delta_\eta)$. Recall that for klt CY pairs in dimension two, the movable cone and nef cones coincide and $\PsAut(X,\Delta) = \Aut(X,\Delta)$. 

Over a non-closed field $k$ of characteristic $0$, we have that 
\[
N^1(X) = N^1(X_{\bar{k}})^{\Gal(\bar{k}/k)}, 
\quad 
\mathcal{A}^e(X) = \mathcal{A}^e(X_{\bar{k}})^{\Gal(\bar{k}/k)},
\text{ and }
\NE(X) = \NE(X_{\bar{k}})^{\Gal(\bar{k}/k)}.
\]
Even though $\Gal(\bar{k}/k)$ is profinite, a simple topological argument shows that its action on $N^1(X_{\bar{k}})$ must factor through a finite group. A $\Gal(\bar{k}/k)$-orbit of a $(K_X + \Delta)$-negative extremal ray of $\NE(X_{\bar{k}})$ is a $(K_X + \Delta)$-negative extremal face of $\NE(X_{\bar{k}})$, since the divisor $K_X+\Delta$ is $\Gal(\bar{k}/k)$-invariant. The cone theorem thus works in the same way as it does in the algebraically closed case, except that when we produce contractions defined over $k$, they must contract $\Gal(\bar{k}/k)$-invariant faces of $\NE(X_{\bar{k}})$.

For smooth surfaces, an extremal $K_X$-negative ray of $N^1(X)$ corresponds to a $1$-cycle $C=\sum C_i$ on $X_{\bar{k}}$ where $C_i$ are in a Galois orbit. If $C^2<0$, we see that $C_i$ must be mutually disjoint $(-1)$-curves. In this case, the associated contraction blows down each $C_i$ to a smooth point. If $C^2=0$, any connected component of $C$ is either irreducible or a union of two transversely intersecting $(-1)$-curves. In this case, the associated contraction turns $X$ into a conic bundle over a smooth curve. If $C^2>0$, then $\rho^{G}(X) = 1$ and $X$ is a del Pezzo surface (see, e.g.~\cite[Example 2.18]{KM98}).

The following is a classical result in the theory of surfaces.

\begin{theorem}
    For $S$ a surface, the intersection pairing on $N^1(S)$ is non-degenerate with signature $(1,\rho(S)-1)$.
\end{theorem}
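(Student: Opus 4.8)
The statement is the classical Hodge index theorem for surfaces, so the plan is to reduce to a smooth projective surface and then invoke Hodge theory (or, if one wants a purely algebraic argument, Riemann--Roch). First I would note that $N^1(S)$ depends only on the birational class in a harmless way: passing to a resolution $\tilde S \to S$ only enlarges $N^1$ by adding a negative-definite span of exceptional curves, so it suffices to prove the signature statement for $S$ smooth and projective, and the contribution of the exceptional locus is handled separately (it is negative definite by \cite[V.2.3.5]{Bou71}, quoted above as the lemma on negative definite configurations). Thus I may assume $S$ is smooth projective.

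Next I would produce one positive direction in the intersection form: choose an ample divisor $H$ on $S$, so that $H^2 > 0$. This shows the signature has at least one $+$. To see there is exactly one, let $D$ be any $\R$-divisor class with $D \cdot H = 0$; the claim is that $D^2 \le 0$, with equality only if $D \equiv 0$. This is where the main input enters. The cleanest route is the Hodge--Riemann bilinear relations on $H^{1,1}(S,\R)$: the cup product form has signature $(1, h^{1,1}-1)$ on the primitive $(1,1)$-classes, and $N^1(S)_\R \subseteq H^{1,1}(S,\R)$ is a subspace containing an ample (hence non-primitive, positive) class, so the restriction of the form to $N^1(S)$ inherits signature $(1,\rho(S)-1)$ provided the form is non-degenerate on $N^1(S)$. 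Non-degeneracy follows from the fact that $N^1(S)$ is defined as a quotient by numerical equivalence, which is exactly the radical of the intersection pairing restricted to divisor classes; hence the induced form on $N^1(S)$ has trivial radical by construction.

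Alternatively — and this is the argument I would actually write out if I wanted to avoid transcendental methods — I would use asymptotic Riemann--Roch. For a divisor $D$ with $D \cdot H = 0$ and $D \not\equiv 0$, suppose for contradiction $D^2 \ge 0$. If $D^2 > 0$, then $mD + nH$ is big for $m \gg n > 0$ by Riemann--Roch (since $(mD+nH)^2 = m^2 D^2 + n^2 H^2 > 0$ and it meets $H$ positively), so some multiple is effective; but $(mD+nH)\cdot(-D) = -m D^2 - n\,D\cdot H = -mD^2 < 0$ would force $-D$ (hence a curve) to have negative intersection with an ample-ish effective class appropriately arranged — one pushes this to a contradiction with $H$ being ample by intersecting against $H$ and $D$ carefully. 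If $D^2 = 0$ with $D \not\equiv 0$, pick $E$ with $D \cdot E \ne 0$ (possible since $D$ is not in the radical), and consider $D' = E - \frac{E\cdot H}{H^2}H$, so $D'\cdot H = 0$; then for suitable $t$, $(D + tD')^2 = 2t\,D\cdot D' + t^2 D'^2$ can be made positive, reducing to the previous case.

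The main obstacle is the $D^2 = 0$, $D \not\equiv 0$ borderline case and, in the Riemann--Roch approach, cleanly deriving the contradiction from $D^2 > 0$, $D \cdot H = 0$ without circularity: one must be careful that "big implies effective multiple" plus "ample meets effective non-negatively" genuinely closes the loop. In practice the shortest honest writeup is simply to cite the Hodge index theorem (e.g.\ \cite[V.1.9]{KM98}-style references or Hodge theory), since the excerpt already freely uses such classical surface theory; so I would state it as "this is the Hodge index theorem" with a one-line reduction from $S$ possibly singular to its resolution via the negative-definiteness of exceptional curves.
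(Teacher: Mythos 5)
Your proposal is correct in substance, but note that the paper offers essentially no proof of this statement: it records it as the classical Hodge index theorem, and the only argument it actually supplies is the remark immediately following, namely that over a non-closed field $k$ of characteristic $0$ one has $N^1(X)=N^1(X_{\bar{k}})^{\Gal(\bar{k}/k)}$, a subspace containing the (Galois-invariant) ample class of positive self-intersection, so the signature statement descends from $\bar{k}$. Your main route (reduce to a smooth surface by a resolution and invoke the Hodge--Riemann relations on $H^{1,1}$) is fine, and in fact simpler than you make it: once you know the big space is Lorentzian, any subspace containing a class of positive square is automatically non-degenerate of signature $(1,\dim-1)$ (project onto the positive vector; its orthogonal complement in the ambient space is negative definite), so neither your separate ``radical'' argument for non-degeneracy nor the full orthogonal decomposition $N^1(\tilde{S})=\pi^*N^1(S)\oplus\langle E_i\rangle$ is needed --- which is just as well, since that decomposition can fail to span $N^1(\tilde S)$ when $S$ is not $\mathbb{Q}$-factorial, whereas the isometric embedding $\pi^*\colon N^1(S)\hookrightarrow N^1(\tilde{S})$ carrying an ample class is all the reduction requires. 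The one point you omit, and the only point the paper cares to address, is the ground field: this section works over a field that is not assumed algebraically closed, and your Hodge-theoretic input is only available after base change to $\mathbb{C}$ (Lefschetz principle). The repair is exactly the subspace observation you already use for the singular-to-smooth reduction, now applied to $N^1(X)\subseteq N^1(X_{\bar{k}})$; adding that one sentence makes your argument align with the paper's. The asymptotic Riemann--Roch alternative you sketch is not wrong in outline, but as you admit it is not closed up (the $D^2>0$, $D\cdot H=0$ contradiction and the $D^2=0$ borderline case are left hand-wavy), so it should not be presented as a complete second proof.
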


The Hodge index theorem is valid over non-closed fields of characteristic $0$. Indeed, since $X$ is projective over $k$, the Galois action will fix the class of an ample divisor. Hence, the restriction of the intersection pairing on $N^1(X_{\bar{k}})$ to $N^1(X)$ contains a class with positive self-intersection. Thus, we have the same result on $N^1(X)$.

In the coming arguments, we will also make use of the Zariski Decomposition \cite{Nak04}.

\begin{theorem}
    Let $D$ be an effective $\mathbb{Q}$-divisor on a smooth projective surface $X$. Then there exist uniquely determined effective $\mathbb{Q}$-divisors $P$ and $N$ with
    $D = P + N$
    such that
    \begin{enumerate}
        \item $P$ is nef;
        \item $N$ is zero or has a negative definite intersection matrix; and 
        \item $P\cdot C=0$ for every irreducible component $C$ of $N$.
    \end{enumerate}
\end{theorem}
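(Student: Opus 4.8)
The plan is to prove uniqueness first — it is short and uses only the negative-definiteness in (2) together with the fact that a nef divisor meets an effective divisor non-negatively — and then to construct the decomposition by an extremal argument over the finitely many configurations of prime components of $D$.

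For uniqueness, suppose $D=P+N=P'+N'$ are two decompositions as in the statement, and set $E:=N-N'=P'-P$. One computes
\[
E^2=(N-N')(P'-P)=N\cdot P'+N'\cdot P\ge 0,
\]
using $N\cdot P=N'\cdot P'=0$ from (3) and that an effective divisor meets a nef divisor non-negatively. On the other hand, write $E=E_+-E_-$ with $E_+,E_-$ effective of disjoint support; every prime component of $E_+$ occurs in $N$ and every prime component of $E_-$ occurs in $N'$, so by (2) one has $E_+^2\le 0$ and $E_-^2\le 0$, with equality only for the zero divisor, while $E_+\cdot E_-\ge 0$ since distinct curves on a surface meet non-negatively. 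Hence $E^2=E_+^2-2E_+\cdot E_-+E_-^2\le 0$, so $E^2=0$, which forces $E_+=E_-=0$, i.e. $N=N'$ and $P=P'$.

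For existence I would run over the finite collection of sets $\mathcal S$ of prime components of $D$ whose intersection matrix $M_{\mathcal S}$ is negative definite; for each such $\mathcal S$ there is a unique $\mathbb Q$-divisor $N_{\mathcal S}$ supported on $\mathcal S$ with $(D-N_{\mathcal S})\cdot C=0$ for all $C\in\mathcal S$. A single linear-algebra input does most of the bookkeeping: since $C_i\cdot C_j\ge 0$ for $i\ne j$, the matrix $-M_{\mathcal S}$ is a symmetric positive-definite matrix with non-positive off-diagonal entries, i.e. a Stieltjes matrix, so $(-M_{\mathcal S})^{-1}$ is entrywise non-negative. From this one deduces that $N_{\mathcal S}\le D$ whenever $N_{\mathcal S}$ is effective, so that $P_{\mathcal S}:=D-N_{\mathcal S}$ is then effective too. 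Now choose $\mathcal S$ with $N_{\mathcal S}$ effective and $\#\mathcal S$ maximal (the empty set qualifies), set $N:=N_{\mathcal S}$ and $P:=P_{\mathcal S}$; conditions (2) and (3) hold by construction, so it remains only to show that $P$ is nef.

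Suppose it were not: there would be a prime divisor $\Gamma$ with $P\cdot\Gamma<0$, and since $P$ is effective we get $\Gamma\subseteq\Supp P\subseteq\Supp D$ and $\Gamma\notin\mathcal S$. Let $\gamma:=\Gamma-\sum_{C\in\mathcal S}c_C\,C$, with the $c_C$ chosen so that $\gamma$ is orthogonal to every $C\in\mathcal S$; by the Stieltjes property the $c_C$ are $\le 0$, so $\gamma$ is an effective $\mathbb Q$-divisor, and $\gamma\cdot P=\Gamma\cdot P<0$. The key point — the one place where the surface geometry genuinely enters — is that if $\gamma^2\ge 0$ then $\gamma$ meets every irreducible curve non-negatively (curves off $\Supp\gamma$ because $\gamma$ is effective, the curves of $\mathcal S$ by orthogonality, and $\Gamma$ because $\gamma\cdot\Gamma=\gamma^2\ge 0$), so $\gamma$ is nef and $\gamma\cdot P\ge 0$, a contradiction. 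Hence $\gamma^2<0$, which is exactly the Schur-complement inequality making $\mathcal S\cup\{\Gamma\}$ negative definite; and $N_{\mathcal S\cup\{\Gamma\}}$ is again effective, since it differs from $N_{\mathcal S}$ by the divisor obtained from inverting $M_{\mathcal S\cup\{\Gamma\}}$ against $(0,\dots,0,P\cdot\Gamma)$, which is effective by the Stieltjes property once more. This contradicts the maximality of $\mathcal S$, so $P$ is nef, finishing existence. I expect the main obstacle to be precisely this existence step: setting up the extremal choice of $\mathcal S$ correctly and verifying that, along an enlargement, negative-definiteness is preserved and both $N$ and $P$ remain effective — all of which I would route through the Stieltjes-matrix fact together with the observation that an effective divisor with non-negative self-intersection whose only troublesome curves lie in the configuration is automatically nef.
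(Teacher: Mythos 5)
Your proposal is correct, but note that the paper does not prove this statement at all: it quotes the Zariski decomposition as a classical result (citing Nakayama), so there is no proof in the paper to compare against. Your uniqueness argument is the standard one (the cross term $E^2=N\cdot P'+N'\cdot P\ge 0$ against the negative-definiteness of the supports), and your existence argument is a correct self-contained version of Zariski's original construction: choose a maximal negative-definite configuration $\mathcal S$ of components of $D$ whose associated solution $N_{\mathcal S}$ is effective, and enlarge it by any curve $\Gamma$ with $P\cdot\Gamma<0$. All the delicate points check out: $\gamma^2$ is exactly the Schur complement of $M_{\mathcal S}$ in $M_{\mathcal S\cup\{\Gamma\}}$, the alternative $\gamma^2\ge 0$ really does force $\gamma$ nef (the only components of $\gamma$ are $\Gamma$ and curves of $\mathcal S$), and $N_{\mathcal S\cup\{\Gamma\}}=N_{\mathcal S}+\delta$ with $M_{\mathcal S\cup\{\Gamma\}}\delta=(0,\dots,0,P\cdot\Gamma)$, so $\delta\ge 0$ by the entrywise non-negativity of $(-M_{\mathcal S\cup\{\Gamma\}})^{-1}$. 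One small expository remark: the claim $N_{\mathcal S}\le D$ deserves a line, and in fact needs no effectivity hypothesis --- writing $D=D_{\mathcal S}+D'$ with $D'$ having no components in $\mathcal S$, the defining equations give $\bigl(M_{\mathcal S}(N_{\mathcal S}-D_{\mathcal S})\bigr)_C=D'\cdot C\ge 0$ for all $C\in\mathcal S$, and multiplying by $(-M_{\mathcal S})^{-1}\ge 0$ yields $N_{\mathcal S}\le D_{\mathcal S}\le D$; this is the same Stieltjes-matrix fact you already invoke (it is the surface case of the negativity lemma). With that line added the proof is complete.
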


Since $P$ and $N$ are intrinsically defined for the divisor $D$ defined over $k$, $P$ and $N$ are fixed under the Galois action. Hence, the Zariski Decomposition is also valid over non-closed fields of characteristic $0$.

\begin{lemma}
\label{lem:semiample}
    Let $(X,\Delta)$ be a klt CY pair of dimension 2. Then any nef effective $\mathbb{R}$-divisor on $X$ is semiample. 
\end{lemma}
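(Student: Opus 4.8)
The plan is to reduce to the abundance statement of \cref{thm:index} by passing to a log resolution and applying the Zariski decomposition, and then to feed the outcome back through \cref{thm:abundance}. Let $D$ be a nef effective $\mathbb{R}$-divisor on $X$. First I would like to reduce to the case that $D$ is a $\mathbb{Q}$-divisor: since $X$ is a klt surface it is $\mathbb{Q}$-factorial, so $\Nef(X)$ is a rational polyhedral cone in the region where it meets the big cone — more precisely, by \cref{prop:locallypolyhedral} the cone $\mathcal{A}^e(X)\cap \mathcal{B}(X)$ is locally rational polyhedral, and on the boundary the Zariski decomposition lets us handle the non-big case — so one may write $D$ as a finite positive $\mathbb{R}$-linear combination of nef effective $\mathbb{Q}$-divisors and it suffices to show each of those is semiample. (Alternatively, if $D$ is big one argues directly; if $D$ is nef but not big, then $D^2 = 0$ on the surface, and the Zariski decomposition of $D$ has trivial negative part, so $D$ itself is the positive part of an effective divisor with $D^2=0$.)

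So assume $D$ is a nef effective $\mathbb{Q}$-Cartier divisor. The key idea is to perturb the boundary. Since $(X,\Delta)$ is klt and the klt condition is open, for $\epsilon>0$ sufficiently small the pair $(X,\Delta+\epsilon D)$ is still klt, and $K_X+\Delta+\epsilon D \equiv \epsilon D$ is nef. Hence $K_X+\Delta+\epsilon D$ is a nef klt log-canonical divisor on a surface, so by \cref{thm:abundance} (abundance in relative dimension at most three, applied with $S = \Spec k$) it is semiample. Therefore $\epsilon D \equiv K_X + \Delta + \epsilon D - (K_X+\Delta)$; but $K_X+\Delta \equiv 0$, so in fact $\epsilon D$ is numerically equivalent to the semiample divisor $K_X+\Delta+\epsilon D$, and on a surface numerical equivalence of $\mathbb{Q}$-divisors with one of them semiample forces $D$ itself to be semiample (use that $N^1$ detects $\mathbb{Q}$-linear equivalence up to torsion, or run the base-point-free theorem directly on $D$, which is nef with $D - (K_X+\Delta)$ nef and big off the non-big locus).

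The step I expect to be the main obstacle is the non-big boundary case, i.e.\ when $D^2 = 0$: here $K_X+\Delta+\epsilon D$ is nef but not big, so one cannot invoke the basepoint-free theorem in its simplest form, and one must rely on the full strength of \cref{thm:abundance} (which does cover this case, since it only assumes $f$-nefness, not $f$-bigness) — or, equivalently, on the classical fact that a nef $\mathbb{Q}$-divisor with $D^2 = 0$ on a surface whose "Iitaka-type" fibration is well-behaved is semiample, which on a klt Calabi--Yau surface follows because the induced map to a curve realizes $D$ as the pullback of an ample divisor. The cleanest route is simply: apply \cref{thm:abundance} to $(X,\Delta+\epsilon D)$ to get $K_X+\Delta+\epsilon D$ semiample, then observe $D$ is proportional in $N^1(X)$ to this semiample class and conclude $D$ is semiample since $\Nef(X)\cap\mathcal{B}^e(X)$ consists of semiample classes. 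I would write the argument in that order: (i) perturb $\Delta$ to $\Delta+\epsilon D$, preserving klt; (ii) note $K_X+\Delta+\epsilon D\equiv\epsilon D$ is nef; (iii) invoke \cref{thm:abundance} for semiampleness of $K_X+\Delta+\epsilon D$; (iv) deduce semiampleness of $D$ from its proportionality to a semiample class on the surface.
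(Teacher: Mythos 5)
Your overall skeleton (perturb the boundary to $\Delta+\epsilon D$, note $K_X+\Delta+\epsilon D\equiv\epsilon D$ is nef, invoke abundance) is the same as the paper's, but your final step (iv) has a genuine gap: you try to transfer semiampleness from $K_X+\Delta+\epsilon D$ to $\epsilon D$ across \emph{numerical} equivalence. That inference is invalid, and your stated justifications do not hold in the relevant generality. The claim that ``$N^1$ detects $\mathbb{Q}$-linear equivalence up to torsion'' is false whenever $\Pic^0(X)$ is nontrivial, i.e.\ when $H^1(X,\mathcal{O}_X)\neq 0$; klt Calabi--Yau surfaces of this kind (abelian and bielliptic types, which are exactly the surfaces arising as generic fibers later in the paper) are precisely where numerical equivalence is strictly weaker than $\mathbb{Q}$-linear equivalence up to torsion, and in general a divisor numerically equivalent to a semiample one need not be semiample (a nontorsion numerically trivial line bundle is the basic counterexample). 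Your fallback, ``run the base-point-free theorem directly on $D$,'' requires $D-(K_X+\Delta)\equiv D$ to be nef and \emph{big}, so it only covers the big case --- which, as you yourself note, is not the problematic one.

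The missing idea, and the way the paper closes exactly this gap, is to first apply abundance to $K_X+\Delta$ itself: it is nef and numerically trivial, hence semiample, hence $\mathbb{R}$-linearly equivalent to zero (this is the content of the numerically trivial abundance statement, \cref{thm:index} in the $\mathbb{Q}$-coefficient case). Once $K_X+\Delta\sim_{\mathbb{R}}0$ is known, the second application of abundance gives that $K_X+\Delta+\epsilon D\sim_{\mathbb{R}}\epsilon D$ is semiample as an honest $\mathbb{R}$-linear equivalence, not merely a numerical one, and the lemma follows. Two smaller remarks: your preliminary reduction to $\mathbb{Q}$-divisors is both unnecessary (abundance applies to $\mathbb{R}$-divisors, and the paper works with $\mathbb{R}$-coefficients throughout) and itself shaky on the non-big boundary, where \cref{prop:locallypolyhedral} gives no rational polyhedral structure; and since the lemma is used over a non-closed field $k$, abundance should be applied on $X_{\bar k}$ and descended using $\Gal(\bar k/k)$-invariance, as the paper does, rather than quoting \cref{thm:abundance} directly over $k$.
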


\begin{proof}
    Since $(X,\Delta)$ is a klt pair and $K_X + \Delta \equiv 0$ is numerically trivially and hence nef, $K_X + \Delta$ is semiample by applying the abundance theorem in dimension two. This follows by applying abundance on $X_{\bar{k}}$ and using that $\mathcal{A}(X) = \mathcal{A}(X_{\bar{k}})^{\Gal(\bar{k}/k)}$. 
    Therefore we know that $K_X+\Delta$ is $\mathbb{R}$-linearly equivalent to zero. Now, for any nef effective $\mathbb{R}$-divisor $D$, we have that $(X, \Delta + \epsilon D)$ is klt for $\epsilon>0$ sufficiently small, and $K_X + \Delta + \epsilon D$ is nef. Hence, applying abundance once more tells us that $K_X + \Delta + \epsilon D \sim_{\mathbb{R}} \epsilon D$ is semiample. 
\end{proof}

We will also make use of the following lemma:

\begin{lemma}
\label{lem:numtriv}
Let $\pi:\: Y\to X$ be a birational morphism of $\qq$-factorial surfaces and $D$ a numerically trivial Cartier divisor on $Y$, then $\pi_* D$ is numerically trivial as well. 
\end{lemma}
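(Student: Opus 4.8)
The plan is to verify directly that $\pi_*D$ pairs to zero against every curve on $X$, using the $\mathbb{Q}$-factoriality of $X$ together with the projection formula. Since we are on surfaces, a $\mathbb{Q}$-Cartier divisor is numerically trivial precisely when its intersection number with every irreducible curve vanishes, so it suffices to show $(\pi_*D \cdot C)_X = 0$ for an arbitrary curve $C\subseteq X$. Note first that $\pi_*D$ is a Weil divisor on $X$, hence $\mathbb{Q}$-Cartier by hypothesis, so the assertion ``$\pi_*D\equiv 0$'' is meaningful.

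Next I would observe that, because $X$ is $\mathbb{Q}$-factorial, the curve $C$ is $\mathbb{Q}$-Cartier, so the pullback $\pi^*C$ is a well-defined $\mathbb{Q}$-Cartier $\mathbb{Q}$-divisor on $Y$ (choose $m$ with $mC$ Cartier and set $\pi^*C=\tfrac{1}{m}\pi^*(mC)$). Applying the projection formula for the proper birational morphism $\pi$ to the line bundle associated to $mC$ and the $1$-cycle $[D]$, and using that proper pushforward of $0$-cycles preserves degree, one obtains the identity
\[
(\pi_*D\cdot C)_X = (D\cdot \pi^*C)_Y .
\]
The conclusion is then immediate: $\pi^*C$ defines a class in $N^1(Y)$, and since $D\equiv 0$ on $Y$ it pairs to zero with every class of $N^1(Y)$, in particular with $\pi^*C$. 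Hence $(\pi_*D\cdot C)_X = (D\cdot \pi^*C)_Y = 0$, and as $C$ was arbitrary we conclude $\pi_*D\equiv 0$.

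I would add the remark that the argument is insensitive to whether $k$ is algebraically closed, since it is phrased entirely in terms of intersection numbers on $X$ and $Y$; alternatively one may base change to $\bar k$ and invoke $N^1(X)=N^1(X_{\bar k})^{\Gal(\bar k/k)}$. There is no serious obstacle here: the only point that deserves a word of care is the justification of the identity $(\pi_*D\cdot C)_X=(D\cdot\pi^*C)_Y$, which is standard intersection theory and is legitimate precisely because $\mathbb{Q}$-factoriality of $X$ makes $C$ a $\mathbb{Q}$-Cartier divisor that can be pulled back.
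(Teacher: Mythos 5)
Your proof is correct, but it takes a genuinely different route from the paper. The paper's argument reduces to the case of a single exceptional curve $C$, observes that $D-\pi^*\pi_*D$ is supported on $C$ and pairs to zero against $C$, and then invokes the Hodge index theorem (negativity of the exceptional curve) to force $D=\pi^*\pi_*D$, concluding with the projection formula; in effect it proves the stronger intermediate statement that $D$ is numerically a pullback. You instead verify numerical triviality of $\pi_*D$ directly: for each curve $C\subseteq X$, $\mathbb{Q}$-factoriality of $X$ makes $C$ a $\mathbb{Q}$-Cartier divisor, so $\pi^*C$ is defined, and the projection formula together with the symmetry of the intersection pairing on the surface $X$ gives $(\pi_*D\cdot C)_X=(D\cdot\pi^*C)_Y=0$ since $\pi^*C$ is a $\mathbb{Q}$-linear combination of curves on $Y$ and $D\equiv 0$. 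Your route is shorter and more elementary: it needs no Hodge index or negativity statement, no reduction to a single exceptional curve, and in fact uses $\mathbb{Q}$-factoriality only of $X$ (the paper's argument implicitly leans on the surface negativity of exceptional curves, consistent with the quadratic-form lemma quoted earlier in the paper). What the paper's approach buys in exchange is the extra conclusion $D=\pi^*\pi_*D$ in $N^1(Y)$, which is in the spirit of the negativity-lemma techniques used elsewhere in the article, whereas your argument, being purely intersection-theoretic, is the one that isolates exactly the hypothesis ($X$ $\mathbb{Q}$-factorial) that makes the statement work.
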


\begin{proof}
    We may assume that $\Exc(\pi)$ is an irreducible curve $C$. The divisor $D- \pi^* \pi_* D$ is supported on $C$ and therefore its numerical class is the same as $\alpha C$ for some $\alpha \in \qq$. We have that $0 = (D- \pi^* \pi_* D) \cdot C = \alpha (C\cdot C)$. If $\alpha \neq 0$, this contradicts the Hodge Index theorem, so $\alpha=0$, $D = \pi^* \pi_* D$, and $\pi^* \pi_* D$ is numerically trivial. By the projection formula, so is $\pi_* D$.  
\end{proof}

By~\cite[Remark 2.2]{Kaw97}, we have the following theorem.

\begin{theorem}
    \label{thm:smoothsurfaces}
    The cone conjecture holds for smooth Calabi--Yau surfaces in characteristic $0$. 
\end{theorem}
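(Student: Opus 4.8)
The plan is to reduce the statement, via the standard dictionary between the cone conjecture and the classification of surfaces, to a case analysis on the Kodaira/Iitaka behaviour of a smooth Calabi--Yau surface $X$ over a field $k$ of characteristic $0$. By ``Calabi--Yau surface'' here we mean $K_X \equiv 0$ with mild singularities; for the reduction we may pass to a minimal smooth model, and after contracting $(-1)$-curves as in the discussion above (which is legitimate over $k$ because the relevant negative curves come in Galois orbits and the contractions are defined over $k$) we are left with a smooth \emph{minimal} surface with $K_X \equiv 0$. The classification of such surfaces over an algebraically closed field tells us $X_{\bar k}$ is an abelian surface, a bielliptic surface, a K3 surface, or an Enriques surface. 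Since $N^1(X) = N^1(X_{\bar k})^{\Gal(\bar k/k)}$, $\mathcal{A}^e(X) = \mathcal{A}^e(X_{\bar k})^{\Gal(\bar k/k)}$, and $\Aut(X) = \Aut(X_{\bar k})^{\Gal(\bar k/k)}$ (the Galois action factoring through a finite group in each case), the problem becomes one about Galois-fixed subcones and Galois-equivariant automorphisms.

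First I would dispose of the cases where $N^1(X)$ has small rank or the Picard lattice forces the nef cone to be rational polyhedral: if $\rho(X) \leq 2$, or more generally if the effective cone is already rational polyhedral, then one takes $\Pi = \mathcal{A}^e(X)$ itself and there is nothing to prove; this subsumes the abelian and bielliptic cases where the relevant cones are controlled directly. The substantive cases are K3 and Enriques, where the nef cone can be genuinely round. Here the key input is the theory of the positive cone and the reflection group: for a K3 surface, $\mathcal{A}^e(X_{\bar k})$ is a fundamental domain for the action of $W$, the group generated by reflections in $(-2)$-classes, acting on (the effective part of) the positive cone $\mathcal{C}^+$, and $\mathrm{O}(\mathrm{Pic})$ is the semidirect product of $W$ with $\Aut(X_{\bar k})$ up to finite index (this is the Sterk--Looijenga--Namikawa picture invoked in the introduction). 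One then takes Galois invariants: $W^{\Gal}$ acts on $(\mathcal{C}^+)^{\Gal} = \mathcal{C}^+ \cap N^1(X)$, which by the Hodge index theorem discussed above is again a round cone of signature $(1, \rho(X)-1)$, and a fundamental domain for $W^{\Gal}$ on this cone is $\mathcal{A}^e(X_{\bar k})^{\Gal} = \mathcal{A}^e(X)$; the quotient of $(\mathcal{C}^+)^{\Gal}$ by the full arithmetic group $\mathrm{O}(N^1(X))$ is of finite volume, whence there are only finitely many $\Aut(X)$-orbits of rational polyhedral cones of the relevant shape, and in particular finitely many orbits of faces of $\mathcal{A}^e(X)$. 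The Enriques case either reduces to the K3 case by passing to the universal cover (a K3 surface, over which one works $\mu_2$-equivariantly in addition to Galois-equivariantly), or can be handled by the finiteness of the automorphism group together with Lemma~\ref{lem:semiample} guaranteeing that nef effective means semiample, so that faces correspond honestly to contractions.

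The glue that turns ``finitely many orbits of rational polyhedral cones of a fixed combinatorial type'' into ``rational polyhedral fundamental domain'' is an arithmetic-group argument à la Looijenga: one shows the $\Aut(X)$-action on the rational points of $\mathcal{A}^e(X) \cap \mathcal{C}^+$ has a fundamental domain that is a finite union of rational polyhedral cones, using that the stabiliser of an ample class is finite (by the classical fact that $\Aut$ of a polarized K3 is finite) and that the cone is cut out inside $\mathcal{C}^+$ by locally finitely many hyperplanes (the walls $\lambda^\perp$ for $(-2)$-classes $\lambda$), which remains true after taking Galois invariants. I expect the \textbf{main obstacle} to be exactly this last packaging step over a non-closed field: one must check that Looijenga's finiteness-of-fundamental-domain result for arithmetic quotients of self-dual cones applies to the Galois-fixed sublattice $N^1(X) \subseteq N^1(X_{\bar k})$ with its induced (still non-degenerate, still hyperbolic) form, and that the relevant group $\Aut(X_{\bar k})^{\Gal}$ is of finite index in $\mathrm{O}(N^1(X))$ up to the reflection subgroup — the latter being where the characteristic-$0$, non-closed subtlety bites, since $\Gal$-invariance can a priori cut the automorphism group down too far. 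The remedy is to observe, as in the cited Totaro argument, that $\Aut(X)$ still surjects onto a finite-index subgroup of $\mathrm{O}(N^1(X))/W^{\Gal}$ because every Galois-invariant Hodge isometry preserving the effective cone is realized by an automorphism defined over $k$ (by descent and the strong Torelli theorem), which is the crucial point to verify carefully.
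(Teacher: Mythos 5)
The paper offers no proof of this theorem at all: it is quoted directly from \cite[Remark 2.2]{Kaw97}, and the discussion immediately preceding it (Galois invariance of $N^1$, the Hodge index theorem, Zariski decomposition over non-closed fields) is only there to make that citation applicable in the generic-fiber setting. Your proposal instead attempts to reprove the statement from scratch via the classification of minimal surfaces with numerically trivial canonical class plus the Sterk--Looijenga--Namikawa lattice-theoretic machinery. That is the correct genealogy, but as written the argument has genuine gaps rather than just unchecked details.

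Concretely: (i) the abelian case is not ``controlled directly'' --- for an abelian surface, or a $k$-torsor under one, with $\rho(X)\geq 3$ (e.g.\ a form of $E\times E$) the nef cone is the full round positive cone, so neither of your escape clauses ($\rho(X)\leq 2$ or a rational polyhedral effective cone) applies; the conjecture there is a nontrivial statement about an arithmetic group worth of automorphisms, which must in addition be shown to be defined over $k$. (ii) Enriques surfaces generically have infinite automorphism group and a non-polyhedral nef cone, so the ``finiteness of the automorphism group'' alternative fails; only the $\mu_2$-equivariant K3-cover route survives, and it requires an equivariant version of the K3 statement of exactly the kind the paper later imports from Oguiso--Sakurai in the proof of \cref{thm:kltsurfaces}. (iii) The K3 step you yourself flag as the main obstacle is genuinely open in your sketch: an isometry of the Galois-fixed lattice $N^1(X)$ is not yet a Hodge isometry of $H^2(X_{\bar k})$, so before the strong Torelli theorem can produce an automorphism you must extend it Galois-equivariantly across $N^1(X_{\bar k})$ and the transcendental lattice (at least on a finite-index subgroup), and you must also justify that $\mathcal{A}^e(X)$ is a fundamental domain for the Galois-fixed wall-and-chamber structure, where the walls now come from Galois orbits of $(-2)$-curves rather than single $(-2)$-classes; ``by descent and the strong Torelli theorem'' does not yet close this. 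These are precisely the points handled in Kawamata's treatment, so the efficient fix is to do what the paper does and cite \cite[Remark 2.2]{Kaw97}, or to carry out the equivariant argument once in the style of the proof of \cref{thm:kltsurfaces}.
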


We now extend this result to CY surfaces with klt singularities, adapting \cite[Theorem 3.3]{Tot10}. 

\begin{theorem}
    \label{thm:kltsurfaces}
    The cone conjecture holds for klt Calabi--Yau surfaces in characteristic $0$. 
\end{theorem}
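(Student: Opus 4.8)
The plan is to reduce the klt case to the smooth case (\cref{thm:smoothsurfaces}) by passing to a suitable resolution, and then to descend a rational polyhedral fundamental domain back down. Let $(X,\Delta)$ be a klt Calabi--Yau surface over $k$. After possibly base-changing to a finite extension (harmless, since passing to a finite-index subgroup of a group acting with a RPFD still acts with a RPFD, cf.\ the argument used in \cref{thm:reldim1}), we may assume $\Delta$ has standard coefficients and $K_X+\Delta\sim_\qq 0$; by \cref{thm:index} the index one cover argument is available, but more directly, I would take the minimal resolution $\pi\colon Y\to X$ and write $K_Y+\Delta_Y=\pi^*(K_X+\Delta)$, where $\Delta_Y$ is effective (since $(X,\Delta)$ is klt, all discrepancies are $>-1$) but need not be; to fix this, instead take a log resolution and run a $K_Y+\Delta_Y^{\geq 0}$-type MMP, or simply work with the crepant model $(Y,\Delta_Y)$ with $\Delta_Y\geq 0$ obtained by keeping only the non-negative part — the cleanest route is the \emph{crepant blow-up} $(Y,\Delta_Y)$ which is again a klt CY surface with $\Delta_Y\geq 0$, and for which all $\pi$-exceptional divisors are $\pi$-exceptional in the sense used earlier. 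The key point is that $\pi^*\colon N^1(X)\hookrightarrow N^1(Y)$ identifies $\mathcal{A}^e(X)$ with a face of $\mathcal{A}^e(Y)$: a class $D$ on $X$ is nef effective iff $\pi^*D$ is nef effective and has zero intersection with every $\pi$-exceptional curve, the latter condition cutting out exactly the face $F\subseteq\mathcal{A}^e(Y)$ perpendicular to the (finitely many) exceptional curves.

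Next I would pass from $Y$ to an honest smooth CY surface. If $\Delta_Y\neq 0$ is horizontal then, as in \cref{lem:fanotype} and its corollary, $X$ is of Fano type, so $\mathcal{A}^e$ is rational polyhedral and there is nothing to prove; so assume $\Delta_Y$ is supported on finitely many curves. Take the index-one cover $Z\to Y$ (available since $K_Y+\Delta_Y\sim_\qq 0$ with standard coefficients): $Z$ has canonical singularities and $K_Z\sim 0$, and its minimal resolution $\widetilde Z$ is a smooth CY surface (K3, abelian, Enriques, or bielliptic up to étale cover). The Galois group $G=\Gal$ of the cover acts on $\widetilde Z$, and $N^1(Y)\otimes\qq$ is recovered as a subspace of $N^1(\widetilde Z)^G$ cut out by the exceptional locus. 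By \cref{thm:smoothsurfaces}, $\Aut(\widetilde Z)$ acts on $\mathcal{A}^e(\widetilde Z)$ with a RPFD; intersecting with the $G$-invariants and using \cref{thm:index} plus Zariski decomposition (both valid over non-closed fields by the remarks above) one obtains a RPFD for the $G$-equivariant automorphisms acting on $\mathcal{A}^e(Y)$, hence on $\mathcal{A}^e(X)$ after intersecting with the exceptional face $F$.

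The remaining point is to compare the group acting: I must show that $\Aut(X,\Delta)$ — equivalently $\PsAut(X,\Delta)$, which coincides with $\Aut(X,\Delta)$ in dimension two — is, up to finite index, the image of the group of $G$-equivariant, exceptional-locus-preserving automorphisms of $\widetilde Z$. Any $\phi\in\Aut(X,\Delta)$ lifts to the minimal/crepant model $Y$ (automorphisms of a surface lift to its minimal resolution, and preserve the crepant structure since discrepancies are intrinsic), hence acts on $N^1(Y)$ preserving $\Delta_Y$ and the exceptional curves; and it lifts to the index-one cover up to the deck group $G$, and then to $\widetilde Z$. Conversely such automorphisms of $\widetilde Z$ descend. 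The intersection of a RPFD with a rational subspace/face is again rational polyhedral, and a group acting on a cone with RPFD induces, on any invariant rational subspace, an action with RPFD (this is Totaro's lemma, \cite[Lemma 2.2]{Tot10}-style, which holds over any field); applying this twice — once to cut down to $G$-invariants, once to cut down to the exceptional face — finishes the argument.

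\medskip
\noindent\emph{Main obstacle.} The genuinely delicate step is the group comparison: one must ensure that enough automorphisms of $X$ preserving $\Delta$ lift to the smooth CY cover $\widetilde Z$, i.e.\ that the finite-index passage does not lose the fundamental domain, and that conversely the $G$-equivariant automorphisms of $\widetilde Z$ that one produces actually descend to $X$ and preserve $\Delta$. This is exactly where working over the non-closed residue field $k=\kappa(\eta)$ matters, and where Totaro's argument must be run carefully with the Galois action of both $\Gal(\bar k/k)$ and the deck group $G$ of the index-one cover treated simultaneously; the RPFD-descent lemma is the technical heart.
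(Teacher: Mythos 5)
Your overall skeleton---index-one cover, minimal resolution of the canonical cover, the smooth case (\cref{thm:smoothsurfaces}), then descent---is the same as the paper's, but there is a genuine gap at the descent step. You assert that a group acting on a cone with a rational polyhedral fundamental domain induces, on any invariant rational subspace, an action with a RPFD, and you use this to pass from $\Aut(\widetilde Z)\curvearrowright\mathcal{A}^e(\widetilde Z)$ to the $G$-equivariant automorphisms acting on the $G$-invariant subcone. No such formal lemma exists: the subspace $N^1(\widetilde Z)^G$ is not preserved by all of $\Aut(\widetilde Z)$, only by the normalizer of $G$, and the translates of a fundamental domain for the \emph{full} automorphism group under this (possibly much smaller) subgroup need not cover $\mathcal{A}^e(\widetilde Z)\cap N^1(\widetilde Z)^G$; Looijenga's criterion (\cref{thm:ddomain}) gives you nothing unless you already know the equivariant group has enough elements. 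What you need is precisely the \emph{equivariant} cone conjecture for finite group actions on smooth Calabi--Yau surfaces, which is a genuine theorem of Oguiso and Sakurai proved by Torelli/lattice-theoretic methods, not a corollary of the non-equivariant statement. This is exactly how the paper proceeds: it checks that the proof of \cite[Corollary 1.9]{OS01} remains valid for subgroups of $\Aut(M_{\bar k})\times \Gal(\bar k/k)$ whose projection to the first factor is finite, thereby treating the deck group $\mathbb{Z}/I$ of the index-one cover and the Galois action simultaneously; this yields the cone conjecture for the crepant quotient $M/(\mathbb{Z}/I)$, and the final descent to $Y$ is \cite[Lemma 3.4]{Tot10}. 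So the step you dismiss as a formal ``RPFD-descent lemma'' is the technical heart, whereas the step you flag as the main obstacle (lifting automorphisms through the covers) is comparatively routine.

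Two smaller points. Base-changing to a finite field extension does not change the coefficients of $\Delta$, so ``we may assume $\Delta$ has standard coefficients'' is unjustified; this is harmless only because the theorem as stated concerns klt Calabi--Yau \emph{surfaces} ($\Delta=0$), where the index-one cover exists by \cref{thm:index}---the pairs case is \cref{thm:kltpairs} and is proved in the paper by an entirely different argument. Also, on the minimal resolution of a surface the crepant boundary is automatically effective (the discrepancies of a minimal resolution are $\leq 0$), so the detour you sketch about repairing non-effectivity is unnecessary.
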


\begin{proof}
    Let $Y$ be a klt CY surface over a field $k$ of characteristic $0$. Let $I$ be the global index of $Y$ so that $IK_Y$ is Cartier and linearly equivalent to zero. Letting $Z\to Y$ be the index one cover. The surface $Y$ has trivial canonical bundle, du Val singularities, and $Y\cong Z/(\mathbb{Z}/I)$. Let $M$ be the minimal resolution of $Z$. Since $M$ is smooth with $K_M\sim 0$, we have the result for $M$ by \cref{thm:smoothsurfaces}. That is, there exists a rational polyhedral fundamental domain for $\Aut(M)\curvearrowright \mathcal{A}^e(M)$. Since the minimal resolution is unique for surfaces, $\mathbb{Z}/I$ acts on $M$ and moreover on $M_{\bar{k}}$. Define 
    \[
    X = M/(\mathbb{Z}/I) = M_{\bar{k}}/(\mathbb{Z}/I\times \Gal(\bar{k}/k)).
    \]
    The proof of \cite[Corollary 1.9]{OS01}, an equivariant cone conjecture for finite $G$-actions on surfaces by Oguiso and Sakurai, is valid for $G$ a subgroup of $\Aut(M_{\bar{k}}) \times \Gal(\bar{k}/k)$ whose projection to the first factor is finite. As $N^1(M_{\bar{k}})$ is finite dimensional, the action  $\Gal(\bar{k}/k)\curvearrowright N^1(M_{\bar{k}})$ factors through a finite group. Applying this result yields the cone conjecture for $\Aut(X)\curvearrowright \mathcal{A}^e(X)$. The result then follows from \cite[Lemma 3.4]{Tot10}, whose proof is valid over fields of characteristic zero. 
\end{proof}

The arguments in \cite[Theorem 4.1]{Tot10}, extending the cone conjecture from klt CY surfaces to klt pairs of dimension two, are valid over non-closed fields of characteristic $0$. We restate the arguments here for the sake of completeness.

By the Hodge index theorem, the intersection product on $N^1(X)$ has signature $(1,\rho(X)-1)$. This allows us to identify the positive cone 
\[
\mathcal{C}_+ := \{[z]\in N^1(X) \mid  z^2>0 \text{ and } z\cdot A >0\}\footnote{$A$ being an ample divisor.}
\]
modulo scalars with a hyperbolic $(\rho(X)-1)$-space, and the nef cone with a convex subset of the hyperbolic space. The coming arguments make use of the following result of Looijenga to produce a rational polyhedral fundamental domain: 

\begin{theorem}[{Cf.~\cite[Proposition 4.1, Application 4.14]{Loo14}}]
    \label{thm:ddomain}
    Let $S$ be a finitely generated free $\mathbb{Z}$-module and $A$ a closed strictly convex cone in $S_{\mathbb{R}}$ with nonempty interior. Let $G$ be a subgroup of $\text{GL}(S)$ which preserves the cone $A$. Suppose that there is a rational polyhedral cone $C \subset A$ such that $\bigcup_{g \in G} gC$ contains the interior of $A$. Then $\bigcup_{g \in G} gC$ is equal to the convex hull $A^+$ of the rational points in $A$, and $G$ has a rational polyhedral fundamental domain on $A^+$. That is, there exists a rational polyhedral cone $\Pi$ such that $\bigcup_{g \in G} g\Pi = A^+$ and $\interior(\Pi) \cap g(\interior(\Pi)) = \emptyset$ unless $g = 1$.
\end{theorem}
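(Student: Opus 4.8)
This theorem is essentially a restatement of results of Looijenga \cite{Loo14}, so the plan is to reconstruct the architecture of his argument. The two geometric inputs that drive everything are the following. First, since $A$ is closed, contains no line, and has nonempty interior, $\interior(A)$ projectivizes to a properly convex domain $\Omega\subseteq \mathbb{P}(S_{\R})$, which carries the Hilbert metric, an $\Aut(A)$-invariant complete metric with compact closed balls; because $S\cong\Z^r$ the group $G\subseteq\GL(S)=\GL_r(\Z)$ is discrete, and hence acts properly discontinuously on $\interior(A)$. Second, any rational polyhedral cone contained in $A$ lies in $A^+$ (its extreme rays are rational points of $A$), so $C\subseteq A^+$; moreover $A^+$ is $G$-stable and is stratified by the relative interiors of the rational polyhedral subcones of $A$, the \emph{rational faces}, each of which is again a properly convex cone.

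The statement I would reduce to is a single structural claim: there exists a $G$-invariant rational polyhedral cone decomposition $\mathcal D$ of $A^+$ — a fan refining the covering $\{gC\}_{g\in G}$ whose supports exhaust $A^+$ — with only finitely many $G$-orbits of cones. Granting this, one extracts the fundamental domain as in the classical toroidal setting: pick a representative $\sigma$ of each $G$-orbit of maximal cones; its stabilizer $G_\sigma$ is finite, so replacing $\sigma$ by a fundamental domain for the linear action of $G_\sigma$ on $\sigma$ — for instance the Dirichlet cone at a rational interior point of $\sigma$ with respect to the (rational) $G_\sigma$-average of the standard inner product, which is cut out by finitely many rational hyperplanes through the origin and hence rational polyhedral — and gluing these along the identifications prescribed by $\mathcal D$, one obtains a rational polyhedral $\Pi$. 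The equalities $\bigcup_{g\in G}g\Pi=A^+$ and $\interior(\Pi)\cap g(\interior(\Pi))=\emptyset$ for $g\neq 1$ are inherited from the corresponding properties of $\mathcal D$, and since every cone of $\mathcal D$ lies inside some $gC$ one also gets $\bigcup_{g\in G}gC=A^+$ (the reverse inclusion being $gC\subseteq A^+$), which is the remaining assertion of the theorem.

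I would build $\mathcal D$ by induction on $r=\dim A$, the case $r=1$ being trivial. For the inductive step, polyhedrality of $C$ together with $GC\supseteq\interior(A)$ forces there to be only finitely many $G$-orbits of rational faces $F\subsetneq\overline A$ — each is $G$-conjugate into the finite face poset of $C$ — and for each such $F$ the stabilizer $G_F$ acts on the properly convex cone $F$ satisfying the same finite-type covering hypothesis (restrict to $F$ the finitely many faces of the translates of $C$ that meet $F$). By induction $G_F$ admits a $G_F$-invariant finite-type rational polyhedral decomposition of $F^+$; spreading these out over $G$-orbits produces a $G$-invariant decomposition of the rational part of $\partial A$. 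One then extends this into $\interior(A)$ by passing to the decomposition cut out by a suitable $G$-invariant family of rational hyperplanes (generated from the facets of $C$ and of the boundary decomposition), proper discontinuity of $G$ on $\interior(A)$ ensuring that this decomposition is locally finite there.

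The main obstacle is exactly this last step: arranging the interior decomposition to be simultaneously locally finite, of finite type, and compatible with the prescribed boundary decomposition. Near a rational boundary face $F$ the group $G$ is not properly discontinuous, so infinitely many cones of $\mathcal D$ accumulate along $F$; one must show that these fall into finitely many $G$-orbits, which is forced by feeding the finite-type boundary decomposition of $F^+$ (already produced by induction) back into the construction, polyhedrality pinning down each accumulating cone by its trace on the boundary. Getting this \emph{link of a boundary stratum} analysis right — the place where the discreteness of $G$, the proper convexity of $A$, and the polyhedrality and covering property of $C$ are all used at once — is the technical heart of Looijenga's proof.
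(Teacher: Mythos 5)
You should first note that the paper does not prove this statement at all: it is quoted verbatim from Looijenga (\cite[Proposition 4.1, Application 4.14]{Loo14}) and used as a black box, so there is no internal argument to compare against. Judged on its own terms, your outline is not yet a proof. The entire content of the theorem is concentrated in the ``single structural claim'' you reduce to --- the existence of a $G$-invariant rational polyhedral decomposition of $A^+$ of finite type, compatible along the rational boundary faces --- and your inductive construction of it is only described, not carried out; you yourself flag the extension into $\interior(A)$ and the analysis of cones accumulating along a rational boundary face as ``the technical heart.'' In particular the equality $\bigcup_{g\in G}gC=A^+$ (which forces every \emph{rational boundary} point of $A$ into some translate $gC$, not just the interior points covered by hypothesis) is exactly one of the assertions to be proved, and in your scheme it is simply inherited from the deferred decomposition. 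So the proposal is a plausible program in the spirit of Ash--Mumford--Rapoport--Tai, but the step that would make it a proof is missing.

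There is also a concrete defect in the final extraction step. The theorem asks for a rational polyhedral cone $\Pi$, i.e.\ a \emph{convex} cone spanned by finitely many rational vectors, with $\bigcup_g g\Pi=A^+$ and disjoint interiors of translates. Choosing orbit representatives $\sigma$ of maximal cones of your fan, taking a Dirichlet domain for the finite stabilizer $G_\sigma$ inside each, and ``gluing along the identifications prescribed by $\mathcal D$'' produces in general a non-convex polyhedral complex, not a single rational polyhedral cone; nothing in your construction forces the glued region to be convex, and convexity is part of the statement being used later in the paper (e.g.\ in the proof of \cref{thm:kltpairs}). Looijenga's actual argument sidesteps both problems at once: he first proves a local finiteness/finite type statement for the translates $gC$ inside $A^+$ (this is where rationality of $C$ and the lattice $S$ enter, via the fact that the relevant dual vectors $g^*\xi$ lie in a fixed lattice and pair boundedly with a point of $\interior(C)$), and then defines $\Pi$ directly as a Dirichlet-type domain with respect to a single rational linear functional $\xi$ in the interior of the dual cone, $\Pi=\{x\in \bigcup_g gC \mid \xi(x)\le (g^*\xi)(x)\ \text{for all } g\in G\}$, which is convex by construction and rational polyhedral because only finitely many $g^*\xi$ are relevant. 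If you want to complete your write-up, either carry out the finite-type decomposition honestly (essentially reproving AMRT-style reduction theory) or switch to this linear-functional Dirichlet construction, which is both shorter and delivers the convexity the statement requires.
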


This lemma is fundamental to prove the following, which is \cite[Theorem 4.1]{Tot10} without the algebraically closed assumption:

\begin{theorem}
    \label{thm:kltpairs}
    Let $(X,\Delta)$ be a klt CY pair of dimension two over a field $k$ of characteristic $0$. Then the cone conjecture holds for $(X,\Delta)$.
\end{theorem}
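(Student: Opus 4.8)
The plan is to reduce the statement to the case of klt Calabi--Yau \emph{surfaces}, which is \cref{thm:kltsurfaces}, by running the Minimal Model Program and invoking \cref{thm:ddomain}. First I would pick a log resolution or, better, a $\mathbb{Q}$-factorial terminalization $\pi\colon (\tilde X,\tilde\Delta)\to (X,\Delta)$; since $(X,\Delta)$ is klt CY of dimension two, $(\tilde X,\tilde\Delta)$ is again klt (in fact terminal) CY, $\tilde X$ is a smooth surface, and $\PsAut$ and $\Aut$ agree in dimension two. Since the minimal model of $(\tilde X,\tilde\Delta)$ is $X$ itself and the movable and nef cones coincide for surfaces, the cone conjecture for $(X,\Delta)$ reduces to producing a rational polyhedral fundamental domain for $\Aut(X,\Delta)\curvearrowright \mathcal{A}^e(X)$.

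The key geometric input is that, by \cref{lem:semiample}, every nef effective $\mathbb{R}$-divisor on the surface $X$ is semiample, so $\mathcal{A}^e(X)$ is exactly the cone of semiample classes; moreover by \cref{prop:locallypolyhedral} this cone is locally rational polyhedral inside the big cone $\mathcal{B}(X)$. The second step is to apply the Looijenga machine: by the Hodge index theorem $N^1(X)$ carries a form of signature $(1,\rho(X)-1)$, and one takes $A=\overline{\mathcal{C}_+}$, the closure of the positive cone, which is a closed strictly convex cone with nonempty interior, preserved by $G:=\Aut(X,\Delta)$. To feed \cref{thm:ddomain} I must exhibit a single rational polyhedral cone $C\subset \mathcal{A}^e(X)$ whose $G$-translates cover the interior of $A$. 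This is where the heavy lifting happens: one shows that $\mathcal{A}^e(X)\cap \mathcal{C}_+$ equals a union of $\Aut(X,\Delta)$-translates of finitely many rational polyhedral pieces, by using that the walls of the nef cone inside $\mathcal{C}_+$ correspond to $(-2)$-curves on $\tilde X$ (the only negative curves are contracted to klt singularities), that these form finitely many orbits under the automorphism group by a Vinberg-style or Looijenga-style argument on the hyperbolic lattice, and that reflections/automorphisms transport any big nef class into a fixed fundamental chamber.

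The remaining step is to pass from the interior of $A$ to the boundary, i.e. from the big nef classes to all nef effective classes. Here one argues that any nef effective class on the boundary of $\mathcal{C}_+$ is either semiample defining a fibration $X\to C$ to a curve, or corresponds to a rigid divisor, and in either case lies in the closure of one of the finitely many chambers; by \cref{prop:locallyfinite} the decomposition is locally finite on $\mathcal{B}(X)$, and Looijenga's theorem upgrades the covering of $\interior(A)$ to a covering of the rational hull $A^+$, giving the genuine rational polyhedral fundamental domain $\Pi$. Finally, one descends from $\tilde X$ back to $X$: since $\pi$ is crepant and small-ish (only contracts $(-2)$-curves), $N^1$, the effective nef cone, and the automorphism group all match up appropriately, and the fundamental domain on $X$ is obtained directly.

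The main obstacle I expect is the construction of the finite set of fundamental chambers — equivalently, showing the $\Aut(X,\Delta)$-orbits of walls of $\mathcal{A}^e(X)$ inside the positive cone are finite. This is precisely the lattice-theoretic heart of the Sterk--Looijenga argument for K3 surfaces, and adapting it to the klt pair setting (with its extra boundary $\Delta$ cutting down the automorphism group, and with non-closed $k$ forcing Galois-equivariance) is the delicate part; everything else is formal MMP bookkeeping plus the black-box \cref{thm:ddomain}.
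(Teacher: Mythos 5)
There is a genuine gap, and it sits exactly at the step you flag as the ``heavy lifting.'' Once $\Delta\neq 0$ (the only case not already covered by \cref{thm:kltsurfaces}), the terminalization $X$ is a \emph{geometrically rational} surface, not a K3-like one: the walls of $\mathcal{A}^e(X)$ in the positive cone are cut out by $(-1)$-curves (the $K_X$-negative extremal rays), not by $(-2)$-curves contracted to klt singularities, and there is no Torelli theorem for rational surfaces that converts lattice isometries into automorphisms. A Vinberg/Sterk--Looijenga argument on the hyperbolic lattice $N^1(X)$ cannot work here: the orthogonal group of the lattice (and its Weyl group generated by reflections) is typically enormous while $\Aut(X,\Delta)$ can be small, so ``reflections transport any big nef class into a fixed chamber'' produces isometries, not automorphisms, and the required finiteness of $\Aut(X,\Delta)$-orbits of walls is precisely what remains unproved. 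The paper instead exploits the structure of $-K_X\equiv\Delta$: writing the Zariski decomposition $-K_X=P+N$ and splitting on $\kappa(P)$. For $\kappa(P)=2$ one is in the Fano-type case (\cref{lem:fanotype}) and the cone is polyhedral; for $\kappa(P)=1$ the semiample class $P$ (via \cref{lem:semiample}) gives a genus-one fibration $X\to B$, and the automorphisms are constructed explicitly as Mordell--Weil translations, with an explicit parabolic-transformation computation showing there are finitely many orbits of $(-1)$-curves, after which the cones $\Pi_E$ spanned by $P$ and $\mathcal{A}^e(X)\cap E^{\perp}$ cover $\mathcal{A}^e(X)$ in finitely many orbits and \cref{thm:ddomain} applies; for $\kappa(P)=0$ one contracts $N$ to a klt CY surface $Y$, invokes \cref{thm:kltsurfaces} there, and then proves (nontrivially, via the finitely many ``types'' of $(-1)$-curves, the negativity lemma, and compactness of hyperboloid slices) that the preimage under $\pi_*$ of a rational polyhedral cone of $Y$ is again rational polyhedral, so the fundamental domain lifts.

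Your sketch also elides this last point: ``descending from $\tilde X$ back to $X$'' is not formal bookkeeping in the $\kappa(P)=0$ case, since one must control infinitely many $(-1)$-curves on the terminalization when lifting a polyhedral fundamental domain from the singular CY surface below. In short, the reduction to \cref{thm:kltsurfaces} and the appeal to \cref{thm:ddomain} are the right frame, but the mechanism you propose for the central finiteness statement (lattice-theoretic, Sterk--Looijenga style) is not available for rational surfaces with nontrivial boundary; the automorphisms have to be built geometrically from the fibration or borrowed from the crepant contraction, and that construction is the actual content of the theorem.
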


\begin{proof}
    By \cite[Lemma 3.4]{Tot10}, whose proof is valid over fields of characteristic $0$, it suffices to prove the result for the terminalization of $(X,\Delta)$, and hence we may assume that $(X,\Delta)$ is terminal and, in particular, $X$ is a smooth surface. We may also assume that $\Delta\neq0$, because otherwise we may apply \cref{thm:kltsurfaces}, and that $\rho(X)>2$. Nikulin showed that, in this case, $X$ is geometrically rational (see~\cite[Lemma 1.4]{AM04}).\\ 
    
    \noindent\textit{Claim 1:} If $X$ is a geometrically rational surface over a field of characteristic $0$ and with $-nK_X$ effective for some $n>0$, every nef line bundle on $X$ is effective.\\

    \begin{proof}[Proof of Claim 1]
        Let $L$ be nef on $X$. We estimate $h^0(X,L)$ using Riemann-Roch for surfaces:
        \begin{align*}
            \chi(X,L) &= \chi(X,\mathcal{O}_X) + \frac{1}{2}(L^2 - K_X\cdot L).
        \end{align*}
        Since $X$ is geometrically rational, we have $\chi(X,\mathcal{O}_X)=1$. Moreover, since $L$ is nef and a multiple of $-K_X$ is effective, $\dfrac{1}{2}(L^2 - K_X\cdot L)\geq 0$, implying that $\chi(X,L)\geq 1$. Moreover, we have $A\cdot(K_X - L) < 0$ for any ample line bundle $A$, implying that $h^0(X, K_X - L) = 0$. Hence,
        \begin{align*}
            h^0(X,L) &= h^0(X,L) + h^0(X, K_X - L) \geq \chi(X,L) \geq 1.
        \end{align*}
    \end{proof}

    The cone conjecture is trivial if $\rho(X) = 2$, so we may assume that $\rho(X)\geq 3$, and hence that every $K_X$-negative extremal ray in $\overline{NE}(X)$ is spanned by the class of a $(-1)$-curve.

    Since $K_X + \Delta \equiv 0$, the divisor $-K_X$ is effective and possesses a Zariski Decomposition $-K_X = P + N$ with $P$ a nef $\mathbb{Q}$-divisor, $N$ an effective $\mathbb{Q}$-divisor with negative definite intersection pairing, and $P\cdot N = 0$. In particular, the divisor $P := \Delta - N$ is effective and therefore semiample by \cref{lem:semiample}. 

    The Iitaka dimension of $P$ is either $0$, $1$, or $2$, each case for which we treat separately. For $\kappa(P) = 0$ or $1$, we will show that $\mathcal{A}^e(X)$ is the union of a collection of rational polyhedral cones that fall into finitely many $\Aut(X,\Delta)$-orbits and thus deduce the existence of a rational polyhedral fundamental domain by \cref{thm:ddomain}.\\ 

\noindent \textit{Case 1:} The Iitaka dimension of the divisor $P$ equals $2$.\\ 

    In this case, $-K_X$ is big and therefore by \cref{lem:fanotype} $X$ is of Fano type. Hence, the cone theorem applies in this case and $\mathcal{A}^e(X)$ is itself rational polyhedral.\\

\noindent \textit{Case 2:} The Iitaka dimension of the divisor $P$ equals $1$.\\

    In this case, the semiample divisor $P$ determines a fibration to a curve $X\to B$ and $P^2 = 0$. We have that $-K_X \cdot P = (P + N)\cdot P = 0$ so, by adjunction, the generic fiber is genus $1$. We may contract Galois-orbits of $(-1)$-curves contained in the fibers of the induced map $X_{\bar{k}}\to B_{\bar{k}}$ to obtain a factorization $X\xrightarrow{\pi} Y \to B$ through a minimal fibered surface $Y\to B$. Writing $\Delta_Y := \pi_*\Delta$, we have that $K_X + \Delta = \pi^*(K_Y + \Delta_Y)$ and hence $(Y, \Delta_Y)$ is a klt CY pair with $(X,\Delta)$ being the terminalization of $(Y, \Delta_Y)$. 

    Since $\Delta_Y \equiv -K_Y$ has degree $0$ on all curves contracted by $Y\to B$, applying Zariski's lemma \cite[Lemma 8.2]{Barth04} implies that $\Delta$ is a positive sum of multiples of fibers of $X\to B$. Since $Y\to B$ is minimal, the Mordell-Weil group $G:=\Pic^0(X_\eta)$ acts by automorphisms of $Y$ over $B$, and hence automorphisms in $\Aut(Y/B,\Delta_Y)$. Since terminalizations are unique in dimension two, the Mordell-Weil group acts by automorphisms in $\Aut(X/B,\Delta)$ on $X$.\\   

\noindent\textit{Claim 2:} There are finitely many $G$-orbits, and therefore finitely many $\Aut(X,\Delta)$-orbits, of $(-1)$-curves in $X$.\\
\begin{proof}[Proof of Claim 2]
    First, for each curve $C$ contracted by $X\to B$, the intersection number of any $(-1)$-curve $E$ with $C$ is bounded, independent of $E$. Indeed, by adjunction, $1 = K_X \cdot E = (P + N)\cdot E$. So, if $E$ is not one of the finitely many curves in $N$, we have that $N\cdot E\geq0$ and therefore $P\cdot E\leq 1$. A fiber of $X\to B$ is numerically equivalent to a fixed multiple of $P$. Writing the numerical class of a fiber as an effective linear combination of $C$ and the other curves in the same fiber of $X\to B$ gives a bound on $C\cdot E$ for $C$ any curve contracted by $X\to B$.

    We have that 
    $$\Pic(X_\eta) = \Pic(X) / (aP, C_1,\hdots, C_r)$$
    where $a>0$ and $C_1,\hdots, C_r$ the curves contained in reducible fibers of $X\to B$. Since the degree of a line bundle on $X$ on a general fiber of $X\to B$ is given by the intersection number of with $bP$ for some $b>0$, we have that the Mordell-Weil group $G:= \Pic^0(X_\eta)$ is the subquotient of $\Pic(X)$ given by
    $$G=P^\perp / (aP, C_1,\hdots, C_r).$$
    An element $x\in G$ acts by a translation on $X_\eta$, and extends to an automorphism of $(X,\Delta)$. If $y$ is an element of $\Pic(X_\eta)$, translation of $y$ by $x$ is given by 
    $$\varphi_x(y) = y + \deg(y)x.$$
    Since $bP$ is the class of a general fiber of $X\to B$, the element $\varphi_x$ acts on $\Pic(X)$ by 
    \[
    \varphi_x(y) = y + (y\cdot bP)x \;\;\; \mod aP, C_1,\hdots, C_r.
    \]
    By \cite[Proposition 8.12(iii)]{Ray70}, for an element $x\in P^\perp$ with $x\cdot C_i = 0$ for all of the curves $C_i$, the automorphism $\varphi_x$ of the minimal elliptic surface $Y$ acts on all singular fibers $F$ of $Y\to B$ by an automorphism in the connected component of the identity in $\Aut(F)$. This implies that the same must be true for the singular fibers of $X$ and, in particular, $\varphi_x$ induces the identity permutation on the curves in each fiber of $X\to B$. Since the action of $G$ on $\Pic(X)$ preserves the intersection product, $\varphi_x$ acts on $\Pic(X)$ by the strictly parabolic transformation 
    \[
    \varphi_x(y) = y + (y\cdot bP)x - \left[x\cdot y + \frac{1}{2}(x\cdot x)(y\cdot bP)\right](bP)
    \]
    for all $x\in P^\perp$ with $x\cdot C_i = 0$ for each curve $C_i$ and for all $y\in \Pic(X)$. 

    Now, to show that there are finitely many $G$-orbits of $(-1)$-curves, we show that a $(-1)$ curve $E$ not contained in a fiber of $X\to B$ has its $G$-orbit determined by its intersection numbers $m:=E\cdot bP$, $E\cdot C_i$ for each $C_i$, and its residue $E \mod m\Pic(X)$. Indeed, suppose that we have two $(-1)$-curves $E_1$ and $E_2$ such that $m:= E_1\cdot bP = E_2 \cdot bP$, $E_1 \cdot C_i = E_2 \cdot C_i$ for each $C_i$ and $E_1 \cong E_2 \mod m\Pic(X)$. Letting $x = (E_2 - E_1)/m\in \Pic(X)$, then we have that $x\in P^\perp$, $x\cdot C_i=0$ for each curve $C_i$, and 
    \[
    \varphi_x(E_1) = E_1 + (E_1\cdot bP)x - \left[(x\cdot E_1) + \frac{1}{2}(x\cdot x)(E_1\cdot bP)\right](bP) = E_2.
    \]
    Recall that there are only finitely many $(-1)$-curves contained in the fibers of $X\to B$. Moreover the $(-1)$-curves not contained in the fibers of $X\to B$ have their $G$-orbits determined by the finitely many possibilities of values for $m$, their intersection numbers $E\cdot C_i$, and residue $E \mod m\Pic(X)$. Therefore, there are only finitely many $G$-orbits for $(-1)$-curves on $X$.
    \end{proof}

    Now, we turn to describe all of the extremal rays of $\NE(X)$:
    Every $K_X$-negative extremal ray is spanned by a $(-1)$-curve, and every $K_X$-positive extremal ray is spanned by one of the finitely many curves in $N$. Suppose $\mathbb{R}^{\geq 0} x$ is an extremal ray of $\NE(X)$ inside of $K_X^\perp$, and that $x$ is not a multiple of a curve in $N$. Then, $x\cdot N\geq 0$. Since $P$ is nef and $x\cdot (-K_X) = x \cdot (P + N) = 0$, $x\cdot P = 0$ as well. Since $P^2 = 0$, by the Hodge index theorem either $x$ is a multiple of $P$ or $x^2 < 0$. In the latter case, the ray $\mathbb{R}^{\geq 0}$ is spanned by a curve $C$, and since $C\cdot P = 0$, $C$ is contained in a fiber of $X\to B$. Because there are only finitely many numerical classes of curves contained in fibers of $X\to B$, we have that all but finitely many extremal rays of $\NE(X)$ are spanned by $(-1)$-curves. 

    Furthermore, we show that the only possible limiting ray of the $(-1)$-rays is $\mathbb{R}^{\geq 0} P$. Indeed, suppose that $\mathbb{R}^{\geq 0} x$ is a limit ray of $(-1)$-rays $\mathbb{R}^{\geq 0} E_i$, for $i\in \mathbb{N}$. For an ample line bundle $A$, all $(-1)$-curves $E_i$ have $E_i^2 = -1$, $-K_X \cdot E_i = -1$, and for an infinite sequence of $(-1)$-curves, the degrees $A\cdot E_i$ must approach infinity, while $0 < x\cdot A < \infty$. Since $\lim_{i\to\infty} \frac{E_i}{E_i \cdot A} = \frac{x}{x \cdot A}$, we must have $x^2 = 0$ and $-K_X \cdot x = 0$. Also, because $N$ has non-negative intersection with co-finitely many $(-1)$-curves, we must have that $N\cdot x \geq 0$, implying that $P\cdot x = 0$ since $-K_X\cdot x = (P+N)\cdot x = 0$ and $P$ is nef. Since $x\cdot P=0$, $P^2 = 0$, and $x^2 = 0$, $x$ is a multiple of $P$ by the Hodge index theorem. 

    We now deduce that the nef cone $\overline{\mathcal{A}}(X)$ is rational polyhedral near any point $y$ not inside of the ray $\mathbb{R}^{\geq 0} \cdot P$, meaning that there exists a neighborhood $U$ of $y$ such that $\overline{\mathcal{A}}(X)\cap U$ is the intersection of a rational polyhedral cone with $U$. Such a point $y$ has $y^2\geq 0$ and $P\cdot y \geq 0$ by nefness and, in fact, $P \cdot y > 0$ since $P\cdot y = 0$ would imply that $y$ is a multiple of $P$. Because the only possible limiting ray is $\mathbb{R}^{\geq 0} P$, there is a neighborhood of $y$ which has positive intersection with all but finitely many $(-1)$-curves. Since all but finitely many extremal rays are spanned by $(-1)$-curves, there are only finitely possible extremal rays that may intersect trivially with a class in this neighorhood. Hence, $\overline{\mathcal{A}}(X)$ is rational polyhedral near $y$.

    For each $(-1)$-curve $E$ not contained in a fiber of $X\to B$, the face $\overline{\mathcal{A}}(X)\cap E^{\perp}$ is rational polyhedral because $E\cdot P > 0$, and hence the cone $\Pi_E$ spanned by $P$ and $\overline{\mathcal{A}}(X)\cap E^{\perp}$ is rational polyhedral. Let $x$ be a nef $\mathbb{R}$-divisor on $X$ that is not in the ray spanned by $P$ and let $c$ be the maximal real number such that $y:= x-cP$ is nef. Then, there exists an extremal ray $R$ such that $R\cdot y = 0$. Since $x$ and $y$ have the same intersection number against any curve contracted by $X\to B$, the ray $R$ is spanned by a curve $C$ not contained in a fiber of $X\to B$. As shown above, all such curves spanning extremal rays of $\NE(X)$ are $(-1)$-curves. This shows that $y \in E^{\perp}$ and moreover $x\in \Pi_E$. Any rational point $z\in \overline{\mathcal{A}}(X)$ is in $\mathcal{A}^e(X)$ because nef line bundles are effective on $X$, so the rational polyhedral cones $\Pi_E$ are contained in $\mathcal{A}^e(X)$. Thus, $\overline{\mathcal{A}}(X) = \mathcal{A}^e(X)$, as both are the union of the cones $\Pi_E$. Since there are only finitely many orbits of $\Aut(X,\Delta)$-orbits of $(-1)$-curves in $X$, the set of cones $\Pi_E$ fall into finitely many $\Aut(X,\Delta)$-orbits, implying the existence of a rational polyhedral fundamental domain for $\Aut(X,\Delta)\curvearrowright \mathcal{A}^e(X)$ by \cref{thm:ddomain}.\\

\noindent\textit{Case 3:} The Iitaka dimension of the divisor $P$ equals $0$.\\ 

    In this case, $P$ is numerically trivial, $-K_X \equiv N$, and moreover $N=\Delta$, as the unique effective $\mathbb{R}$-divisor numerically equivalent to $-K_X$. 

     Letting $N_1,\hdots, N_r$ be the irreducible components of $N$, by the negativity lemma, there is a positive linear combination $D = \sum a_i N_i$ with $D\cdot N_i = -1$ for each $i$. Moreover, the support of $D$ is a union of connected components of $N$. Since $(X,\Delta+\epsilon D)$ is klt for small $\epsilon$, we may run a $(K_X + \Delta + \epsilon D)$-MMP that contracts precisely $N$, since $(K_X + \Delta + \epsilon D )\cdot N_i = \epsilon D \cdot N_i < 0$. Denote $\pi:\: X\to Y$ the associated contraction. We have that $Y$ is a klt CY surface, $K_X + \Delta = \pi^*(K_Y)$ and $(X,\Delta)$ is the terminalization of $Y$. 

    We now describe all of the extremal rays of $\NE(X)$: Again, all $K_X$-negative extremal rays are spanned by $(-1)$-curves. The $K_X$-positive curves must be spanned by one of the finitely many curves in $N$. Suppose now that $\mathbb{R}^{\geq0}x$ is an extremal ray of in $K_X^{\perp}$. This ray may be spanned by one of the finitely many curves in $N$. If this is not the case, then $x\cdot N_i \geq0$ for each $i$. Since $-K_X \cdot x = 0$, we moreover have $x\cdot N_i = 0$, and therefore $x = \pi^*(w)$ for some $w\in \NE(Y)$. So an extremal ray in $K_X^{\perp}$ is spanned either by one of the curves $N_i$ or the pullback of a curve from $Y$. 

    Let $C$ be a $(-1)$-curve in $X$. We have that 
    $$
    1=-K_X\cdot C = \left(\sum a_i N_i \right)\cdot C = \sum a_i \lambda_i
    $$
    where $a_i$ are fixed positive numbers and $\lambda_i:= N_i\cdot C$. If $C$ is not among the curves contained in $N$, then $\lambda_i\geq0$, and there are only finitely many possibilities for the tuple of natural numbers $(\lambda_1,\hdots, \lambda_r)$, which we will call the \textit{type} of $C$. 

    Now, we describe the nef cone of $X$: A divisor class $u$ on $X$ may be written as $\pi^*(y) - \sum b_i N_i$ for some real numbers $b_i$ and $y\in N^1(Y)$. By push-pull, if $u$ is nef, $y$ must be as well. Moreover, $u$ must have non-negative intersection with each curve $N_i$. This property defines a half-space in $\bigoplus \mathbb{R}\cdot N_i$ for each curve $N_i$, restricting $(b_1,\hdots, b_r)$ to lie in a rational polyhedral cone $B$ defined by the intersection of these half-spaces. By the negativity lemma, the cone $B$ must be contained in $[0,\infty)^r$. Since the extremal rays of $\NE(X)$ are given by the curves $N_i$, curves pulled back from $Y$, and $(-1)$-curves, a class $u=\pi^*(y) - \sum b_i N_i$ is nef if and only if $y$ is nef on $Y$, $(b_1,\hdots, b_r)\in B$, and $u$ has non-negative degree on the $(-1)$-curves other than the curves $N_i$. For any $(-1)$-curve $C\neq N_i$, this last condition, by push-pull, tells us that $\sum \lambda_i b_i \leq y\cdot \pi_*(C)$.

    Since terminalizations are unique in dimension two automorphisms of $Y$ lift to automorphism of $(X\Delta)$ and moreover $\Aut(X,\Delta) = \Aut(Y)$. Since we know the cone conjecture for the klt CY surface $Y$, it suffices to prove that for any rational polyhedral cone $S\subset \mathcal{A}^e(Y)$, the inverse image under $\pi_*:\: \mathcal{A}^e(X) \to \mathcal{A}^e(Y)$ is rational polyhedral, allowing us to lift the fundamental domain. Since nef $\mathbb{Q}$-divisors on $X$ are effective, rational polyhedral cones in $\overline{\mathcal{A}}(X)$ are contained in $\mathcal{A}^e(X)$, so it suffices to show that the inverse image $T$ of $S$ inside of $\overline{\mathcal{A}}(X)$ is rational polyhedral. We may check this locally around integral points of $Y$.

    First consider an integral point $y_0\in S$ with $y_0^2>0$. It suffices to show that there are finitely many $(-1)$-curves needed to define $T$ over a neighborhood of $y_0\in S$. To do this, by our description of the nef cone, we may show that for each type $\lambda = (\lambda_1,\hdots, \lambda_r)$, there is a finite set $Q$ of $(-1)$-curves of type $\lambda$ such that for all $y$ in a neighborhood of $y_0$, $y\cdot \pi_*(C)$ is minimized among all $(-1)$-curves $C$ of type $\lambda$. We show that this is the case. The type $\lambda$ of $C$ determines the rational number $c:=\pi_*(C)^2$. The intersection pairing, by the Hodge index theorem, has signature $(1,\rho(Y)-1)$. Since $y_0>0$, the intersection of the hyperboloid $\{z\in N_1(Y) \mid z^2 = c\}$ with $\{z\in N_1(Y) \mid |z\cdot y_0|\leq M\}$ is compact for any $M$. Hence, there are finitely many integral classes $z\in N_1(Y)$ with $z^2=c$ and any given bounds on $z\cdot y_0$, and the same finiteness holds when we allow $y$ to vary in a neighborhood of $y_0$. 

    Now, consider an integral point $y_0\in S$ such that $y_0^2 =0$. Since $\mathcal{A}^e(Y)$ is contained in the positive cone 
    \[
    \{y\in N^1(Y) \mid y^2>0 \text{ and } A\cdot y > 0\}
    \]
    for $A$ ample on $Y$, $y_0$ spans an extremal ray of $\mathcal{A}^e(Y)$. By \cref{thm:abundance}, we have an associated fibration $Y\to L$ to a curve $L$. For a point $p\in Y$ over which $\pi:\: X\to Y$ is not an isomorphism, let $D$ be a curve through $p$ and contained in a fiber of $Y\to L$, and let $C$ be the proper transform of $D$ in $X$. The curve $C$ is then contained in a singular fiber of $X\to L$, $C$ has negative self intersection, and therefore $C$ spans an extremal ray of $\NE(X)$. Since $C$ is the proper transform of a curve containing a point over which $\pi$ is not an isomorphism, there is a $N_i$ that intersects $C$ positively, since $C$ is not among the $N_i$, our description of $\NE(X)$ tells us that $C$ must be a $(-1)$-curve. Each point $p$ as above corresponds to a connected component $R$ of $N$. Moreover, we have shown that for each connected component $R$ of $N$, there exists a $(-1)$-curve $C$ on $X$ such that $y_0\cdot \pi_*(X) = 0$ and $\lambda_i>0$ for some $i$ corresponding to an $N_i$ in $R$. 

    Since a $(-1)$-curve $C$ with $y_0\cdot \pi_*(C)=0$ must be contained one of the finitely many singular fibers of $X\to L$, we know the set $Q$ of such curves must be finite. We now prove that the curves in $Q$ are enough to define the extremal rays of $T$ over a neighborhood of $y_0\in S$. Up to scalars, we may view such a neighborhood as a set of linear combinations $y = y_0 + \sum c_i v_i$ for $v_i$ some nef classes on $Y$ and $c_i\geq0$ sufficiently small. From this description, it is evident that $y\cdot \pi_*(C) \geq y_0 \cdot \pi_*(C)$ for $y$ in this neighborhood and $C$ any $(-1)$-curve on $X$ apart from the curves $N_i$. Since $y_0$ is integral and nef, $y\cdot \pi_*(C)\geq 1$ for curves $(-1)$-curves $C$ outside of $Q$, and near $0$ for curves $C$ in $Q$. 
    
    The negativity lemma tells us that if $u = \pi^*(y) - \sum b_i N_i$ is a class in $N^1(X)$ with non-negative intersection with each $N_i$ (i.e., $(b_1,\hdots, b_r)\in B$), then each $b_i\geq 0$ and the support of $b$ is a union of connected components of $N$. Therefore, if $b_i=0$ for some $i$, then $b_j=0$ for all $N_j$ in the same connected component as $N_i$. Hence, for $(b_1,\hdots, b_r)\in B$, if $b_i$ is close to $0$ for some $i$, the same is true for all $b_j$ corresponding to $N_j$ in the same connected component of $N_i$.  Earlier, we showed that for each connected component $R$ of $N$, there exists a $(-1)$-curve $C$ in $Q$ such that $\lambda_j>0$ for $\lambda_j$ corresponding to some $N_j$ in $R$. For this $C$, the inequality $\sum \lambda_i b_i \leq y\cdot \pi_*(C)$ tells us that $b_j$ is close to $0$, and hence $b_i$ is close to $0$ for all $i$ corresponding to $N_i$ in the same connected component as $N_j$. Taking such a $C\in Q$ for each connected component $R$, it follows that $b_i$ is close to $0$ for all $i$. Since $y\cdot \pi_*(C)$ is at least $1$ for $C$ outside of $Q$, the inequalities $\sum \lambda_i b_i \leq y\cdot \pi_*(C)$ for the curves $C$ outside of $Q$ are implied by the same inequalities for $C$ inside of $Q$. Since $Q$ is finite this shows that $T$ is rational polyhedral over a neighborhood of $y_0$ in $S$.
\end{proof}

\section{Finiteness of fiber space structures}
\label{sec:finiteness-fiber-space}

In this section, we prove the following result
regarding finiteness of fiber spaces
for klt log Calabi--Yau pairs
of relative dimension two.

\begin{proposition}
\label{prop:FFSS}
    For $(X/S,\Delta)$ a klt CY pair of relative dimension two, there are finitely many Mori faces of $\mathcal{M}^e(X/S)$ corresponding to SQM fiber space structures up to $\PsAut(X/S,\Delta)$.
\end{proposition}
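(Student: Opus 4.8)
The plan is to classify the SQM fiber space structures $X\dashrightarrow X_0\xrightarrow{\phi}T$ of $X/S$ by the relative dimension of $T$ over $S$, which is either $0$ or $1$ because $X/S$ has relative dimension two and $\phi$ is a genuine fiber space. Two preliminary reductions streamline everything. First, a Mori face $F$ attached to such a structure determines its base: taking $D$ in the relative interior of $F$, one has $T\cong{\rm Proj}_S\mathcal{R}(X/S,D)$, and the face $F=\phi^*\mathcal{A}^e(T/S)$ (pulled back to $X$) depends only on the Mori equivalence class of $D$, not on the chosen marked SQM $X_0$, since the strict transform of a pullback from $T$ under a small modification is again a pullback from $T$. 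So it suffices to bound, up to $\PsAut(X/S,\Delta)$, the set of bases $T$ that arise. Second, by the Ambro--Kawamata canonical bundle formula there is a fixed klt pair $(S,\Delta_S)$ attached to $X\to S$ over which, as recorded below, all birational-over-$S$ bases are crepant.

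If $T$ is birational over $S$, then $F$ lies in $V(X/S)$, since every divisor pulled back from $T$ is $f$-vertical. Here $\phi\colon X_0\to T$ is a birational contraction of klt Calabi--Yau pairs over $S$, hence crepant by the negativity lemma, and, composing, $(T,\Delta_T)$ is a crepant birational model of $(S,\Delta_S)$ with $\Delta_T\geq 0$; \cref{lem:crepant} then gives only finitely many possibilities for $T$ up to isomorphism over $S$, and therefore only finitely many such faces outright.

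Now suppose $T$ has relative dimension one over $S$. Since a small $\mathbb{Q}$-factorial modification of a surface is an isomorphism, $\alpha_0$ restricts to an isomorphism $X_\eta\cong(X_0)_\eta$ over $k(S)$, and $\phi$ restricts to a fibration $g_\eta\colon X_\eta\to B:=T_\eta$ of the klt Calabi--Yau surface $(X_\eta,\Delta_\eta)$ onto a curve; correspondingly $F$ restricts to the Mori face of $\mathcal{A}^e(X_\eta)=\mathcal{M}^e(X_\eta)$ determined by $g_\eta$. By \cref{thm:kltpairs} there are only finitely many such fibrations up to $\Aut(X_\eta,\Delta_\eta)=\PsAut(X_\eta,\Delta_\eta)$; in particular the curve $B$ ranges over finitely many isomorphism classes. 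Since an isomorphism of generic fibres spreads out to a birational map over $S$, any two bases $T,T'$ with $T_\eta\cong T'_\eta$ are birational over $S$, so it is enough to fix a birational class of such $T$ and bound the corresponding faces up to $\PsAut(X/S,\Delta)$. Using \cref{lem:nicefactorization} I may, after replacing $X$ by a marked SQM, assume that $X\to T_1$ is a good fibration for a chosen representative $T_1$ of the class; then \cref{cor:lifting} lifts $\PsAut(T_1/S,\Delta_{T_1})$ to a subgroup of $\PsAut(X/S,\Delta)$. Finally, by \cref{thm:reldim1} applied to the relative dimension one pair $(T_1/S,\Delta_{T_1})$, together with the finiteness of crepant models (\cref{lem:crepant}), the klt Calabi--Yau pairs of relative dimension one birational over $S$ to $T_1$ fall into finitely many $\PsAut(T_1/S,\Delta_{T_1})$-orbits; transporting this through the lift shows the faces in this birational class fall into finitely many $\PsAut(X/S,\Delta)$-orbits. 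Combining the two cases proves the proposition.

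I expect the main obstacle to be the relative dimension one case, and specifically the bookkeeping required to move freely between $X/S$, its marked SQMs, and the intermediate bases $T$ while preserving the $\mathbb{Q}$-factoriality and good-fibration hypotheses needed to invoke \cref{lem:nicefactorization}, \cref{cor:lifting}, and \cref{lem:commutativity}. In particular, one must argue that lifting $\PsAut(T_1/S,\Delta_{T_1})$ across the good fibration $X\to T_1$ --- rather than attempting to lift all of $\Aut(X_\eta,\Delta_\eta)$ across the fibrationless morphism $X\to S$ --- already accounts for the (possibly infinitely many) fiber space faces restricting to a given birational class of base. The relative dimension zero case, by contrast, is comparatively soft once the crepancy of $(T,\Delta_T)$ over $(S,\Delta_S)$ has been recorded.
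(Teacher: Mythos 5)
Your overall architecture matches the paper's (split by the relative dimension of the base, use \cref{lem:crepant} when $T\to S$ is birational, use \cref{thm:kltpairs} on the generic fiber to get finitely many classes, then good fibrations, \cref{cor:lifting}, and \cref{thm:reldim1} on the base), but there is a genuine gap exactly at the step you dismiss as ``bookkeeping''. Having fixed a birational class, you choose a single good-fibration representative $T_1$ and then assert that \cref{thm:reldim1} together with \cref{lem:crepant} shows that all relative-dimension-one klt CY pairs birational over $S$ to $T_1$ fall into finitely many $\PsAut(T_1/S,\Delta_{T_1})$-orbits, and that ``transporting through the lift'' finishes the argument. Neither cited result gives this: \cref{thm:reldim1} controls the Mori faces of $\mathcal{M}^e(T_1/S)$, i.e.\ only those models reached from $T_1$ by SQM birational \emph{contractions}, while \cref{lem:crepant} controls birational \emph{morphisms} to a fixed pair; an arbitrary target $T'$ in the class may carry divisorial valuations not present on $T_1$ (the sets of crepant divisors on two targets can be incomparable), in which case $T_1\dashrightarrow T'$ extracts divisors, the face attached to $X'\to T'$ is not the pullback along $X\to T_1$ of any Mori face of $\mathcal{M}^e(T_1/S)$, and your transport argument says nothing about it. Relatedly, counting bases (or pairs) up to the group is not the same as counting faces: distinct structures with the same base give distinct faces, which is why the paper counts faces of $\mathcal{M}^e(T_i/S)$ rather than isomorphism classes of bases, and why one also needs \cref{lem:commutativity} to know the face upstairs really is the pullback of the face downstairs.

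This missing step is precisely the content of \cref{lem:fsslem}: the paper passes to a $\mathbb{Q}$-factorial terminal model $\hat{T}$ dominating \emph{all} targets in the class, uses \cref{lem:degenerate} to see that only finitely many divisors on $\hat{T}$ can be contracted over $S$, forms the finitely many partial contractions $T_{\mathcal{D}}$ (each realized as the target of an actual SQM fiber space structure via \cref{lem:existssqm}, which is also why one cannot simply work over $\hat{T}$ itself), factors each through a good fibration by \cref{lem:nicefactorization}, and only then concludes via \cref{lem:birationalcontraction} that every target is an SQM birational contraction of one of finitely many good-fibration bases $T_i$. With that in hand the claim-step of the paper proceeds as you intend. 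A smaller point: in the birational-base case you call $\phi\colon X_0\to T$ a birational contraction of klt CY pairs; it has relative dimension two, so the crepancy you want is that of $T\to S$ against $(S,\Delta_S)$ coming from the canonical bundle formula, not of $\phi$. So the proposal is salvageable, but as written the relative-dimension-one case, which is the heart of the proposition, is not proved.
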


\begin{proof}
    A fiber space structure $X_0 \to T \to S$ on a marked SQM $\alpha_0:\: X\dashrightarrow X_0$ may be of relative dimension $1$ or $2$. By \cref{lem:crepant}, there are only finitely many SQM fiber space structures of relative dimension two. So, we only consider SQM fiber space structures of relative dimension one.
    
    By \cref{thm:kltpairs}, we have finitely many fiber space structures on the generic fiber $X_\eta$ up to the action of $\Aut(X_\eta, \Delta_\eta)$. Therefore, we know that there are finitely many SQM fiber space structures of $(X/S,\Delta)$ of relative dimension one up to birational equivalence. Here, we say that two SQM fiber space structures $\phi_1:\: X_1\to T_1$ and $\phi_2:\: X_2\to T_2$ of $(X/S,\Delta)$ are birationally equivalent if there exists a commutative diagram 

     \begin{center}
        \begin{tikzcd}
            X_1 \arrow[r, dashed] \arrow[d, "\phi_{1}", swap] 
            & X_2 \arrow[d, "\phi_{2}"] \\ 
            T_1 \arrow [r, swap, dashed]
            & T_2
        \end{tikzcd}
        \end{center}
     with the horizontal maps being birational. Fix $\mathcal{C}$ to be one of the finitely many birational equivalence classes relative dimension one SQM fiber space structures. We now prove that there are finitely many SQM fiber space structures in $\mathcal{C}$ up to $\PsAut(X/S,\Delta)$.\\
    
   \noindent \textit{Claim:} It suffices to find a finite collection of SQM fiber space structures $\phi_i:\: (X_i, \Delta_i) \to T_i$ over $S$ in $\mathcal{C}$ such that each $\phi_i$ is a good fibration and the following condition is satisfied. An arbitrary fiber space structure $\phi':\:(X',\Delta')\to T'$ in $\mathcal{C}$ fits into a commutative diagram over $S$:
      \begin{center}
        \begin{tikzcd}
            X_i \arrow[r, "\pi_X", dashed] \arrow[d, "\phi_{i}", swap] 
            & X' \arrow[d, "\phi'"] \\ 
            T_i \arrow [r, "\pi_T", swap, dashed]
            & T'
        \end{tikzcd}
        \end{center}
    where $\pi_X$ is a sequence of flops over $S$ and $\pi_T$ a birational contraction of $T_i$ over $S$.

    \begin{proof}[Proof of the Claim]
    Each $T_i$ comes naturally equipped by the canonical bundle formula with a boundary $\Delta_{T_i}$ that makes $(T_i/S, \Delta_{T_i})$ into a klt Calabi Yau pair of relative dimension one. Applying~\cref{thm:reldim1} to each $(T_i/S, \Delta_{T_i})$, it follows that there are finitely many SQM birational contractions of each $(T_i/S, \Delta_{T_i})$ up to $\PsAut(T_i/S,\Delta_{T_i})$. Since $\phi_i$ is a good fibration and pullbacks of divisors movable over $S$ are movable over $S$, an SQM birational contraction of $T_i/S$ canonically gives rise to a SQM fiber space structure of $X/S$. Since the pseudo-automorphisms $\PsAut(T_i/S,\Delta_{T_i})$ lift to $\PsAut(X_i/S,\alpha_{i*}\Delta)\cong \PsAut(X/S,\Delta_{X})$ by \cref{cor:lifting} there are finitely many Mori faces of $\mathcal{M}^e(X/S)$ corresponding to SQM birational contractions of $T_i/S$ up to $\PsAut(X/S,\Delta_{X})$. By the assumption in our claim, all SQM fiber space structures in $\mathcal{C}$ correspond to an SQM birational contractions of $T_i/S$ for one of the finitely many $i$, so there are finitely many SQM fiber space structures in $\mathcal{C}$ up to $\PsAut(X/S,\Delta_{X})$ altogether. 
\end{proof} 
We have reduced the proof of the proposition 
to Lemam~\ref{lem:fsslem} below.
\end{proof}

\begin{lemma}
    \label{lem:fsslem}
    In any birational equivalence class $\mathcal{C}$ of relative dimension one SQM fiber space structures of $(X/S,\Delta)$, we may find a finite set of SQM fiber space structures $\phi_{i}:\:(X_i,\Delta_i)\to T_i$ over $S$ with each $\phi_{i}$ a good fibration, satisfying the following. Given any SQM fiber space structure $\phi':\:(X_0,\Delta')\to T_0$ in $\mathcal{C}$ there is a commutative diagram 
    \begin{center}
    \begin{tikzcd}
        X_i \arrow[r, "\pi_X", dashed] \arrow[d, "\phi_{D_i}", swap] 
        & X' \arrow[d, "\phi'"] \\ 
        T_i \arrow [r, "\pi_T", swap, dashed]
        & T'
    \end{tikzcd}
    \end{center}
    over $S$ for some $i$, with $\pi_X$ a sequence of flops over $S$ and $\pi_T$ a birational contraction of $(T_i/S, \Delta_{T_i})$.
    
\end{lemma}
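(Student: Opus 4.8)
The plan is to prove the stronger statement that a \emph{single} well-chosen good fibration in $\mathcal{C}$ already dominates every member of $\mathcal{C}$ in the sense of the statement; the required finite set can then be taken to be a singleton.

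First I would reduce to the case of good fibrations. Let $\phi'\colon (X',\Delta')\to T'$ be any SQM fiber space structure in $\mathcal{C}$. From $K_{X'}+\Delta'\equiv_S 0$ we get $K_{X'}+\Delta'\equiv_{T'} 0$, so $(X'/T',\Delta')$ is a klt Calabi--Yau pair of relative dimension one, and by \cite[Theorem 4.1]{Amb05} there is a boundary $\Delta_{T'}$ with $(T'/S,\Delta_{T'})$ klt Calabi--Yau of relative dimension one. Applying \cref{lem:nicefactorization} to $(X'/T',\Delta')$ (abundance holds in relative dimension one) yields a marked SQM $X'\dashrightarrow X''$ over $T'$ and a factorization $X''\xrightarrow{g} S''\xrightarrow{h} T'$ with $h$ birational and $g$ a good fibration. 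By \cref{prop:movdecomp} and \cref{rem:faceschambers} the marked SQM $X'\dashrightarrow X''$ is a sequence of flops over $T'$, hence over $S$; and $g$ is again an SQM fiber space structure of $X/S$ lying in $\mathcal{C}$, dominating $\phi'$ via the flops $X''\dashrightarrow X'$ and the birational morphism $h$. Since composites of sequences of flops over $S$ are again such, and composites of birational contractions are again such (a birational morphism being in particular a birational contraction), it is enough to dominate every good fibration in $\mathcal{C}$.

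Next I would fix any good fibration $\phi_1\colon (X_1,\Delta_1)\to T_1$ in $\mathcal{C}$ (one exists by the previous paragraph) and show it alone suffices. Given another good fibration $g\colon (X'',\Delta'')\to S''$ in $\mathcal{C}$, let $\tau\colon T_1\dashrightarrow S''$ be the birational map over $S$ induced by the canonical small birational map $X_1\dashrightarrow X''$ between these two marked SQMs of $(X/S,\Delta)$. The heart of the argument is to check that $\tau$ is a birational contraction of $(T_1/S,\Delta_{T_1})$. I would show it extracts no divisor: if $E\subset S''$ were extracted by $\tau$, then, $g$ being a good fibration, $g$ is equidimensional in codimension one, so $g^{-1}(E)$ has a prime component $D''$ dominating $E$; its strict transform $D_1\subset X_1$ is a prime divisor with $\tau(\phi_1(D_1))=g(D'')=E$, and by a dimension count ($\tau$ is dominant and $E$ is a proper divisor) $\phi_1(D_1)$ is a prime divisor of $T_1$ whose strict transform under $\tau$ is $E$ — contradicting that $E$ is extracted. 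A birational map extracting no divisor is a birational contraction.

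Finally, with $\tau\colon T_1\dashrightarrow S''$ a birational contraction over $S$, $\phi_1$ a good fibration over $S$, and $g$ a projective contraction over $S$, \cref{lem:commutativity} produces a sequence of flops $X_1\dashrightarrow X''$ over $S$ commuting with $\phi_1$, $g$ and $\tau$. Composing with the flops $X''\dashrightarrow X'$ and the birational morphism $h\colon S''\to T'$ from the first paragraph gives the commutative diagram required for $\phi'$, with $\pi_X\colon X_1\dashrightarrow X'$ a sequence of flops over $S$ and $\pi_T\colon T_1\dashrightarrow T'$ a birational contraction of $(T_1/S,\Delta_{T_1})$. Hence $\{\phi_1\}$ already works, and a fortiori so does any finite collection containing it. I expect the main obstacle to be the rigidity step above, namely the claim that $\tau$ is a birational contraction: this is the one place where the good fibration hypotheses are essential (on $g$ directly, and on $\phi_1$ through the applicability of \cref{lem:commutativity}), and one must be careful that the induced birational map on bases genuinely extracts nothing; everything else is bookkeeping about composing flops and birational contractions.
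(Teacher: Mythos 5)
There is a genuine gap, and it sits exactly where you placed the weight of the argument: the map $\tau$ is not defined. You take $\tau\colon T_1\dashrightarrow S''$ to be ``induced by the canonical small birational map $X_1\dashrightarrow X''$'' between the two marked SQMs. But membership of two structures in the same birational equivalence class $\mathcal{C}$ only provides \emph{some} birational map intertwining the two fibrations; in the proof of \cref{prop:FFSS} these classes come from \cref{thm:kltpairs}, i.e.\ from the $\Aut(X_\eta,\Delta_\eta)$-orbit of fibrations on the generic fiber, so the intertwining map typically differs from the marking-compatible map by a nontrivial automorphism of $X_\eta$. The canonical small map restricts over the generic point to the canonical identification of generic fibers, so it descends to the bases only if the two fibration classes $\phi_{1,\eta}^*(\mathrm{pt})$ and $g_\eta^*(\mathrm{pt})$ are proportional in $N^1(X_\eta)$ --- which fails precisely in the interesting cases (e.g.\ $X_\eta$ a K3 or abelian surface with infinite automorphism group, where one orbit contains infinitely many pairwise non-proportional elliptic fibration classes). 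For the same reason your stronger conclusion, that a single good fibration $\phi_1$ dominates all of $\mathcal{C}$, cannot hold as you use it: a sequence of flops over $S$ restricts to an isomorphism of generic fibers, so your diagram would force every structure in $\mathcal{C}$ to have generic-fiber class pulled back (up to the restriction of a pseudo-automorphism) from the one-dimensional space $N^1(T_{1,\eta})$, and since only a finite-index subgroup of $\Aut(X_\eta,\Delta_\eta)$ lifts to $\PsAut(X/S,\Delta)$, one model is not enough in general. Replacing ``the canonical map'' by ``the map furnished by membership in $\mathcal{C}$'' does not repair the step either, since that map is in general neither small over $S$ nor $\Delta$-preserving, and the induced base map from the fixed $T_1$ need not be a birational contraction.

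The paper's proof goes around exactly this obstruction by working downstairs first: all the \emph{targets} in $\mathcal{C}$ are crepant birationally equivalent (via the equivalence-class maps, not the markings), so one can choose a common $\mathbb{Q}$-factorial terminal model $\hat T$ admitting a birational contraction to every target, use \cref{lem:degenerate} to see that $\hat T$ has only finitely many contractible divisors, and form one model $T_{\mathcal{D}}$ for each subset $\mathcal{D}$ of them; every target is then connected by flops to the appropriate $T_{\mathcal{D}}$, and factoring through good fibrations via \cref{lem:nicefactorization} yields the finite collection $\phi_i$ with the composites $T_i\dashrightarrow \hat T_i\dashrightarrow T'$ birational contractions. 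The resulting flops upstairs (from \cref{lem:commutativity}) are \emph{not} required to be marking-compatible; the discrepancy is deliberately left to be absorbed by $\PsAut(X/S,\Delta)$ in the Claim of \cref{prop:FFSS} via \cref{thm:reldim1} and \cref{cor:lifting}. Your reduction to good fibrations in the first paragraph and the final bookkeeping are fine, but the rigidity step must be replaced by an argument of this kind rather than strengthened.
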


\begin{proof}
   Let $(T'/S, \Delta')$ be a target of a fiber space structure $X'\to T'$ in $\mathcal{C}$. All such targets in $\mathcal{C}$ are crepant birational equivalent to one another, so we may find a $\mathbb{Q}$-factorial terminal model $(\hat{T}/S, \Delta_{\hat{T}})$ that is log CY over $S$ and admits a birational contraction over $S$ to any one of them. By \cref{lem:degenerate} there are only finitely many prime divisors on $(\hat{T}/S,\Delta_{\hat{T}})$ that may be contracted over $S$. For a subset $\mathcal{D}$ of degenerate divisors on $(\hat{T}/S,\Delta_{\hat{T}})$, denote by $(T_{\mathcal{D}}/S,\Delta_{T_{\mathcal{D}}})$ the model on which we contract the degenerate divisors in $\mathcal{D}$. Any target $(T'/S,\Delta_{T'})$ of an SQM fiber space structure in $\mathcal{C}$ whose centers of $\mathcal{D}$ on $T'$ have codimension $\geq 2$ is connected to $(T_\mathcal{D}/S,\Delta_{T_\mathcal{D}})$ by flops. Thus, because the number of such subsets $\mathcal{D}$ is finite, we may form a finite collection of SQM fiber space structures $\hat{\phi_{i}}: (\hat{X}_i/S,\Delta_{\hat{X}_i}) \to \hat{T}_i$ in $\mathcal{C}$ 
    such that every target of an SQM fiber space structure in $\mathcal{C}$ is connected to a unique $(\hat{T}_i/S,\Delta_{\hat{T}_i})$ by flops. We factor each $\hat{\phi_{i}}:\: \hat{X}_i\to \hat{T}_i$ up to flops through a good fibration $\phi_i:\: X_i\to T_i$ as in \cref{lem:nicefactorization}. For any target $T'$ in $\mathcal{C}$, we thus have a composition $T_i\dashrightarrow \hat{T}_i\dashrightarrow T'$ where the first map is a birational contraction and the second is a sequence of flops. Since this composition does not extract any divisors, it is an SQM birational contraction by \cref{lem:birationalcontraction}. 
\end{proof}

\begin{remark}
Let $(X/S,\Delta)$ be a klt CY pair of relative dimension $n$.
Assume the finitess of fiber space structures $X_\eta \rightarrow T_\eta$ up to ${\rm Aut}(X_\eta,\Delta_\eta)$
and the geometric cone conjecture in relative dimension $<n$.
Then, the same arguments as in \cref{prop:FFSS} and \cref{lem:fsslem} show the finiteness of SQM fiber space structures of $(X/S,\Delta)$ up to $\PsAut(X/S,\Delta)$.
\end{remark}

\section{The geometric cone conjecture in relative dimension 2}

In this section, we prove the geometric cone conjecture in relative dimension two.
Throughout this section $(X/S,\Delta)$ is a klt CY pair of relative dimension two. If $\kappa(X_{\eta}, -K_{X_{\eta}})=2$ so that $\Delta$ is big over $S$, then $X$ is of Fano type over $S$  by \cref{lem:fanotype} and the cone conjecture holds because $\mathcal{A}^e(X)$ is rational polyhedral by the cone theorem. Because of this, we will from now on assume that $\kappa(X_{\eta}, -K_{X_{\eta}})$ is either $0$ or $1$.  

\begin{lemma}
The restriction homomorphism
$r\colon N^1(X/S)\rightarrow N^1(X_\eta)$
is surjective.
Furthermore, the following equalities hold
\[
r(\mathcal{M}^e(X/S))=\mathcal{A}^e(X_\eta), r^{-1}(\mathcal{A}^e(X_\eta))\cap \overline{\mathcal{M}}(X/S) = \mathcal{M}^e(X/S), \text{ and } r^{-1}(\mathcal{B}(X_\eta))=\mathcal{B}(X/S).
\]
\end{lemma}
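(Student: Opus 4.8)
The plan is to handle the four assertions in turn, the only substantial one being the containment $\mathcal{A}^e(X_\eta)\subseteq r(\mathcal{M}^e(X/S))$. First, $r$ is well-defined by a standard spreading-out argument (a numerically $S$-trivial divisor restricts numerically trivially to the generic fibre), and it is surjective: any class in $N^1(X_\eta)$ is represented by a Cartier divisor defined over some open $U\subseteq S$, and its Zariski closure in $X$ is a $\mathbb{Q}$-Cartier divisor (as $X$ is $\mathbb{Q}$-factorial) restricting to the given class. The fourth equality is essentially a tautology: by definition a Cartier divisor $D$ is $f$-big precisely when $D_\eta$ is big on $X_\eta$, and both $\mathcal{B}(X/S)$ and $\mathcal{B}(X_\eta)$ are the open convex cones generated by big classes; since $f$-bigness of a class depends only on $r$ of that class, $\mathcal{B}(X/S)=r^{-1}(\mathcal{B}(X_\eta))$.

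Next I would dispatch $r(\mathcal{M}^e(X/S))\subseteq \mathcal{A}^e(X_\eta)$ and, equivalently, the inclusion $\mathcal{M}^e(X/S)\subseteq r^{-1}(\mathcal{A}^e(X_\eta))\cap\overline{\mathcal{M}}(X/S)$. Using that the stalk of $f_*\mathcal{O}_X(D)$ at the generic point of $S$ is $H^0(X_\eta,D_\eta)$, restriction to $X_\eta$ carries $f$-effective divisors to effective divisors and $f$-movable divisors to movable divisors (the relative base locus restricts to the base locus on $X_\eta$; its vertical components meet $X_\eta$ in the empty set, while horizontal components of codimension $\geq 2$ stay of codimension $\geq 2$). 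Hence $r(\overline{\mathcal{M}}(X/S))\subseteq\overline{\mathcal{M}}(X_\eta)$ and $r(\mathcal{B}^e(X/S))\subseteq\mathcal{B}^e(X_\eta)$, so $r(\mathcal{M}^e(X/S))\subseteq\overline{\mathcal{M}}(X_\eta)\cap\mathcal{B}^e(X_\eta)=\mathcal{M}^e(X_\eta)=\mathcal{A}^e(X_\eta)$, the last equality because on a klt CY surface the movable and nef cones coincide. The inclusion $\mathcal{M}^e(X/S)\subseteq r^{-1}(\mathcal{A}^e(X_\eta))\cap\overline{\mathcal{M}}(X/S)$ is then formal.

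The core of the proof is $\mathcal{A}^e(X_\eta)\subseteq r(\mathcal{M}^e(X/S))$. By \cref{prop:Qgenerated} applied to $X_\eta$ together with \cref{lem:semiample} (and Riemann--Roch on the surface $X_\eta$ to see that rational points of $\mathcal{A}^e(X_\eta)$ are $\mathbb{Q}$-effective), it suffices to treat a class $\alpha=[A_\eta]$ with $A_\eta$ a semiample effective $\mathbb{Q}$-Cartier divisor on $X_\eta$. Let $A$ be the Zariski closure of $A_\eta$ in $X$. For small $\epsilon>0$ the pair $(X,\Delta+\epsilon A)$ is klt, and $K_X+\Delta+\epsilon A\equiv_S\epsilon A$. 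Running a $(K_X+\Delta+\epsilon A)$-MMP over $S$, which terminates in relative dimension two, yields a birational map $\alpha_0\colon X\dashrightarrow X_0$ over $S$ with $A_0:=(\alpha_0)_*A$ relatively nef (indeed semiample, by \cref{thm:abundance}) and effective over $S$; also $K_{X_0}+\Delta_0\equiv_S 0$, so $(X_0/S,\Delta_0)$ is klt CY. No step of this MMP touches the generic fibre: such a step would be a step of the $(K_{X_\eta}+\Delta_\eta+\epsilon A_\eta)$-MMP, impossible since $K_{X_\eta}+\Delta_\eta+\epsilon A_\eta\equiv\epsilon A_\eta$ is nef; thus $\alpha_0$ is an isomorphism over a neighbourhood of $\eta$. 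By \cref{lem:birationalcontraction}, the birational contraction $\alpha_0$ of klt CY pairs factors as a marked SQM $\alpha'\colon X\dashrightarrow X'$ over $S$ followed by a projective birational morphism $\psi\colon X'\to X_0$ over $S$. Then $\psi^*A_0$ is relatively nef and effective on $X'$, so $[\psi^*A_0]\in\mathcal{A}^e(X'/S)$, and it restricts to $\alpha$ on $X'_\eta\cong X_\eta$ (as $\psi$ and $\alpha'$ are isomorphisms over $\eta$). By \cref{prop:movdecomp}, $\alpha'^*[\psi^*A_0]\in\mathcal{M}^e(X/S)$ and $r$ sends it to $\alpha$. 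Since such classes generate the cone $\mathcal{A}^e(X_\eta)$ and $r$ is linear, the first equality — and hence also $r^{-1}(\mathcal{A}^e(X_\eta))\cap\overline{\mathcal{M}}(X/S)\supseteq \mathcal{M}^e(X/S)$, already shown — follows. Finally, for $r^{-1}(\mathcal{A}^e(X_\eta))\cap\overline{\mathcal{M}}(X/S)\subseteq\mathcal{M}^e(X/S)$, given $[D]\in\overline{\mathcal{M}}(X/S)$ with $r[D]\in\mathcal{A}^e(X_\eta)$ it suffices to prove $[D]\in\mathcal{B}^e(X/S)$. By the established first equality pick $[D_1]\in\mathcal{M}^e(X/S)$ with $r[D_1]=r[D]$; then $[D]-[D_1]\in\ker r$, which is spanned by $f$-vertical divisor classes (together with classes restricting numerically trivially to $X_\eta$). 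By \cref{lem:excgen} the $f$-vertical classes span $V(X/S)$, which by \cref{lem:degenerate} is generated by $f$-exceptional primes $E$, and for each such $E$ both $[E]$ and $-[E]$ lie in $\mathcal{B}^e(X/S)$; checking that the remaining generators of $\ker r$ also lie in $\mathcal{B}^e(X/S)$, one gets $[D]=[D_1]+([D]-[D_1])\in\mathcal{B}^e(X/S)$, whence $[D]\in\overline{\mathcal{M}}(X/S)\cap\mathcal{B}^e(X/S)=\mathcal{M}^e(X/S)$.

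The main obstacle is the containment $\mathcal{A}^e(X_\eta)\subseteq r(\mathcal{M}^e(X/S))$: one must promote a class that is merely relatively nef on the generic fibre to a relatively movable class on a model of $X/S$, and the relative MMP that accomplishes this may output a model $X_0$ obtained from $X$ by contracting vertical divisors, so it is not itself a marked SQM — this is precisely why one detours through \cref{lem:birationalcontraction} instead of working with $X_0$ directly. A secondary point requiring the structure theory of $f$-exceptional divisors (\cref{lem:excgen}, \cref{lem:degenerate}) is the verification that $\ker r$ contributes only classes lying in $\mathcal{B}^e(X/S)$.
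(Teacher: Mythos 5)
Most of your argument is fine, and for the substantial containment $\mathcal{A}^e(X_\eta)\subseteq r(\mathcal{M}^e(X/S))$ you take a genuinely different route from the paper: the paper's proof is a one-liner asserting that the closure of a basepoint free divisor on $X_\eta$ is $f$-movable, whereas you run a relative $(K_X+\Delta+\epsilon A)$-MMP, observe it is an isomorphism near the generic fibre, factor the resulting birational contraction through a marked SQM via \cref{lem:birationalcontraction}, and conclude with \cref{prop:movdecomp}. This is correct, and it has the advantage of only producing \emph{some} class of $\mathcal{M}^e(X/S)$ restricting to the given class, so you never have to worry about vertical components in the relative base locus of the closure. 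Surjectivity, the inclusion $r(\mathcal{M}^e(X/S))\subseteq\mathcal{A}^e(X_\eta)$ (restriction of $f$-movable is nef, of $f$-effective is effective, and movable equals nef on the surface $X_\eta$), and the statement on big cones are all fine; for the last one, the transfer of bigness across $\ker r$ follows since for any divisor $D$ with $[D]\in\ker r$ and any relatively ample $A$ the divisor $D+\epsilon A$ is $f$-effective, so $\ker r\subseteq \overline{\mathcal{B}}(X/S)\cap(-\overline{\mathcal{B}}(X/S))$ -- pseudoeffectivity being a closed condition is what makes this harmless.

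The genuine gap is in your final step, the inclusion $r^{-1}(\mathcal{A}^e(X_\eta))\cap\overline{\mathcal{M}}(X/S)\subseteq\mathcal{M}^e(X/S)$. You write $[D]=[D_1]+k$ with $[D_1]\in\mathcal{M}^e(X/S)$ and $k\in\ker r$, and reduce to ``checking that the remaining generators of $\ker r$ also lie in $\mathcal{B}^e(X/S)$''; this unproved claim is in fact \emph{false} whenever $\ker r\neq V(X/S)$. Indeed, if $c=\sum a_i[D_i]\in\ker r$ with $a_i>0$ and each $D_i$ $f$-effective, then applying $r$ and pairing with an ample class on $X_\eta$ forces every effective representative of $D_{i,\eta}$ to be the zero divisor, hence $D_{i,\eta}\sim 0$ and $[D_i]\in V(X/S)$; thus $\ker r\cap\mathcal{B}^e(X/S)=V(X/S)$. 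By \cref{lem:H1Otriv} and \cref{prop:k=0}, the strict inclusion $V(X/S)\subsetneq\ker r$ occurs exactly in the irregular cases ($X_{\bar{\eta}}$ abelian, hyperelliptic, or an isotrivial elliptic fibration), which are precisely the cases the paper's torus-bundle analysis is designed for; so your argument only covers $H^1(X_\eta,\mathcal{O}_{X_\eta})=0$, where $\ker r=V(X/S)$ and the \cref{lem:excgen}/\cref{lem:degenerate} reasoning applies. To close the gap one must use the hypothesis on $[D]$ itself rather than splitting off an arbitrary $\ker r$-component: for instance, if $r[D]$ is big then $[D]\in\mathcal{B}(X/S)\subseteq\mathcal{B}^e(X/S)$ and you are done, but for boundary classes (where $r[D]$ has square zero) the effectivity over $S$ needs a separate argument, since choosing $[D_1]$ with $r[D_1]=r[D]$ gives no control on $[D]-[D_1]$ modulo $V(X/S)$.
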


\begin{proof}
    The first statement follows from that fact that we may extend divisors from $X_\eta$ to divisors on $X$ by taking the closure. The first two parts of the second statement follow from the fact that movable divisors on $X_\eta$ are nef and that the closure of a basepoint free divisor on $X_\eta$ is $f$-movable on $X$. The statement on the big cones follows from the definition of $f$-big divisors. 
\end{proof}

\begin{lemma}
\label{lem:k=1}
If $\kappa(X_{\eta}, -K_{X_{\eta}})=1$, then either $H^1(X_\eta, \mathcal{O}_{X_\eta})=0$ or $X_{\bar{\eta}}$ is birational to $(E\times \mathbb{P}^1) / G$, an isotrivial elliptic fibration over $\mathbb{P}^1$ for some elliptic curve $E$ and finite group $G$. 
\end{lemma}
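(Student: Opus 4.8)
The plan is to reduce to a smooth, birationally ruled model over $\bar{k}$ and to analyze the genus-one fibration determined by the Zariski decomposition of the anticanonical class.

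First I would base change to $\bar{k}$ (harmless, since $H^1(X_\eta,\mathcal{O}_{X_\eta})$ is invariant under base change and the second alternative is a birational statement about $X_{\bar\eta}$) and replace $X_{\bar\eta}$ by a smooth projective surface $X'$: take a $\mathbb{Q}$-factorial terminalization of $(X_{\bar\eta},\Delta_{\bar\eta})$, which for klt surfaces is already smooth. By crepant birational invariance of Iitaka dimensions, together with \cref{thm:index} (or \cref{thm:abundance}), this produces a boundary $\Delta'\geq 0$ with $K_{X'}+\Delta'\equiv 0$, $-K_{X'}\sim_{\mathbb{R}}\Delta'$, and $\kappa(-K_{X'})=\kappa(X_{\bar\eta},-K_{X_{\bar\eta}})=1$; hence $\Delta'\neq 0$, so $K_{X'}\equiv -\Delta'$ is not pseudo-effective and $\kappa(X')=-\infty$. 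Since klt surface singularities are rational, $H^1(X_\eta,\mathcal{O}_{X_\eta})=H^1(X',\mathcal{O}_{X'})=q(X')$, so the lemma becomes a dichotomy on the irregularity $q(X')$. Writing the Zariski decomposition $-K_{X'}=P+N$ with $P$ nef, the hypothesis forces $\kappa(P)=1$ and $P^2=0$; since $P\equiv \Delta'-N$ is effective it is semiample by \cref{lem:semiample} applied to $(X',\Delta')$, so it defines a fibration $g\colon X'\to B$ onto a smooth curve with $P$ numerically a positive multiple of a general fiber $F$. From $-K_{X'}\cdot P=(P+N)\cdot P=0$ and adjunction, $(K_{X'}+F)\cdot F=0$, so $F$ has arithmetic genus one: $g$ is an elliptic fibration.

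If $q(X')=0$ the first alternative of the lemma holds and I am done. So suppose $q:=q(X')\geq 1$. Then, by the Enriques--Kodaira classification, $X'$ is birationally ruled over its Albanese curve $C$ with $g(C)=q$, and the Albanese map factors as $X'\xrightarrow{\rho} C\hookrightarrow \mathrm{Jac}(C)$, where $\rho$ is a ruling whose fibers are trees of rational curves. A general elliptic fiber $F$ of $g$ cannot lie in such a tree, so $\rho|_F\colon F\to C$ is nonconstant; a nonconstant map from an elliptic curve forces $g(C)\leq 1$, hence $q=1$ and $C$ is elliptic. Next I would check $g(B)=0$: a general (rational) fiber of $\rho$ maps to $B$ under $g$, and if $g(B)\geq 1$ this map is constant, so $g$ would factor through $\rho$ and every fiber of $g$ would be a union of rational curves, contradicting that $F$ is elliptic. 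Therefore $X'$ is birational to $\mathbb{P}^1\times C$ with $C$ elliptic, so $\chi(\mathcal{O}_{X'})=\chi(\mathcal{O}_{\mathbb{P}^1\times C})=0$.

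Now I would pass to the relatively minimal model $h\colon S\to \mathbb{P}^1$ of $g$ (so $K_S^2=0$ and $\chi(\mathcal{O}_S)=\chi(\mathcal{O}_{X'})=0$) and use Noether's formula: $e(S)=12\chi(\mathcal{O}_S)-K_S^2=0$. Since $e(S)=\sum_{p\in \mathbb{P}^1}e(h^{-1}(p))$ with every summand non-negative and zero only when the fiber is a smooth (possibly multiple) elliptic curve, $h$ has no singular fibers; consequently the $j$-invariant is a morphism $\mathbb{P}^1\to \mathbb{A}^1$, hence constant, and $h$ is isotrivial. Finally I would invoke the structure theory of isotrivial elliptic fibrations: $S$ is birational to a quotient $(E\times B')/G$, where $G$ is a finite group acting on $E$ through $\mathrm{Aut}(E)$ and diagonally on a $G$-Galois cover $B'\to\mathbb{P}^1$. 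Computing $q(S)=\dim\bigl(H^0(\Omega^1_E)\oplus H^0(\Omega^1_{B'})\bigr)^G$, and using that $H^0(\Omega^1_{B'})^G=H^0(\Omega^1_{\mathbb{P}^1})=0$ while $H^0(\Omega^1_E)^G$ is nonzero precisely when $G$ acts on $E$ by translations, the equality $q(X')=1$ forces $G$ to act on $E$ by translations; after quotienting $B'$ by the kernel of $G\to E$ I may assume $G\hookrightarrow E$, so the diagonal action on $E\times B'$ is free, whence $\kappa(X')=\kappa((E\times B')/G)=\kappa(E\times B')=\kappa(B')$, and $\kappa(X')=-\infty$ gives $B'\cong\mathbb{P}^1$. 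Thus $X_{\bar\eta}$ is birational to $(E\times\mathbb{P}^1)/G$, an isotrivial elliptic fibration over $\mathbb{P}^1$. I expect the main obstacle to be this last step — passing from isotriviality to the explicit quotient presentation — which relies either on citing the classification of elliptic surfaces or on reconstructing $(B',G)$ as the Galois closure of $S\to C\times\mathbb{P}^1$ and checking compatibility with the ruling; by contrast the Noether-formula argument forcing isotriviality is the conceptual crux.
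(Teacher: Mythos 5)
There is a genuine gap at the very first reduction. You pass to a terminalization $\pi\colon X'\to X_{\bar\eta}$ of the pair and assert, ``by crepant birational invariance of Iitaka dimensions,'' that $\kappa(-K_{X'})=\kappa(X_{\bar\eta},-K_{X_{\bar\eta}})=1$. This is false in general: crepancy is an invariance statement about $K+\Delta$, not about $-K$, and a terminalization may extract divisors $E$ with $a(E;X,\Delta)\le 0$ but $a(E;X,0)>0$, so that $-K_{X'}=\pi^*(-K_{X_{\bar\eta}})-\sum a(E_i;X_{\bar\eta},0)E_i$ loses sections. Concretely, let $Y\to\mathbb{P}^1$ be a rational elliptic surface with two nodal fibers $F_0,F_1$ and take the klt CY pair $(Y,\tfrac12F_0+\tfrac12F_1)$; then $\kappa(-K_Y)=1$, but the terminalization blows up the two nodes (both crepant for the pair) and every member of $|-mK_{Y}|$ is a sum of fibers with multiplicity $2a_i$ at the node of $F_i$, so the only member vanishing to order $m$ at both nodes is $\tfrac m2(F_0+F_1)$: on the terminalization one gets $\kappa(-K)=0$ and the Zariski positive part of $-K$ is zero, so your elliptic fibration $g\colon X'\to B$ need not exist. (In this example $q=0$, so the lemma is vacuous, but your argument uses the invariance before knowing anything about $q$, and you have not shown the drop cannot occur when $q\ge 1$.) The paper's proof is organized precisely to avoid this: in the singular case it first runs a $(K+(1+\epsilon)\Delta)$-MMP, and then takes a minimal resolution over the base curve; under both operations $h^0(-mK)$ can only increase, and the possible jump to $\kappa(-K)=2$ is handled separately via \cref{lem:fanotype} and Kodaira vanishing. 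Your reduction would need either this reordering or a separate argument that $\kappa(-K)$ cannot drop below $1$ on the terminalization when the irregularity is positive.

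The remainder of your argument is essentially a from-scratch proof of the structural fact the paper simply cites (a relatively minimal elliptic surface over $\mathbb{P}^1$ has $q=0$ or is isotrivial of the form $(E\times\mathbb{P}^1)/G$): ruledness over a genus-$q$ curve forcing $q=1$ and base $\mathbb{P}^1$, Noether's formula giving $e=0$ hence no singular fibers and constant $j$, and the structure theory of isotrivial fibrations. That portion is sound (modulo quoting the isotrivial structure theorem, which is comparable to the paper's citation), but as written the proof does not go through because of the smooth-model reduction above.
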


\begin{proof}
The cohomology groups $H^i(X,\mathcal{O}_X)$ are birational invariants of varieties with klt singularities. This follows from the Leray spectral sequence and the fact that klt singularities are rational.

If $X_{\bar{\eta}}$ is a smooth minimal elliptic surface over $\mathbb{P}^1$, then either $H^1(X_\eta, \mathcal{O}_{X_\eta})=0$ or $X_{\bar{\eta}} \cong (E\times \mathbb{P}^1) / G$ for some finite group $G$, an isotrivial elliptic fibration over $\mathbb{P}^1$ (see, e.g.,~\cite{Serr96}). Thus, it suffices to show that $X_{\bar{\eta}}$ is birational to a smooth minimal elliptic surface over $\mathbb{P}^1$ or a surface with irregularity $0$. If $X_{\bar{\eta}}$ is smooth, we have the Zariski Decomposition $-K_{X_{\bar{\eta}}}=N+P$ on $X_{\bar{\eta}}$ where $N$ has negative definite intersection pairing among its components, $P$ is nef, and $P\cdot N = 0$. Moreover, the section rings $R(X,D)$ and $R(X,P)$ coincide. In particular, $\kappa(X,D)=\kappa(X,P)$. $N$ only depends on the numerical class of $-K_{X_{\bar{\eta}}}$, so $\Delta \leq N$ and so $P$ is effective. Applying \cref{lem:semiample} implies that $P$ is semiample and so it defines a morphism to a curve $\phi_P:\: X_{\bar{\eta}} \to C$. We have that $P^2=0$ and $-K_{X_{\bar{\eta}}}\cdot P = (P+N)\cdot P = 0$, so, by adjunction, the generic fiber is an elliptic curve. Using the canonical bundle formula for this elliptic surface, we see that $K_{X_{\bar{\eta}}}+\Delta_\eta \sim \phi_P^*(K_C + B_C + M_C)$ with $B_C$ nonzero effective, implying that $K_C$ is anti-ample since $K_{X_{\bar{\eta}}}+\Delta_\eta \equiv 0$. Therefore, $C\cong \pp^1$ and $X_{\bar{\eta}}$ is a smooth rational elliptic surface. Hence, we obtain a smooth minimal elliptic surface over $\mathbb{P}^1$ after contracting the $(-1)$ curves in contained in the fibers over $X_{\bar{\eta}}\to \mathbb{P}^1$.

 In general, with $(X_{\bar{\eta}},\Delta_{\bar{\eta}})$ klt, run a $(K_{X_{\bar{\eta}}} + (1+\epsilon)\Delta_{\bar{\eta}})$-MMP to obtain a morphism $\pi:\: X_{\bar{\eta}} \to X'_{\bar{\eta}}$ to a singular elliptic surface such that $(K_{X_{\bar{\eta}}} + (1+\epsilon)\Delta_{\bar{\eta}})\equiv -\epsilon\Delta_{\bar{\eta}} \equiv -\epsilon K_{X_{\bar{\eta}}}$ is semiample. The Iitaka dimension of the anticanonical only may increase during this MMP, so $\kappa(X'_{\bar{\eta}}, -K_{X'_{\bar{\eta}}}) \in \{1,2\}$. 

If $\kappa(X'_{\bar{\eta}}, -K_{X'_{\bar{\eta}}})=2$, then by \cref{lem:fanotype}, the variety $X'_\eta$ is of Fano type and therefore $H^1(X'_\eta, \mathcal{O}_{X'_\eta})=0$ by Kodaira vanishing. Hence, $H^1(X_\eta, \mathcal{O}_{X_\eta})=0$. 

If $\kappa(X'_{\bar{\eta}}, -K_{X'_{\bar{\eta}}})=1$, then write $C$ for the image of the anticanonical morphism. We may take a minimal resolution $X_{\bar{\eta}}'' \to X'_{\bar{\eta}}$ over $C$ to obtain a smooth (relatively) minimal elliptic surface over $C$. By the argument in the first paragraph, $C\cong \pp^1$ and either $H^1(X''_\eta, \mathcal{O}_{X''_\eta})=0$ or $X_{\bar{\eta}}''\cong (E\times \mathbb{P}^1)/G$.
\end{proof}

\begin{notation}
    By \cref{thm:kltpairs}, we have rational polyhedral fundamental domain $\Pi(X_\eta)$ for the action of $\Aut(X_\eta, \Delta_\eta)$ on $\mathcal{A}^e(X_\eta)$. We define $\Pi_1(X_\eta,\Delta_\eta)$ to be those classes in $\Pi(X_\eta)$ intersecting a fixed ample class $H\in\mathcal{A}(X_\eta)$ with degree $1$, $\tilde{\Pi}_1(X/S,\Delta) = r^{-1}\Pi_1(X_\eta)\cap \mathcal{M}^e(X/S)$ and similarly $N^1_1(X/S)=r^{-1}(N^1(X_\eta))$. 

We define $$W(X/S,\Delta):= \{[z]\in N^1_1(X/S)/V(X/S) \mid r(z)\in \Pi_1(X_\eta)\cap \mathcal{B}(X_\eta)\}$$
\end{notation}

\begin{lemma}
\label{lem:H1Otriv}
If the irregularity $H^1(X_\eta, \mathcal{O}_{X_\eta})=0$, then the kernel of the restriction map 
\[
r:\: N^1(X/S)\to N^1(X_\eta)
\]
coincides with the subspace $V(X/S)$. Furthermore, in this case we have $\rho(X/S) = \rho (X_\eta) + v(X/S)$ and $W(X/S,\Delta) \cong \Pi_1(X_\eta,\Delta_\eta) \cap \mathcal{B}(X_\eta)$.
\end{lemma}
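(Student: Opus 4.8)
The plan is to first identify the kernel of $r$ with $V(X/S)$, and then deduce the two numerical/combinatorial consequences. For the kernel computation: the containment $V(X/S)\subseteq \ker r$ is immediate, since an $f$-vertical divisor restricts to zero on the generic fiber. For the reverse containment, I would argue as follows. The discrepancy between $N^1(X/S)$ and $N^1(X_\eta)$ is controlled by a Leray-type exact sequence: passing to a resolution if necessary and using that klt singularities are rational (as invoked in the proof of \cref{lem:k=1}), one has an exact sequence relating $\Pic(X/S)\otimes\R$, $\Pic(X_\eta)\otimes\R$, and a piece built from the divisors supported over the complement of the generic point together with $H^1(X_\eta,\mathcal{O}_{X_\eta})$ and the relative Picard variety. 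Concretely, the Galois/spreading-out argument shows that a class $[D]\in N^1(X/S)$ with $r[D]=0$ means $D_\eta\equiv 0$ on $X_\eta$; since $X_\eta$ is a surface, $D_\eta$ is in fact numerically trivial, hence (by the abundance results available in dimension two, e.g.\ the argument of \cref{lem:numtriv} combined with finiteness of the relative Picard scheme) $D_\eta$ is torsion in $\Pic(X_\eta)$ up to the obstruction living in $H^1(X_\eta,\mathcal{O}_{X_\eta})$. The hypothesis $H^1(X_\eta,\mathcal{O}_{X_\eta})=0$ kills this obstruction, so $D_\eta\sim_{\Q}0$, and after clearing denominators and spreading out, $mD$ differs from a vertical divisor by the pullback of a divisor from the base $S$ — which is again vertical. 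Hence $[D]\in V(X/S)$.

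Given $\ker r = V(X/S)$, the rank formula $\rho(X/S)=\rho(X_\eta)+v(X/S)$ is just the rank-nullity theorem applied to the surjection $r\colon N^1(X/S)\to N^1(X_\eta)$ (surjectivity was established in the preceding lemma), together with $\dim V(X/S)=v(X/S)$.

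For the isomorphism $W(X/S,\Delta)\cong \Pi_1(X_\eta,\Delta_\eta)\cap\mathcal{B}(X_\eta)$: by definition $W(X/S,\Delta)$ is the set of classes $[z]\in N^1_1(X/S)/V(X/S)$ with $r(z)\in\Pi_1(X_\eta)\cap\mathcal{B}(X_\eta)$. Since $\ker r = V(X/S)$, the restriction map descends to an \emph{injection} $\bar r\colon N^1(X/S)/V(X/S)\hookrightarrow N^1(X_\eta)$, which is an isomorphism by surjectivity of $r$. Under $\bar r$, the affine slice $N^1_1(X/S)$ (classes meeting a chosen ample class with degree $1$, pulled back from the corresponding slice downstairs) maps isomorphically onto $N^1_1(X_\eta)$, and the preimage condition defining $W$ is transported exactly onto the condition $r(z)\in\Pi_1(X_\eta)\cap\mathcal{B}(X_\eta)$. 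So $\bar r$ restricts to a bijection $W(X/S,\Delta)\to\Pi_1(X_\eta,\Delta_\eta)\cap\mathcal{B}(X_\eta)$, and since both sides sit inside real vector spaces and $\bar r$ is linear, this is the desired isomorphism of (convex) sets.

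The main obstacle I expect is the reverse containment $\ker r\subseteq V(X/S)$: one must be careful that "restricts to numerically trivial on $X_\eta$" upgrades to "$\Q$-linearly trivial on $X_\eta$," which is where the irregularity hypothesis enters, and then that $\Q$-linear triviality on the generic fiber spreads out to a relation involving only vertical divisors and pullbacks from $S$. This requires the base-change/spreading-out compatibility of $N^1$ together with the fact that a numerically trivial line bundle on a klt CY surface with $H^1(\mathcal{O})=0$ is torsion (indeed trivial after passing to the index-one cover), and some care with the non-closed residue field $k(\eta)$ as in Section~\ref{section4}. The rest is formal linear algebra.
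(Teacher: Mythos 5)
Your proposal is correct and follows essentially the same route as the paper: both reduce to showing that a class $[D]$ with $D_\eta\equiv 0$ has $D_\eta$ torsion in $\Pic(X_\eta)$ because the irregularity vanishes, so that $nD_\eta\sim 0$ and hence $[D]$ lies in $V(X/S)$, after which the rank formula and the identification of $W(X/S,\Delta)$ with $\Pi_1(X_\eta,\Delta_\eta)\cap\mathcal{B}(X_\eta)$ are formal linear algebra. The only difference is in how torsionness is justified: the paper argues via the exponential sequence and the universal coefficient theorem, while you invoke the Picard scheme (trivial $\Pic^0$ since its tangent space is $H^1(X_\eta,\mathcal{O}_{X_\eta})=0$, plus finiteness of the torsion of the N\'eron--Severi group), which is a harmless variation and in fact sits more comfortably over the non-closed field $k(\eta)$.
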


\begin{proof}
    Looking at the long exact sequence associated to the exponential sequence, we have $\Pic(X)\hookrightarrow H^2(X,\zz)$. If $[D]$ is the class of a Cartier divisor $D\in \Pic(X)$ inside of $H^2(X,\zz)$ with $D_\eta\equiv 0$, then $[D_\eta]_\qq\in H^2(X,\qq)$ must be zero, since the pairing on $H^2(X,\qq)$ is given by the intersection paring of divisors, and hence $D_\eta$ is torsion in $\Pic(X_\eta)$ by the universal coefficient theorem. Thus, $nD_\eta = 0$ for some $n>0$, implying that $nD$ is vertical.  
\end{proof}

\begin{proposition}
\label{prop:k=0}
    If $\kappa(X_{\eta}, -K_{X_{\eta}}) \leq 1$, then 
    one of the following conditions is satisfied:
    \begin{enumerate}
    \item[(i)] either $H^1(X_\eta,\mathcal{O}_{X_\eta})=0$
    and the consequence of \cref{lem:H1Otriv} holds; or
    \item[(ii)]  the image of the representation $\sigma:\: \PsAut(X/S,\Delta) \to \GL(N^1(X/S),\mathbb{Z})$ contains a subgroup $G(X/S,\Delta)$ such that:
    \begin{enumerate}
        \item[(1)] $G(X/S,\Delta)$ acts trivially on $N^1(X_\eta)$ and $V(X/S)$;
        \item[(2)] $G(X/S,\Delta)$ is isomorphic to a free abelian group of rank $\rho(X/S) - \rho(X_\eta) - v(X/S)$; and
        \item[(3)] $G(X/S,\Delta)$ acts on the fibers of the projection $W(X/S,\Delta)\to\Pi_1(X_\eta,\Delta_\eta)\cap \mathcal{B}(X_\eta)$ properly discontinuously as a group of translations making $W(X/S,\Delta) / G(X/S,\Delta)$ into a real torus bundle over $\Pi_1(X_\eta,\Delta_\eta)\cap \mathcal{B}(X_\eta)$. In particular, $$W(X/S,\Delta)/G(X/S,\Delta)\to \Pi_1(X_\eta,\Delta_\eta)\cap \mathcal{B}(X_\eta)$$ is proper.
        \end{enumerate}
    \end{enumerate} 
\end{proposition}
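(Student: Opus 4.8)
The plan is to dichotomize on the vanishing of $H^1(X_\eta,\mathcal{O}_{X_\eta})$. If $H^1(X_\eta,\mathcal{O}_{X_\eta})=0$, the argument of \cref{lem:H1Otriv} applies verbatim and we land in case~(i), so there is nothing to prove. Assume therefore that $H^1(X_\eta,\mathcal{O}_{X_\eta})\neq 0$; the task is to construct $G(X/S,\Delta)$. Since $\PsAut(X/S,\Delta)$, $N^1(X/S)$, $V(X/S)$, and hence also $N^1_1(X/S)$ and $W(X/S,\Delta)$, are unchanged when $X$ is replaced by a marked SQM over $S$, I am free to pass to a convenient birational model. By \cref{lem:k=1} (when $\kappa(X_\eta,-K_{X_\eta})=1$) together with the classification of klt Calabi--Yau surface pairs of positive irregularity (when $\kappa(X_\eta,-K_{X_\eta})=0$), the geometric generic fiber $X_{\bar\eta}$ is an abelian surface, a bielliptic surface, or a minimal ruled surface over an elliptic curve, and in each case it admits a faithful action of the group of $k$--points of an abelian variety $A_\eta$ isogenous to $\Alb(X_\eta)\cong\widehat{\Pic^0(X_\eta)}$ by ``translations''; these translations preserve $\Delta_\eta$ (a short analysis via the Zariski decomposition of $-K_{X_\eta}$: when $\kappa=1$ the divisor $\Delta_\eta$ is vertical for the elliptic fibration $X_\eta\to\mathbb{P}^1$ and the translations act fiberwise, and when $\kappa=0$ it is the unique effective divisor in its numerical class). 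Using \cref{lem:nicefactorization} and \cref{lem:existssqm}, after a further marked SQM I can arrange that this structure extends over $S$, so that translation by a point of $A_\eta(k)$ spreads out to a pseudo-automorphism of $X$ over $S$ preserving $\Delta$; this yields a homomorphism $A_\eta(k)\to\PsAut(X/S,\Delta)$.

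Next I would compute $g:=\rho(X/S)-\rho(X_\eta)-v(X/S)$. From the exact sequence $0\to\ker(r)\to N^1(X/S)\xrightarrow{r}N^1(X_\eta)\to 0$ and the inclusion $V(X/S)\subseteq\ker(r)$ one has $\dim_{\mathbb{R}}\ker(r)/V(X/S)=g$, and restricting divisors to $X_\eta$ (the kernel of this operation being $V(X/S)$, by the argument of \cref{lem:H1Otriv}, and surjectivity holding by taking Zariski closures of divisors on $X_\eta$) identifies $\ker(r)/V(X/S)$, after tensoring with $\mathbb{Q}$, with $\Pic^0(X_\eta)(k)\otimes\mathbb{Q}$. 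By the Lang--N\'eron theorem $\Pic^0(X_\eta)(k)$ is finitely generated, hence of rank $g$; since $A_\eta$ is isogenous to $\Pic^0(X_\eta)$, the group $A_\eta(k)$ is finitely generated of the same rank $g$. I set $G(X/S,\Delta)$ to be the image in $\GL(N^1(X/S),\mathbb{Z})$ of (a lift to $\PsAut(X/S,\Delta)$ of) a free part of the translation group $A_\eta(k)$. Translations act trivially on $N^1(X_\eta)$ and, after replacing $G(X/S,\Delta)$ by a finite--index subgroup, trivially on the finitely many $f$--exceptional divisors spanning $V(X/S)$ by \cref{lem:excgen}; this gives~(1). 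The representation is injective on this subgroup (a nontrivial element acts nontrivially on the ample classes, by the computation below), so $G(X/S,\Delta)\cong\mathbb{Z}^g$, giving~(2).

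For~(3) I would unwind the definitions: $W(X/S,\Delta)$ maps to $\Pi_1(X_\eta,\Delta_\eta)\cap\mathcal{B}(X_\eta)$ with fibers the affine spaces over $\ker(r)/V(X/S)\cong\mathbb{R}^g$, namely the fibers of $r$ on $N^1_1(X/S)/V(X/S)$. Each $\gamma\in G(X/S,\Delta)$ acts on $N^1(X/S)$ by a unipotent map $\id+n_\gamma$ with $n_\gamma(N^1(X/S))\subseteq\ker(r)$ (triviality on $N^1(X_\eta)$) and $n_\gamma(\ker(r))\subseteq V(X/S)$ (triviality on $\ker(r)/V(X/S)$, since a translation induces the identity on $\Pic^0(X_\eta)$); hence $\gamma$ acts on the fiber over a class $[b]$ as the translation by $\bar n_\gamma(b)\in\ker(r)/V(X/S)$, and $[b]\mapsto\bar n_\gamma(b)$ is linear. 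The theorem of the square identifies $\bar n_\gamma(b)$, for $b=c_1(L)$, with the image of $\tau_{t_\gamma}^*L\otimes L^{-1}\in\Pic^0(X_\eta)$, where $t_\gamma\in A_\eta(k)$ is the translation element; since $b\in\mathcal{B}(X_\eta)$, the homomorphism $A_\eta\to\Pic^0(X_\eta)$, $t\mapsto\tau_t^*L\otimes L^{-1}$, is an isogeny, so $\{\bar n_\gamma(b):\gamma\in G(X/S,\Delta)\}$ is a full--rank lattice in $\ker(r)/V(X/S)$ that varies locally constantly with $[b]$. Thus $G(X/S,\Delta)$ acts on $W(X/S,\Delta)$ fiberwise by a locally constant full--rank lattice of translations; the action is properly discontinuous, the quotient $W(X/S,\Delta)/G(X/S,\Delta)$ is an $\mathbb{R}^g/\mathbb{Z}^g$--bundle over $\Pi_1(X_\eta,\Delta_\eta)\cap\mathcal{B}(X_\eta)$, and this bundle is proper because its fibers are compact tori. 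This is the relative analogue of the corresponding arguments in \cite{Kaw97,FHS21}.

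The step I expect to be the main obstacle is the structural input in the first paragraph: showing, via \cref{lem:k=1} and the classification of klt Calabi--Yau surfaces of positive irregularity, that $H^1(X_\eta,\mathcal{O}_{X_\eta})\neq0$ forces a geometric model of $X_\eta$ carrying an $\Alb(X_\eta)$--translation action that fixes $\Delta_\eta$ and acts trivially on $N^1(X_\eta)$ --- in particular that the klt condition on $(X_\eta,\Delta_\eta)$ rules out non-minimal models --- and then that these translations genuinely extend to pseudo-automorphisms of a marked SQM of $X$ over $S$; this is where the relative machinery of \cref{lem:nicefactorization}, \cref{lem:existssqm} and \cite{FHS21} is essential. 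Granting this, the rank bookkeeping of the second paragraph and the topological bundle statement in~(3) are formal, the theorem of the square supplying the only genuine computation.
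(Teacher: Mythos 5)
Your overall strategy is the paper's: dichotomize on the irregularity, and when $H^1(X_\eta,\mathcal{O}_{X_\eta})\neq 0$ produce translation-type pseudo-automorphisms whose image in $\GL(N^1(X/S),\mathbb{Z})$ acts as a full-rank lattice of translations on the fibers of $N^1(X/S)/V(X/S)\to N^1(X_\eta)$. But two steps are genuinely broken. First, the rank bookkeeping. The ``restriction'' $\ker(r)/V(X/S)\to\Pic^0(X_\eta)(k)\otimes\mathbb{Q}$ is not well defined on numerical classes: a divisor numerically trivial over $S$ restricts to an element of $\Pic^\tau(X_\eta)$ that need not vanish, so different representatives of the same class give different images. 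Worse, Lang--N\'eron only gives finite generation of $A_\eta(k)$ \emph{modulo the $\mathbb{C}(S)/\mathbb{C}$-trace}; in cases squarely within the proposition (e.g.\ $X=A\times S$ with $A$ an abelian surface and $\Delta=0$, or the isotrivial situations of \cref{lem:k=1}) the trace is nontrivial, $A_\eta(\mathbb{C}(S))\supseteq A(\mathbb{C})$ is not finitely generated, and translations by constant points act trivially on $N^1(X/S)$, so neither ``a free part of $A_\eta(k)$ of rank $g$'' nor the injectivity you assert is available; your route to (2) collapses. Relatedly, the full-rank claim in (3) (``since $t\mapsto\tau_t^*L\otimes L^{-1}$ is an isogeny, the $\bar n_\gamma(b)$ form a full-rank lattice'') is a non sequitur: the isogeny is surjective on $\overline{k}$-points, not on $k$-points, and what must be shown is that enough $k$-rational translations extend to $\PsAut(X/S,\Delta)$ to span $\ker(r_V)$. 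The paper does exactly this with a Galois-descent device: for any divisor $D$ over the base with $D_\eta\equiv D_{0,\eta}$, the sum of the Galois-stable fiber $\psi_{D_{0,\eta}}^{-1}(D_\eta-D_{0,\eta})$ is a $\mathbb{C}(S)$-point $x$ with $T_xD_{0,\eta}-D_{0,\eta}\sim d(D_\eta-D_{0,\eta})$, so $d[D-D_0]$ lies in the image of $\sigma'$; freeness and discreteness then come from $G(X/S,\Delta)\subset\GL(N^1(X/S),\mathbb{Z})$, not from Mordell--Weil finiteness.

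Second, the structural input you flag as the main obstacle is genuinely missing, and the shortcut you hope for (``klt rules out non-minimal models'') is neither true nor how the paper proceeds. The paper treats: $\Delta_{\bar\eta}=0$ with $X_\eta$ of index $>1$ via the index-one cover, descending only the $\zeta$-invariant translations; $\Delta_{\bar\eta}\neq 0$ by contracting $\Delta^{\hor}$ with a $(K_X+(1+\epsilon)\Delta^{\hor})$-MMP and lifting a finite-index subgroup fixing the contracted valuations; and, for $\kappa=1$, the case where $X_{\bar\eta}$ is merely birational to $(E\times\mathbb{P}^1)/G$, via a terminalization of $(X/S,\Delta)$ and descent through the minimal resolution $X''_\eta$. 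None of this is supplied by \cref{lem:nicefactorization} or \cref{lem:existssqm}, and even in the abelian case the extension of translations to pseudo-automorphisms acting trivially on $V(X/S)$ is only available (and only needed) for a finite-index subgroup of rational points. Without these descent and lifting arguments, properties (1)--(3) are not established outside the strictly abelian, $\Delta_{\bar\eta}=0$, minimal case.
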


\begin{proof}

We proceed in two cases depending on the Iitaka dimension of the anti-canonical divisor.\\

\noindent \textit{Case 1:} We assume that the anti-canonical divisor $-K_{X_\eta}$ has Kodaira dimension zero.\\ 

We will proceed in four subcases which are not mutually exclusive. In the first two subcases, we analyze different isomorphism classes of the geometric generic fiber $X_{\bar{\eta}}$ while in the last two subcases, we analyze whether the divisor $\Delta_{\eta}$ is trivial.\\ 

\noindent\textit{Case 1.1:} We assume that $X_{\bar{\eta}}$ is a K3 surface.\\

In this case, $H^1(X_\eta, \mathcal{O}_{X_\eta})\otimes_{\mathbb{C}(S)} \overline{\mathbb{C}(S)}\cong H^1(X_{\bar{\eta}}, \mathcal{O}_{X_{\bar{\eta}}})=\{0\}$ and we may apply \cref{lem:H1Otriv}.\\
            
\noindent\textit{Case 1.2:} We assume that $X_{\bar{\eta}}$ is an abelian surface.\\

In this case, we produce a subgroup $G(X/S,\Delta)\leqslant \sigma(\PsAut(X/S,\Delta))$ that acts properly discontinuously as a group of translations on the fibers of induced restriction map
$$r_V:\: N^1(X/S) / V(X/S) \to N^1(X_\eta)$$
over $\mathcal{A}(X_\eta) = \mathcal{A}^e(X_\eta)\cap \mathcal{B}(X_\eta)$.
Further, we will find $G(X/S,\Delta)$ so that the quotient $W(X/S,\Delta) / G(X/S,\Delta)$ is a real torus bundle over $\mathcal{A}(X_\eta)\cap \mathcal{B}(X_\eta)$. Fix a divisor $D_0$ on $X$ such that $D_{0,\eta}\in\mathcal{A}(X_\eta)$ and denote $F_{D_{0,\eta}}$ the fiber of $r$ over $D_{0,\eta}$.
            
If $X_{\bar{\eta}}$ is abelian, then
$X_\eta$ is a $\mathbb{C}(S)$-torsor under $A_\eta := \Alb(X_\eta)$ induced by the Galois-invariant addition on $X_{\bar{\eta}}\cong \Alb(X_{\bar{\eta}})$ (see, e.g.,~\cite{ACV22}). Indeed, given a point $p$ of $X_{\bar{\eta}}$, we may define an isomorphism $\phi_p:\: \Alb(X_{\bar{\eta}})\to X_{\bar{\eta}}$ sending $e$ to $p$. The action $\Alb(X_{\bar{\eta}})\curvearrowright X_{\bar{\eta}}$ is defined by $a\cdot x = \phi_p(a + \phi_p^{-1}(x))$ and is independent of $p$. This action is Galois invariant, because given an element $\zeta\in \Gal(\overline{\mathbb{C}(S)}/\mathbb{C}(S))$, we have that 
                \begin{align*}
                    (\zeta a)\cdot (\zeta x) &= \phi_p(\zeta a + \phi_p^{-1}(\zeta x))\\
                    &= \phi_p(\zeta a + \zeta \phi_{\zeta^{-1}p}^{-1}(x))\\
                    &= \zeta \phi_{\zeta^{-1}p}(a + \phi_{\zeta^{-1}p}^{-1}(x))\\
                    &= \zeta \phi_{p}(a + \phi_{p}^{-1}(x))
                \end{align*}
            
                so the action descends to $\Alb(X_{\eta})\curvearrowright X_{\eta}$.
    
                Define $G(X/S,\Delta)$ to be the image under $\sigma$ of the subgroup $M\leqslant \PsAut(X/S,\Delta)$ of pseudo-automorphisms that act as $A_\eta$-translations on $X_\eta$ and act trivially on $V(X/S)$. This guarantees (1). There is a finite index subgroup of $\mathbb{C}(S)$-rational points $x$ of $A_\eta$ that induce a translation $T_x$ of $X_\eta$ that may be extended to an element $\theta_x\in \PsAut(X/S,\Delta)$ acting trivially on $V(X/S)$.
                Hence, elements of $M$ may be identified with $\mathbb{C}(S)$-rational points of $A_\eta$. 
                The map $\sigma':\: M \to N^1(X/S)/V(X/S)$ defined by $\sigma'(\theta) = [\theta_* D_0 - D_0]$ has image contained in $\ker(r_V)$. The divisor class $D_{0,\eta}$ determines a degree $d$ polarization $\psi_{D_{0,\eta}}:\: A_\eta \to A_\eta^\vee \cong \Pic_0(X_\eta)$ given by $x\mapsto T_x D_{0,\eta} - D_{0,\eta}$ that only depends on the numerical class of $D_{0,\eta}$ \cite[Proposition 5.3]{LB92}. So, for $\theta\in M$ and any divisor $D$ on $X$ such that $[D]\in F_{D_{0,\eta}}$, we have a linear equivalence $\theta_* D_\eta - D_\eta \sim \theta_* D_{0,\eta} - D_{0,\eta}$ on $X_\eta$. Letting $\theta = \theta_x$ and $D_\eta = \theta_y D_{0,\eta}$ for $x,y \in A_\eta(\mathbb{C}(S))$, we have that the linear equivalence $$T_{x+y} D_{0,\eta} - D_{0,\eta} \sim (T_x D_{0,\eta} - D_{0,\eta}) + (T_y D_{0,\eta} - D_{0,\eta})$$ extends over an open set of $S$, i.e., $$(\theta_x\circ \theta_y)_* D_{0} - D_{0} - (\theta_x)_* D_{0} + D_{0} - (\theta_y)_* D_{0} + D_{0} \in V(X/S).$$ Therefore $\sigma'$ is a group homomorphism. As a subgroup of a vector space, the image of $\sigma'(M)$ is a free abelian group. The rank $\sigma(M)$ is the same as the rank of $\sigma'(M)$, and $\dim \ker(r_V) = \rho(X/S) - v(X/S) - \rho(X_\eta)$. Thus, to show (2) and (3) it suffices to show that $\sigma'(M)$ has full rank in $\ker(r_V)$. If $D$ again is a divisor on $X$ such that $[D]\in F_{D_{0,\eta}}$, then the sum of points in $\psi^{-1}_{D_{0,\eta}}(D_\eta - D_{0,\eta})$ determine a $\mathbb{C}(S)$-rational point $x$ such that $T_x D_{0,\eta} - D_{0,\eta} \sim d(D_\eta - D_{0,\eta})$. Because this linear equivalence extends over an open set of $S$, $\sigma'(\theta_x) = [d(D-D_0)]\in N^1(X/S)/V(X/S)$. Therefore, $\sigma'(M)$ is of full rank in $\ker(r_V)$ and (2) and (3) hold.\\

                \noindent\textit{Case 1.3:} We assume that $\Delta_{\bar{\eta}}=0$.\\ 

                Passing to the minimal $\tilde{X}_{\bar{\eta}}$ resolution of the index one cover of $X_{\bar{\eta}}$, we obtain a K3 or abelian surface. If $\tilde{X}_{\bar{\eta}}$ is K3, then $X_{\bar{\eta}}$ has irregularity $0$ and we're done. 
        
                Otherwise, assume now that the index one cover of $X_\eta$ is an abelian torsor. Taking $\tau:\: \tilde{X}_\eta \to X_\eta$ to be the index one cover of $X_\eta$, $\tau$ is a cyclic Galois covering, and $\tilde{X}_\eta$ is a $\mathbb{C}(S)$-torsor under $\tilde{A}_\eta = \Alb(\tilde{X}_\eta)$. For a generator $\zeta\in \Gal(\tilde{X}_\eta / X_\eta)$ and $D$ a divisor on $X$ such that $[D]\in F_{D_{0,\eta}}$, we have a $\mathbb{C}(S)$-rational point $x$ on $\tilde{A}_\eta$ such that $(\theta_x)_*\tau^* D_{0,\eta} - D_{0,\eta} \sim d(\tau^* D_\eta - \tau^* D_{0,\eta})$, as before. The generator $\zeta$ permutes the points in the fiber $\phi_{\tau^*D_{0,\eta}}^{-1} (\tau^* D_\eta - \tau^* D_{0,\eta})$, so $x$ is $\zeta$-invariant, $\theta_x \circ \zeta = \zeta \circ \theta_x$, and $\theta_x$ descends to an element of $\PsAut(X/S,\Delta)$. We may thus apply the same argument as in the abelian case, defining $M\leqslant \PsAut(X/S,\Delta)$ to be the pseudo-automorphisms that act by $\zeta$-invariant $\tilde{A}_\eta$-translations on $X_\eta$ and trivially on $V(X/S)$.\\

                \noindent\textit{Case 1.4:} We assume that $\Delta_{\bar{\eta}}\neq 0$.\\
            
                For $0<\epsilon\ll1$, the pair $(X, (1+\epsilon) \Delta^{\hor})$ is klt and $\qq$-linearly equivalent to $\epsilon \Delta^{\hor}$. Run a $(K_X + (1+\epsilon) \Delta^{\hor})$-MMP over $S$ to obtain a birational contraction $\phi:\:X\to X'$ that contracts exactly the components of $\Delta^{\hor}$. We obtain a klt CY pair $(X'/S,\Delta'=\phi_* \Delta)$ such that $\Delta'_\eta=0$. From the $\Delta_\eta = 0$ case, we obtain a subgroup $M'\leqslant  \PsAut(X'/S,\Delta')$ defined as before. Restricting to the generic fiber, we obtain a birational contraction $\phi_\eta:\: X_\eta\to X'_\eta$ that contracts precisely the components of $\Delta_\eta$. There exists a finite index subgroup $M\leqslant M'$ that fixes the valuations over $X'$ corresponding to the components of $\Delta^{\hor}$. Hence, the pseudo-automorphisms of $M$ lift to $\PsAut(X/S,\Delta)$. Defining $G(X/S,\Delta)$ to be the image of the liftings of elements of $M$ under $\sigma:\: \PsAut(X/S,\Delta) \to \GL(N^1(X/S,\zz))$, conditions (1) and (2) hold automatically. Moreover, (3) follows from the fact that $M\leqslant M'$ is finite index.\\

    \noindent\textit{Case 3:} We assume that the anti-canonical divisor $-K_{X_\eta}$ has Kodaira dimension one.\\ 
    
        By \cref{lem:k=1}, either $X_\eta$ has irregularity $0$ or $X_{\bar{\eta}}$ is birational to an isotrivial elliptic fibration over $\mathbb{P}^1$. Addressing the latter, if $X_{\bar{\eta}}\cong (E\times \mathbb{P}^1)/G$ for $E$ an elliptic curve, then we run the same argument that we did for $X_{\bar{\eta}}$ abelian, noting that $X_{\eta}$ still possesses a free action of $A_\eta:=\Alb(X_\eta)$. 
        
        Otherwise, as in the proof of \cref{lem:k=1}, run a $(K_{X_{\eta}} + (1+\epsilon)\Delta_\eta)$-MMP over $\mathbb{P}^1$ to obtain a singular elliptic surface $X'_{\eta} \to \mathbb{P}^1$ and let $X''_{\eta}$ be the minimal resolution over $\mathbb{P}^1$. We have that $X''_{\bar{\eta}}\cong (E\times \mathbb{P}^1)/G$, an isotrivial elliptic fibration over $\mathbb{P}^1$. Let $(\tilde{X}/S,\tilde{\Delta})\to X$ be the terminalization of $(X/S,\Delta)$. We have a commutative diagram over $\mathbb{P}^1$:

        \begin{center}
        \begin{tikzcd}
            X_\eta \arrow[d, swap] 
            & \tilde{X}_\eta \arrow[l] \arrow[d] \\ 
            X'_\eta 
            & X''_\eta. \arrow [l, swap]
        \end{tikzcd}
        \end{center}
        Take $M'\leqslant \PsAut(\tilde{X}/S,\tilde{\Delta})$ to be the pseudo-automorphisms that descend to $\PsAut(X/S,\Delta)$, acting trivially on $V(X/S)$, and to $\Aut(X''_\eta)$ acting by $\Alb(X''_\eta)$-translations. For any $\phi\in M'$, because $\phi_\eta$ descends to $X''_\eta$, where it acts by $\Aut(X''_\eta)$-translation, $\phi_\eta$ acts trivially on $N^1(\tilde{X}_\eta)$.
        Therefore, $\phi_\eta$ acts  trivially on $N^1(X_\eta)$, giving (1). Moreover, fixing $D_0$ a divisor on $X$ such that $D_{0,\eta}\in\mathcal{A}(X_\eta)$, we get a group homomorphism $\sigma':\: M\to N^1(X/S)/V(X/S)$ defined by $\theta \mapsto [\theta_*D_0 - D_0]$ such that $\sigma'(M)$ is full rank inside of $\ker(r_V)$, as before, giving (2) and (3). 
\end{proof}

\begin{notation}
Let $(X/S,\Delta)$ be a klt CY pair of relative dimension two.
Assume that $\kappa(X_\eta,-K_{X_\eta})\leq 1$.
Then, we define the group $G(X/S,\Delta)$ as follows.
If $H^1(X_\eta,\mathcal{O}_{X_\eta})=0$, then we set $G(X/S,\Delta)=\{\id\}$.
Otherwise, we know that Proposition~\ref{prop:k=0}.(ii) holds and $G(X/S,\Delta)$ is the subgroup of 
$\sigma({\rm PsAut}(X/S,\Delta))\leqslant {\rm GL}(N^1(X/S),\zz)$ constructed in such proposition.
\end{notation} 

We proceed to the proof of the main result of the article.

\begin{theorem}
\label{thm:mainresult2}
     Let $(X/S,\Delta)$ be a klt CY pair of relative dimension two, then the geometric cone conjecture holds true for $(X/S,\Delta)$. That is, there are finitely many Mori chambers and Mori faces in $\mathcal{M}^e(X/S)$ up to $\PsAut(X/S,\Delta)$. Hence, there are finitely many marked SQMs $\alpha:\: X \dashrightarrow X_0$ over $S$ contractions $\phi:\: X_0\to T$ over $S$ up to $\PsAut(X/S,\Delta)$.
\end{theorem}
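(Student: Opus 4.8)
The plan is as follows. As already observed, we may assume $\kappa(X_\eta,-K_{X_\eta})\in\{0,1\}$, since otherwise $X$ is of Fano type over $S$ by \cref{lem:fanotype} and $\mathcal{M}^e(X/S)=\mathcal{A}^e(X/S)$ is rational polyhedral by the cone theorem. By \cref{rem:faceschambers} --- valid here since abundance (\cref{thm:abundance}) and termination of flips hold in relative dimension two --- the Mori chambers of $\mathcal{M}^e(X/S)$ are the cones $\alpha_0^*\mathcal{A}^e(X_0/S)$ over marked SQMs $\alpha_0\colon X\dashrightarrow X_0$ of $X/S$, and the Mori faces are the cones $\alpha_0^*\phi^*\mathcal{A}^e(Y/S)$ over SQM contractions $\phi\colon X_0\to Y$ over $S$. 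One checks that such a face meets the relative big cone $\mathcal{B}(X/S)$ exactly when $\phi$ is birational; otherwise $\phi$ is an SQM fiber space structure, the face lies on $\partial\mathcal{B}(X/S)$, and $\mathcal{M}^e(X/S)\cap\partial\mathcal{B}(X/S)$ is the union of these fiber space faces (by \cref{prop:movdecomp} and abundance, every $f$-movable $f$-effective non-$f$-big class is, after a sequence of flops, the pullback of an ample divisor under a contraction of positive relative dimension over $S$). I split chambers and faces into three classes: (A) the faces corresponding to SQM fiber space structures; (B) the chambers and faces whose closure meets $\partial\mathcal{B}(X/S)$ in positive dimension; and (C) the chambers and faces whose closure is contained in $\mathcal{B}(X/S)\cup\{0\}$. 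Class (A) is finite up to $\PsAut(X/S,\Delta)$ by \cref{prop:FFSS}.

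\emph{Class (B): Step 4.} I would argue by induction on $v(X/S)=\dim V(X/S)$. Fix a positive-dimensional fiber space face $F$, arising from an SQM fiber space structure $\phi\colon X_0\to Y$ over $S$; after further flops we may take $\phi$ a good fibration by \cref{lem:nicefactorization}, so that $\PsAut(Y/S,\Delta_Y)$ lifts into $\PsAut(X/S,\Delta)$ by \cref{cor:lifting}. If $\phi$ has relative dimension two then $Y\to S$ is birational and \cref{lem:crepant} controls the relevant contractions of $Y/S$; if $\phi$ has relative dimension one then $Y\to S$ has relative dimension one and \cref{thm:reldim1} does. The chambers and faces of $\mathcal{M}^e(X/S)$ whose closure contains $F$ are organised, up to sequences of flops, by the Mori decomposition of $\mathcal{M}^e(Y/S)$ near the corresponding face together with the relative structure of the relative-dimension-one pair $(X_0/Y,\Delta_0)$, again governed by \cref{thm:reldim1}; the induction on $v(X/S)$, combined with \cref{lem:excgen}, is what lets this data be reassembled. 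Together with the lifted action of $\PsAut(Y/S,\Delta_Y)$, this gives finitely many $\PsAut(X/S,\Delta)$-orbits of chambers and faces in class (B).

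\emph{Class (C): Step 5.} A class-(C) chamber or face is a closed convex cone contained in $\mathcal{B}(X/S)\cup\{0\}$, hence compact modulo scaling, and by \cref{prop:locallyfinite} only finitely many chambers and faces meet any fixed closed convex cone inside $\mathcal{B}(X/S)\cup\{0\}$. It therefore suffices to exhibit one such cone $Z$ so that every class-(C) chamber or face is $\PsAut(X/S,\Delta)$-equivalent to one meeting $Z$. I would build $Z$ from the tube $\tilde{\Pi}_1(X/S,\Delta)=r^{-1}(\Pi_1(X_\eta))\cap\mathcal{M}^e(X/S)$, with $r\colon N^1(X/S)\to N^1(X_\eta)$ the surjective restriction map and $\Pi_1(X_\eta,\Delta_\eta)$ the degree-one slice of the rational polyhedral fundamental domain $\Pi(X_\eta)$ for $\Aut(X_\eta,\Delta_\eta)\curvearrowright\mathcal{A}^e(X_\eta)$ from \cref{thm:kltpairs}. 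The key input is that $\tilde{\Pi}_1\cap\mathcal{B}(X/S)$, after passing to the quotient by the group $G(X/S,\Delta)\leqslant\sigma(\PsAut(X/S,\Delta))$, is proper over $\Pi_1(X_\eta,\Delta_\eta)\cap\mathcal{B}(X_\eta)$: when $H^1(X_\eta,\mathcal{O}_{X_\eta})=0$ we have $\ker r=V(X/S)$ and $G(X/S,\Delta)$ is trivial by \cref{lem:H1Otriv}, and otherwise \cref{prop:k=0}(ii) yields $G(X/S,\Delta)$ acting trivially on $N^1(X_\eta)$ and on $V(X/S)$, of full rank in $\ker(r_V)$, and acting fibrewise by translations so that $W(X/S,\Delta)/G(X/S,\Delta)\to\Pi_1(X_\eta,\Delta_\eta)\cap\mathcal{B}(X_\eta)$ is a real torus bundle; the vertical directions contribute only boundedly, since $V(X/S)$ is spanned by the finitely many $f$-exceptional divisors (\cref{lem:degenerate}, \cref{lem:excgen}) and carries no nonzero $f$-movable class. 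Intersecting $\tilde{\Pi}_1$ with a closed convex cone in $\mathcal{B}(X/S)\cup\{0\}$ whose $G(X/S,\Delta)$-translates sweep out the big part of $\tilde{\Pi}_1$ produces the desired $Z$. That $\PsAut(X/S,\Delta)$-translates of $\tilde{\Pi}_1$ cover $\mathcal{M}^e(X/S)\cap\mathcal{B}(X/S)$ away from the fiber space faces follows from $r(\mathcal{M}^e(X/S))=\mathcal{A}^e(X_\eta)=\bigcup_{\psi\in\Aut(X_\eta,\Delta_\eta)}\psi_*\Pi(X_\eta)$ together with the fact that a finite-index subgroup of $\Aut(X_\eta,\Delta_\eta)$ is realised by restriction of elements of $\PsAut(X/S,\Delta)$ --- obtained by spreading out an automorphism of $(X_\eta,\Delta_\eta)$ to a crepant birational self-map of $(X/S,\Delta)$ over $S$ and running a relative MMP to make it a pseudo-automorphism. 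Then every class-(C) chamber or face meets $\mathcal{M}^e(X/S)\cap\mathcal{B}(X/S)$, hence is $\PsAut(X/S,\Delta)$-equivalent to one meeting $Z$, of which \cref{prop:locallyfinite} permits only finitely many.

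\emph{Expected main obstacle.} The crux is Step 5: assembling the fundamental domain of the generic surface $X_\eta$, the cocompact translation action of $G(X/S,\Delta)$ on the torus bundle $W(X/S,\Delta)$, and the $f$-exceptional directions $V(X/S)$ into one region whose $\PsAut(X/S,\Delta)$-orbit reaches every class-(C) chamber and face, and checking that the properness in \cref{prop:k=0}(ii).(3) genuinely bounds the number of chambers via \cref{prop:locallyfinite}. Secondary difficulties are the lifting of (enough of) $\Aut(X_\eta,\Delta_\eta)$ to $\PsAut(X/S,\Delta)$, and verifying that classes (B) and (C) are exhaustive --- i.e., that a chamber whose closure touches $\partial\mathcal{B}(X/S)$ nontrivially does so along a face handled by \cref{prop:FFSS} and Step 4.
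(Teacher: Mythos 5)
Your overall architecture matches the paper's: handle the faces coming from SQM fiber space structures by \cref{prop:FFSS}, handle the cones containing such faces by \cref{thm:reldim1} and induction on $v(X/S)$, and treat the remaining (big) cones by pulling the fundamental domain $\Pi(X_\eta,\Delta_\eta)$ back along $r$, quotienting by $G(X/S,\Delta)$, and invoking \cref{prop:locallyfinite} after lifting a finite-index subgroup of $\Aut(X_\eta,\Delta_\eta)$ to $\PsAut(X/S,\Delta)$ via coset representatives. However, there is a genuine gap exactly at the step you flag as the crux. Your justification that ``the vertical directions contribute only boundedly, since $V(X/S)$ \ldots carries no nonzero $f$-movable class'' is false in general: $V(X/S)\cap\overline{\mathcal{M}}(X/S)=\{0\}$ holds only for good fibrations (\cref{prop:movert}, \cref{cor:proper}), and the paper's Addendum gives an explicit mechanism for its failure when $\dim S>1$ (a prime divisor mapping into a movable divisor on $S$ in codimension $\geq 2$ produces a nonzero movable vertical class). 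Replacing $X/S$ by an SQM factoring through a good fibration $g\colon X\to S'$ over a birational $h\colon S'\to S$ does not repair this, because the relevant vertical space is still $V(X/S)\supseteq g^*N^1(S'/S)$, and pullbacks of classes on $S'$ movable over $S$ remain movable on $X$ over $S$. Consequently the properness of the projection of your tube to $W(X/S,\Delta)$, and hence the existence of the sweeping cone $Z$, does not follow from the ingredients you cite.

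This is precisely why the paper isolates $J(X/S,\Delta)=\tilde{\Pi}_1(X/S,\Delta)\setminus\bigcup I(X/S)$ and proves two nontrivial lemmas: \cref{lem:closed}, that $J(X/S,\Delta)$ is closed in $N^1_1(X/S)$ and stays inside the big cone (needed so that limits do not degenerate to non-big classes and so that the image lands in a compact subset of $\Pi_1(X_\eta,\Delta_\eta)\cap\mathcal{B}(X_\eta)$), and \cref{lem:proper}, that $p\colon J(X/S,\Delta)\to W(X/S,\Delta)$ is proper. The proof of \cref{lem:proper} is a lengthy argument (adapted from step 5 of the proof of Theorem 3.4 in FHS21): given a sequence in $J$ with convergent image but no convergent subsequence, one runs MMPs on the base $S'$ for the divergent base components $w'_n$, controls the divisorial contractions step by step, and ultimately produces a factorization exhibiting the classes $z_n$ as lying in $\bigcup I(X/S)$, contradicting their membership in $J$. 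In other words, the exclusion of the cones containing fiber space faces is not merely a bookkeeping device splitting cases (A)/(B)/(C); it is the substantive input that makes the properness true, and nothing in your proposal substitutes for this argument. Your restriction to class (C) cones points in the right direction, but without an analogue of \cref{lem:closed} and \cref{lem:proper} the claimed cocompactness of the $G(X/S,\Delta)$-action on the big part of the tube, and hence the construction of $Z$, is unsupported; \cref{prop:k=0}(ii).(3) only gives properness of $W(X/S,\Delta)/G(X/S,\Delta)$ over $\Pi_1(X_\eta,\Delta_\eta)\cap\mathcal{B}(X_\eta)$, which controls the directions transverse to $V(X/S)$ but says nothing about escape to infinity inside the vertical directions.
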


\begin{proof}
    Let $I(X/S)$ be the set of cones $\alpha^{-1}_*g_1^* \mathcal{A}(Y'/S)$ for those marked SQMs $\alpha:\: X\dashrightarrow X'$ of $X/S$ whose structure morphism factors as $X'\xrightarrow{g_1} Y'\xrightarrow{g_2} T \xrightarrow{h} S$ with $\dim T = \dim X - 1$ or $\dim X - 2$, and $h$ is not the identity. The cone $\alpha^{-1}_*g_1^* \mathcal{A}(Y'/S)$ is the interior of a chamber if $g_1$ is an isomorphism and the relative interior of a face if $g_1$ is a nontrivial contraction. In other words, the set $I(X/S)$ is the set of faces and chambers that themselves contain faces corresponding to SQM fiber space structures.\\

    \noindent\textit{Claim:} There exist finitely many $\PsAut(X/S,\Delta)$-orbits among the cones in $I(X/S)$.\\

    \begin{proof}[Proof of the Claim]
    For a given fiber space structure $g'\colon X'\rightarrow T$ of $X/S$ of relative dimension one, applying \cref{thm:reldim1}  to $g'\colon (X',\alpha_*\Delta) \rightarrow T$ tells us that there are only finitely many faces and chambers $\alpha^{-1}_*g_1^* \overline{\mathcal{A}}(Y'/S)$ of $\mathcal{M}^e(X/S)$ containing the face corresponding to $g'\colon X'\rightarrow T$ up to $\PsAut(X/S,\Delta)$. 
    Analogously, for a given fiber space structure
    $g'\colon X'\rightarrow T$ of $X/S$ of relative dimension two, induction on $V(X/S)$ shows that there are only finitely many faces and chambers $\alpha^{-1}_*g_1^* \overline{\mathcal{A}}(Y'/S)$ of $\mathcal{M}^e(X/S)$ containing the face corresponding to $g'\colon X'\rightarrow T$ up to $\PsAut(X/S,\Delta)$.
    Note that if $v(X/S)=0$, then there are no fiber space structures of $X/S$ of relative dimension two. Thus, there are finitely many faces and chambers up to $\PsAut(X/S,\Delta)$ containing a particular face corresponding to an SQM fiber space structure. By \cref{prop:FFSS}, there are finitely many chambers and faces corresponding to SQM fiber space structures $\PsAut(X/S,\Delta)$, so there are finitely many cones in $I(X/S)$ up to $\PsAut(X/S,\Delta)$ altogether.
    \end{proof}

    We now prove finiteness up to $\PsAut(X/S,\Delta)$ for chambers and faces not in the set $I(X/S)$. Define
    \[
    J(X/S,\Delta)=\tilde{\Pi}_1(X/S,\Delta) \setminus \bigcup I(X/S).
    \]
    We will prove in \cref{lem:proper} that the quotient 
    \[
    p:\: J(X/S,\Delta)\to W(X/S,\Delta)\subset N^1_1(X/S)/V(X/S)
    \]
    is proper, so the same holds for the induced map 
    \[
    J(X/S,\Delta)/G(X/S,\Delta)\to W(X/S,\Delta)/G(X/S,\Delta).
    \]
    In \cref{lem:k=1} and \cref{prop:k=0}, we proved that $W(X/S,\Delta)/G(X/S,\Delta)$ maps under $N_1^1(X/S)/V(X/S)\to N^1_1(X_\eta)$ either isomorphically onto $\Pi_1(X_\eta,\Delta_\eta)\cap \mathcal{B}(X_\eta)$ or as a real torus bundle over $\Pi_1(X_\eta,\Delta_\eta)\cap \mathcal{B}(X_\eta)$. In particular,
    the map $W(X/S,\Delta)/G(X/S,\Delta)\to \Pi_1(X_\eta,\Delta_\eta)\cap \mathcal{B}(X_\eta)$ is proper and hence the composition $$J(X/S,\Delta)/G(X/S,\Delta)\to W(X/S,\Delta)/G(X/S,\Delta)\to \Pi_1(X_\eta,\Delta_\eta)\cap \mathcal{B}(X_\eta)$$ is proper.   
    
    Moreover, the image of $J(X/S,\Delta)/G(X/S,\Delta)\to \Pi_1(X_\eta, \Delta_\eta)\cap \mathcal{B}(X_\eta)$ lies inside of a compact subset. Indeed, let $K_1\subseteq K_2\subseteq \cdots$ be a compact exhaustion of $\Pi_1(X_\eta, \Delta_\eta)\cap \mathcal{B}(X_\eta)$ such that every $K_i$ contains the faces of $\Pi_1(X_\eta, \Delta_\eta)\cap \mathcal{B}(X_\eta)$. In \cref{lem:closed}, we will see that $J(X/S,\Delta)$ is a closed subset of $N^1_1(X/S)$ contained in $\mathcal{B}_1(X/S)$. We have that $\partial (\bigcup I(X/S))\cap \partial J(X/S,\Delta)\subset \mathcal{B}(X/S)$ is closed in $N^1(X/S)$, so there are only finitely many faces in $\partial (\bigcup I(X/S))\cap \partial J(X/S,\Delta)$. Since each face in this intersection maps to a closed subset of $\Pi(X_\eta, \Delta_\eta)\cap \mathcal{B}(X_\eta)$ by properness, there exists an $N>0$ such that all of the faces of $\partial (\bigcup I(X/S))\cap \partial J(X/S,\Delta)$, (and therefore all of $J(X/S,\Delta)$) are in the preimage of $K_N$.
    
    Since $J(X/S,\Delta)/G(X/S,\Delta)\to \Pi_1(X_\eta, \Delta_\eta)\cap \mathcal{B}(X_\eta)$ is proper with image contained in a compact set, $J(X/S,\Delta)/G(X/S,\Delta)$ is compact. Therefore the Mori faces and chambers of $J(X/S)$ whose restriction to $X_\eta$ intersects $\Pi(X_\eta, \Delta_\eta)\cap \mathcal{B}(X_\eta)$ only fall into finitely many $G(X/S,\Delta)$-orbits, whence finitely many $\PsAut(X/S,\Delta)$-orbits. Up to $\Aut(X_\eta,\Delta_\eta)$, the restriction to $X_\eta$ of every Mori face in $\mathcal{B}(X/S)$ and every Mori chamber intersects $\Pi(X_\eta, \Delta_\eta)\cap \mathcal{B}(X_\eta)$/.
    However, this may not be the case up to $\PsAut(X/S,\Delta)$.

    For any klt CY pair, there is a finite index subgroup of $\Aut(X_\eta,\Delta_\eta)$ of automorphisms that extend to $\PsAut(X/S,\Delta)$. Indeed, any automorphism $\sigma\in \Aut(X_\eta,\Delta_\eta)$ extends uniquely to an birational automorphism of $\sigma'\in \Bir(X/S)$. We argue that the biratonal automorphism $\sigma'\in \Bir(X/S)$ is an element of $\Bir(X/S,\Delta)$.
    If $(X/S,\Delta')$ is the log pull-back of $(X/S,\Delta)$ with respect to $\sigma'$, then $K_X+\Delta'$ and $K_X+\Delta$ differ by the pull-back of a divisor $D$ on $S$. Indeed, both klt sub-pairs $(X/S,\Delta)$ and $(X/S,\Delta')$ agree over an open of $S$ and are log Calabi--Yau over $S$. On the other hand, the pair obtained on $S$ by the canonical bundle formula applied to $(X/S,\Delta)$ and $(X/S,\Delta')$ are equal, so $D=0$. We conclude that $\sigma'\in {\rm Bir}(X/S,\Delta)$ and so it induces a permutation of the finite set of non-terminal exceptional valuations of $(X/S,\Delta)$. 
    The element $\sigma'$ is in ${\rm PsAut}(X/S,\Delta)$ whenever such action is the identity.
    Therefore, there is a finite index subgroup of 
    $\Aut(X_\eta,\Delta_\eta)$ that lifts
    to $\PsAut(X/S,\Delta)$.
    
    Writing $\PsAut(X/S,\Delta)\leqslant \Aut(X_\eta,\Delta_\eta)$, choose $g_1,\hdots, g_s$ right coset representatives of $\PsAut(X/S,\Delta)$ in $\Aut(X_\eta,\Delta_\eta)$. For any $D\in \mathcal{M}^e(X/S)$, there exists a $g\in \Aut(X_\eta,\Delta_\eta)$ such that $g^*D_\eta\in \Pi(X_\eta,\Delta_\eta)$. We have that $g=h \circ g_i$ for some $i$ and for some $h\in \PsAut(X/S,\Delta)$. By construction, $h^* D_\eta\in (g_i^{-1})^* \Pi(X_\eta,\Delta_\eta)$. Therefore, up to $\PsAut(X/S,\Delta)$, any Mori chamber or Mori face in $\mathcal{M}^e(X/S)$ lies in $r^{-1}((g_i^{-1})^*\Pi(X_\eta,\Delta_\eta))$ for some $i=1,\hdots, s$. Henceforth, applying the above argument with $\Pi(X_\eta,\Delta_\eta)$ replaced with $(g_i^{-1})^* \Pi(X_\eta,\Delta_\eta)$ for each $i=1,\hdots,s$, we obtain that all of the Mori faces and chambers of $\mathcal{M}^e(X/S)$ fall into finitely many $\PsAut(X/S,\Delta)$-orbits. It follows that there are only finitely many $\PsAut(X/S,\Delta)$-orbits of Mori chambers. 
\end{proof}

In~\cref{lem:proper}, we will show that  $p:\: J(X/S,\Delta)\to W(X/S,\Delta)$ is proper.
To do so, we prove the following lemma in order
for bounded sequences in $J(X/S,\Delta)$ to admit convergent subsequences.

\begin{lemma}
\label{lem:closed}
    $J(X/S,\Delta)$ is a closed subset of $N^1_1(X/S)$ contained in $\mathcal{B}_1(X/S)$.
\end{lemma}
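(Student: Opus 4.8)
The plan is to deduce both assertions from the following three facts:
(A) $\tilde{\Pi}_1(X/S,\Delta)$ is closed in $N^1_1(X/S)$;
(B) every class of $\tilde{\Pi}_1(X/S,\Delta)$ that is not $f$-big lies in $\bigcup I(X/S)$;
(C) $\bigcup I(X/S)\cap\tilde{\Pi}_1(X/S,\Delta)$ is open in $\tilde{\Pi}_1(X/S,\Delta)$.
Indeed, (B) gives $J(X/S,\Delta)=\tilde{\Pi}_1(X/S,\Delta)\setminus\bigcup I(X/S)\subseteq\tilde{\Pi}_1(X/S,\Delta)\cap\mathcal{B}(X/S)\subseteq\mathcal{B}_1(X/S)$, which is the stated containment; and (A) together with (C) shows that $J(X/S,\Delta)$ is relatively closed in the closed set $\tilde{\Pi}_1(X/S,\Delta)$, hence closed in $N^1_1(X/S)$.

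\textbf{Facts (A) and (B).} For (A): by the restriction lemma $r^{-1}(\mathcal{A}^e(X_\eta))\cap\overline{\mathcal{M}}(X/S)=\mathcal{M}^e(X/S)$ and $\Pi_1(X_\eta)\subseteq\Pi(X_\eta)\subseteq\mathcal{A}^e(X_\eta)$, so $\tilde{\Pi}_1(X/S,\Delta)=r^{-1}(\Pi_1(X_\eta))\cap\overline{\mathcal{M}}(X/S)$. Since $H$ is ample on the surface $X_\eta$, the Hodge index theorem forces $z\cdot H>0$ for every nonzero nef class $z$ on $X_\eta$, so $\Pi_1(X_\eta)=\Pi(X_\eta)\cap\{z\cdot H=1\}$ is compact, in particular closed; as $r$ is continuous and $\overline{\mathcal{M}}(X/S)$ is closed, $\tilde{\Pi}_1(X/S,\Delta)$ is closed. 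For (B): if $[D]\in\tilde{\Pi}_1(X/S,\Delta)$ is not $f$-big, then $D_\eta$ is a nonzero nef class on $X_\eta$ with $\kappa(X_\eta,D_\eta)<2$; by \cref{lem:semiample} it is semiample, and being nonzero its Iitaka dimension is exactly $1$, giving a fibration $X_\eta\to C_\eta$ onto a curve with $D_\eta$ a positive multiple of the pullback of an ample class. By \cref{prop:movdecomp} there is a marked SQM $\alpha_0\colon X\dashrightarrow X_0$ with $\alpha_{0*}D$ nef over $S$, and this flop is an isomorphism over $\eta$; by abundance (\cref{thm:abundance}) the class $\alpha_{0*}D$ is $f_0$-semiample, with relative Iitaka fibration $\psi\colon X_0\to T$ over $S$. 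Restricting to $\eta$ identifies $T_\eta$ with $C_\eta$, so $\dim T=\dim X-1$ and $h\colon T\to S$ has relative dimension one, hence is not the identity; moreover $\alpha_{0*}D\in\psi^*\mathcal{A}(T/S)$. Thus $[D]$ lies in $\alpha_{0*}^{-1}\psi^*\mathcal{A}(T/S)$, a cone belonging to $I(X/S)$.

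\textbf{Fact (C).} Fix $[D]\in\tilde{\Pi}_1(X/S,\Delta)\cap\bigcup I(X/S)$; I must produce a neighbourhood of $[D]$ in $\tilde{\Pi}_1(X/S,\Delta)$ contained in $\bigcup I(X/S)$. Suppose first that $[D]$ is $f$-big. By \cref{prop:locallyfinite} a neighbourhood $U$ of $[D]$ meets only finitely many chambers and faces of $\mathcal{M}^e(X/S)$, and the smallest face $F$ with $[D]\in\relint F$ corresponds, by \cref{prop:locallypolyhedral}, to a birational contraction $g\colon X_0\to Y'$ over $S$; since $[D]\in\bigcup I(X/S)$ and $\relint F$ is precisely the cone of $I(X/S)$ containing $[D]$, the morphism $Y'\to S$ factors as $Y'\to T\to S$ with $\dim T\in\{\dim X-1,\dim X-2\}$ and $T\to S$ not the identity. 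Shrinking $U$, local finiteness and convexity force every class of $\mathcal{M}^e(X/S)$ in $U$ to lie in $\relint F'$ for a face $F'\supseteq F$; the target $Y''$ of $F'$ then admits a contraction $Y''\to Y'$ over $S$ (via \cref{lem:birationalcontraction}), so $Y''\to S$ also factors through $T$, whence $\relint F'\in I(X/S)$ and the class lies in $\bigcup I(X/S)$. Hence $U\cap\tilde{\Pi}_1(X/S,\Delta)\subseteq\bigcup I(X/S)$. Now suppose $[D]$ is not $f$-big; as in (B), $D_\eta$ is a multiple of the fibre class of a fibration $X_\eta\to C_\eta$. A class $[D']\in\tilde{\Pi}_1(X/S,\Delta)$ near $[D]$ that is not $f$-big lies in $\bigcup I(X/S)$ by (B); if $[D']$ is $f$-big, let $g'\colon X_0'\to Y''$ be the birational contraction over $S$ attached to the smallest face of $\mathcal{M}^e(X/S)$ through $[D']$. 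By the description of $\mathcal{A}^e(X_\eta)$ near classes of Iitaka dimension one coming from the proof of \cref{thm:kltpairs}, the curves contracted by the induced birational contraction of $X_\eta$ associated with $D'_\eta$ lie in a finite list independent of $[D']$; for $[D']$ close enough to $[D]$, each \emph{horizontal} curve on that list has strictly positive intersection with $D'_\eta$ (its intersection with $D_\eta$ being a positive multiple of its multiplicity over $C_\eta$), so $g'_\eta$ does not meet the fibres of $X_\eta\to C_\eta$ and this fibration descends to $Y''_\eta$. Globalising this fibration on $Y''$—using surjectivity of the restriction map, \cref{thm:abundance}, and lifting the resulting flops of $Y''$ along $g'$ as in the proof of \cref{lem:existssqm}—realises the face of $[D']$ as an element of $I(X/S)$, so $[D']\in\bigcup I(X/S)$.

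\textbf{Main obstacle.} The delicate point is the non-$f$-big case of (C): there $\mathcal{M}^e(X/S)$ and $\tilde{\Pi}_1(X/S,\Delta)$ need not be locally polyhedral—chambers may accumulate along the fibre-space walls—so the soft convex-geometric argument used in the $f$-big case is unavailable. It is circumvented by passing to the surface $X_\eta$, where the proof of \cref{thm:kltpairs} yields a precise local description of the nef cone near classes of Iitaka dimension one (rational polyhedral away from the fibre-class rays, with only those rays as accumulation loci), so that the contraction attached to a perturbed class $D'_\eta$ involves only finitely many curves and keeps the fibration $X_\eta\to C_\eta$ rigid; combining this rigidity with the cone conjecture in relative dimension one (\cref{thm:reldim1}) is the crux of the argument.
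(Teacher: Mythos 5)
Your reduction to (A), (B), (C) is logically sound, and two of the three pieces are fine: (A) is immediate once you rewrite $\tilde{\Pi}_1(X/S,\Delta)=r^{-1}(\Pi_1(X_\eta))\cap\overline{\mathcal{M}}(X/S)$, and (B) is a clean and correct way to get the containment $J(X/S,\Delta)\subseteq\mathcal{B}_1(X/S)$ (the relative Iitaka fibration of a non-big class in $\tilde{\Pi}_1$ does produce a fiber-space face of $I(X/S)$ containing it). The big-point case of (C) is also consistent with what the paper uses (it only asserts, via local finiteness, that $J$ is closed inside $\mathcal{B}(X/S)$). The problem is the non-big case of (C), which is exactly where the paper's entire proof of \cref{lem:closed} lives, and there your argument has a genuine gap. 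First, the claim that the curves contracted by the contraction attached to a nearby big class $[D']$ ``lie in a finite list independent of $[D']$'' is not justified: on the surface $X_\eta$ the walls orthogonal to $(-1)$- (or $(-2)$-) curves accumulate precisely along square-zero rays such as $\mathbb{R}^{\geq0}\,r([D])$, and nothing in \cref{thm:kltpairs} or \cref{thm:ddomain} guarantees that only finitely many of these walls meet the fundamental domain $\Pi_1(X_\eta)$ near that ray.

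Second, and more seriously, the ``globalising'' step is asserted in one sentence but is the actual content of the lemma. Knowing that the fibration $X_\eta\to C_\eta$ descends to $Y''_\eta$ only gives a nef class on the generic fiber of the ample model $Y''$ of the face of $[D']$. For that face to belong to $I(X/S)$ you need the model $Y''$ itself to admit a contraction $Y''\to T\to S$ with $\dim T=\dim X-1$ (or $\dim X-2$), i.e.\ the closure of the chamber/face of $[D']$ must contain an honest fiber-space face. The closure of the fibration class from $Y''_\eta$ is in general only movable over $S$ after adding vertical corrections, and nef only on some SQM of $Y''$; flopping $Y''$ (as you propose via \cref{lem:existssqm}) changes the model and hence the face, so it does not place the face of $[D']$ in $I(X/S)$. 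Controlling exactly this vertical ambiguity is what the paper's proof does: it shows $G(X/S,\Delta)=\{\id\}$ at such a point, chooses a basis adapted to a good fibration with the insufficient-fiber-type exceptional classes singled out, bounds the divergent coefficients by intersecting with curves in those exceptional divisors, uses the rational polyhedrality of $\mathcal{A}^e(T/S)$ (\cref{cor:finite}) to split the vertical part into a big divergent piece and a convergent piece, produces the intermediate contraction $T\to T'\to S$, and finally twists by $N g_2^* d$ and applies \cref{prop:locallyfinite} over $T'$ to exhibit the chamber of $z_n$ as an element of $I(X/S)$. Your proposal never engages the vertical directions (nor the kernel of $r$ when $G(X/S,\Delta)\neq\{\id\}$), so the key step is missing rather than merely abbreviated.
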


\begin{proof}
    Since $J(X/S,\Delta)$ is a closed subset of $\mathcal{B}(X/S)$, the only way $J(X/S,\Delta)$ may fail to be closed in $N^1_1(X/S)$ is if there is a class $z\in \overline{J(X/S,\Delta)}$ that is not big over $S$. Bigness over $S$ is detected on the generic fiber, so we show that there is no element $z\in J(X/S,\Delta)$ such that $w:=r(z)\in N^1(X_\eta)$ has Iitaka dimension at most one, or equivalently, $r(z)^2=0$. We proceed by contradiction, assuming the existence of such a class $z$.\\ 

    \noindent\textit{Claim:} If such a class $z$ exists, then $G(X/S,\Delta) = \{\id\}$, and hence $\rho = \rho(X/S) = \rho(X_\eta) + v(X/S)$.\\
    
    \begin{proof}[Proof of the Claim]By the construction of $G(X/S,\Delta)$, it suffices to show this for $X_{\bar{\eta}}$ is abelian, hyperelliptic, or $(E\times \mathbb{P}^1)/G$. Suppose $X_{\bar{\eta}}$ is abelian and hence the $\mathcal{A}^e(X_\eta)$ coincides with the positive cone 
    \[
    \{z\in N^1(X_\eta)\mid z^2> 0
    \text{ and } z\cdot H>0\}
    \]
    for some hyperplane class $H$. The boundary of the positive cone, those square-zero classes, forms a quadric cone in $N^1(X/S)$. The existence of a rational point $w$ implies that that this quadric cone is rational, and therefore we may find a finite set of classes $\{w_i\}_i\in\mathcal{A}^e(X_\eta)$ generating $N^1(X_\eta)$ such that $w_i^2=0$. Taking closures we extend these to classes $w_i'\in N^1(X/S)$ which yield fiber space structures $X\xrightarrow{g_i} T_i \xrightarrow{h_i} S$. We have that $w_i = (g_i^* v_i)_\eta$ for $h_i$-ample classes $v_i\in N^1(T_i/S)$. Since each element of $G(X/S,\Delta)$ acts trivially on $N^1(X_\eta)$, it fixes each $w_i$ and therefore induces an automorphism on each $T_i$. We may find a finite index subgroup of $G(X/S,\Delta)$ fixing each $v_i$. Because $G(X/S,\Delta)$ acts as a group of translations on the fibers of $r:\: N^1(X/S) \to N^1(X_\eta)$, this finite index subgroup fixing each $v_i$ acts trivially on $N^1(X/S)$. Therefore, $G(X/S,\Delta)$ must be acting trivially as well. Hence, $G(X/S,\Delta) = \{\id\}$. 
   
   For $(E\times \mathbb{P}^1)/G$ an isotrivial elliptic fibration over $\mathbb{P}^1$, the group $G$ acts on $N^1(E\times \mathbb{P}^1)$ sending non-big classes to non-big classes. Hence, it must fix the classes defining the two canonical projections. Otherwise, the quotient would have Picard rank $1$, contradicting the existence of the isotrivial elliptic fibration. Therefore, $(E\times \mathbb{P}^1)/G$ has Picard rank $2$ with two distinct fiber space structures. The same is true if $X_{\bar{\eta}}$ hyperelliptic, so in these two cases, we may continue the argument similarly to the case where $X_{\bar{\eta}}$ is abelian. Hence, the claim is proven.
   \end{proof}

   By \cref{lem:nicefactorization}, after replacing $(X/S,\Delta)$ with an SQM, we may assume that $(X/S,\Delta)$ factors as $X\xrightarrow{g} T \xrightarrow{h} S$ where $g$ is a good fibration and $h$ is birational. Let $\rho = \rho(X/S)$, $\rho_\eta = \rho(X_\eta)$, and $k=\rho(T/S)$. 
   
   Extend $w$ to a basis of $N^1(X_\eta)$ and lift each basis element under $r$ to obtain $x_1,\hdots,x_{\rho_\eta}\in N^1(X/S)$ with $r(x_{\rho_\eta})=w$. Let $x_{\rho_\eta + k},\hdots, x_{\rho}\in V(X/S)$ be the classes of the $g$-exceptional prime divisors. These are the degenerate divisors of $X/S$ of insufficient fiber type. Indeed, $T$ is obtained by blowing up $S$ so that the degenerate divisors mapping to codimension $\geq 2$ on $S$ map to divisors in $T$, and then taking a small $\mathbb{Q}$-factorialization. Since $k=v(T/S) + 1$, the variety $T$ has $k-1$ prime $h$-exceptional divisors we denote $y_2,\hdots, y_k$. Complete $x_{\rho_\eta + k},\hdots, x_{\rho}\in V(X/S)$ to a basis $x_{\rho_\eta},\hdots, x_{\rho}$ of $V(X/S)$ by letting $g^* y_j = x_{\rho_\eta+j-1}$ for $1<j\leq k$. Altogether, we obtain a basis $x_1,\hdots, x_\rho$ of $N^1(X/S)$ such that:
   \begin{enumerate}
        \item $r(x_1)\hdots,r(x_{\rho_\eta})$ are a basis of $N^1(X_\eta)$;
        \item $w=r(x_{\rho_\eta})$;
        \item $x_{\rho_\eta+1},\hdots, x_{\rho}$ are a basis of $V(X/S)$; and
        \item $x_{\rho_\eta+k},\hdots, x_{\rho}$ are $g$-exceptional (of insufficient fiber type).
   \end{enumerate}
   
   Lifting $x_{\rho_\eta}$ under $g^*$ to an element $y_1\in N^1(T/S)$, we also obtain a basis $y_1,\hdots, y_k$ of $N^1(T/S)$ such that:
    \begin{enumerate}
        \item[(i)] $g^* y_1 = x_{\rho_\eta}$; and
        \item[(ii)] $g^* y_j = x_{\rho_\eta+j-1}$, with $y_j$ $h$-exceptional, for $1<j\leq k$.
    \end{enumerate}
    
    By assumption, $z\in \overline{J(X/S,\Delta)}$, so we may take $\{z_n\}$ a sequence in $J(X/S,\Delta)$ converging to $z$. Decompose each $z_n$ in terms of our basis for $N^1(X/S)$:
    \begin{align*}
        z_n &= \sum_{i=1}^{\rho} a_{n,i}x_i.
    \end{align*}

    We know that $(r(z_n))_{n\in \mathbb{N}} = (r(\sum_{i=1}^{\rho_\eta} a_{n,i}x_i))_{n\in \mathbb{N}}$ must converge to $r(z)=r(x_{\rho_\eta})=w$, so $a_{n,i}\to 0$ as $n\to \infty$ for $1\leq i<\rho_\eta$ (and $a_{n,\rho_\eta}\to 1$). If we define $c_n = \|(a_{n,1}, \hdots, a_{n,\rho_\eta-1})\|^{-1}$, then $\sum_{i=1}^{\rho_\eta-1} c_na_{n,i}x_i$ lies on the unit sphere in $\mathbb{R}\cdot x_1 \oplus \cdots \oplus \mathbb{R}\cdot x_{\rho_\eta -1}$, so by compactness contains a convergent subsequence. We pass to this subsequence. Hence, we have a sequence $(c_n)_{n\in\mathbb{N}}$ of positive real numbers approaching infinity such that $z'_n = \sum_{i=1}^{\rho_\eta-1} c_na_{n,i}x_i$ converges to a nonzero class $z'\in N^1(X/S)$.  

    For $\rho_\eta + k \leq j\leq \rho$, let $C_j$ denote a curve contained in a fiber of the prime exceptional divisor $x_j$ over its image in $T$. We have that 
    \begin{align*}
    \left(\sum_{i=1}^{\rho} c_n a_{n,i}x_i \right)\cdot C_j &\geq 0
    \end{align*}
    since $c_nz_n\in \mathcal{M}^e(X/S)$. On the other hand, by push-pull $x_i\cdot C_j = 0$ for $\rho_\eta\leq i\leq \rho$ except for $i=j$.
    Indeed, this follows as $x_{\rho_\eta}$ is pulled back from $y_1\in N^1(T/S)$, and $x_j$ is of insufficient fiber type. So 
    \begin{align*}
    \left(\sum_{i=1}^{\rho} c_n a_{n,i}x_i\right)\cdot C_j &= \left(\sum_{i=1}^{\rho_\eta-1} c_n a_{n,i}x_i\right)\cdot C_j + c_na_{n,j} x_j \cdot C_j.
    \end{align*}

    Since $x_j \cdot C_j<0$, and $(\sum_{i=1}^{\rho_\eta-1} c_n a_{n,i}x_i)\cdot C_j$ approaches $z'\cdot C_j$, the value $c_n a_{n,j}$ must be bounded as well. As $c_n a_{n,j}$ are bounded as $n\to \infty$ for $\rho_\eta + k \leq j\leq \rho$, by passing to another subsequence, 
    \begin{align*}
    z''_n &:= \sum_{i=\rho_\eta+k}^{\rho} c_n a_{n,i}x_i
    \end{align*}
    converges to some $z''\in N^1(X/S)$. 

    By \cref{cor:finite}, the cone $\mathcal{A}^e(T/S)$ is a finite rational polyhedral cone. Let $r_i$ for $1\leq i\leq e$ be the generators of this cone. 
    We may write
    \begin{align*}
    \sum_{j=1}^{k} c_n a_{\rho_\eta +j-1} y_j &= \sum_{i=1}^e b_{n,i} r_i 
    \end{align*}

    for some non-uniquely determined $b_{n,i}$. We choose $b_{n,i}$ so that $\{b_{n,i}\}_{n,i}$ is bounded from below. Rearrange indices so that $b_{n,i}\to \infty$ for all $1\leq i\leq e'$ and some $e'\leq e$, and 
    $y'_n := \sum_{i=e'+1}^e b_{n,i} r_i$ converges to some $y'$. Because $c_n$ goes to infinity, we have that $e'\geq 1$. Observe that $\sum_{i=1}^{e'} r_i$ is $h$-big; otherwise, by convexity, $C\cdot r_i=0$ for $C$ a general fiber of $h$ and $1\leq i \leq e'$, and then $C \cdot \sum_{i=1}^e c_n b_{n,i} r_i$ converges, contradicting $c_n\to \infty$.  Since $\sum_{i=1}^{e'} r_i$ is $h$-big and $h$-nef, it is $h$-semiample, defining a birational morphism $h_1$ over $S$. We obtain a factorization $T \xrightarrow{h_1} T' \xrightarrow{h_2} S$ and an $h_2$-ample class $d\in N^1(T'/S)$ such that $h_1^*d = \sum_{i=1}^{e'} r_i$. We have that 
    \[
    c_n z_n \equiv_{T'} z'_n + z''_n + g^*y'_n \in \mathcal{M}^e(X/T').
    \]
    Taking limits, we have that $z' + z'' + z''' \in \mathcal{M}^e(X/T')$ as well, defining $z''' := g^* y'$. Since $z_n\in J(X/S,\Delta)$, $z' + z'' + z''' \in \mathcal{M}^e(X/T') \cap \mathcal{B}(X/T')$. We may find a marked SQM $\alpha:\: X\dashrightarrow X'$ such that $\alpha_*(z'+z''+z''')\in\mathcal{A}^e(X'/T')$. The class $\alpha_*(z'+z''+z''')$ is thus semiample on $X'$ over $T'$ and we may form a factorization $X'\xrightarrow{g_1} Y'\xrightarrow{g_2} T'$ such that $g_1$ is birational and $z'+z''+z''' = g_1^* \ell$ for some $g_2$-ample class $\ell\in N^1(Y'/T')$. Taking $N$ large enough so that $\ell + Ng_2^*d$ is $h_2 \circ g_2$-ample, we have that $g_1^*(\ell + Ng_2^*d)$ yields a lift of $z'+z''+z'''$ in $\mathcal{A}^e(X'/S)$, and $\alpha_*^{-1} g_1^*\mathcal{A}(Y'/S)\in I(X/S)$.
    Passing to a subsequence and applying \cref{prop:locallyfinite} to $X'\xrightarrow{g_2\circ g_1} T'$
    we may assume that $[z_n] \in \mathcal{A}^e(X'/T')$ for all $n$.
    Since $b_{n,i} \to \infty$ for $1 \leq i \leq e'$, we have
    $z_n \in\bigcup I(X/S)$, a contradiction.
\end{proof}

For the following crucial lemma we adapt the technique in step 5 of the proof of \cite[Theorem 3.4]{FHS21}.

\begin{lemma}
\label{lem:proper}
    The map $p:\:J(X/S,\Delta)\to W(X/S, \Delta)$ is proper. 
\end{lemma}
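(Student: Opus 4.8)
Since $p$ is the restriction to $J(X/S,\Delta)$ of the linear quotient map $N^1_1(X/S)\to N^1_1(X/S)/V(X/S)$, and $J(X/S,\Delta)$ is closed in $N^1_1(X/S)$ by \cref{lem:closed}, the preimage of any compact $K\subseteq W(X/S,\Delta)$ is automatically closed in $N^1(X/S)$; it therefore suffices to show it is bounded. Equivalently, the plan is to show that there is no sequence $z_n\in J(X/S,\Delta)$ with $\bigl(p(z_n)\bigr)_n$ bounded and $\|z_n\|\to\infty$. Since $\ker p=V(X/S)$, such a sequence would satisfy $z_n=s_n+v_n$ with $(s_n)_n$ bounded and $v_n\in V(X/S)$, $\|v_n\|\to\infty$: all of the divergence would be vertical over $S$.

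Assume such a sequence exists. Replacing $(X/S,\Delta)$ by a marked SQM changes nothing, since the canonical identification of N\'eron--Severi spaces matches $\mathcal{M}^e$, $V(X/S)$, the collection $I(X/S)$, and the sets $J(X/S,\Delta)$ and $W(X/S,\Delta)$; so by \cref{lem:nicefactorization} I may assume $X\xrightarrow{g}T\xrightarrow{h}S$ with $g$ a good fibration over $S$ and $h$ birational. As in the proof of \cref{lem:closed}, I fix a basis of $N^1(X/S)$ adapted to: lifts under $r$ of a basis of $N^1(X_\eta)$ (the ``horizontal'' directions, along which $z_n$ stays bounded because $p(z_n)$ is bounded and $r(z_n)$ lies in the compact polytope $\Pi_1(X_\eta)$); the finitely many $g$-exceptional prime divisors of insufficient fiber type (finite by \cref{lem:degenerate}); and the $g^*$-pullbacks of the $h$-exceptional prime divisors on $T$, which together with the previous group span $V(X/S)$ by \cref{lem:excgen}. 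Passing to a subsequence, $z_n/\|v_n\|$ converges to a nonzero class $\hat v\in V(X/S)$ lying in $\overline{\mathcal{M}}(X/S)$ (a limit of positive multiples of classes of $\mathcal{M}^e(X/S)$); as $\hat v\in V(X/S)$ it restricts to $0$ on $X_\eta$.

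Next I would isolate the direction of divergence, following step 5 of \cite[Theorem 3.4]{FHS21} as in \cref{lem:closed}. For each $g$-exceptional prime divisor $E_j$ of insufficient fiber type, choosing a general curve $C_j$ in a fiber of $E_j$ over its image in $T$, the inequality $z_n\cdot C_j\geq 0$ together with $E_j\cdot C_j<0$, $E_{j'}\cdot C_j=0$ for $j'\neq j$, $g^*D\cdot C_j=0$ for $D$ on $T$, and boundedness of the horizontal part, bounds the $E_j$-coefficient of $z_n$; hence, after a further subsequence, the divergence of $v_n$ is confined to the span of the $g^*$-pullbacks of $h$-exceptional divisors, i.e.\ $v_n=g^*u_n+O(1)$ with $u_n\in N^1(T/S)$, $\|u_n\|\to\infty$. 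Since $\mathcal{A}^e(T/S)$ is a finite rational polyhedral cone by \cref{cor:finite} (see also \cref{lem:crepant}), I write the divergent class in terms of the finitely many generators $r_1,\dots,r_e$ of $\mathcal{A}^e(T/S)$ with coefficients bounded below; a nonempty subset $\{r_i:i\in I_\infty\}$ has coefficients tending to $\infty$, and the corresponding sum is big and nef over $S$ (bigness because otherwise it is trivial on a general fiber of $h$, which would bound its coefficient), hence semiample over $S$ by \cref{thm:abundance} and $K_T+\Delta_T\equiv_S 0$. This yields a factorization $T\xrightarrow{h_1}T'\xrightarrow{h_2}S$ over $S$ with $T'\neq S$, so $h_2$ is not the identity, and an $h_2$-ample class $d$ on $T'$ with $h_1^*d=\sum_{i\in I_\infty}r_i$.

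Finally, over $T'$: since $\mathcal{M}^e(X/S)$ maps into $\mathcal{M}^e(X/T')$, the suitably rescaled combinations of the basis pieces converge, modulo numerical equivalence over $T'$, to a class $\zeta\in\overline{\mathcal{M}}(X/T')\cap\mathcal{B}(X/T')$; by \cref{prop:movdecomp} and \cref{thm:abundance} there is a marked SQM $\alpha\colon X\dashrightarrow X'$ and a factorization $X'\xrightarrow{g_1}Y'\xrightarrow{g_2}T'$ with $g_1$ birational along which $\zeta$ becomes semiample, so adding a large multiple of the pullback of $d$ exhibits a lift of $\zeta$ in $\mathcal{A}^e(X'/S)$ and shows that $\alpha^{-1}_*g_1^*\mathcal{A}(Y'/S)$ is one of the cones of $I(X/S)$ (it lies over the fiber space structure $X'\xrightarrow{g_2\circ g_1}T'\xrightarrow{h_2}S$ with $h_2$ not the identity). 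Then, passing to one more subsequence and applying \cref{prop:locallyfinite} to $X'\xrightarrow{g_2\circ g_1}T'$, I may assume $z_n\in\mathcal{A}^e(X'/T')$ for every $n$; since the coefficient along some $r_i$ with $i\in I_\infty$ tends to $\infty$, the class $z_n$ eventually lies in $\bigcup I(X/S)$, contradicting $z_n\in J(X/S,\Delta)$. The step I expect to be the main obstacle is this last matching: confirming that the chamber or face actually containing $z_n$ (rather than one merely adjacent to it) is a cone of $I(X/S)$, which requires tracking the direction at infinity $\hat v$ through the relative abundance and local finiteness statements over $T'$ and handling carefully the two species of exceptional divisors together with possibly irrational limiting classes.
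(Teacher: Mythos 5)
Your plan is essentially to rerun the limiting argument of \cref{lem:closed} (rescale, extract a direction at infinity, expand the divergent base part in the generators of a polyhedral cone, pass to the limit over $T'$, then invoke \cref{prop:locallyfinite}), but this is not the route the paper takes for \cref{lem:proper}, and the step you yourself flag at the end is a genuine gap rather than a technicality. Knowing that (after an SQM) $z_n\in\mathcal{A}^e(X'/T')$, i.e.\ that $z_n$ is nef over $T'$, does not identify the Mori chamber or face of $\mathcal{M}^e(X/S)$ containing $z_n$ in its relative interior: membership in $\bigcup I(X/S)$ requires that the contraction over $S$ defined by $z_n$ itself factors nontrivially through a lower-dimensional base, and for that you must control $z_n\cdot C$ for the infinitely many classes of curves $C$ that are vertical over $S$ but horizontal over $T'$. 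The bounded/convergent part of $z_n$ has no a priori lower bound on such curves, so $z_n$ could perfectly well sit in a chamber that is merely adjacent to the cone of $I(X/S)$ you produced from the limit class. This is exactly the content the paper supplies and your sketch omits: a uniform negativity bound $x_{n,N+1}\cdot C>-2\dim X_{N+1}/\epsilon_{N+1}$ coming from the cone theorem applied to klt pairs $(X_{N+1},\epsilon_{N+1}D_{n,N+1})$ (Step 5), together with the dichotomy on whether $w'_{n,N+1}-T_{N+1}H_{N+1}$ is nef over $S$ and, when it is not, an induction on $\rho(S_{N+i}/S)$ via nef thresholds and extremal contractions of the base (Step 6, Case 2). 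Only with both ingredients can one conclude that $z_{n,N+1}$ is nef over $S$ and trivial exactly on curves over $S_{N+1}$, so that the actual chamber of $z_n$ factors through $S_{N+1}\neq S$.

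There is a second, structural difference that also matters: you expand the divergent part of the base class in the generators $r_1,\dots,r_e$ of $\mathcal{A}^e(T/S)$ ``with coefficients bounded below.'' In the present situation the $g^*N^1(T/S)$-component $w'_n$ of $z_n$ carries no positivity whatsoever, and a uniformly bounded-below expansion in the generators of the nef effective cone is equivalent to $w'_n+M\sum_i r_i$ being nef effective over $S$ for a uniform $M$, which you have not established (also, \cref{cor:finite} is stated for relative dimension one; here polyhedrality of $\mathcal{A}^e(T/S)$ for the birational $T\to S$ should rather be extracted from \cref{lem:crepant} and the relative Fano type property). The paper sidesteps this entirely by running a $w'_n$-MMP over $S$ on the base, using \cref{lem:crepant} and the negativity lemma to fix a single run valid for all $n$ after passing to a subsequence, and then working with the ample model $S_{N+1}$; the inductive convergence of the classes $x_{n,i}$ along this MMP (their Step 3) replaces your limit-direction analysis. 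Finally, a smaller fixable point: your curve-pairing argument against general curves in the insufficient-fiber-type divisors only bounds those coefficients from above; the two-sided bound used to confine the divergence to $g^*N^1(T/S)$ is obtained in the paper by normalizing and invoking $V(X/S')\cap\overline{\mathcal{M}}(X/S')=\{0\}$ for the good fibration, i.e.\ \cref{prop:movert}.
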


\begin{proof}
    For the sake of contradiction, suppose that there exists a sequence $(z_n)_{n\in\mathbb{N}}$ of classes in $J(X/S,\Delta)$ such that $(p(z_n))_{n\in\mathbb{N}}$ converges, but $(z_n)_{n\in\mathbb{N}}$ does not admit any convergent subsequences. Both of these properties are preserved under passing to subsequences. Every time we prove the existence of a certain subsequence, we understand that it is refining all previous subsequences, and we pass to it for all objects referring to the subsequence of indices as $n\in\mathbb{N}$.

    Because it suffices to prove this result on a SQM of $f:\:(X,\Delta)\to S$, by \cref{lem:nicefactorization}, we assume without loss of generality that $f=h\circ g$ where $g:\: X\to S'$ is a good fibration and $h:\:S'\to S$ is birational. Write
    \begin{align*}
    k&:= \rho(X/S')\\
    v&:=v(X/S)\\
    \rho&:=\rho(X/S).
    \end{align*}   
    By \cite[Lemma 2.5]{FHS21}, \cite[Lemma 2.18]{FHS21}, and \cite[Lemma 3.1.3]{FHS21}, the subspace $g^*N^1(S'/S)\subset V(X/S)$ is intrinsically defined and independent of the chosen factorization through a good fibration. Let $\Delta'$ be a divisor on $S'$ so that $(S'/S,\Delta')$ is the klt CY pair obtained by the canonical bundle formula for $(X,\Delta)$. In a similar manner to \cref{lem:closed}, we 
    turn to define a basis of $N^1(X/S)$ 
    that will allow us to understand the map $p:\: J(X/S,\Delta)\to W(X/S,\Delta)$. Begin with a basis $y_1,\hdots, y_k$ of $g^*N^1(S'/S)$, complete to a basis $y_1,\hdots, y_v$ of $V(X/S)$, and further complete to a basis $y_1,\hdots, y_\rho$ of $N^1(X/S)$. Write $z_n = \sum_{i=1}^\rho a^i_n y_i$ and define the summand sequences 
    \begin{align*}
    w_n&:= \sum_{i=1}^k a_{n,i} y_i\\
    x_n&:= \sum_{i=k+1}^\rho a_{n,i} y_i.\\
    \end{align*}

    We have that $z_n = x_n + w_n$ for all $n\in \mathbb{N}$. The classes $z_n$ and $x_n$ are big over $S$. Moreover, the subspace span$(y_{v+1},\hdots, y_\rho)$ maps isomorphically onto $N^1(X/S) / V(X/S)$.\\

    \noindent\textit{Step 1:} We show that 
    $(x_n)_{n\in \mathbb{N}}$ contains a convergent subsequence.\\ 

    By construction, we have $[z_n]=[x_n]\in N^1(X/S')$, so it suffices to show that $[z_n]=\sum_{i=k+1}^\rho a_{n,i}[y_i]$ contains a convergent subsequence. We have that $[z_n]=\sum_{i=k+1}^v a_{n,i}[y_i] + \sum_{i=v+1}^\rho a_{n,i}[y_i]$. The classes $y_{k+1},\hdots,y_v$ are the classes of prime $g$-exceptional divisors. Since $g$ is a good fibration, these divisors are of insufficient fiber type. Moreover, $a_{n,v+1},\hdots, a_{n,\rho}$ converge by our assumption on the convergence of $p(z_n)=\sum_{i=v+1}^\rho a_{n,i}p(y_i)$. We must then show that each $a_{n,k+1},\hdots, a_{n,v}$ is bounded. Suppose for contraction that at least one of these is not bounded. Then, we have that $[z_n] = [u_n] + [b_n]$ where $u_n$ are the classes of insufficient fiber type divisors with diverging coefficients, and $[b_n]$ converges. Since $[z_n]$ is movable, and, by this decomposition, converges to $V(X/S')$, $[z_n]$ must be approaching $0\in N^1(X/S')$ by \cref{prop:movert} in the Addendum. However, this is impossible, because $[z_n]$, written in our basis of $N^1(X/S')$, contains summands with diverging coefficients, a contradiction. \\

    \noindent\textit{Step 2:} We show that there exists a birational contraction $S'\dashrightarrow S_N$ over $S$ that is the outcome of a $w'_n$-MMP for a suitable subsequence of $(w'_n)_{n\in \mathbb{N}}$.
    Moreover, we define some divisor classes in the models that appear in these MMPs.\\

    For any $n$, we may run a $w'_n$-MMP over $S$ and it will terminate with a minimal model over $S$. By~\cref{lem:crepant}, there are only finitely many birational models of $S$ that may appear. By the negativity lemma, no model may appear more than once in the $w'_n$-MMP. Hence, we may find a subsequence such that there exists a sequence of pseudo-isomorphisms and divisorial contractions $\{\psi_i:\: S_i\dashrightarrow S_{i+1}\}_{i=0}^N$ with $S_0:=S'$ over $S$ that gives a run of the $w'_n$-MMP for all $n$. In particular, we have that $(\psi_{N-1}\circ \cdots\circ \psi_0)_* w'_n$ is nef over $S$ for all $n$. Moreover, for each $i$ by \cref{lem:existssqm} there exists a marked SQM $(f_i:\: X_i \to S,\alpha_i)$ of $X/S$ together with a factorization 
        \begin{center}
        \begin{tikzcd}
            X_i \arrow[r, "l_i"] \arrow[rr, bend left=30, "f_i"] 
            & S_i \arrow[r] & S
        \end{tikzcd}
        \end{center}
        such that the square 
        \begin{center}
                \begin{tikzcd}
                 X_i \arrow[r, "\alpha_{i+1}^{-1}\circ\alpha_i", dashed] \arrow[d, "l_i", swap] & X_{i+1} \arrow[d, "l_{i+1}"] \\
                 S_i \arrow [r, "\psi_i", swap, dashed] & S_{i+1}
                \end{tikzcd}
        \end{center}
    commutes. Write $\phi_i:= \alpha_{i+1}^{-1}\circ\alpha_i$. Passing to another subsequence, we may assume that each $w'_n$ has the same ample model $\psi_N:\: S_N \to S_{N+1}$. Now, we define analogous sequences of numerical classes on each $X_i$. Let  $w_{n,0} := w_n$, $w'_{n,0} := w'_n$, $z_{n,0} := z_n$, and $x_{n,0} := x_n$. Define
    \begin{align*}
        z_{n, i+1} &:= (\phi_i)_* z_{n, i} = (\alpha_{i})_* z_n \in N^1(X_{i+1}/S),\\
        w'_{n,i+1} &:= (\psi_i)_* w'_{n,i} \in N^1(S_{i+1}/S), \\ 
        w_{n,i+1} &:= l_{i+1}^* w'_{n,i+1} \in N^1(X_{i+1}/S), \text{ and }\\
        x_{n, i+1} &:= z_{n,i+1}-w_{n,i+1} \in N^1(X_{i+1}/S).
    \end{align*}
    It follows immediately that $$z_{n,i}=x_{n,i}+w_{n, i}\in N^1(X_i/S)$$ and $$[z_{n,i}] = [x_{n,i}] \in N^1(X_i/S_i)$$ for all $i=0, 1, \dots, N$. Define $w'_{n,N+1}$ to be the ample class in $N^1(S_{N+1}/S)$ such that $\psi_N^*w'_{n,N+1} = w'_{n,N}$. Since  $w'_{n,N}$ is pulled back from $S_{N+1}$, we have that 
    \[
    [z_{n,N}] = [x_{n,N}] \in N^1(X_N/S_{N+1}).
    \]

    \noindent\textit{Step 3:} We show that for each $i=0,1,\dots,N$, we may find a converging subsequence of $(x_{n,i})_{n\in \mathbb{N}}$ whose indices are chosen independently of $i$.\\

    We prove this by induction on $i$. The base case being Step 1. If $\psi_{i}$ is a flip, then there is nothing to prove as $x_{n,i+1} = (\phi_{i})_* x_{n,i}$ and $(\phi_{i})_*$ induces an isomorphism between $N^1(X_{i}/S_{i})$ and $N^1(X_{i+1}/S_{i+1})$. If $\psi_{i}$ is a divisorial contraction, then write $E_{i}$ for its exceptional divisor. We have that 
    \[
    w'_{n,i} - \psi^*_{i+1} w'_{n,i+1} = c_{n,i} [E_{i}]\in N^1(S_{i}/S)
    \]
    for some $c_{n,i}>0$. Since $E_{i}$ is exceptional and $c_{n,i}>0$, $c_{n,i} [E_{i}]\notin \overline{\mathcal{M}}(S_{i}/S_{i+1})$ (e.g., \cite[Lemma 2.11]{FHS21}). If we set $F_{i} := l_i^* E_{i}$, then $[z_{n,i}] = [\phi_i^* x_{n,i+1}] = [x_{n,i} + c_{n,i} F_{i}]\in N^1(X_i/S_{i+1})$. Since $z_{n,i}\in \overline{\mathcal{M}}(X_i/S)$, and because movability over $S_{i+1}$ is a weaker condition than movability over $S$ (e.g., \cite[Lemma 2.2]{FHS21}), we have that $[x_{n,i} + c_{n,i} F_{i}]\in \overline{\mathcal{M}}(X_i/S_{i+1})$. On the other hand, $c_{n,i} [F_{i}]\notin \overline{\mathcal{M}}(X_i/S_{i+1})$, because $c_{n,i} [E_{i}]\notin \overline{\mathcal{M}}(S_{i}/S_{i+1})$. The coefficients $c_{n,i}$ are thus bounded above, so they must contain a convergent subsequence. By construction, we have that $w_{n,i+1} = (\phi_i)_* w_{n,i} - c_{n,i} (\phi_i)_* F_i$, so $x_{n,i+1} = (\phi_i)_* x_{n,i} + c_{n,i} (\phi_i)_* F_i$. Therefore, taking a subsequence where $c_{n,i}$ converges yields a subsequence where $x_{n,i+1}$ does too, completing our inductive step.

    We observe that this step implies that $S_{N+1}\to S$ is not an isomorphism. Indeed, if there is an isomorphism $S_{N+1}\xrightarrow{\sim} S$, then $w_{n,N}$ is the pull-back of a divisor from $S$. In this case, $z_{n,N} = x_{n,N} + w_{n,N} = x_{n,N} \in N^1(X_N/S)$. Passing to the subsequence in the claim, $x_{n,N}$ converges, whereas $z_{n,N}$ may not contain any convergent subsequences, a contradiction.\\

    \noindent\textit{Step 4:} We show that there exists a marked SQM $(f_{N+1}:\: X_{N+1}\to S, \alpha_{N+1})$ of $f$ with a factorization
    \begin{center}
        \begin{tikzcd}
            X_{N+1} \arrow[r, "l_{N+1}"] \arrow[rr, bend left=30, "f_{N+1}"] 
            & S_{N+1} \arrow[r] & S
        \end{tikzcd}
        \end{center}
    such that for a subsequence of indices $n\in 
    \mathbb{N}$, the class $[x_{n,N+1}]$ is nef over $S_{N+1}$, where $x_{n,N+1}:=(\phi_N)_* x_{n,N}$ and $\phi_N := \alpha_{N+1}^{-1}\circ \alpha_N$. \\

        Since $z_{n,N}\in (\alpha_N)_* J(X/S)$ and $[z_{n,N}]=[x_{n,N}]\in N^1(X_N/S_{N+1})$ for each $n$, the class $x_{n,N}$ is big over $S$ and therefore it is big over $S_{N+1}$ as well. Applying \cref{prop:locallyfinite} to the morphism $X_N\to S_{N+1}$ finishes this step.
        We define $z_{n,N+1}=(\phi_N)_* z_{n,N}$ and $w_{n,N+1}=l_{N+1}^* w'_{n,N+1}$.\\

        \noindent\textit{Step 5:} We show that there exists a positive real number $\epsilon_{N+1}$ such that for any curve $C\subset X_{N+1}$ contained in a fiber of $f_{N+1}$, we have the inequality 
        \[
        x_{n,N+1}\cdot C > -\dfrac{2\dim X_{N+1}}{\epsilon_{N+1}}.
        \]

        Applying the argument from Step 4 as well as \cite[Lemma 2.4]{FHS21} to $f_{N+1}$, we may find for a subsequence of indices $n\in \mathbb{N}$ effective $\mathbb{R}$-divisors $D_{n,N+1}$, big over $S$, such that $x_{n,N+1} = [D_{n,N+1}]\in N^1(X_{N+1}/S)$, and an $0<\epsilon_{N+1}<1$ such that each $(X_{N+1}, \epsilon_{N+1} D_{n,N+1})$ is klt. Therefore, the claim follows immediately from the the cone theorem.
        We fix an integer $T_{N+1}\geq 2\dim X_{N+1}\epsilon_{N+1}^{-1}$ and a Cartier divisor $H_{N+1}$ on $S_{N+1}$ ample over $S$.\\

        \noindent\textit{Step 6:} We show that for infinitely many $n\in \mathbb{N}$, we have $z_n\in \bigcup I(X/S)$.\\

        We will proceed in two cases, depending on the nefness of the divisors $w'_{n,N+1}-T_{N+1}H_{N+1}$.\\

        \textit{Case 1:} Assume that we may pass to a subsequence of $n\in \mathbb{N}$ to arrange that $w'_{n,N+1} - T_{N+1}H_{N+1}$ is nef over $S$.\\

        Then, for any irreducible curve $C'\subset S_{N+1}$ in a fiber of $S_{N+1}\to S$, we have that $w'_{n,N+1}\cdot C' \geq T_{N+1}$. For any irreducible curve $C\subset X_{N+1}$ contained in a fiber of $f_{N+1}:\: X_{N+1}\to S$, its image $l_{N+1}(C)$ is either a point or a curve contained in a fiber of $S_{N+1}\to S$, so $w_{n,N+1}\cdot C$ is either $0$ or at least $T_{N+1}$. If $C$ is contracted by $l_{N+1}$, then $z_{n,N+1}\cdot C = x_{n,N+1}\cdot C \geq 0$, because we chose $X_{N+1}$ in Step 4 so that $x_{n,N+1}$ is nef over $S_{N+1}$. If $l_{N+1}(C)$ is a curve, then by Step 5, we at least have $x_{n, N+1}\cdot C> -T_{N+1}$. Hence, $z_{n,N+1}\cdot C = (x_{n,N+1} + w_{n,N+1})\cdot C > 0$, and so the class $z_{n,N+1}$ is nef over $S$, only pairing trivially with curves contained in the fibers over $S_{N+1}$. Thus, the class $z_{n,N+1}$ defines a factorization $X_{N+1}\to X'_{N+1}\to S_{N+1}\to S$ and is the pullback of a divisor on $X'_{N+1}$ ample over $S$. This is a contradiction to our initial assumption of $z_n\notin \bigcup I(X/S)$.\\
        
        \textit{Case 2:} Suppose now that we may not pass to a subsequence to arrange that $w'_{n,N+1} - T_{N+1}H_{N+1}$ is nef over $S$.\\
        
        Setting 
        \[
        \lambda_n := \inf \left\{\lambda\in \mathbb{R} \mid  w_{n,N+1}' + (\lambda - 1)T_{N+1} H_{N+1} \text{ is nef over } S \right\}\in [0,1],
        \]
        fix an extremal contraction $\psi_{n,N+1}:\: S_{N+1} \to S_{N+2}$ over $S$ contracting a ray $R_n$ of $\overline{NE}(S_{N+1}/S)$ whose intersection with $w'_{n,N+1} - (\lambda_n-1) T_{N+1}H_{N+1}$ pairs to zero. We apply the canonical bundle formula to obtain an effective divisor $\Delta_{S_{N+1}}$ on $S_{N+1}$ such that $(S_{N+1}/S,\Delta_{S_{N+1}})$ is klt CY.
        Using~\cref{lem:crepant}, we pass to a subsequence of indices $n\in \mathbb{N}$ such that:
        \begin{enumerate}
            \item $\psi_{n,{N+1}}$ are all the same, and
            \item $\lambda_n$ converges.
        \end{enumerate}

        Set $\psi_{N+2}:=\psi_{n,N+2}$ for any choice of $n\in \mathbb{N}$. Setting 
        \[
        x_{n,N+1}':= x_{n,N+1} + (1-\lambda_n)l^*_{N+1} T_{N+1} H_{N+1}\in N^1(X_{N+1}/S),
        \]
        we have that $(x_{n,N+1}')_{n\in \mathbb{N}}$ converges. Since all $\psi_{n,N+1}$ are the same, we have that $$w_{n,N+1}-(1-\lambda_n)l^*_{N+1} T_{N+1} H_{N+1}\equiv_{S_{N+2}}0$$ for all $n\in \mathbb{N}$. Therefore, we have that $[z_{n,N+1}]=[x_{n,N+1}']\in N^1(X_{N+1}/S_{N+2})$, implying that $S_{N+2}\to S$ is not an isomorphism, because if this were the case then $(z_n)_{n\in \mathbb{N}}$ would converge, contradicting our assumption. 

        Now, we apply the previous argument inductively. The proof of Step 4 implies that there exists a marked SQM $(f_{N+2}, X_{N+2}\to S,\alpha_{N+2})$ and a commutative diagram 
        \begin{center}
        \begin{large}
        \begin{tikzcd}
            X_{N+1} \arrow[d, "l_{N+1}", swap] \arrow[rr,"\phi_{N+1}",dashed]
            && X_{N+2} \arrow[d, "l_{N+2}"] \\ 
            S_{N+1} \arrow[rr, "\psi_{N+1}"] \arrow[dr]
            && S_{N+2} \arrow[dl]\\
            & S &
        \end{tikzcd}
        \end{large}
        \end{center}

        such that $x'_{n,N+2}:= (\phi_{N+1})_* x'_{n,N+1}$ is nef over $S_{N+2}$ for $n\in \mathbb{N}$ after passing to a subsequence. As in Step 5, there exists $0<\epsilon_{N+2}<1$ such that $x_{n,N+2}\cdot C> 2\dim X_{N+2}\epsilon_{N+2}^{-1}$ for any curve $C$ contained in the fibers of $f_{N+2}$. Fixing $T_{N+2}\geq 2 \dim X_{N+2}\epsilon_{N+2}^{-1}$ and $H_{N+2}$ a Cartier divisor on $S_{N+2}$ ample over $S$, we define $w'_{n,N+2}:=(\psi_{N+1})_* (w'_{n,N+1} + (\lambda_n -1)T_{N+1}H_{N+1})$ for each $n\in \mathbb{N}$. If there exists a subsequence such that 
        $w'_{n,N+2}-T_{N+2}H_{N+2}$ is nef over $S$, then we reach a contradiction as in the fist case of this step. Otherwise, we proceed inductively.
        
        The procedure yields a decreasing sequence of birational contractions $\psi_{N+i}$ fitting into a commutative diagram 

        \begin{center}
        \begin{large}
        \begin{tikzcd}
            S_{N+1} \arrow[r,"\psi_{N+1}"] \arrow[rrd]
            & S_{N+2} \arrow[r,"\psi_{N+2}"] \arrow[rd]
            & \cdots \arrow[r,"\psi_{N+l-2}"]
            & S_{N+l-1} \arrow[r,"\psi_{N+l-1}"] \arrow[ld]
            & S_{N+l} \arrow[lld]\\
            && S &&
        \end{tikzcd}
        \end{large}
        \end{center}

    such that $S_{N+i}\to S$ is not an isomorphism and at most one contraction $\psi_{N+i}$ is fiber type. Because $\rho(S_{N+i}/S)$ decreases at each step, the diagram must be finite and terminate at some $S_{N+l}\to S$. As the procedure must terminate, we know that there exists a subsequence of indices $n\in \mathbb{N}$ such that $w'_{n,N+l}-T_{N+l}H_{N+l}$ is nef over $S$.
    This contradicts the first case of this step.
    \end{proof}

    We finish this section by proving the main two statements of this paper.

    \begin{proof}[Proof of Theorem~\ref{thm:mainresult}]
    This follows immediately from Theorem~\ref{thm:mainresult2}.
    \end{proof}

    \begin{proof}[Proof of Corollary~\ref{cor:minmodels}]
    The proof is verbatim from~\cite[Corollary 1.7]{Li23} by replacing~\cite[Corollary 6.3]{Li23} with Theorem~\ref{thm:mainresult}.
    \end{proof}

\section{Addendum: A property of good fibrations}

In this section, we discuss a nice property of good fibrations.
In the proof of \cref{thm:mainresult}, one might hope that the restriction of the quotient map $\pi:\: N^1(X/S) \to N^1(X/S)/V(X/S)$ to $\overline{\mathcal{M}}(X/S)$ is proper, as in \cite[Lemma 4.2(3)]{Kaw97}. This would make \cref{lem:proper} superfluous and greatly shorten the argument. However, when $\dim S > 1$, or more generally when $f:\: (X,\Delta)\to S$ is not a good fibration, this is not the case. 

For instance, this is not the case if a prime divisor $D$ in $X$ is mapped to a codimension at least $2$ subset $f(D)$ in $S$ that is contained in a movable divisor $M$ on $S$. Suppose that $D$ is the only divisor for which this is the case. We have that $f^* M$ is reducible containing $D$ as an irreducible component. Taking $x:= f^* M - D$, we have that $x$ is nonzero in $N^1(X/S)$ and contained in the intersection $V(X/S)\cap \overline{\mathcal{M}}(X/S)$. It follows that all of $\mathbb{R}^{\geq 0}x$ is contained in this intersection, yielding a non-compact fiber of $\pi$.

In this addendum, we prove that this property holds if $f:\: (X,\Delta)\to S$ is a good fibration.

\begin{lemma}
    If $f:\: X\to S$ is a good fibration, then $V(X/S)$ is spanned by $f$-exceptional divisors of insufficient fiber type.
    
\end{lemma}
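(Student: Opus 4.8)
The plan is to reduce to a single $f$-vertical prime divisor and manufacture the required relation in $N^1(X/S)$ by pulling back its image from $S$. Recall that $V(X/S)$ is, by definition, $\mathbb{R}$-spanned by the classes of $f$-vertical prime divisors, so it suffices to show that each such class lies in the span of classes of $f$-exceptional prime divisors of insufficient fiber type. So first I would fix an $f$-vertical prime divisor $D$ on $X$. Since $f$ is a good fibration, $D$ dominates a prime divisor on $S$, i.e.\ $B:=\overline{f(D)}$ has codimension exactly $1$ in $S$; and since $f$ is a good fibration, $S$ is $\mathbb{Q}$-factorial, so $B$ is $\mathbb{Q}$-Cartier and $f^*B$ makes sense as an effective $\mathbb{Q}$-divisor.

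Next I would analyze $f^*B$. Its support is contained in $f^{-1}(B)$, so every prime component $\Gamma$ of $f^*B$ has $\overline{f(\Gamma)}\subseteq B$; using that $f$ is a good fibration again, $\Gamma$ dominates a divisor on $S$, which therefore must be all of $B$. Hence every prime component of $f^*B$ dominates $B$. Since $D$ itself dominates $B$, it occurs in $f^*B$ with some positive coefficient $m$, and we may write $f^*B=mD+D'$ with $D'$ effective and $D\not\subseteq\Supp D'$. Now every prime component $\Gamma$ of $D'$ satisfies $\overline{f(\Gamma)}=B=\overline{f(D)}$, which has codimension $1$ in $S$, and $\Gamma\neq D$; taking $D$ as the witnessing divisor (note $D\not\subseteq\Gamma$ as they are distinct prime divisors), $\Gamma$ is of insufficient fiber type. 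By \cref{lem:degenerate}, applied to the ambient klt Calabi--Yau pair, $\Gamma$ is therefore $f$-exceptional. Finally, $[f^*B]=0$ in $N^1(X/S)$ because $f^*B$ is pulled back from $S$, so $m[D]=-[D']$, i.e.\ $[D]=-\tfrac1m[D']$ is an $\mathbb{R}$-linear combination of classes of $f$-exceptional prime divisors of insufficient fiber type (this also covers the degenerate case $D'=0$, where $[D]=0$). Summing over a spanning set of $f$-vertical prime divisors gives the claim.

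The one genuinely substantive input is the implication ``insufficient fiber type $\Rightarrow$ $f$-exceptional'', which is exactly \cref{lem:degenerate} and rests on running a suitable relative MMP; so the hypothesis that $(X,\Delta)$ is klt Calabi--Yau is being used here, not just that $f$ is a good fibration. Everything else is a short formal argument. An essentially equivalent, even quicker route is to quote \cref{lem:excgen} to get that $V(X/S)$ is spanned by (finitely many) $f$-exceptional, hence $f$-degenerate, prime divisors, and then observe that in a good fibration no prime divisor maps to a subset of codimension $\ge 2$, so the only surviving possibility in the definition of $f$-degenerate is ``of insufficient fiber type''; I would present the direct argument above since it makes the role of the $\mathbb{Q}$-factoriality of $S$ transparent, and mention this alternative as a remark.
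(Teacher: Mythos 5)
Your argument is correct, and it runs in the same circle of ideas as the paper's, just organized in the opposite order. The paper first quotes the spanning statement (its \cref{lem:excgen}, via \cite[Lemma 3.2]{Kaw97}) to write a vertical class as a combination of $f$-exceptional divisors, and then uses the good-fibration hypothesis to show each such exceptional divisor with nonzero class is of insufficient fiber type: its image is a divisor (no codimension $\geq 2$ images are possible), and since $[D]\neq 0$ the fiber $f^{-1}f(D)$ must have at least two components, producing the witnessing divisor. You instead re-run the pullback argument underlying \cref{lem:excgen} directly: pull back the $\mathbb{Q}$-Cartier image $B$ (here the $\mathbb{Q}$-factoriality of $S$ from the good-fibration definition is used, which you rightly make explicit), observe every component of $f^*B$ dominates $B$, and conclude that the components of $D'=f^*B-mD$ are of insufficient fiber type with $D$ as witness, then invoke \cref{lem:degenerate} in the direction ``degenerate $\Rightarrow$ exceptional'' to see they are $f$-exceptional; the relation $m[D]=-[D']$ in $N^1(X/S)$ finishes it. Both routes rest on the same two inputs (the pullback trick and \cref{lem:degenerate}, the latter using the ambient klt Calabi--Yau structure, as you correctly flag), and the quicker alternative you sketch at the end --- quote \cref{lem:excgen} and note that in a good fibration no prime divisor has image of codimension $\geq 2$, so degenerate forces insufficient fiber type --- is essentially the paper's own proof. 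Your primary version has the small expository advantage of making the role of $\mathbb{Q}$-factoriality of $S$ visible and of producing the witnessing divisor explicitly, at the cost of repeating the computation already done in \cref{lem:excgen}.
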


\begin{proof}
    Let $D$ be a prime vertical divisor whose class in $N^1(X/S)$ is nonzero. By \cite[Lemma 3.2]{Kaw97}, the class $[D]$ can be represented as a linear combination of $f$-exceptional divisors. Replace $D$ with any of these $f$-exceptional divisors. The divisor $D$ is degenerate, either of insufficient fiber type or dominating a cycle of codimension at least $2$, by \cref{lem:degenerate}. We show that $D$ is of insufficient fiber type. Each irreducible component of $f^{-1}f(D)$ dominates $f(D)$. Since $0\neq [D]\in N^1(X/S)$, $f^{-1}f(D)$ has at least two components. Therefore, $D$ doesn't contain the fiber over the generic point of $f^{-1}f(D) \to f(D)$.
\end{proof}

\begin{proposition}
\label{prop:movert}
    If $f:\: (X,\Delta)\to S$ is a good fibration, then $V(X/S) \cap \overline{\mathcal{M}}(X/S) = \{0\}$.
    
\end{proposition}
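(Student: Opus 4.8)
The plan is to suppose there is a nonzero class $x\in V(X/S)\cap\overline{\mathcal{M}}(X/S)$ and reach a contradiction, the endgame being that an $f$-ample model of $x$ would have to be an isomorphism onto $S$. The first move is to promote $x$ to an \emph{effective} movable class. By the preceding lemma, $V(X/S)$ is spanned by the prime divisors $E_1,\dots,E_k$ that are $f$-exceptional of insufficient fiber type, and there are finitely many of these by \cref{lem:degenerate}. Since $S$ is $\mathbb{Q}$-factorial and every divisor on $X$ dominates a divisor on $S$, for each $i$ the reduced divisor $B_i:=\overline{f(E_i)}$ is $\mathbb{Q}$-Cartier of codimension one, and $f^*B_i$ is effective with every component dominating $B_i$; because $E_i$ has insufficient fiber type, each such component (including $E_i$) is again $f$-exceptional of insufficient fiber type, hence one of the $E_l$. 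From $[f^*B_i]=0$ in $N^1(X/S)$ one gets $-[E_i]\in\sum_l\mathbb{R}_{\geq0}[E_l]$, so the convex cone generated by $[E_1],\dots,[E_k]$ contains $\pm[E_i]$ for every $i$ and therefore equals $V(X/S)$. Consequently $x$ is a non-negative combination of the $[E_i]$, hence the class of an effective $\mathbb{R}$-divisor, so $x\in\mathcal{B}^e(X/S)$ and thus $x\in\overline{\mathcal{M}}(X/S)\cap\mathcal{B}^e(X/S)=\mathcal{M}^e(X/S)$.

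Next I would use the chamber decomposition $\mathcal{M}^e(X/S)=\bigcup_{(X_0,\alpha_0)}\alpha_0^*\mathcal{A}^e(X_0/S)$ of \cref{prop:movdecomp} to choose a marked SQM $\alpha_0\colon X\dashrightarrow X_0$ over $S$ with $x_0:=(\alpha_0)_*x\in\mathcal{A}^e(X_0/S)$. Since $\alpha_0$ is small, $f_0\colon X_0\to S$ is again a good fibration and $x_0\in V(X_0/S)$. By relative abundance (\cref{thm:abundance}, via the argument of \cref{prop:Qgenerated}), $x_0$ is $f_0$-semiample, so $x_0=\phi^*A$ for some contraction $\phi\colon X_0\to Z$ over $S$ and some $\mathbb{R}$-Cartier divisor $A$ on $Z$ ample over $S$. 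Restricting to a general fibre $F_0$ of $f_0$, vertical classes vanish, so $(\phi|_{F_0})^*(A|_{\overline{\phi(F_0)}})\equiv 0$ with $A|_{\overline{\phi(F_0)}}$ ample; hence $\phi(F_0)$ is a point, and this forces the general fibre of the induced contraction $Z\to S$ to be a point, i.e.\ $Z\to S$ is birational.

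The good-fibration hypothesis is then what kills the example. If $Z\to S$ contracted a prime divisor $E_Z$, its image in $S$ would have codimension at least two; since $\phi$ is a contraction with $\dim X_0>\dim Z$, the preimage $\phi^{-1}(E_Z)$ has a divisorial component $W$ of $X_0$ with $\phi(W)\subseteq E_Z$, and then $f_0(W)$ has codimension at least two in $S$ — contradicting that $f_0$ is a good fibration. Hence $Z\to S$ is an isomorphism, $\phi=f_0$ up to this isomorphism, and $x_0=[\phi^*A]=0$ in $N^1(X_0/S)$; applying the isomorphism $(\alpha_0)_*^{-1}$ gives $x=0$, the desired contradiction, so $V(X/S)\cap\overline{\mathcal{M}}(X/S)=\{0\}$.

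I expect the crux to be the first step: the assertion that, for a good fibration over a $\mathbb{Q}$-factorial base, $V(X/S)$ is generated \emph{as a cone} (not merely as a vector space) by the $f$-exceptional divisors of insufficient fiber type. This is exactly where both hypotheses are used, and it is precisely what fails in general, as the remark preceding the proposition illustrates. Granting that, the remaining steps are a routine assembly of the movable-cone chamber decomposition, relative abundance, and an elementary fibre-dimension count.
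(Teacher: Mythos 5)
Your proof is correct in outline but follows a genuinely different route from the paper's. The effectivization step is essentially the paper's own trick (the paper makes a representative of the class effective by adding pullbacks of sufficiently positive divisors from $S$, which vanish in $N^1(X/S)$; your cone-theoretic reformulation via $[f^*B_i]=0$ is a clean way to phrase the same point). After that the two arguments diverge: the paper contradicts movability directly, pairing the class against movable (``mov-$1$'') curves via the duality of \cite{Leh12b} and using \cite[Lemma 2.9]{Lai11}, which covers a component of a divisor of insufficient fiber type (after subtracting full pulled-back fibers) by a family of movable curves meeting it negatively. You instead run through the MMP: the chamber decomposition of \cref{prop:movdecomp} to reach an SQM where the class is nef and effective over $S$, relative abundance (\cref{thm:abundance}) to make it semiample, and then an analysis of the resulting ample model $X_0\to Z\to S$, using both halves of the good-fibration hypothesis to force $Z\to S$ to be an isomorphism. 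What the paper's argument buys is unconditionality: it uses neither termination of flips nor abundance, so it works in any relative dimension; your argument is valid only where \cref{prop:movdecomp} and \cref{thm:abundance} apply (relative dimension at most three), which does cover the paper's only use of \cref{prop:movert} (inside \cref{lem:proper}, relative dimension two), but makes the general statement conditional. What your argument buys is that it stays entirely within the paper's internal toolkit and makes explicit exactly where $\mathbb{Q}$-factoriality of $S$ and the dominance condition each enter.

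There is one genuine jump: ``Hence $Z\to S$ is an isomorphism.'' You have only excluded the possibility that the birational contraction $Z\to S$ contracts a divisor; it could a priori be a small (non-isomorphic) projective birational morphism, and in that case your conclusion fails at the crucial point, since $\phi^*A$ is then \emph{not} numerically trivial over $S$ (it is strictly positive on curves of $X_0$ lying over curves contracted by $Z\to S$), so you could not deduce $x_0=0$. The repair is immediate from the hypothesis you have not yet used at that moment: $S$ is $\mathbb{Q}$-factorial, so the exceptional locus $\Exc(Z\to S)$ of a projective birational morphism onto $S$ has pure codimension one (see, e.g., \cite{KM98}); since your divisorial argument shows no divisor is contracted, $\Exc(Z\to S)=\emptyset$ and $Z\to S$ is indeed an isomorphism. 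With that one line added (and the harmless convention that the fibration has positive relative dimension, so that $\dim X_0>\dim Z$ and the fiber-dimension count applies), the argument is complete.
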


\begin{proof}
    For an arbitrary class nonzero $e\in V(X/S)\cap\overline{\mathcal{M}}(X/S)$, we may represent $e$ with a degenerate divisor $E$ of insufficient fiber type such that $E$ is vertical, effective, and $E\cdot C \geq 0$ for any mov-1 curve $C$ \cite{Leh12b}. Indeed, because any representative $E$ of $e$ maps to a divisor in $S$, we may take $E$ to be effective by pulling back sufficiently positive classes from $S$. Take $\mu\geq 0$ so that either $E' := E-\mu f^*C$ is not supported on the whole fiber or $E-\mu f^*C = 0 \in N^1(X/S)$. If $E-\mu f^*C = 0 \in N^1(X/S)$, then $E=0 \in N^1(X/S)$ as well. If $E'$ is not supported on the whole fiber, by \cite[Lemma 2.9]{Lai11} there is a component of $E'$ covered by a family of mov-$1$ curves intersecting $E'$ negatively, a contradiction. 
\end{proof}

\begin{corollary}
\label{cor:proper}
    For $f:\: (X,\Delta)\to S$ a good fibration, the quotient $$\pi:\: \overline{\mathcal{M}}(X/S)\to N^1(X/S)/V(X/S)$$ is proper.
\end{corollary}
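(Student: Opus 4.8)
The plan is to deduce the statement directly from \cref{prop:movert} by a soft compactness argument, in the spirit of \cite[Lemma 4.2(3)]{Kaw97}. The two structural facts that make this work are that $\overline{\mathcal{M}}(X/S)$ is, by its very definition in the Preliminaries, a closed convex cone in the finite-dimensional real vector space $N^1(X/S)$, and that $V(X/S)$ is precisely the kernel of the quotient map $\pi\colon N^1(X/S)\to N^1(X/S)/V(X/S)$. The general principle to invoke is: the restriction of a linear map to a closed cone is proper if and only if the cone meets the kernel only at the origin. Since \cref{prop:movert} supplies exactly $V(X/S)\cap\overline{\mathcal{M}}(X/S)=\{0\}$ for a good fibration, the corollary follows once this principle is verified.

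To verify it, I would fix an arbitrary compact subset $K\subseteq N^1(X/S)/V(X/S)$ and show that $\pi^{-1}(K)\cap\overline{\mathcal{M}}(X/S)$ is compact. It is closed, being the intersection of the closed cone $\overline{\mathcal{M}}(X/S)$ with the preimage of a closed set under the continuous map $\pi$; so it remains to bound it. If it were unbounded, I would choose classes $z_n\in\overline{\mathcal{M}}(X/S)$ with $\pi(z_n)\in K$ and $\|z_n\|\to\infty$ for a fixed norm on $N^1(X/S)$. The normalized classes $z_n/\|z_n\|$ then lie on the compact set $\overline{\mathcal{M}}(X/S)\cap\{\|{\cdot}\|=1\}$, so after passing to a subsequence they converge to some $z\in\overline{\mathcal{M}}(X/S)$ with $\|z\|=1$. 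As $(\pi(z_n))_n$ is bounded, $\pi(z)=\lim_n \pi(z_n)/\|z_n\|=0$, whence $z\in V(X/S)\cap\overline{\mathcal{M}}(X/S)=\{0\}$, contradicting $\|z\|=1$.

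I do not expect any genuine obstacle here: essentially all the content sits in \cref{prop:movert}, which in turn rests on the good-fibration structure (via \cref{lem:degenerate} and the insufficient-fiber-type divisors), and the passage from that to properness is routine. The only points deserving a line of care are that $\overline{\mathcal{M}}(X/S)$ is genuinely closed, so that the limiting class $z$ remains inside it, and that $\ker\pi=V(X/S)$; both are immediate from the definitions. An alternative, equivalent, phrasing would observe that $\pi$ induces a continuous bijection of $\overline{\mathcal{M}}(X/S)$ onto its image, which by the same normalization argument is a homeomorphism onto a closed subcone of the quotient, but the direct argument above is cleaner.
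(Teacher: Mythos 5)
Your proof is correct and follows essentially the same route as the paper: both deduce properness from \cref{prop:movert} (that $V(X/S)\cap\overline{\mathcal{M}}(X/S)=\{0\}$) together with elementary topology of the closed cone $\overline{\mathcal{M}}(X/S)$ and the fact that $\ker\pi=V(X/S)$. Your normalization-of-a-divergent-sequence argument in fact makes explicit the compactness assertion (the paper's truncated sets $\mathcal{M}_r$) that the paper's proof states without detail, so nothing further is needed.
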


\begin{proof}
 Fix an inner product on $N^1(X/S)$ so that we may realize $\pi$ as the projection to the orthogonal complement $V'\cong N^1(X/S)/V(X/S)$. Since $\overline{\mathcal{M}}(X/S) \cap V(X/S) = \{0\}$, each set $$\mathcal{M}_r := \{x\in\overline{\mathcal{M}}(X/S) \mid  \dist(x,V(X/S))\leq r\},$$ the truncation of  $\overline{\mathcal{M}}(X/S)$ within distance $r$ of $V(X/S)$, is compact. If a sequence of points $x_i$ diverges in $\overline{\mathcal{M}}(X/S)$, then it must leave every $\mathcal{M}_r$. The mapping $\pi$ is the orthogonal projection with kernel $V(X/S)$, so $\dist(x,V(X/S)) = \|\pi(x)\|$ and $\pi(\mathcal{M}_r)$ are the points of magnitude no more than $r$ in $\pi(\overline{\mathcal{M}}(X/S))$. Hence, $\pi(x_i)$ diverges as well. 
\end{proof}

\bibliographystyle{habbvr}
\bibliography{mybib}

\end{document}